\newcommand{\bdelta}{\overline{\delta}}
\newcommand\veps{\varepsilon}
\renewcommand\epsilon{\varepsilon}
\newcommand\NN{\mathbb{N}}
\newcommand\RR{\mathbb{R}} 
\newcommand\PP{\mathbb{P}} 
\newcommand\EE{\mathbb{E}}
\newcommand\Car{\mathds{1}}
\newcommand\Var{\mathrm{Var}}
\newcommand{\tr}{\mathrm{tr}}
\newtheorem{theorem}{Theorem}[section]
\newtheorem{corollary}[theorem]{Corollary}
\newtheorem{proposition}[theorem]{Proposition}
\newtheorem{lemma}[theorem]{Lemma}
\newtheorem{fact}[theorem]{Fact}
\newtheorem{definition}[theorem]{Definition}
\newtheorem{remark}[theorem]{Remark}
\newtheorem{assumption}[theorem]{Assumption}
\numberwithin{equation}{section}
\newcommand{\Jn}{J_{n,{\bf a},{\bf b}}}
\title[CLT for characteristic polynomial of random Jacobi matrices]{A CLT for the characteristic polynomial of random Jacobi matrices, and the
G$\beta$E}
\author[F.\ Augeri]{Fanny Augeri$^{\mathsection}$}
\address{$^{\mathsection}$Department of Mathematics, Weizmann Institute of Science, 
POB 26, Rehovot 76100,\newline \indent  Israel. 
Current address: Laboratoire de Probabilit\'{e}s, Statistique et Mod\'{e}lisation (LPSM), \newline \indent
Universit\'{e} de Paris, 75205 Paris Cedex 13, France}
\email{augeri@lpsm.paris}
\author[R.\ Butez]{Raphael Butez$^*$}
\address{$^*$Department of Mathematics, Weizmann Institute of Science 
POB 26, Rehovot 76100,\newline \indent
Israel. Current address: D\'{e}partement Math\'{e}matiques, Universit\'{e} de Lille, 59655 \newline \indent Villeneuve d'Ascq Cedex, France}
\email{butez@ceremade.dauphine.fr}
\author[O.\ Zeitouni]{Ofer Zeitouni$^{\#}$}
\address{$^{\#}$Department of Mathematics, Weizmann Institute of Science, 
 POB 26, Rehovot 76100, \newline\indent Israel, and
  Courant Institute, New York University,
  251 Mercer St, New York, NY 10012, \newline \indent
  USA}
 \email{ofer.zeitouni@weizmann.ac.il}
\date{}
\begin{document}
\begin{abstract}
  We prove a central limit theorem for the real part of the logarithm 
   of the 
  characteristic polynomial
  of  random Jacobi matrices. Our results cover the G$\beta$E models for $\beta>0$.
\end{abstract}
\maketitle
\section{Introduction}
We derive in this paper a central limit theorem for the real part
of the logarithm of the
characteristic polynomial of certain Jacobi matrices; as discussed below,
see Remark \ref{rem-GbetaE},
the ensemble we consider covers the important cases of the G$\beta$E ensembles.
We begin by introducing the model.

The $n$-by-$n$ \textit{Jacobi} matrix associated with 
sequences ${\bf a}=(a_k)_{k=1}^{n-1}$, ${\bf b}=(b_k)_{k=1}^n$ of real numbers
is defined as the matrix
\begin{equation}
  \label{eq-Jacobi}
  J_{n,{\bf a},{\bf b}}=\begin{pmatrix}b_n& a_{n-1}&0&\cdots&0\\
    a_{n-1}&b_{n-1}&a_{n-2}&\cdots&0\\
    \vdots&\ddots& \ddots&\ddots&\vdots\\
    0&\cdots&a_{2}&b_{2}&a_1\\
    0&\cdots&\cdots&a_1& b_1
  \end{pmatrix}.
\end{equation}
We will be interested in the spectrum of 
\textit{random} Jacobi matrices, whose sequences 
of coefficients
${\bf a}, {\bf b}$ satisfy the following.
\begin{assumption}
  \label{ass-ab}
With $v=2/\beta>0$,
$\{b_k\}_{k\geq 1}$, $\{g_k\}_{k\geq 1}$ are
two independent sequences
of independent centered random variables,
with $b_k,g_k$ of variance $v+O(1/k)$,
possessing absolutely continuous laws with respect to the Lebesgue measure on
$\RR$.  Let $a_{k-1}$ be such that, for a deterministic sequence
$c_k$ satisfying $c_k=O(1/k)$ and $c_{k+1}-c_k = O(1/k^2)$,
$$ \frac{a_{k-1}^2}{\sqrt{k(k-1)}} = 1-
 c_k+
\frac{g_k}{\sqrt{k}}.$$
Further,
 there exist
$\lambda_0>0$ and $M>0$ independent of $k$ such that
\begin{equation} \label{boundlaplace} \ \EE \big(e^{\lambda_0 |b_k| }\big)\leq M, \ \EE \big(e^{\lambda_0 |g_k|}\big)\leq M.\end{equation}
\end{assumption}
Note that the assumption on the sequence $a_k^2$ entails an implicit further assumption on the variables $g_k$; further note that changing the
sign of any coefficient $a_k$ does  
not change 
the spectrum of  $J_{n,{\bf a},{\bf b}}$.

For a Jacobi matrix $J_{n,{\bf a},{\bf b}}$ with eigenvalues $\lambda_i$, $i=1,\ldots,n$,
we introduce the empirical measure
of eigenvalues $L_n=n^{-1}\sum_{i=1}^n \delta_{\lambda_i/\sqrt{n}}$. 
The following is classical, and can be read e.g. from \cite{popescu}.
\begin{proposition}
  \label{prop-LLN}
  Assume ${\bf a},{\bf b}$ satisfy Assumption \ref{ass-ab}. Then, the
  empirical measure $L_n$ associated with $J_{n,{\bf a},{\bf b}}/\sqrt{n}$
  converges weakly in probability to the semi-circle law
  with density
  \[ \sigma(x)=\frac{1}{2\pi}\sqrt{4-x^2} {\bf 1}_{|x|\leq 2}.\]
\end{proposition}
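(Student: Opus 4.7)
The plan is to apply the method of moments, which in the tridiagonal setting reduces the problem to a combinatorial trace expansion. Since the semicircle law is compactly supported and hence determined by its moments, weak convergence of $L_n$ to $\sigma$ in probability follows if for every integer $p \geq 0$,
\begin{equation*}
  \frac{1}{n^{p/2+1}} \EE\bigl[\tr(J_n^p)\bigr] \longrightarrow \int x^p\, d\sigma(x),
  \qquad
  \Var\!\Bigl( \frac{1}{n^{p/2+1}} \tr(J_n^p) \Bigr) \longrightarrow 0.
\end{equation*}
Expanding the trace gives a sum over closed walks of length $p$ on the path graph $\{1,\dots,n\}$; each loop at vertex $i$ contributes the factor $b_{n-i+1}$ and each traversal of the edge $\{i,i+1\}$ contributes $a_{n-i}$.

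For the first (expectation) limit, the key observations are the following. The $b_k$ are centered and independent, so only walks that loop an even number of times at each vertex can contribute. Walks containing any loop turn out to be of lower order: each loop removes two edge-steps from a walk of length $2k$, and since edges contribute factors of order $\sqrt{n}$ (by the assumption $a_{k-1}^{2}\sim\sqrt{k(k-1)}$) whereas loops only contribute factors of order $1$ (by the exponential moment bound \eqref{boundlaplace}), such walks are suppressed by a factor of $n^{-1}$. Thus the dominant term comes from loop-free closed walks of length $2k$. For starting vertex $i_0$ away from the boundary, these are in bijection with lattice paths from $0$ to $0$ on $\ZZ$ of length $2k$, of which there are $\binom{2k}{k}$. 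Each such walk traverses every edge it uses an equal number of times up and down, with the edges lying in a window of bounded size around $i_0$; Assumption \ref{ass-ab} then yields a contribution $(n-i_0)^{k}(1+o(1))$. Summing over $i_0$,
\begin{equation*}
  \frac{1}{n^{k+1}} \EE\bigl[\tr(J_n^{2k})\bigr] \longrightarrow \binom{2k}{k} \int_0^1 (1-s)^k\, ds = \frac{1}{k+1}\binom{2k}{k},
\end{equation*}
which is exactly the $2k$-th moment of $\sigma$. Odd moments vanish since loop-free closed walks necessarily have even length.

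The variance bound is obtained from an analogous expansion of $\EE[\tr(J_n^p)^2]$ as a sum over pairs of closed walks; the independence of entries with distinct indices makes pairs of walks sharing no entry factor exactly, cancelling the square of the mean, so only a $o(n^{p+2})$ set of configurations (those sharing at least one entry) survives, yielding the required decay after dividing by $n^{p+2}$.

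The main technical burden lies in the first step: controlling, uniformly in $p$, the error terms coming from the $O(1/k)$ and $O(1/\sqrt{k})$ corrections in Assumption \ref{ass-ab}, from boundary walks that start within distance $k$ of either end of $\{1,\ldots,n\}$, and from walks containing loops or traversing some edge with an uneven number of up/down steps. A clean way to package these verifications is to quote Popescu's general tridiagonal limit theorem \cite{popescu}, whose hypotheses are routinely checked against Assumption \ref{ass-ab}.
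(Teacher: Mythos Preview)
The paper does not give its own proof of this proposition: it is stated as classical, with the reader referred to \cite{popescu}. Your proposal therefore already goes beyond the paper's treatment by sketching the underlying moment-method argument (trace expansion over closed walks on the path graph, with the Catalan number emerging as $\binom{2k}{k}\int_0^1(1-s)^k\,ds$ from averaging the edge-weight profile, rather than via a Dyck-path count as in the Wigner case). The sketch is correct, and your final appeal to \cite{popescu} for the uniform control of the error terms is exactly what the paper itself does.
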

Let $D_n(z,\Jn)=\det(zI_n-\Jn/\sqrt{n})$ denote the characteristic 
polynomial of $\Jn/\sqrt{n}$. 
Our main result is the following central limit theorem for $\log D_n(z,\Jn)$.
\begin{theorem}
  \label{theo-main}
  Fix $z\in (-2,2)\setminus \{0\}$.
  Then, the sequence of random variables
  \begin{equation}
    \label{eq-defwn}
    w_n(z)= \frac{\log |D_n(z,\Jn)|- n(z^2/4-1/2)}{\sqrt{v \log n/2}}
  \end{equation}
  converges in distribution to a standard Gaussian law.
\end{theorem}
The centering $n(z^2/4-1/2)$, which is the $n$ multiple of the logarithmic potential of the semi-circle distribution, comes about naturally from a certain scaling captured by the deterministic constant  $C_n(z)$ defined in
\eqref{eq-Cn} below, see Remark \ref{rem-asym} for the explicit computation.
\begin{remark}[The G$\beta$E ensemble]
  \label{rem-GbetaE}
  For $\beta>0$, 
  the classical G$\beta$E ensemble is the law on real 
  $n$-particle configurations
  given by the density
  \begin{equation}
    \label{eq-GBE}
     p(\lambda_1,\ldots,\lambda_n)= C_{n,\beta} |\Delta(\lambda)|^{\beta}
  e^{-\frac{\beta}{4}\lambda_i^2},
  \quad \mbox{\rm where}\; \Delta(\lambda)=\prod_{1\leq 
  i<j\leq n}(\lambda_i-\lambda_j).
\end{equation}
  We recall, see e.g. \cite[Section 4.1]{AGZ}, that
the particular cases of $\beta=1,2,4$ can be realized as the law of the
eigenvalues 
of (Orthogonal, Hermitian, Symplectic) random matrices with Gaussian entries.
  We also recall, see e.g. \cite{DumitriuEdelman}, \cite[Section 4.5]{AGZ},
  that by the Dumitriu-Edelman construction, these laws can be realized
  as the laws of eigenvalues of Jacobi matrices, where ${\bf a},{\bf b}$
  independent sequences of independent variables, such that $\sqrt{\beta}
  a_i$ is
  distributed as a $\chi$ random variable with $i\beta$ degrees of freedom
  and $b_i$ is a centered 
 Gaussian of variance $v=2/\beta$.
 It is immediate to check, using the CLT for $\chi^2$ random variables,
 that this setup is covered by Assumption \ref{ass-ab}.
 \end{remark}

   We note that the law of the determinant of $\Jn$, that is
   the case $z=0$, is not covered 
   by Theorem \ref{theo-main}. 
We chose to state the theorem that way for two reasons. First, the case 
$z=0$ is covered, at least for G$\beta$E with $\beta=1,2$, in 
the work of Tao and Vu \cite{TaoVu}, and it is not hard to see that for $z=0$,
their proof could be extended to the case of general $\beta$ and in fact to cover
our assumption \ref{ass-ab}. Second,
   as we describe later in section \ref{sec-structure}, the case $z=0$
   is in fact simpler than $z\neq 0$, in two ways: first, there is no linearized regime, as in Section \ref{sec-scalar}. Second, the rotation angles $\theta_l$,
   see \eqref{eq-lambdadef},
   are, in case $z=0$, constant and equal to $\pi$, which makes the definition of blocks redundant and greatly simplifies the argument. 
   Our proof could be adapted to that case as well, with minor changes.

   As described in Section \ref{sec-previous}, one expects 
   $\sqrt{\log n}w_n(z)$ to converge (as a process in $z$) 
   to a logarithmically correlated Gaussian field, in the sense that
   $ \log n \mbox{\rm Cov}( w_n(z), w_n(z')) =-(v/2)\log (|z-z'|\vee 1/n)+O(1)$ for 
   $|z|,|z'|<2$. Theorem \ref{theo-main}
   can thus be seen as a first step in proving such convergence, and indeed
   we expect the ``blocks'' construction to play an important role in 
   such a proof. The remaining obstacles to such proof are significant 
   enough that we do not address this goal here, but we do note that
   in the G$\beta$E setup,
    such a statement follows
   from the results of \cite{BourgadeModyPain}.
   \subsection{Related work}
  \label{sec-previous}
The computation of the determinant of a random matrix has a long history.
 Goodman \cite{goodman} noticed that for
 random matrices with iid standard complex Gaussian entries, the 
 square of the determinant can be expressed as a product of $\chi^2$ variables,
 from which a central limit theorem is easy to derive. The latter CLT
 was generalized 
 in \cite{Nguyen-Vu} to non-Gaussian i.i.d. entries. These proofs
 do not generalize well to Hermitian 
 matrices due to the lack of independence of entries.
 This motivated Tao and Vu \cite{TaoVu} to consider the GOE/GUE cases.
 They use the Edelman-Dumitriu Jacobi representation and the recursions
 for the determinant of principal minors to derive a CLT for the 
 logarithm of the determinant, that is, for $w_n(0)$. They also showed 
 that the result is universal, at least for Hermitian matrices whose
 entries on and above the diagonal 
 are independent and match the GUE moments up to fourth moment. 
 (The fourth moment matching condition 
 was later  dispensed with in \cite{BourgadeMody}.) Earlier 
   works that give the distribution of the determinant, that is at $z=0$, are
   \cite{MN98} and \cite{DLC00}, for the cases $\beta=2$ and $\beta=1$, respectively.
   In the proof of the GOE/GUE
 cases,
 the recursions \eqref{basic-recursion} below play  a central role,
 together with the observation that 
 $\psi_{k+2}^2+\psi_{k+1}^2= \psi_k^2 +\psi_{k-1}^2+\epsilon_k$,
  with $\epsilon_k$ small (we note that this relation is a manifestation
  of the fact that $\psi_{k+1}=|\psi_k|+\epsilon_k$ with $\epsilon_k$ small).
 That is, the case $z=0$ is a case where the
 angles $\theta_l$
 defined in \eqref{eq-lambdadef} below are roughly equal to $\pi$, and
 in particular do not change significantly with $l$. This fact simplifies significantly the analysis compared to the current paper.

 The interest in the understanding of $w_n(z)$ for $z\neq 0$ stems in part
 from analogous results for the characteristic polynomial of C$\beta$E matrices, which in the case $\beta=2$ corresponds to
 random unitary matrices. 
 In that case, it is natural to consider 
 $z\in S^1$, and the rotation invariance of the ensemble
 implies that the law of of the charateristic polynomial
 in independent of $z\in S^1$. 
Using representation-theoretic tools, 
it was proved by Diaconis and Shashahani \cite{DS}
that for $\beta=2$,
the moments $m_{k,n}:=n\int x^k L_n(dx)$  of the empirical 
 measure of eigenvalues are asymptotically Gaussian; this 
 was amplified in \cite{DiaconisEvans}, where the statistics of linear functionals of $nL_n$ was shown to be Gaussian, 
 with a covariance corresponding to a 
 logarithmically correlated field. 
 This was  further developed  in \cite{Wieand}.
 Motivated by conjectured connections with the zeros 
 of the Riemann zeta function \cite{KeatingSnaith}, 
 the CLT for the logarithmic 
 characteristic polynomial of a CUE matrix was first derived 
 in \cite{HughesKeatingOconnell}. These authors also considered the
 process $\{m_n(z)\}_{z\in S^1}$, where $m_n(z)=\log \det(zI-U_n)$, and 
 proved the logarithmic-correlated structure of the limiting Gaussian field. 

 The recent explosion of interest in logarithmically correlated fields 
 has led to a resurgence of activity concerning $m_n(z)$ for CUE matrices.
 Motivated again by connections with Riemann's zeta function, 
 Fyodorov, Hiary and Keating \cite{FHK} conjectured  the (and provided
 convincing evidence for) limiting distribution of the maximum of $\Re m_n(z)$.
 Recent work \cite{ABB,PZ,CMN,CN} has made significant progress toward that conjecture. In particular, \cite{CN} have shown, in the general 
 C$\beta$E case, that with $\Phi_n^*$ the orthogonal polynomial 
 associated with the CMV representation of $U_n$,
 $|\Phi_n^*|^{-2}$, 
 properly normalized, converges
 to the standard Gaussian Multiplicative chaos on the circle. Interestingly,
 the proofs in \cite{CMN,CN} are based on recursions for Verblunsky coefficients, which play a role similar to the role of the coefficients $(a_k,b_k)$.
 For $\beta=2$ and using Riemann-Hilbert methods,
 convergence of the characteristic polynomial to the GMC in the full subcritical phase was proved in \cite{NSW}, after initial results in the so-called $L^2$ phase 
 \cite{Webb}. See also \cite{L21} for a smoothed analogue, valid for all $\beta$.

 One expects that results true for the logarithm of the characteristic polynomial of C$\beta$E should carry over to the G$\beta$E setup.
Indeed, for $\beta=2$, Riemann-Hilbert methods can be applied to yield 
convergence of exponential moments, which in turn  
imply the (multidimensional, i.e. for a finite collection of $z_i$s) 
CLT, see 
\cite{Krasovsky}.
Generalizations to more general 
potentials beyond the quadratic case (that is, replace $\lambda_i^2$ in
\eqref{eq-GBE} by $V(\lambda_i)$) appear
in \cite{BWW} and \cite{Charlier}.
We note that 
\cite{BWW} prove convergence 
to the GMC of the characteristic polynomial
(in the $L^2$ phase).  We also mention \cite{FKS} for an explicit construction
of the limiting logarithmically correlated field for GUE, and a proof of convergence in mesoscopical scales.

 Our work can be seen as a further 
 step in the direction of obtaining CLTs for random determinants,
 with generalization in the direction 
 where Riemann-Hilbert methods are not applicable. 
 Recent related results 
 are \cite{LP}, where a (much more precise) study of the recursion 
 for $\psi_k$ is undertaken, albeit only for $k<k_0(z)-O(n^{1/3} \log n)$,
 i.e. in what we call the \textit{scalar} regime (and they refer to as the hyperbolic regime), and the follow-up article \cite{LP2} showing that the scaled characteristic polynomial at the edge of the spectrum converges in law to 
 the solution of the stochastic Airy equation. Another closely related study is \cite{CFLW}, where 
 the exponential of the eigenvalue counting measure of the GUE
 (and other
 ensembles related to $\beta=2$) 
 is considered. 

 We have emphasized in this brief summary the fluctuations of the logarithm of the characteristic polynomial, which can be seen as a linear statistics (albeit, a non-smooth one) 
 of the empirical measure $L_N$. For smooth statistics, the fluctuations are Gaussian, and well understood. We refer in particular to 
 \cite{DE2} (for the G$\beta$E case; their proofs cover also our setup) and \cite{popescu}, and to \cite{DiaconisEvans} for the CUE case.

 After the first version of this
 work was completed, we learnt of a very interesting work of 
 Bourgade, Mody and Pain \cite{BourgadeModyPain},
 where they obtain local laws for the distribution
of eigenvalues for the G$\beta$E, and for more general $\beta$-ensembles. As
a consequence, they obtain a CLT for the logarithm of the characteristic polynomial. Compared with our results in this paper, they are able to treat
more general potentials, 
they obtain a multi-dimensional CLT that reveals the log-correlated structure of the limiting field, and they treat also the imaginary part of the logarithm. On the other hand, 
our work allows us to handle models outside of the $\beta$ class of models.
The methods of proof in the two papers are completely different.

Another interesting paper that was completed after the first version of this paper was posted is \cite{JKOP}. The authors derive there the CLT at $z$ in a small, $N$-dependent neighborhood of the edge of the support of the limiting ESD, for Gaussian ensembles ($\beta=1,2$), including spiked models, by exploiting \eqref{eq-recpsik}, in the scalar regime.

\subsection{Notation}
The collection of probability measures on $\RR$ is denoted $\mathcal{P}(\RR)$.
We equip $\mathcal{P}(\RR)$ with the weak topology and write $\mu_n{\leadsto}\mu$ for weak
convergence of $\mu_n$ to $\mu\in \mathcal{P}(\RR)$; we let $d:\mathcal{P}(\RR)\times \mathcal{P}(\RR)\to \RR$ denote a metric compatible
with weak convergence (e.g., the L\'{e}vy metric).
We let $\gamma\in \mathcal{P}(\RR)$ denote the standard Gaussian law.
For a sigma-algebra $\mathcal{F}$, we write $\PP_{\mathcal{F}},
\EE_{\mathcal F}$ for the
conditional probability
and expectation conditioned on $\mathcal{F}$.
We write non-commutative products from right to left, that is, if $C_i, i\leq j$ is a sequence of square matrices of a given size,
$$ \prod_{i=1}^j C_i = C_j C_{j-1} \ldots C_1.$$
For any matrix $A$ we denote by $A^T$ its transpose and by $\|A\|$ its Hilbert-Schmidt norm, that is,
$ \| A\| = \big( \tr (A A^*)\big)^{1/2},$
and define $\|A\|_{\infty} = \max_{i,j} |A_{i,j}|$.
We write $I_n$ for the identity matrix of dimension $n$.
We denote by $\mathcal{M}_2$ the set of matrices of size $2\times 2$ with complex entries. We use the notation $\lesssim$ for inequalities which are valid up to an
absolute  multiplicative constant, independent of all parameters $n,k$, etc.
For a parameter $p$, we write $a=O_p(b)$ to mean
  that $a/b$ is bounded by a universal constant, and  $a=o_p(b)$ if $a/b \to 0$ as $p\to \infty$.
If $p=n$ then we omit $p$ from the notation.
Finally, we write throughout $C,c$ for constants that are absolute but
may change from line to line.

\subsection*{Acknowledgements} 
We thank the anonymous referees for a careful
reading of the manuscript and for their corrections and suggestions.
This work was supported by
funding from the European Research Council (ERC) under the European Unions Horizon 2020 research and innovation program (grant agreement number 692452). 

\subsection*{Data availability} Data sharing not applicable - no new data generated.

\section{The recursion, definitions of regimes, and a high level description
of the proof}

Fix $z \in (-2,2)\setminus\{0\}$. Let $n \in \NN$, $n\geq 1$ and 
let $(\phi_k)_{k \in \{-1,\ldots, n\}}$ be defined by the recursion
\begin{equation}
  \label{basic-recursion}
 \phi_{-1} = 0, \ \phi_0 = 1,\;  \phi_{k} = \big( z\sqrt{n}-b_{k}\big) \phi_{k-1} - a_{k-1}^2\phi_{k-2},
\; k\geq 1.
 \end{equation}
 Using the tri-diagonal structure of $\Jn$, we note that 
 $\phi_n=\det(z\sqrt{n}I_n-\Jn)$.
Fix $\kappa>0$ large and set
\begin{equation}
  \label{eq-zkk0}
  z_k=z\sqrt{n/k}, \quad 
  k_0 = \lfloor {z^2n}/{4} \rfloor, \quad l_0=\lfloor \kappa k_0^{1/3}\rfloor
\end{equation}
(so that $z_{k_0} = 2+O(1/n)$),  and 
\begin{equation}
  \label{eq-alpha}
\alpha_k=\alpha_k(z)=\left\{\begin{array}{cl}
    z_k/2+\sqrt{z_k^2/4-1+c_{k}},& k<k_0-l_0\\
    1,&  k\geq k_0- l_0.
  \end{array}
  \right.
\end{equation}
Let $\psi_k$ denote the scaled variables
\begin{equation}
  \label{eq-psik}
  \psi_k = \frac{\phi_k}{\sqrt{k!}}\prod_{i=1}^k \frac{1}{\alpha_i} ,\end{equation}
  which satisfy the recursion
  \begin{equation}
    \label{eq-recpsik}
    \psi_{k}=\Big(z_k-\frac{b_k}{\sqrt{k}}\Big)\frac{\psi_{k-1}}{\alpha_{k}}-
    \frac{a_{k-1}^2}{\sqrt{k(k-1)}} \frac{\psi_{k-2}}{\alpha_k \alpha_{k-1}}, \quad k\geq 2 ,
  \end{equation}
  with $\psi_0=1, \psi_1=(z\sqrt{n}-b_1)/\alpha_1=1+O(1/\sqrt{n})$ with
  probability approaching $1$ as $n\to\infty$.
  Note that
  \begin{equation}
    \label{eq-Dn}
    D_n(z,\Jn)=\det(zI_n-\Jn/\sqrt{n})=n^{-n/2} \det(z\sqrt{n}I_n-\Jn)= 
    C_n(z) \psi_n, 
  \end{equation}
  where
  \begin{equation}
    \label{eq-Cn}
   C_n(z)= \sqrt{{n!}/{n^n}} \prod_{i=1}^n \alpha_i.
 \end{equation}
  (See Remark \ref{rem-asym} below for an explicit evaluation of $\log C_n(z)$ up to order $O(1)$.)
  Therefore, the study of $w_n(z)$, see \eqref{eq-defwn}, boils down to the study of the recursion \eqref{eq-recpsik}.

   Set $X_k^T = (\psi_{k},\psi_{k-1})$.
  Since the laws of the
coefficients $a_k$ and $b_k$ are absolutely continuous with respect to
the Lebesgue measure, for fixed $z$,   $\psi_k \neq 0$ for all $k\in \{1,\ldots,n\}$,
almost surely.
In particular,  we can define $\delta_k$ such that
\begin{equation}
\label{eq-Xdelta} X_{k} = \psi_{k} \left(\begin{array}{c}
  1\\
  (1+\delta_{k})^{-1}
\end{array}\right),\; k\geq 1.
\end{equation}
From the recursion \eqref{eq-recpsik} we obtain that
\begin{equation}
  \label{eq-Xk}
  \ X_{k} = (A_k +W_k) X_{k-1}, \quad k\geq 1,
\end{equation}
where
\begin{align}
  \label{def-Ak}
  A_k =  \left( \begin{array}{cc}
\frac{z_k}{{\alpha_k}} & -\frac{(1-c_k)}{{\alpha_k \alpha_{k-1}}} \\
1 & 0
\end{array}\right), \ W_k =\left( \begin{array}{cc}
-\frac{b_k}{{\alpha_k}\sqrt{k}} & -\frac{g_k}{{\alpha_k \alpha_{k-1}}\sqrt{k}}\\
0 & 0
\end{array}\right).
\end{align}
  \subsection{The scalar and oscilatory regimes}
  \label{sec-regimes}
  We consider (for most $k$'s except for $k=O(1)$, which eventually 
  do not matter), the matrix $W_k$ and the constant 
 $c_k$ as perturbations. Then, the eigenvalues of $A_k$ (with $c_k=0$)
 are real
 for $k<k_0$, with modulus larger than $1$, and pure imaginary for 
 $k>k_0$.
  We thus distinguish between three regimes.
  \begin{itemize}
    \item \textit{The scalar regime.} This is the regime where $k\leq k_0-l_0$. As we explain momentarily, in this regime the recursions
      \eqref{eq-Xk} can be essentially replaced by recursions for the scalar
      quantity $\psi_k/\psi_{k-1}=1+\delta_k$, with $\delta_k$ small
      and (essentially) satisfying a scalar equation, hence the name. The analysis of the scalar regime is carried out in Section
\ref{sec-scalar}.
    \item \textit{The transition regime.} This is the regime where 
      $k_0-l_0<k\leq k_0+l_0$.
      In this regime, the eigenvalues of $A_k$ are both close to $1$. The analysis of the transition regime is carried out in Section
\ref{sec-trans}.
    \item \textit{The oscilatory regime.} This is the regime where $k>
      k_0+l_0$. In this regime, 
      the eigenvalues of $A_k$ are complex, of modulus very close 
      to $1$, hence the name. The analysis of the oscilatory regime is carried out in Sections
\ref{subsec-change} and \ref{sec-timechange}, 
and constitutes the hardest and most innovative part of the paper.
  \end{itemize}
  \subsection{High level description of the proof}
 \label{sec-structure}
 We begin the description with the scalar regime, even if
 its analysis is brought last in Section \ref{sec-scalar}. In that case, writing 
 $\psi_k=(1+\delta_k) \psi_{k-1}$, see \eqref{eq-Xdelta},
 we obtain, after some manipulations, that
 \begin{equation}
   \label{eq-deltak}
   \delta_{k}=u_k +v_k \delta_{k-1}/(1+\delta_{k-1}), \quad k\geq 2
 \end{equation}
 where $v_k,u_k$ are as in \eqref{eq-ukvk} below and $\delta_1=O(1/\sqrt{n})$ with high probability; 
 in particular, $v_k$ is essentially deterministic
 and the $u_k$'s are independent random variables. Whenever $\delta_k$ is small,
 the equation \eqref{eq-deltak} can be linearized (around $\delta_k\sim 0$), 
 in the form
 \begin{equation}
   \label{eq-bardeltak0}
  \bdelta_1=\delta_1, \quad
 \bdelta_k=u_k+v_k \bdelta_{k-1}, \quad k\geq 2.
 \end{equation}
 The recursion \eqref{eq-bardeltak} can be solved, and it is straightforward
 to obtain a-priori bounds on $\bar \delta_k$, which then can be bootstraped to 
 a-priori bounds on $\delta_k$, see section 
 \ref{sec-det}. 
 In this analysis, it is important that 
 $v_k\sim 1- \sqrt{l/k_0}$, for $k>(1-\veps) k_0$ with $l=k_0-k$, 
 and there is a natural decay of the
 influence of $u_j$ on $\delta_k$ for $j<k-\sqrt{k_0/l}$. 
 In particular, in the full scalar regime, 
 $\delta_k$ is small with high probability. Therefore, 
 \[\log \prod_{i=1}^k (1+\delta_i)
 \sim \sum_{i=1}^k \delta_i-\frac12 \sum_{i=1}^k \delta_i^2 \,.\]
 Further, the correlation structure of the $\delta_k$ is not stationary, and
 the random variables
 $\{\delta_j\}$, $j=k,\ldots,k+\sqrt{k_0/k}$ are strongly correlated. 
 The upshot of this analysis is  a CLT for the random variable
 $\log(|\psi_{k_0-l_0}|/|\psi_1|)$.

 One of the consequences of the analysis of the scalar regime is that
 $\delta_{k_0-l_0}\sim O(k_0^{-1/3})$. In the transition regime, 
 $\delta_k$ grows, and the linearization 
 approach used in the scalar regime fails.
On the other hand, the vector $(1,1)$ is an eigenvector of $A_k$, with 
eigenvalue approximately $1$. Using that $\delta_{k_0-l_0}$ is small and a change of basis, one 
shows in Section \ref{sec-trans},
by a somewhat tedious combinatorial analysis, 
that $\log(|\psi_{k_0+l_0}/\psi_{k_0-l_0}|)=O(1)$. Having established that, 
it is natural to change basis to the basis where $A_k$ is the conjugate of a rotation matrix. In this, a difficulty arises: contrary to
the case $z=0$, the change of basis is not constant, but rather $k$ dependent,
and it has an amplification effect on the noise,
see Fact \ref{newbasis} below.  Because of these facts, a global linearization
is difficult. Instead, we construct random blocks of indices $l\in
k_0+ [l_i,l_{i+1})$, see
Section \ref{sec-timechange}, with the following properties: 
\begin{itemize}
\item The $l_i$'s are stopping times with respect to the filtration $(\mathcal{F}_l)_l$ where $\mathcal{F}_l$ is generated by the variables $g_k,b_k$ for $k\leq k_0+l$.
  \item Along a block, linearization is possible and gives a good control,
conditional 
    in $\mathcal{F}_{l_i}$ and on the event where $\delta_{k_0+l_i}$ is small enough,
    on the change of norm $\|X_{k_0+l_{i+1}}\|/\|X_{k_0+l_i}\|$. 
  \item The rotation achieved by $\prod_{k \in [l_i, l_{i+1})} A_{k_0+l}$ 
      is (almost) a multiple
    of $2\pi$, and so the associated $\delta_{k_0+l_{i+1}}$ is small.
\end{itemize}
It turns out that, essentially,
a good choice is $l_i\sim (k_0/i)^{1/3}$ for $i\lesssim  k_0^\tau$
and $l_i\sim k_0/(k_0-i)^{2}$ for $i\gtrsim k_0^\tau$, with $\tau \in (1/4,2/5)$,
see Section 
\ref{sec-defblocks}.  We remark that for $i<k_0^\tau$, the variance accumulated in each block
is $\sim 1/i$. We also remark that the change of basis induces a  drift of order $1/i$, that eventually will be cancelled by the drift in the scalar part; see Lemma \ref{decompbaseQ}.

Having performed the linearization, the variable $\log \|X_{k_0(1+\veps)}\|/\|X_{k_0-l_0}\|$
is well approximated by a drifted martingale, and a CLT ensues, see  Proposition
\ref{convlawblock}. Finally, the control of $\log(\|X_n\|/\|X_{k_0(1+\veps)}\|)$ is 
straightforward, and it remains to relate $\|X_n\|$ and $\psi_n$. This is done by means of an
anti-concentration argument, see Lemma \ref{lastblock}. 

\subsection{Auxiliary propositions and proof of Theorem \ref{theo-main}}
\label{proof-main}
Recall the notation $k_0,l_0$, see \eqref{eq-zkk0}. 
 Recall also that  $\gamma$
denotes the standard Gaussian distribution on $\RR$, and that
$d$ denotes a metric on $\mathcal{P}(\RR)$ which is
compatible with weak convergence.
The following propositions correspond to 
the different regimes. Since we consider $z$ fixed, we omit it from
the notation.
\begin{proposition}
  \label{prop-scalar}
   Let $\mu_{n,\kappa}$ denote the law of
	\[ \frac{ \log|\psi_{k_0-l_0}|  + \frac{v-1}{6}\log n }{\sqrt{v\log n/3}} . \]
	Then,
	\begin{equation}
	  \label{eq-scalarrec}
	  \lim_{\kappa \to \infty} \limsup_{n \to \infty} d(\mu_{n,\kappa}, \gamma) = 0. 
	\end{equation}
	Further, there exists a constant $C_\kappa$, possibly depending on $\kappa$, so that
	\begin{equation}
	  \label{eq-extra}
	  \lim_{\kappa \to \infty} \limsup_{n \to \infty} 
	  \PP( |\delta_{k_0-l_0}| \leq C_\kappa k_0^{-1/3})=1.
	\end{equation}
\end{proposition}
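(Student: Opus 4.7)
The plan is to exploit the telescoping identity
\[ \log|\psi_{k_0-l_0}| = \log|\psi_1| + \sum_{k=2}^{k_0-l_0} \log|1+\delta_k|, \]
together with the recursion \eqref{eq-deltak} for $\delta_k$. Following the program sketched in Section \ref{sec-structure}, I would compare $\delta_k$ to the linearized sequence $\bdelta_k$ of \eqref{eq-bardeltak0}, which admits the explicit representation
\[ \bdelta_k = \delta_1\prod_{j=2}^k v_j + \sum_{l=2}^k u_l \prod_{j=l+1}^k v_j, \]
and then Taylor-expand $\log|1+\delta_k| = \delta_k - \delta_k^2/2 + O(|\delta_k|^3)$. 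Because $u_k$ inherits sub-Gaussian-like tails from \eqref{boundlaplace} and $v_k \approx 1 - 2\sqrt{(k_0-k)/k_0}$ in the upper scalar regime, the kernel $\prod_{j=l+1}^k v_j$ decays roughly as $\exp\!\bigl(-\tfrac{4}{3\sqrt{k_0}}\bigl((k_0-l)^{3/2}-(k_0-k)^{3/2}\bigr)\bigr)$, the key quantitative input used throughout.

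The first step is a priori control: a sub-Gaussian tail bound on $\bdelta_k$ valid uniformly in $k\leq k_0-l_0$, obtained directly from the explicit formula, \eqref{boundlaplace}, and the kernel decay. A Gronwall-type bootstrap using $|\delta_{k-1}/(1+\delta_{k-1}) - \delta_{k-1}| = O(\delta_{k-1}^2)$ then transfers these bounds to $\delta_k$ on the event $\{\max_{k\leq k_0-l_0}|\delta_k|\leq 1/2\}$, whose probability tends to one. The same calculation, specialized to $k=k_0-l_0$, shows that the variance of $\bdelta_{k_0-l_0}$ is dominated by indices $l$ with $k_0 - l \in [l_0, l_0 + C k_0^{1/3}]$, giving $\Var(\bdelta_{k_0-l_0}) = O(k_0^{-2/3})$, and sub-Gaussian concentration then delivers \eqref{eq-extra}.

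The second step is the probabilistic asymptotics. The cubic Taylor remainder is controlled, via the a priori $L^p$ bounds, by $O\bigl(\sum_k \EE|\delta_k|^3\bigr) = o(\sqrt{\log n})$. The quadratic correction evaluates, using $\EE[u_l^2] \sim v/l$ and the kernel decay, to
\[ -\tfrac{1}{2} \sum_{k=2}^{k_0-l_0}\EE[\delta_k^2] \;=\; -\tfrac{v}{6}\log n + o(\sqrt{\log n}), \]
up to errors that vanish after $\limsup_n$ and $\kappa \to \infty$; this yields the centering. The remaining linear term $\sum_k \delta_k$ is, after replacing $\delta_k$ by $\bdelta_k$ with $L^2$-error that is $o(\sqrt{\log n})$, reorganized by swapping the order of summation into the martingale-difference sum
\[ \sum_{k=2}^{k_0-l_0} \bdelta_k \;=\; \delta_1 T_1 + \sum_{l=2}^{k_0-l_0} u_l S_l, \qquad S_l := \sum_{k=l}^{k_0-l_0}\prod_{j=l+1}^k v_j, \]
in the filtration $\mathcal{F}_l=\sigma(b_j,g_j:\,j\leq l)$. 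Its predictable bracket $\sum_l \EE[u_l^2] S_l^2$ converges to $v\log n/3$, and the Lindeberg condition follows from \eqref{boundlaplace}. The martingale CLT then produces the desired convergence to the standard Gaussian.

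The main obstacle, I expect, is closing the bootstrap at the upper end of the scalar regime. Near $k=k_0-l_0$ the multiplier $v_k$ is only $1-O(k_0^{-1/3})$, so the nonlinear discrepancy $v_k\delta_{k-1}^2$ accumulates slowly and threatens a crude Gronwall estimate; the choice $l_0 = \kappa k_0^{1/3}$ with $\kappa$ large is precisely what makes the bootstrap self-consistent. A secondary nuisance is quantifying the $\kappa$-dependent defect in the variance and drift contributions of the narrow strip of indices $k \in [k_0 - l_0 - Ck_0^{1/3},\,k_0 - l_0]$; this defect vanishes only after the order of limits $\lim_{\kappa\to\infty}\limsup_n$, which is exactly the form of the stated conclusion.
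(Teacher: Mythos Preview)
Your overall plan is close to the paper's, but you have misidentified where the drift $-\tfrac{v}{6}\log n$ comes from, and this breaks the argument in two places simultaneously. A direct computation from the kernel asymptotics you already quote gives $\EE\bdelta_k^2 \sim \tfrac{v}{2}(k_0 k^*)^{-1/2}$ for $k^*=k_0-k$ in the contributing range, and hence
\[
\sum_{k=(1-\varepsilon)k_0}^{k_0-l_0}\EE\delta_k^2 \;\asymp\; \frac{v}{2\sqrt{k_0}}\sum_{l=l_0}^{\varepsilon k_0}\frac{1}{\sqrt{l}} \;=\; O(1),
\]
not $\tfrac{v}{3}\log n$. So the quadratic Taylor term is bounded and cannot supply the centering. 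Conversely, your claim that replacing $\delta_k$ by $\bdelta_k$ in $\sum_k\delta_k$ costs only $o(\sqrt{\log n})$ is false for the same reason: the discrepancy $\Delta_k=\delta_k-\bdelta_k$ satisfies $\Delta_k\approx v_k\Delta_{k-1}-v_k\bdelta_{k-1}^2$, and solving this linear recursion gives $\sum_k\Delta_k \approx -\sum_j \bdelta_{j-1}^2 W_j$ with $W_j=\sum_{k\ge j}\prod_{l>j}^k v_l\sim\tfrac12\sqrt{k_0/j^*}$. The extra weight $W_j$ is exactly what promotes the $O(1)$ sum $\sum_j\bdelta_{j-1}^2$ to $\sum_j\bdelta_{j-1}^2\sqrt{k_0/j^*}\asymp\sum_j (j^*)^{-1}\sim\tfrac{2}{3}\log k_0$, and this is where the paper extracts the drift (their Proposition~\ref{Limit Delta bar}). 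Your bootstrap and a~priori steps are fine in spirit, but until you track $\sum_k\Delta_k$ separately and prove both its mean and its fluctuations (the latter via a fourth-moment bound on $\bdelta_k$), the CLT statement with the correct centering does not close.

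A smaller issue: your ``martingale-difference sum'' $\sum_l u_l S_l$ is not a martingale in the filtration $(\mathcal{F}_l)$, because $S_l=\sum_{k\ge l}\prod_{j=l+1}^k v_j$ depends on $g_j$ for $j>l$ and is not $\mathcal{F}_l$-measurable. The paper resolves this by replacing $S_l$ by its (deterministic) expectation $\EE W_l$, reducing to a genuine sum of independent terms to which Lindeberg applies, and then showing the remainder $\sum_l u_l(S_l-\EE S_l)$ is $o(\sqrt{\log n})$ in $L^2$. You will need the same split, or an equivalent device, once you repair the drift accounting.
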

\begin{proposition}\label{osci}
  Let $\mu_{n,\kappa}^{(0)}$ denote the conditional law of
$$ \frac{\log \Big| \frac{\psi_n}{\psi_{k_0- l_0}}\Big| - \frac{v-1}{6}\log n}{\sqrt{v\log n /6}},$$
given $\mathcal{F}_{k_0-l_0}$. Then there exist  constants
$C_\kappa$, possibly 
depending on $\kappa$, so that
in probability, 
$$\lim_{\kappa \to \infty}\limsup_{n\to \infty} \Car_{\{| \delta_{k_0-l_0}| \leq C_\kappa k_0^{-1/3}\}} d(\mu_{n,\kappa}^{(0)}, \gamma) =0.$$
\end{proposition}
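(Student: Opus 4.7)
The plan is to work conditionally on $\mathcal{F}_{k_0-l_0}$, on the event
$\{|\delta_{k_0-l_0}|\leq C_\kappa k_0^{-1/3}\}$, and to split the log-ratio into three pieces,
\[
\log\Big|\frac{\psi_n}{\psi_{k_0-l_0}}\Big|
= \log\Big|\frac{\psi_{k_0+l_0}}{\psi_{k_0-l_0}}\Big|
+ \log\Big|\frac{\psi_{k_1}}{\psi_{k_0+l_0}}\Big|
+ \log\Big|\frac{\psi_n}{\psi_{k_1}}\Big|,
\]
where $k_1=\lfloor k_0(1+\veps_0)\rfloor$ for some small fixed $\veps_0>0$. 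The first (transition) term I would bound by $O_\kappa(1)$, and hence by $o(\sqrt{\log n})$, following the change of basis described in Section \ref{sec-structure}: the vector $(1,1)^T$ is an approximate eigenvector of $A_k$ with eigenvalue close to $1$ when $k$ is near $k_0$, and $\delta_{k_0-l_0}$ is small by the conditioning event, so a direct product estimate on the $2l_0$ factors gives a bounded contribution. The third term lies entirely in the stable oscillatory region, where $\|W_k\|\lesssim k^{-1/2}$ uniformly and $|\det A_k|$ has an explicit deterministic drift of order $1/k$; a routine martingale estimate handles it after the explicit deterministic centering in the statement is accounted for.

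The main work is the middle term. Following Section \ref{sec-structure}, change basis so that $A_k$ is conjugate to a rotation by an angle $\theta_k$ determined by \eqref{eq-lambdadef}; this introduces a $k$-dependent amplification of $W_k$ (Fact \ref{newbasis}) that prevents a single global linearization. Instead, I would partition $[k_0+l_0,k_1]$ into blocks whose lengths $l_{i+1}-l_i$ are chosen so that (i) the cumulative rotation $\sum_{l\in [l_i,l_{i+1})}\theta_{k_0+l}$ is close to a multiple of $2\pi$, which forces $\delta$ to return to a small value at the next boundary, and (ii) the noise accumulated inside one block is small enough that the $\delta$-recursion \eqref{eq-deltak} can be replaced there by its linearization in the spirit of \eqref{eq-bardeltak0}. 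The balance advertised in Section \ref{sec-structure}, namely $l_i\sim (k_0/i)^{1/3}$ for $i\lesssim k_0^\tau$ and $l_i\sim k_0/(k_0-i)^2$ beyond, with $\tau\in(1/4,2/5)$, is what achieves both requirements simultaneously.

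Within each block I would solve the linearized recursion explicitly and express $\log\|X_{k_0+l_{i+1}}\|-\log\|X_{k_0+l_i}\|$ as a deterministic drift, coming from $\sum_k\log|\det A_k|$ together with the quadratic correction $-\tfrac12\sum_k\delta_k^2$, plus a martingale increment whose conditional variance with respect to $\mathcal{F}_{k_0+l_i}$ is of order $1/i$ in the regime $i\lesssim k_0^\tau$, with a corresponding formula beyond. Summing over blocks, the variances telescope to $(v/6)\log n$ and the drifts produce the announced $(v-1)\log n/6$ centering. A martingale CLT, which is essentially the content of Proposition \ref{convlawblock}, then delivers the claimed conditional weak convergence of $\mu_{n,\kappa}^{(0)}$ to $\gamma$ in probability. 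To close, I would pass from $\|X_n\|$ back to $|\psi_n|$ via $|\psi_n|=\|X_n\|(1+|1+\delta_n|^{-2})^{-1/2}$ and an anti-concentration estimate for $1+\delta_n$: the near-rotations equidistribute the direction of $X_n$ on the projective line, so the correction factor is $O(1)$ with probability $1-o(1)$.

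The main obstacle, as I see it, is the self-consistency of the block construction: at every boundary $\delta_{k_0+l_{i+1}}$ must again be small, with a quantitative rate, so that the linearization can restart in the next block. This depends on near-exact cancellation of the rotation angle modulo $2\pi$ up to noise, and on containing the amplification described by Fact \ref{newbasis}; a naive second-moment bound on $\delta$ is too weak in the late blocks (those with $i$ close to $k_0^\tau$), so one must track simultaneously $\delta_{k_0+l_i}$ and a companion quadratic quantity across block boundaries. This delicate probabilistic bookkeeping is what forces the precise choice of $\tau$ and of the block-length profile above, and it is where I would expect most of the technical effort to go.
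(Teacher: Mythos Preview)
Your high-level plan is the paper's, but two steps are underestimated and one is wrong as stated.

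The transition piece $\log|\psi_{k_0+l_0}/\psi_{k_0-l_0}|$ needs a \emph{two-sided} bound, and your ``direct product estimate on the $2l_0$ factors'' only yields the upper one. The lower bound --- that $\|X_{k_0+l_0}\|\geq c_\kappa^{-1}|\psi_{k_0-l_0}|$ with high probability --- requires anti-concentration: the paper subdivides the transition window into sub-blocks of length $\veps k_0^{1/3}$ and shows (Lemma~\ref{smallblock}, Proposition~\ref{tightlower}) that on each sub-block the first-order chaos of $\langle e_1,C_m u\rangle$ has variance bounded below, so the $u$-component cannot collapse. Without this, $\log|\psi_{k_0+l_0}/\psi_{k_0-l_0}|$ could be very negative and the argument fails. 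A similar anti-concentration step (Lemma~\ref{controlratio}) is needed at the end to pass from $\|X_n\|$ to $|\psi_n|$; your equidistribution heuristic is not enough.

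More seriously, your condition (i) --- that $\sum_{l\in[l_i,l_{i+1})}\theta_{k_0+l}$ be close to a multiple of $2\pi$ --- does \emph{not} force $\veps_{i+1}$ small: a rotation by a multiple of $2\pi$ is the identity, so it gives $\veps_{i+1}\approx\veps_i+\text{noise}$, and $\veps_i$ random-walks with increments of order $j_i^{-1/2}$ and diverges. The paper's blocks are \emph{random stopping times}: $l_{i+1}$ is chosen, given $\mathcal F_{k_0+l_i}$, so that the cumulative rotation equals $\veps_i$ (not $0$) modulo $2\pi$ up to $O(\sqrt{l_i/k_0})$, see \eqref{defli}; this is what resets $\veps_{i+1}$ to the fresh noise $G_{l_i,l_{i+1}}(2,1)$. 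The deterministic profile $\hat l_i$ is only a skeleton that the stopping times track to within $O(\sqrt{k_0/l_i})$. Relatedly, the drift $(v-1)\log n/6$ does not come from $\sum_k\log|\det A_k|$ (that sum is $O(1)$ since $|\det A_{k_0+l}|=1-c_{k_0+l}$); it arises from three pieces isolated in Proposition~\ref{increm}: the change-of-basis drift $\Delta_l$ of Lemma~\ref{decompbaseQ} gives $-\Delta l_i/(4l_i)$, the term $\veps_i^2/2$ gives $(3v/8)\Delta l_{i-1}/l_{i-1}$ in expectation, and $-\tfrac12 G_{l_i,l_{i+1}}(1,1)^2$ gives $-(v/8)\Delta l_i/l_i$, for a net $(v-1)/4$ per $\Delta l_i/l_i$.
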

The proofs of Propositions \ref{prop-scalar} and \ref{osci} are
presented in Sections \ref{sec-scalar} and 
\ref{sec-osci}. Based on them, we can bring the proof of Theorem \ref{theo-main}.
\begin{proof}[Proof of Theorem \ref{theo-main}]
  Combining the propositions gives that 
  $ (\log |\psi_n|)/{\sqrt{v\log n/2}}$
  converges to a standard Gaussian random variable. Combined
  with \eqref{eq-Dn}, this yields the theorem.
 \end{proof}
 \begin{remark}
   \label{rem-asym}
   We provide here an explicit expansion for $C_n(z)$.
 Stirling's formula 
  gives that $\sqrt{n!/n^n}=(2\pi n)^{1/4} e^{-n/2}(1+o(1))$.  
Further,
\begin{eqnarray*}
  &&\sum_{k=1}^{k_0-\ell_0} \log \alpha_k=
  \sum_{k=1}^{k_0-\ell_0} \log\Big(\sqrt{\frac{k_0}{k}}+\sqrt{\frac{k_0}{k}-1+O(k^{-1})}\Big)\\
  &=& \frac12 \sum_{k=1}^{k_0-\ell_0} \log(k_0/k)+
  \sum_{k=1}^{k_0-\ell_0} \log(1+\sqrt{1-k/k_0})+O(1)=:I_1+I_2+O(1).
\end{eqnarray*}
Rearanging, we obtain that
\[I_1=\frac{k_0-\ell_0}{2}\log k_0-\frac12(k_0-\ell_0)\log(k_0-\ell_0)+
\frac12(k_0-\ell_0)-\frac{\log n}{4}+O(1)=\frac{k_0}{2}-\frac{\log n}{4}+O(1).\]
On the other hand, we have that
\[ I_2=k_0 \int_{1/k_0}^{1-\ell_0/k_0}\log(1+\sqrt{1-x}) dx+O(1).\]
For $x<1$, with $u(x)=1+\sqrt{1-x}$, we have that 
\[\int^x \log(1+\sqrt{1-y}) dy=-u(x)^2 \log u(x)+u(x)^2/2+2u(x)\log u(x)-2\sqrt{1-x},\]
and therefore we obtain that  $I_2=k_0/2+O(1)$.
Altogether,
we obtain that $\log C_n(z)=n(z^2/4-1/2)+O(1)$,
which coincides as expected to leading order with the
logarithmic potential of the semi-circle.
\end{remark}

  \section{The transition and oscillatory regimes - proof of Proposition \ref{osci}} 
  \label{sec-osci}
This section is devoted to the proof of
Proposition \ref{osci}. 
\subsection{Crossing the $k_0$ barrier - the transition regime}
\label{sec-trans} In this section, we will show that with probability going to $1$ as $\kappa$ goes to $+\infty$, the contribution of the transition regime to
the norm of $X_k$ is of order $1$, that is $\|X_{k_0+l_0}\| \asymp \|X_{k_0-l_0}\|$ up to multiplicative constants which depend on $\kappa$. As a consequence, we will have proven that $\log \|X_{k_0+l_0}\| = \log \|X_{k_0-l_0}\| +O_\kappa(1)$, with probability going to $1$ when $\kappa \to+\infty$. 
 
In a first step, we will prove that the noise accumulated in the block 
$[k_0-l_0,k_0+l_0]$ is at most of order $1$ in the direction $(1,1)$,
whereas in the orthogonal
direction the contribution is
much smaller, as described in the following proposition.
Throughout this section, we set
  $u=(1,1)$ and $v=(0,1)$, without
further mentioning it. 
Recall \eqref{eq-Xdelta} and the notation \eqref{eq-zkk0}.
 \begin{proposition}\label{firstblock}  Fix
 $\kappa \geq 1$. The
     solution $X_{k_0+l_0}$ of \eqref{eq-Xk}
     satisfies, for all $n$ large enough,
$$ \frac{X_{k_0+l_0} }{\psi_{k_0-l_0}}=x_{l_0}u + y_{l_0}v,$$
where, for an absolute constant $C$,
$$  \EE_{\mathcal{F}_{k_0-l_0} }  (x_{l_0}^2) \lesssim  e^{C{\kappa^3}}(1+ | l_0\delta_{k_0-l_0}|)^2, \  \EE_{\mathcal{F}_{k_0-l_0} } (y_{l_0}^2) \lesssim  e^{C{\kappa^3}} k_0^{-\frac{2}{3}}(1+|l_0\delta_{k_0-l_0}|)^2.$$
\end{proposition}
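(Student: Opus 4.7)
Work in the basis $P = (u \mid v)$ with $u = (1,1)^T$, $v = (0,1)^T$, in which $u$ is approximately an eigenvector of $A_k$ near $k_0$. Writing $X_{k_0+j}/\psi_{k_0-l_0} = x_j u + y_j v$, \eqref{eq-Xdelta} gives the initial conditions $x_{-l_0}=1$, $y_{-l_0} = -\delta_{k_0-l_0}/(1+\delta_{k_0-l_0})$. Since $\alpha_k = 1$ throughout the window $|k-k_0| \leq l_0$ by \eqref{eq-alpha}, a direct computation yields
\[ \widetilde{A}_k := P^{-1} A_k P = J + R_k, \qquad J := \begin{pmatrix} 1 & -1 \\ 0 & 1\end{pmatrix}, \]
with $\|R_k\|_\infty \lesssim |k-k_0|/k_0 + 1/k \lesssim \kappa k_0^{-2/3}$; in particular the $(2,1)$-entry of $R_k$ equals $2 - z_k - c_k = O(|k-k_0|/k_0)$. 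Moreover, since $W_k$ has only first-row entries, $W_k X = (\,\cdot\,)(1,0)^T = (\,\cdot\,)(u-v)$ in the new basis, so $P^{-1} W_k X_{k-1} = s_k(u-v)$ with $\EE_{\mathcal{F}_{k-1}} s_k^2 \lesssim \|X_{k-1}/\psi_{k_0-l_0}\|^2/k_0$.

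The driving observation is that $J$ is unipotent: $J^j = I + jN$ with $N = \begin{pmatrix} 0 & -1 \\ 0 & 0\end{pmatrix}$ nilpotent, so $J^j u = u$, $J^j v = v - ju$, and $J^j(u-v) = (j+1)u - v$; iterates grow only polynomially, and the $u$-direction is $J$-invariant. Expanding the deterministic propagator $T_j := \prod_{i=1}^j \widetilde{A}_{k_0-l_0+i}$ as a sum over words in $\{J, R_\cdot\}$ and bounding each configuration with $m$ factors of $R$ by $\prod_a(j_a+1)\prod_l \|R_{i_l}\|_\infty \lesssim (2l_0)^{m+1}(\kappa k_0^{-2/3})^m$, summation gives $\|T_j\| \lesssim l_0 \sum_m (l_0^2\|R\|_\infty)^m/m! \lesssim l_0\, e^{C\kappa^3}$ since $l_0^2 \|R\|_\infty \sim \kappa^3$. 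A refinement exploiting $J^m u = u$ and the specific entry structure of $R_k$ sharpens this to
\[ |(T_j u)_u|, \, |(T_j v)_v| \lesssim e^{C\kappa^3}, \quad |(T_j v)_u| \lesssim j\, e^{C\kappa^3}, \quad |(T_j u)_v| \lesssim k_0^{-1/3} e^{C\kappa^3}. \]
Applied to the initial data, the deterministic part then satisfies $|x_{l_0}^{\mathrm{det}}| \lesssim e^{C\kappa^3}(1+l_0|\delta_{k_0-l_0}|)$ and $|y_{l_0}^{\mathrm{det}}| \lesssim e^{C\kappa^3}(k_0^{-1/3}+|\delta_{k_0-l_0}|)$, matching the proposition after squaring.

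For the stochastic contribution, write
\[ (x_{l_0}, y_{l_0})^T = T_{2l_0}(x_{-l_0},y_{-l_0})^T + \sum_{k=k_0-l_0+1}^{k_0+l_0} s_k\, T^{(k)}(u-v),\]
with $T^{(k)}$ the deterministic propagator from step $k$ to $k_0+l_0$; the sum is a discrete-time martingale adapted to $(\mathcal{F}_k)$. A conditional second-moment estimate, combined with a bootstrap $\|X_{k-1}/\psi_{k_0-l_0}\|^2 \lesssim e^{C\kappa^3}(1+|l_0\delta_{k_0-l_0}|)^2$ obtained by iterating the same argument on sub-windows, yields the $u$-component variance bounded by $e^{C\kappa^3}(1+|l_0\delta_{k_0-l_0}|)^2 \sum_k (l_0-|k-k_0|+1)^2/k_0 \lesssim e^{C\kappa^3}(1+|l_0\delta_{k_0-l_0}|)^2 \cdot l_0^3/k_0 \lesssim e^{C\kappa^3}(1+|l_0\delta_{k_0-l_0}|)^2$, while the $v$-component receives no polynomial amplification (since $|T^{(k)}(u-v)|_v = O(e^{C\kappa^3})$), producing the extra $k_0^{-2/3}$ saving.

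\textbf{Main obstacle.} The combinatorial expansion of $\prod_k(J+R_k)$ is the delicate point: a naive step-by-step bound on $\|\widetilde{A}_k\|$ yields spurious exponential growth $e^{cl_0}$, so the expansion must be organized to exploit simultaneously (i) the polynomial (rather than exponential) amplification of $J^m$, (ii) the invariance $J^m u = u$, and (iii) the specific $(2,1)$-entry structure of $R_k$ that controls $u \to v$ mixing. Keeping these three features separate in the bookkeeping is exactly the tedious combinatorial analysis announced in Section \ref{sec-structure}.
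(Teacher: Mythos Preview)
Your plan is essentially the paper's, reorganized in the conjugated basis. The paper writes $A_j=A+B_j$ with $A=\left(\begin{smallmatrix}2&-1\\1&0\end{smallmatrix}\right)$ and uses $A^lu=u$, $A^lv=-lu+v$ directly (your $J$ is precisely $P^{-1}AP$), then expands $\prod_j(A+B_j+W_j)$ combinatorially via the identity $A^{j'-j-1}(B_j+W_j)=(j'-j)S_j+R_j$, which splits each insertion into a rank-one piece along $u$ and a remainder with zero first row; this is exactly the ``refinement exploiting $J^mu=u$ and the $(2,1)$-entry structure'' you announce.

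The one genuine organizational difference is the treatment of the noise: the paper folds $W_k$ into the same combinatorial expansion as $B_k$ and uses independence only when computing variances of the resulting polynomial chaoses, whereas you separate the noise by the exact Duhamel formula $X_{l_0}=T_{2l_0}X_{-l_0}+\sum_k T^{(k)}\widetilde W_k X_{k-1}$ and then need a Gronwall/bootstrap step to control the intermediate $\EE\|X_{k-1}\|^2$. Both routes are correct and yield the same $e^{C\kappa^3}$; the paper's single expansion avoids the bootstrap at the price of a heavier double-sum variance computation (the indices $a,b,c,d,e,f,g,h$ in the proof of Lemma~\ref{block}), while your Duhamel decomposition is cleaner conceptually but requires you to actually carry out the induction on sub-windows rather than just invoke it.
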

In a second step we will show that the norm  $\| X_{k_0+l_0}\|$ cannot decrease too much from $\|X_{k_0-l_0}\|$ as described in the following proposition.
\begin{proposition}\label{tightlower} For $\kappa\geq 1$ there exists $C_\kappa>0$ so that
$$ \PP_{\mathcal{F}_{k_0-l_0}}\Big( \|X_{k_0+l_0}\|\leq C_\kappa^{-1}\|X_{k_0-l_0}\|
\Big) \leq \kappa^{-1}.$$
\end{proposition}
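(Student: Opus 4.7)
My plan is to combine a determinantal identity for the transfer-matrix product $T_{l_0}=\prod_{k=k_0-l_0+1}^{k_0+l_0}(A_k+W_k)$ with a Wronskian / Hadamard-type inequality. The underlying principle is that although the operator norm of $T_{l_0}$ may grow polynomially in $k_0$, its determinant concentrates tightly around $1$; an upper bound on a carefully chosen companion solution then converts this into the desired lower bound on $\|X_{k_0+l_0}\|$.

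First I would establish that $|\det T_{l_0}|\ge 1/2$ outside a conditional event of probability at most $1/(3\kappa)$. Since $\alpha_k=\alpha_{k-1}=1$ throughout the range $k\ge k_0-l_0$, we have the explicit formula $\det(A_k+W_k)=1-c_k+g_k/\sqrt{k}$, so that $\det T_{l_0}$ is a product of $2l_0=O(\kappa k_0^{1/3})$ near-unit factors with means $1+O(1/k_0)$ and variances $O(1/k_0)$. A Taylor expansion together with a second-moment estimate gives $\log|\det T_{l_0}|=O_p(\sqrt{\kappa}/k_0^{1/3})=o_p(1)$, from which the claim follows.

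Next I would introduce a companion solution $\tilde X_k$ of the same recursion $\tilde X_k=(A_k+W_k)\tilde X_{k-1}$, starting at $k_0-l_0$ from a unit vector orthogonal to $X_{k_0-l_0}/\|X_{k_0-l_0}\|$. Because both $X_k$ and $\tilde X_k$ are propagated by identical transfer matrices, the $2\times 2$ Wronskian $\det(X_k,\tilde X_k)$ evolves multiplicatively by $\det(A_k+W_k)$, so
\[
|\det(X_{k_0+l_0},\tilde X_{k_0+l_0})|\;=\;|\det T_{l_0}|\cdot\|X_{k_0-l_0}\|,
\]
and Hadamard's inequality yields
\[
\|X_{k_0+l_0}\|\;\ge\;\frac{|\det T_{l_0}|\cdot|\psi_{k_0-l_0}|}{\|\tilde X_{k_0+l_0}\|}.
\]
It therefore suffices to bound $\|\tilde X_{k_0+l_0}\|$ by a constant depending only on $\kappa$.

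The hard part will be this last step. A direct substitution into Proposition \ref{firstblock} fails: the starting direction of $\tilde X$ has ``$\tilde\delta$'' parameter of order $1$ rather than $1/l_0$, producing the useless bound $(1+l_0)^2\asymp\kappa^2 k_0^{2/3}$. The crucial observation is that on the scalar half $k\in[k_0-l_0,k_0]$ the unstable eigenvalue of $A_k$ is $1+O(k_0^{-1/3})$ per step, so the total amplification there is at most $\exp(O(\kappa^{3/2}))$, while on the oscillatory half both eigenvalues of $A_k$ have unit modulus and no further amplification occurs on average. A careful reprise of the linearization arguments underlying Proposition \ref{firstblock}, tracking the two eigen-directions of $A_k$ separately along the companion trajectory, should then deliver $\EE_{\mathcal{F}_{k_0-l_0}}(\|\tilde X_{k_0+l_0}\|^2)\le K_\kappa$ for some $K_\kappa$ depending only on $\kappa$. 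Combined with Markov's inequality and a union bound over the exceptional events, this gives the proposition with $C_\kappa$ of order $\sqrt{\kappa K_\kappa}$.
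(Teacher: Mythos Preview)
Your Wronskian approach runs into a genuine obstruction at the last step: the bound $\EE_{\mathcal{F}_{k_0-l_0}}(\|\tilde X_{k_0+l_0}\|^2)\le K_\kappa$ is simply false, and no amount of eigen-direction tracking will repair it.

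The issue is that near $k_0$ the matrices $A_k$ are perturbations of the Jordan block $A=\bigl(\begin{smallmatrix}2&-1\\1&0\end{smallmatrix}\bigr)$, for which $A^m u=u$ but $A^m v=-mu+v$. Since $X_{k_0-l_0}/|\psi_{k_0-l_0}|$ is, on the good event, within $O(k_0^{-1/3})$ of the direction $u=(1,1)$, your orthogonal companion $\tilde X_{k_0-l_0}$ is within $O(k_0^{-1/3})$ of $(1,-1)/\sqrt{2}=u/\sqrt{2}-\sqrt{2}\,v$, i.e.\ it carries an order-one $v$-component. The decomposition \eqref{decompoxy} in the proof of Lemma~\ref{block} then gives $\tilde x_m=\tilde x-m\tilde y+\text{(lower order)}$, so already the deterministic part of $\tilde X_{k_0+l_0}$ has $u$-coordinate of size $\sim l_0=\kappa k_0^{1/3}$. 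Your eigenvalue heuristic misses this because the two eigenvalues of $A_k$ (which are $1\pm O(\sqrt{\kappa}\,k_0^{-1/3})$) are nearly coincident, so the eigenvector matrix has condition number $\sim k_0^{1/3}/\sqrt{\kappa}$; decomposing $(1,-1)$ in the eigenbasis already produces coefficients of that size. Consequently the Hadamard step only yields $\|X_{k_0+l_0}\|\gtrsim |\psi_{k_0-l_0}|/k_0^{1/3}$, which is far too weak. Equivalently, $T_{l_0}$ has operator norm $\asymp k_0^{1/3}$ and smallest singular value $\asymp k_0^{-1/3}$; the content of Proposition~\ref{tightlower} is precisely that the \emph{random} small-singular direction of $T_{l_0}$ is unlikely to align with the \emph{fixed} vector $X_{k_0-l_0}$, and this anti-concentration information is exactly what the determinant/Hadamard argument discards.

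The paper's proof exploits this directly: it partitions $[k_0-l_0,k_0+l_0]$ into $N\asymp\kappa/\varepsilon$ sub-blocks of length $\varepsilon k_0^{1/3}$, and on each sub-block isolates the first-chaos contribution to the $u$-coordinate (a sum of $\sim\varepsilon k_0^{1/3}$ independent variables with total variance $\asymp\varepsilon^3\max(|x|,|my|)^2$). The L\'evy--Kolmogorov--Rogozin inequality (Lemma~\ref{anticonc}) then gives, for each sub-block, $\PP(|x_m|\le\varepsilon^{5/2+\delta}\max(|x|,|my|))\lesssim\varepsilon^{1+\delta}$; chaining over the $N$ sub-blocks and choosing $\varepsilon$ in terms of $\kappa$ yields the proposition.
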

The proofs of Propositions \ref{firstblock} and \ref{tightlower} are elementary but rather long and technical, and we advise the reader to skip them at first reading, and proceed directly to Section \ref{subsec-change}.
\begin{proof}[Proof of Proposition \ref{firstblock}]
We start by showing a more general result, providing estimates of the variances and expectations accumulated on a sub-block of length $m$ nested
in a interval of length $l_0$ around $k_0$. Recall
  the definition of the matrices $A_k$ and $W_k$ \eqref{def-Ak}.
\begin{lemma}\label{block} Let $\kappa \geq 2$. Let $k_0/2\geq l_0\geq m\geq 1$.
  For any $k_0- l_0\leq k \leq k+m \leq k_0+l_0$ and $x,y\in \RR$,
  \begin{equation}
    \label{eq-xmym}
    \prod_{j=k+1}^{k+m} \big( A_j + W_j\big) (x u+y v)=:x_mu+y_m v
  \end{equation}
where, for an absolute constant $C$,
\begin{equation}
  \label{eq-xm}
  |\EE x_m| \leq Ce^{\frac{Cm^2 l_0}{k_0}}\Big( 1+ \frac{ml_0}{k_0}\Big)(|x|+m|y|), \quad \Var (x_m) \leq C\Big(e^{\frac{m^3}{k_0}}-1 + \frac{l_0}{k_0}\Big)e^{\frac{Cm^2 l_0}{k_0}}(|x|+m|y|)^2,
\end{equation}
and
\begin{equation}
  \label{eq-ym}
  |\EE y_m - y| \leq \frac{Cml_0}{k_0}e^{\frac{Cm^2l_0}{k_0}}(|x|+m|y|), \   \Var(y_m ) \leq \frac{Cl_0}{k_0} e^{\frac{Cm^2l_0}{k_0}}(|x|+ m|y|)^2.\end{equation}
\end{lemma}
\begin{proof}
Recall that  $\alpha_j=1$ for $j\geq k_0-l_0$ (see \eqref{eq-alpha}). Therefore for any $j\geq k_0-l_0+1$, we have that $ A_ j = A + B_j$,
where
$$ A = \left(\begin{array}{cc}
2 & -1 \\
1 & 0
\end{array}\right), \quad B_j = \left(\begin{array}{cc}
z_j-2 & c_j \\
0 & 0
\end{array}\right) \text{ and } \ W_j = \begin{pmatrix} -\frac{b_j}{\sqrt{j}} & -\frac{g_j}{\sqrt{j}} \\ 0 & 0 \end{pmatrix}.$$
Expanding the product $\prod_{j=k+1}^{k+m} (A+B_j+W_j)$, we get
$$ \prod_{j=k+1}^{k+m} (A+B_k+W_k) = A^{m} + \sum_{p=1}^{m} \sum_{ k+1  \leq k_1<\ldots < k_p \leq k+ m} \Big(\prod_{i=1}^{p} A^{k_{i+1}-k_{i}-1}( B_{k_i}+W_{k_i})\Big) A^{k_1-k-1},$$
where in the inner sum, $k_{p+1}= k+m$. From the observation that for any $l\geq 1$,
$$ A^l =   \left( \begin{array}{cc}
l+1 & -l\\
l & -(l-1)
\end{array}\right),$$
we obtain that for any $j<j'$,
$$ A^{j'-j-1}(B_{j} +W_{j}) =  (j'-j)S_{j} + R_{j},$$
where
$$ S_{j} = \left(\begin{array}{cc}
z_{j}-2  - b_{j}/\sqrt{j} & c_j - g_{j}/\sqrt{j} \\
z_{j}-2- b_{j}/\sqrt{j}& c_j - g_{j}/\sqrt{j}
\end{array}\right), \quad {R_j = \left(\begin{array}{cc}
0 & 0 \\
-(z_{j}-2)+ b_{j}/\sqrt{j}& -c_j + g_{j}/\sqrt{j}
\end{array}\right)}.$$
Recall that $z_j= z\sqrt{n/j}$, see \eqref{eq-zkk0}. 
Thus, for any $|k_0-j|\leq k_0/2$, we have that
$|z_j -2| = O(|k_0-j|/k_0)$. Thus,  uniformly in $j\in\{k_0-l_0,\ldots, k_0+l_0\}$, we obtain 
that
$\| \EE R_{j}\| = O(l_0/k_0)$ and $\|\mathrm{Var}(R_{j})\| = O(1/k_0)$, 
where $\mathrm{Var}(R_j)$ denotes the  matrix whose entries are the variances of the corresponding entries of $R_j$.
Thus we can write
\begin{align} \prod_{j=k+1}^{k+m} (A+B_j+W_j) &= A^{m} + \sum_{p=1}^{m}   \sum_{ k+1 \leq k_1<\ldots < k_p \leq k+ m}\Big(\prod_{i=1}^{p} \big((k_{i+1}-k_i) S_{k_i} + R_{k_i}\big) \Big) A^{k_1-k-1} \nonumber\\
& =  A^{m} + C_m+T_m,\label{decompoprod}
\end{align}
where
\begin{equation}
  \label{def-Cm}
 C_m = \sum_{p=1}^{m} \sum_{ k+1 \leq k_1<\ldots < k_p \leq k+ m}\prod_{i=1}^p (k_{i+1}-k_i)S_{k_i} A^{k_1-k-1},
 \end{equation}
with $k_{p+1} = k+m$ in the inner sum, and
\begin{align} T_m & = \sum_{ k+1 \leq k_1<\ldots < k_p \leq k+ m \atop 1 \leq p\leq m}\sum_{1\leq i_1<\ldots <i_q\leq p \atop 1\leq q \leq p} \Big(\prod_{r=1}^q\Big(\prod_{i=i_{r}+1}^{i_{r+1}-1}(k_{i+1}- k_{i}) S_{k_i}\Big) R_{k_{i_r} }\Big) A^{k_1-k-1}, \label{defDeltabis}
\end{align}
with $i_{q+1}=k+m$ in the inner sum.  Note that 
\[ {x_m = \langle e_1,  \prod_{j=k+1}^{k+m} \big( A_j + W_j\big) (xu+yv)\rangle, \ y_m = \langle e_2-e_1,  \prod_{j=k+1}^{k+m} \big( A_j + W_j\big) (xu+yv)\rangle}.\]
Since $A^m u=u$ and $A^mv=-mu +v$, we have
\begin{align}
  x_m &= x-my +\langle e_1, C_m (x u+y v)\rangle + \langle e_1,T_m(x u+y v)\rangle,\nonumber \\
y_m &=  y +  \langle e_2- e_1,T_m(x u+y v)\rangle,\label{decompoxy}
\end{align}
where we used the fact that $\mathrm{Ran}(C_m) \subset \{e_2-e_1\}^\perp$.
We start by  computing the expectations of $T_m u$ and $T_m v$.
Note that for any $l\geq 0$, $A^lu=u$, for any $i$, $\|\EE S_{k_i}\| = O(l_0/k_0)$ and $\|\EE R_{k_i}\| = O(l_0/k_0)$. 
Using the fact that the Hilbert-Schmidt norm is sub-multiplicative, we obtain
that
\[
  \|\EE T_m u \| \leq \!\!\!\!\!\sum_{ I\subset \{k+1,\ldots,k+m\}\atop |I|\geq 1} \sum_{J\subset I \atop |J|\geq 1}c^{|I|} \Big( \frac{ml_0}{k_0}\Big)^{|I\setminus J|} \Big( \frac{l_0}{k_0}\Big)^{|J|}
= {\!\sum_{j=1}^{m} \binom{m}{j} c^j \sum_{j'=1}^j\binom{j}{j'}m^{-j'} \Big(\frac{ml_0}{k_0}\Big)^{j}}\!, \]
where $c$ is an absolute constant. 
As $\sum_{j'=1}^j \binom{j}{j'} m^{-j'}=O(j/m)$ for $j\leq m$, and $\binom{m}{j} \leq m^j/j!$ we deduce that
$$\|\EE T_m u \| \lesssim \frac{1}{m}\sum_{j=1}^{m} \frac{1}{(j-1)!}\Big( \frac{c m^2l_0}{k_0}\Big)^{j}\lesssim \frac{ml_0}{k_0}e^{\frac{cm^2l_0}{k_0}}.$$
Using that $\| A^l\|\lesssim l$ for any $l\geq 1$, we deduce similarly that
$$\|\EE T_m v \| \lesssim \frac{m^2l_0}{k_0}e^{\frac{cm^2l_0}{k_0}},$$
and we  conclude that
\begin{equation} \label{Tmexpct}\| \EE T_m(xu+yv) \| \lesssim \frac{ml_0}{k_0}e^{\frac{cm^2l_0}{k_0}}\big( |x|+|my|\big).\end{equation}
To compute the variance of $T_m u$ and $T_m v$, we define for any $l$, $\hat S_l = S_l -\EE S_l$ and $\hat R_l = R_l - \EE R_l$. Expanding the products, we obtain
$$T_m u = \sum_{ I\subset \{k+1,\ldots,k+ m\}\atop |I| \geq 1}\sum_{ J,K\subset I  \atop |J| \geq 1} M_{I,J,K}u,$$
where
$$
M_{I,J,K} = \prod_{i\in I\setminus J} (k_{i+1}-k_i) \prod_{l\in I} M_l,\;
\mbox{\rm with}\;
M_l = \begin{cases}
 \hat S_l & \text{ if } l\in K \setminus  J, \\
\EE S_l & \text{ if } l\in I\setminus  (K \cup  J),\\
\hat R_l& \text{ if } l\in K\cap J,\\
\EE R_l & \text{ if } l\in J\setminus  K
\end{cases}.$$
Since $(S_l, R_l)_{l\geq 1}$ are independent, we get
$$\EE \| (T_m - \EE T_m)u \|^2 \leq \EE 
\sum_{ J\subset I ,J' \subset I' \atop |J|,|J'| \geq 1}\sum_{ K\subset I\cap I'  \atop |K| \geq 1} \tr \big(M_{I,J,K}^TM_{I',J',K}\big).$$
As  $\EE \| \hat S_{l} \|^2  =O(1/k_0)$, $\EE \| \hat R_l \|^2 = O(1/k_0)$,
$\| \EE R_l \| = O(l_0/k_0)$ and $\| \EE S_{l} \| = O(l_0/k_0)$,  uniformly in $l\in I$,  we deduce that
\begin{align*}
\EE \tr \big(M_{I,J,K}^TM_{I',J',K}\big) \leq & C^{|I|+|I'|} m^{|I\setminus J|+|I'\setminus J'|}  \Big( \frac{1}{k_0}\Big)^{| K| }  \Big( \frac{l_0}{k_0}\Big)^{|I\setminus K|  +|I'\setminus K| }.
\end{align*}
Using the fact that $| J\cap K| +|J'\cap K| =
| (K\cap J)\Delta (K\cap J')| +2|K\cap J \cap J'|$, we can write
\begin{align*}&\EE \tr \big(M_{I,J,K}^TM_{I',J',K}\big)\\&\leq  \Big( \frac{Cml_0}{k_0}\Big)^{|I| +|I'| } \Big( \frac{l_0^2}{k_0}\Big)^{- |K|}  \Big( \frac{1}{m^2}\Big)^{| K\cap J\cap  J'|} \Big( \frac{1}{m}\Big)^{|
    (K\cap J) \Delta (K\cap J')| }\Big( \frac{1}{m}\Big)^{| J\setminus  K  | + | J'\setminus  K | }.
\end{align*}
Denoting
$$a =| I \setminus I'|, \ b=|I'\setminus I|, \ c=|I\cap I'|, \ d =|K|, \
e = |( K\cap J)\setminus (K\cap J')|,$$
$$  f = | (K\cap J')\setminus (K\cap J)|, \ g =|K\cap J\cap J'|, \ h=|J\setminus K| + |J' \setminus K|,$$
we deduce that since $|J|, |J'| , |K|\geq 1$, we have that
\begin{equation}\label{const} h\geq 2 \text{ or } \Big(h=1 \text{ and } e+f+g \geq 1\Big) \text{ or } \Big(h=0 \text{ and } e+f+2g \geq 2\Big).\end{equation}
Observe that for choosing $J\setminus K$ and $J'\setminus K'$ in  respectively $I\setminus K$ and $I' \setminus K$, we have 
\[ \sum_{h_1+h_2 =h }\binom{a+c-d}{h_1}\binom{b+c-d}{h_2} 
= \binom{a+b+2(c-d)}{h} \quad \mbox{\rm choices}.\]
Therefore,
\begin{equation}
  \label{eq-sat1}
  \EE \| (T_m - \EE T_m)u \|^2\leq \sum_{a,b,c,d,e,f,g,h }  \binom{m}{a,b,c} \binom{c}{d}\binom{d}{e,f,g} \binom{a+b+2(c-d)}{h}  Y_{a,b,c,d,e,f,g,h},
\end{equation}
where $a,b,c,d,e,f,g,h \leq m$ satisfy \eqref{const}, and
\begin{equation} \label{defY} Y_{a,b,c,d,e,f,g,h}=
\Big( \frac{Cm l_0}{k_0}\Big)^{a+b+2c}\Big( \frac{l_0^2}{k_0}\Big)^{-d}
\Big( \frac{1}{m^2}\Big)^{g}  \Big( \frac{1}{m}\Big)^{e+f} \Big( \frac{1}{m}\Big)^{h}.\end{equation}
Using that $d\leq c$, we obtain that 
\begin{equation}
  \label{eq-sat2}
   \EE \| (T_m - \EE T_m)u \|^2\leq \sum_{a,b,c,d,e,f,g,h }  \binom{m}{a,b,c} \binom{c}{d}\binom{c}{e,f,g} \binom{a+b+2c}{h}  Y_{a,b,c,d,e,f,g,h}.
\end{equation}
In the expression \eqref{defY}, we split $(Cml_0/k_0)^{a+b+2c}$ into $(Cml_0/k_0)^{a+b+c}(Cml_0/k_0)^c$. Summing over $d$, and using that $c\leq m \leq l_0$, we get
$$ \sum_{ d\leq c} \binom{c}{d}\Big( \frac{Cml_0}{k_0}\Big)^c
\cdot \Big( \frac{l_0^2}{k_0}\Big)^{-d}\leq\sum_{ d\leq c} \binom{c}{d}
\Big( \frac{Cml_0}{k_0}\Big)^{c-d} \leq e^{\frac{Cm^2l_0}{k_0}},$$
which gives
\begin{align*}
&  \EE \| (T_m - \EE T_m)u \|^2 \leq  e^{\frac{Cm^2l_0}{k_0}}\\
& \times \sum_{a,b,c,e,f,g,h }  \binom{m}{a,b,c} \binom{c}{e,f,g} \binom{a+b+2c}{h} \Big( \frac{Cml_0}{k_0}\Big)^{a+b+c}  \Big( \frac{1}{m}\Big)^{e+f} \Big( \frac{1}{m^2}\Big)^{g} \Big( \frac{1}{m}\Big)^{h}.
\end{align*}
We split the sum  on the right-hand side above into the sums $\Sigma_i$,
$i=0,1,2$, over indices such that $h=i$ if $i=0,1$ and $h\geq 2$ if $i=2$.
 For any $i=0,1,2$, we have
$$
\sum_{ h\geq i}\binom{a+b+2c}{h}\Big( \frac{1}{m}\Big)^{h}  \leq \sum_{h\geq i} \frac{1}{h!} \Big( \frac{a+b+2c}{m} \Big)^h \leq \Big(\frac{a+b+2c}{m}\Big)^i e^{(a+b+2c)/m}\lesssim \Big( \frac{a+b+c}{m}\Big)^i,$$
where we used that $a+b+c\leq m$. Similarly, for any $i=0,1,2$,
$$\sum_{ e+f+2g\geq 2-i} \binom{c}{e,f,g} \Big( \frac{1}{m}\Big)^{e+f+2g}\lesssim \Big( \frac{c}{m}\Big)^{2-i}.$$
 Thus, taking advantage of \eqref{const}, we obtain that for any $i=0,1,2$,
 $$ \Sigma_i \lesssim  \frac{1}{m^2}\sum_{a+b+c\geq 1 }  \binom{m}{a,b,c} (a+b+c)^2\Big( \frac{Cml_0}{k_0}\Big)^{a+b+c}.$$
Using that $\sum_{a+b+c=k} \binom{m}{a,b,c} = 3^k \binom{m}{k}$ and $a+b+c\leq m$, we get 
 $$\Sigma_i \lesssim \frac{1}{m^2} \sum_{k=1}^m \binom{m}{k} k^2 \Big( \frac{3Cml_0}{k_0}\Big)^k\leq \frac{1}{m^2} \sum_{k=1}^m \frac{k^2}{k!} \Big( \frac{3C m^2l_0}{k_0}\Big)^k.$$
Dividing the last sum into the case where $k=1$ and $k\geq 2$, and using the fact that $m l_0 \leq k_0$, we get
for any $i =0,1,2$, $\Sigma_i \lesssim (l_0/k_0)\exp(\frac{3Cm^2 l_0}{k_0})$, which leads to
$$\label{varu:1}\EE \| (T_m - \EE T_m)u \|^2\lesssim  \frac{l_0}{k_0} e^{ \frac{3Cm^2l_0}{k_0}},$$
and similarly
$$ \label{varDelta}\EE \|( T_m - \EE T_m)v \|^2 \lesssim  \frac{l_0m^2}{k_0} e^{\frac{3Cm^2l_0}{k_0}}.$$
We  conclude that
\begin{equation}\label{varTm} \EE \| (T_m - \EE T_m)(xu+yv) \|^2 \lesssim \frac{l_0}{k_0}e^{\frac{3Cm^2 l_0}{k_0}}\big( |x|^2+ |my|^2\big).\end{equation}
Using \eqref{Tmexpct} and \eqref{varTm}, we get the claimed estimates \eqref{eq-ym}.

In view of \eqref{decompoprod} and
\eqref{decompoxy}, it remains to estimate the term
$C_m$, see \eqref{def-Cm}.
As $A^l u = u$ for any $l\geq 0$, we have that
$$ C_m u = \sum_{p=1}^{m} \sum_{ k+1 \leq k_1<\ldots < k_p \leq k+ m}\prod_{i=1}^p (k_{i+1}-k_{i})\Big( z_{k_i}-2 - \frac{b_{k_i}}{\sqrt{k_i}} +c_j - \frac{g_{k_i}}{\sqrt{k_i}}\Big) u$$
with $k_{p+1} = k+m$ in the inner sum. 
 Since $|z_j-2| = O(l_0/k_0)$, see \eqref{eq-zkk0},
 and $c_j = O(1/k_0)$, we get that
\begin{align}
|\EE\langle e_1,C_m u\rangle| & \leq \sum_{p=1}^{m} \sum_{ k+1 \leq k_1<\ldots < k_p \leq k+m}\Big( \frac{cm l_0}{k_0}\Big)^p \leq \Big(e^{\frac{cm^2l_0}{k_0}}-1\Big), \label{expectu2}
\end{align}
where $c$ is an absolute constant, and similarly,
$$|\EE \langle e_1,C_m v\rangle|  \lesssim \Big(e^{cm^2l_0/k_0}-1\Big)m,$$
which yields that
\begin{equation} \label{expectCm}| \EE \langle e_1, C_m(x u+yv)\rangle | \lesssim \Big(e^{cm^2l_0/k_0}-1\Big)\big(|x|+|my|\big).\end{equation}
For the purpose of proving later Proposition \ref{tightlower}, we will make a finer variance computation, and estimate the variance of the component of $\langle e_1, C_mu \rangle$ on chaoses of degree greater or equal than a given
$s\in\NN$.  For any such $s\geq 1$,
 let
 \begin{equation}
   \label{chaos}
   u_m^{(s)} = \sum_{ J \subset \{ k+1,\ldots,k+ m\}\atop |J| \geq s}(-1)^{|J|} \prod_{j\in J} \frac{b_j+g_j}{\sqrt{j}} \sum_{K \subset J^c} \prod_{j\in K} \Big( c_j+z_j-2 \Big) \prod_{i=1}^p (k_{i+1}-k_{i}),
 \end{equation}
 where $\{ k_1 < \ldots <k_p\} = K \cup J$ and $k_{p+1}=k+m$. In particular,
 \begin{equation}\label{chaos1}\langle e_1, C_m u\rangle -\EE  \langle e_1, C_m u\rangle= u_m^{(1)}.
\end{equation}
Since $c_j +z_j -2 = O(l_0/k_0)$, we have that
\begin{align*}
\Var  (u_m^{(s)}) &  \leq  \sum_{ J \subset \{ k+1,\ldots,k+ m\}\atop |J| \geq s} \Big(\frac{c}{k_0}\Big)^{|J|}\Big( \sum_{K \subset J^c}  \Big( \frac{cl_0}{k_0}\Big)^{|K|} m^{|K|+|J|}\Big)^2,
\end{align*}
where $c$ is an absolute  constant.
But, as $|J^c| \leq m$, we have that
$\sum_{K \subset J^c}  ( cm l_0/k_0)^{|K|} \leq e^{cm^2 l_0/k_0}$.
Therefore,
 \begin{equation}\label{varu:2}
\Var  (u_m^{(s)})  \leq  e^{2cm^2l_0/k_0} \sum_{ J \subset \{ k+1,\ldots,k+ m\}\atop |J|\geq s} (cm^2/k_0)^{|J|}\leq e^{2cm^2 l_0/k_0} \sum_{j=s}^{m} \frac{1}{j!} (cm^3/k_0)^j .\end{equation}
Similarly, let $v_m^{(s)}$ denote the component of $\langle u, C_m v\rangle$ on chaoses of degree greater or equal to $s$.  As $A^l v = -lu+v$ for any $l\geq 1$, we get
\begin{equation} \label{varv} \Var( v_m^{(s)}) \lesssim
  m^2 e^{2cm^2 l_0/k_0} \sum_{j=s}^{m} \frac{1}{j!} \Big( \frac{m^3}{k_0}\Big)^j , \end{equation}
where $c$ is an absolute constant, which gives in particular
\begin{equation} \label{varCm} \Var  \langle e_1, C_m(xu+yv)\rangle  \lesssim \Big ( e^{m^3/k_0}-1\Big)e^{2cm^2 l_0/k_0}\big( x^2 +(my)^2\big).\end{equation}
Putting together \eqref{Tmexpct}, \eqref{varTm}, \eqref{expectCm} and \eqref{varCm}, we get the claim \eqref{eq-xm}.
\end{proof}
We now come back to the proof of Proposition \ref{firstblock}.
Using Lemma \ref{block} with $m=l_0 = \lfloor \kappa k_0^{1/3}\rfloor$, we deduce that
$$ \frac{X_{k_0+l_0}}{\psi_{k_0 -l_0}} = x_m u+y_mv,$$
where
$$\big| \EE_{\mathcal{F}_{k_0-l_0} } (x_{l_0})\big| \leq e^{C{\kappa^3}} (1+ l_0|\delta_{k_0-l_0}|), \ \Var_{\mathcal{F}_{k_0-l_0} }(x_{l_0})
 \leq e^{C{\kappa^3}}(1 +l_0|\delta_{k_0-l_0}|)^2,$$
$$ |\EE_{\mathcal{F}_{k_0-l_0} }  (y_{l_0})| \leq k_0^{-\frac{1}{3}}e^{C{\kappa^3} }(1+l_0|\delta_{k_0-l_0}|), \    \Var_{\mathcal{F}_{k_0-l_0} } (y_{l_0}) \leq k_0^{-\frac{2}{3}} e^{C{\kappa^3} }(1+ l_0|\delta_{k_0-l_0}|)^2,$$
and $C$ is an absolute constant.
\end{proof}
\begin{proof}[Proof of Proposition \ref{tightlower}]
We will linearize the product
$\prod_{j=k_0- l_0+1}^{k_0+l_0} (A_j+W_j)$ on sub-blocks of order $\veps k_0^{1/3}$ and use the anti-concentration property of sums of independent random variables to show that the norm cannot decrease too much on such a block. To this end, we start by showing the following anti-concentration  lemma.
\begin{lemma}\label{smallblock}
  Let $\veps>0$ such that $\veps \kappa < 1$. Let $m = \lfloor \veps k_0^{1/3}\rfloor$ and $k\in\NN$ such that $k_0 - l_0 \leq k\leq k+m \leq k_0 +l_0$. Fix $x,y \in \RR$ and let $x_m$ be as in \eqref{eq-xmym}.
Then, for  any $\delta <1/2$,
$$\PP\big( |x_m| \leq \veps^{\frac{5}{2}+\delta} \max(|x|,|my|)\big) \lesssim \veps^{1+\delta}.$$
\end{lemma}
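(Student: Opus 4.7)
The strategy is to carve $x_m$ into three pieces: a deterministic shift $D$, a dominant linear-in-noise piece $L$ with $\Var(L) \gtrsim \veps^3 M^2$ (where $M := \max(|x|, m|y|)$), and a higher-order remainder $R$ that, with overwhelming probability, is much smaller than $\veps^{5/2+\delta} M$. Anti-concentration of $L$ then yields the claim.

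First, using the decomposition \eqref{decompoprod}--\eqref{decompoxy} from the proof of Lemma \ref{block}, one writes
\[ x_m = (x-my) + \langle e_1, C_m(xu+yv)\rangle + \langle e_1, T_m(xu+yv)\rangle, \]
and splits each random bracket into its expectation, its first-chaos component (via \eqref{chaos} with $s=1$), and its higher-chaos remainder; denote by $D$ all the deterministic pieces, by $L$ the first chaos of $\langle e_1, C_m(xu+yv)\rangle$, and by $R$ everything else. Reading off the $p=1$ contribution in \eqref{def-Cm} with $A^\ell u = u$ and $A^\ell v = -\ell u + v$, one finds that $L$ is a linear combination of $\{b_j/\sqrt{j}, g_j/\sqrt{j}\}_{k+1 \le j \le k+m}$ whose coefficients, with $s=j-k$, are of the form $(m-s)(-x + (s-1)y)$ for $b_j$ and an analogous expression for $g_j$. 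By independence and the variance profile $v+O(1/j)$ of the noise, a Riemann-sum estimate produces
\[ \Var(L) \asymp \frac{v m^3}{k_0}\int_0^1 (1-u)^2(-x + m u\,y)^2\,du \geq c\,\veps^3 M^2, \]
the last inequality following from positive-definiteness of the quadratic form $(a,b) \mapsto a^2/3 + ab/6 + b^2/30$ in $(a,b) = (-x, my)$.

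Next, I plan to control $R$ using \eqref{varu:2} with $s=2$, together with \eqref{varv} and \eqref{varTm}, which give $\Var(R) \lesssim \veps^6 M^2 + (l_0/k_0) M^2 \ll \veps^{5+2\delta} M^2$ for $\delta < 1/2$ and $n$ large. Since $R$ is a polynomial in the independent variables $\{b_j, g_j\}$ with uniform exponential moments \eqref{boundlaplace}, it admits sub-exponential tail bounds, so $\PP(|R| \geq \tfrac12 \veps^{5/2+\delta} M) = o(\veps^{1+\delta})$. On the complement, $|x_m| \leq \veps^{5/2+\delta}M$ forces $|L + D| \leq \tfrac{3}{2}\veps^{5/2+\delta} M$, so it remains to prove that $\sup_a \PP(|L - a| \leq 2\veps^{5/2+\delta}M) = O(\veps^{1+\delta})$. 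Because $L$ is a sum of $2m$ independent absolutely continuous random variables with $\Var(L) \gtrsim \veps^3 M^2$, this will follow from an Esseen-type smoothing inequality
\[ \sup_a \PP(|L - a| \leq \lambda) \lesssim \lambda \int_{-1/\lambda}^{1/\lambda} |\varphi_L(t)|\,dt, \]
combined with a uniform quadratic lower bound $1 - |\varphi_{b_j}(t)|,\; 1 - |\varphi_{g_j}(t)| \gtrsim \min(1, t^2)$, which propagates via independence to $\int |\varphi_L(t)|\,dt \lesssim 1/\sqrt{\Var(L)}$.

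The hard part will be this last step: Assumption \ref{ass-ab} only guarantees that each $b_j$ and $g_j$ is absolutely continuous, with no uniform bound on its density. The required estimate $1 - |\varphi_{b_j}(t)| \gtrsim \min(1,t^2)$ is obtained by combining absolute continuity (which via Riemann--Lebesgue forces $|\varphi_{b_j}(t)| \to 0$ as $|t| \to \infty$) with the second-moment expansion near the origin and the exponential moment bound \eqref{boundlaplace} (used to make the constant uniform in $j$). The delicate accounting is then the precise variance bookkeeping that produces the $\veps^{1+\delta}$ rate rather than a weaker $o_\veps(1)$ estimate.
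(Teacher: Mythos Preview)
Your decomposition $x_m=D+L+R$ and the variance lower bound $\Var(L)\gtrsim \veps^3 M^2$ match the paper's argument, and the overall union-bound strategy is the same. There are, however, two genuine gaps.

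\textbf{The anti-concentration step does not go through.} You propose to bound $Q_L(\lambda)$ via the Esseen smoothing inequality, which ultimately requires a \emph{uniform} lower bound $1-|\varphi_{b_j}(t)|\gtrsim \min(1,t^2)$ (and similarly for $g_j$). Assumption~\ref{ass-ab} does not give this. Riemann--Lebesgue only yields $|\varphi_{b_j}(t)|\to 0$ for each fixed $j$, with no uniformity; the exponential-moment bound \eqref{boundlaplace} controls tails, not smoothness, and cannot prevent $|\varphi_{b_j}(t)|$ from being close to $1$ on an intermediate range of $t$ that grows with $j$. (Concretely: take $b_j$ to be a symmetric $\pm 1$ Bernoulli convolved with a Gaussian of variance $1/N_j^2$; then $|\varphi_{b_j}(\pi)|=e^{-\pi^2/(2N_j^2)}\to 1$ as $N_j\to\infty$, while all moment assumptions in \eqref{boundlaplace} hold uniformly.) The paper avoids this entirely by using the Kolmogorov--Rogozin inequality (Lemma~\ref{anticonc}), whose only input is the uniform third-moment bound $\EE|X_i|^3\le M(\EE X_i^2)^{3/2}$; this is available from \eqref{boundlaplace} and yields $Q_L(tM)\lesssim t/\veps^{3/2}$ directly.

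\textbf{The tail bound on $R$ is not established by your argument.} You write $\Var(R)\lesssim \veps^6 M^2$ and then appeal to ``sub-exponential tails for polynomial chaos''. But $R$ is a polynomial of degree up to $m\asymp \veps k_0^{1/3}$ in the noise, so generic hypercontractivity-type tail bounds (which degrade as $\exp(-c(t/\sigma)^{2/d})$ with $d$ the degree) are useless. Moreover, plain Chebyshev from $\Var(R)\lesssim \veps^6 M^2$ only gives $\PP(|R|>\veps^{5/2+\delta}M)\lesssim \veps^{1-2\delta}$, which is \emph{larger} than $\veps^{1+\delta}$ for any $\delta>0$. The paper resolves this by a further splitting: the second-order chaos is shown to be, up to an error of variance $O(\kappa^2\veps^{10}M^2)$, a \emph{product of two linear forms} in the noise (see \eqref{tailzeta2}--\eqref{tailxi2}), so its tail is controlled by the square of a sub-exponential bound, giving $\PP(|x_m^{(2)}|>tM)\lesssim e^{-c\sqrt{t/\veps^3}}$. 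All remaining pieces (the error in this approximation, chaoses of order $\ge 3$, and the $T_m$ contribution) are collected into $z_m$ with $\EE z_m^2\lesssim \kappa^2\veps^{10}M^2$, for which Chebyshev \emph{does} suffice. Your outline skips both the product structure of the second chaos and the finer variance accounting that makes Chebyshev work on the residual.
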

\begin{proof}[Proof of Lemma \ref{smallblock}]
  Recall the notation $Q_X,Q_\mu$ for L\'{e}vy's concentration function, see
\eqref{eq-anticonc}.
We will first prove that
\begin{equation} \label{expansionx} x_m = a_m + x_m^{(1)} +x_m^{(2)} + z_m,\end{equation}
where:\\
$\bullet$
 $a_m$ is deterministic\\
 $\bullet$  If $t>0$ then for $n$ large enough,
\begin{equation} \label{claim1} Q_{x_m^{(1)}}\big(t \max(|x|, |m y|)\big)\leq \frac{C t}{\veps^{3/2}},\end{equation}
\begin{equation} \label{claim2} \PP\big( |x_m^{(2)}|\geq t\max(|x|, m|y|) \big) \leq Ce^{-c\sqrt{t/\veps^3}},\end{equation}
and \begin{equation} \label{claim3}\EE (z_m^2) \lesssim\kappa^2 \veps^{10} \max\big(x^2,(m y)^2\big),\end{equation}
where $c,C$ are absolute constants. To see this, recall \eqref{eq-xmym} and the decomposition \eqref{decompoxy}. It is straightforward to check, using
\eqref{Tmexpct} and \eqref{varTm}, that the contribution of $T_m$ can be absorbed in $z_m$ with (better than) the stated bounds.
The contribution of $A^m$ is deterministic, and thus can be absorbed in $a_m$. Thus, it only remains to evaluate the contribution of $C_m$.
From \eqref{chaos}, we see  that the first degree chaos expansion $ \zeta_m^{(1)} $ of $ \langle e_1, C_m u\rangle$ is
$$ \zeta_m^{(1)} =  \sum_{j=k+1}^{k+m} s_j  \frac{b_j+g_j}{\sqrt{j}},\quad
 s_j = -\sum_{K \subset \{k+1,\ldots,k+m\}\setminus \{j\}}\prod_{l\in K} \Big( c_j+z_l-2 \Big) \prod_{i=1}^p (j_{i+1}-j_{i}),$$
 where in the sum defining $s_j$,
 $\{ j_1 < \ldots <j_p\} = K \cup \{j\}$, and $j_{p+1}= k+m$. Similarly, one can check that the  first degree chaos expansion of $ \langle e_1, C_m v\rangle$ can be written as
$$ \xi_m^{(1)} = \tilde{ \xi}_m^{(1)}  + r_m^{(1)},$$
where $\EE (r_m^{(1)})^2 \lesssim \veps^2 \kappa$ and $ \tilde{ \xi}_m^{(1)}   = \sum_{j=k+1}^{k+m} d_j \frac{b_j+g_j}{\sqrt{j}}$ with
$$ d_j =- \sum_{K \subset \{k+1,\ldots,k+m\}\setminus \{j\}}\prod_{l\in K} \Big( c_j+z_l-2 \Big) \prod_{i=0}^p (j_{i+1}-j_{i}), $$
with $j_{p+1}=k+m$, $j_0=k+1$ and  $\{ j_1 < \ldots <j_p\} = K \cup \{j\}$. 
Absorbing $r_m^{(1)}$ into the term $z_m$ (which is possible due to the 
presence of $(my)^2$ in the latter, see \eqref{claim3}, and the fact
that $r_m^{(1)}$ is coming from the contribution of $v$, i.e. is proportional
to $y$),
we will prove that $x_m^{(1) } = \zeta_m^{(1)} x + \tilde{\xi}_m^{(1)}y$ satisfies \eqref{claim1}. We compute first a lower bound on the variance of $\zeta_m^{(1)}$ and $\tilde{\xi}_m^{(1)}$.  On the  one hand,
$$ s_j' = \sum_{ K\subset  \{k+1,\ldots, k+m\} \setminus \{j\}\atop |K|\geq 1} \prod_{j\in K} \Big| c_j+z_j-2 \Big| \prod_{i=1}^p (j_{i+1}-j_{i}) \leq  m \sum_{p\geq 1} \frac{1}{ p !} \Big( \frac{C l_0 m^2}{k_0}\Big)^{p},$$
 where $C$ is an absolute constant and we used that $|z_j-2|=O(m_0/k_0)$ and that $c_j=O(1/k)$. Thus,
 for $\veps$ small enough and using the fact that $ \veps^2\kappa \leq 1$, we obtain that
$ s_j'  =O( \veps^2\kappa m).$
Therefore,
$ s_j = (k+m-j) + O( \veps^2\kappa m).$
Using the definition of $m$, one concludes that
$  \EE (\zeta_m^{(1)})^2 = (2v \veps^3/3) + O(\veps^5\kappa).$
 Now, to compute the variance of $\tilde{\xi}_m^{(1)}$, we observe similarly as above that
\begin{align*}
 d_j' &= \sum_{K \subset \{k+1,\ldots,k+m\}\setminus \{j\} \atop |K|\geq 1}\prod_{l\in K} \Big| c_l+z_l-2 \Big| \prod_{i=0}^p (j_{i+1}-j_{i})  \lesssim \veps^2 \kappa m^2.
\end{align*}
Thus,
$ d_j = (k+m - j)(j-k-1) + O(\veps^2 \kappa m^2),$
which yields
$$ \EE (\tilde{\xi}_m^{(1)})^2 =  2v \sum_{j= k+1 }^{k+m}\Big(\frac{d_j^2}{j} \Big) = (v \veps^3 m^2/15)+O( \veps^5 \kappa m^2),$$
where we used the fact that $\EE (r_m^{(1)})^2 \lesssim \veps^2 \kappa$.
Finally, we compute the covariance between $\zeta_m^{(1)}$ and $\tilde{\xi}_m^{(1)}$:
\begin{align*}  \EE \zeta_m^{(1)} \tilde{\xi}_m^{(1)} & =2v \sum_{j=k+1}^{k+m} \frac{d_j s_j}{j}\\
 &= \frac{2v}{k_0} \sum_{j=1}^{m}(m-j)^2j+O( \veps^5 \kappa m)
= \frac{v}{6} \veps^3 m +O( \veps^5 \kappa m).
\end{align*}
We deduce from these variance computations that, with  $x_m^{(1)}= x\zeta_m^{(1)} + y\tilde{\xi}_m^{(1)}$,
\begin{align*}
 \EE ( x_m^{(1)})^2
&= (2v/3) \veps^3 x^2 +(v/15) \veps^3 m^2y^2 + (v/3) \veps^3 m xy +  O( \veps^5 \kappa \max ( x^2, y^2 m^2))\\
&\gtrsim \veps^3 \max(x^2, (m y)^2),\end{align*}
where in the last inequality we used that $\kappa\veps<1$.
On the other hand, $x_m^{(1)}$
 is the sum of $m$ independent centered random variables, which we denote by  $Z_j$. The uniform bound \eqref{boundlaplace} on the Laplace transform of $b_j$ and $g_j$ implies that the assumption of Lemma \ref{anticonc} is satisfied. Note that $\EE Z_j^2 = O(m^2/k_0)\max( |x|^2,|my|^2)$ for any $j$. Thus, Lemma \ref{anticonc} yields that for any $t>0$ and $n$ large enough,
$$ Q_{x_m^{(1)}}\big( t\max(|x|,|my|)\big) \leq \frac{C t}{\veps^{3/2}},$$
which gives  \eqref{claim1}.
Let now $\zeta_m^{(2)}$, $\xi_m^{(2)}$ denote respectively the second order chaoses of $\langle e_1, C_m v\rangle$ and $\langle e_1, C_m v\rangle$. From \eqref{chaos}, using that
$$\sum_{K \subset \{k+1,\ldots, k+m\}\setminus \{j,l\}\atop |K|\geq1} \prod_{t\in K} \Big| c_t+z_t-2 \Big| \prod_{i=1}^p (j_{i+1}-j_{i})\leq \sum_{q \geq 1} \frac{1}{q!}\Big( \frac{Cl_0}{k_0}\Big)^q m^{2q+2}\lesssim \veps^2 \kappa m^2,$$
since $\kappa \veps^2 <1$, we deduce that
$$ \zeta_m^{(2)} = \sum_{j< j'} \frac{(b_j+g_j)(b_{j'}+g_{j'})}{\sqrt{jj'}} \Big( (k+m-j')(j'-j)  +O(\veps^2 \kappa m^2)\Big).$$
(Here, the $O(\cdot)$ term is deterministic.)
Let
$$ \tilde{\zeta}_m^{(2)} = \sum_{j,j'} \frac{(b_j+g_j)(b_{j'}+g_{j'})}{\sqrt{j j'}}  (k+m-j')(j'-j) , \ \eta_m^{(2)} = \sum_{j< j'} \frac{(b_j+g_j)(b_{j'}+g_{j'})}{\sqrt{j j'}}O(\veps^2 \kappa m^2).$$
 Let $a_j>0$ such that $a_j\leq 2m/\sqrt{k_0}$. Note that
 \eqref{boundlaplace} implies in particular that the third derivatives of the log-Laplace transforms of $b_j$ and $g_j$ are bounded uniformly in a neighborhood of $0$. By Taylor's expansion, we deduce that there exists $\lambda_0'>0$ such that for any $|\lambda| \leq \lambda_0'$,
$$ \EE e^{\lambda b_j} \leq e^{C\lambda^2},  \ \EE e^{\lambda g_j} \leq e^{C\lambda^2},$$
where $C$ is a constant independent of $j$.
Using Chebyshev's inequality, we get that for any $0<\lambda \leq \lambda_0' \sqrt{k_0}/2m$,
$$ \PP\big(  \sum_{j=k+1}^{k+m} a_j (b_j+g_j)  \geq t\big) \leq e^{-\lambda t } e^{2C\sum_{j=k+1}^{k+m}} \lambda^2 a_j^2.$$
Taking $\lambda =\lambda_0' \sqrt{k_0/m^3}$ we get 
 \begin{equation} \label{inechernoff} \PP\big( | \sum_{j=k+1}^{k+m} a_j (b_j+g_j) | \geq t\big) \leq C' e^{-t/C'\veps^{3/2}},\end{equation}
 where $C'$ is an absolute constant.
Note that $(k+m-j')(j'-j) = (k+m-j)(k+m-j') - (k+m-j)^2$. Therefore, applying the above inequality \eqref{inechernoff} alternatively with $a_j = (k+m-j)/\sqrt{j}$, $(k+m-j)^2/m\sqrt{j}$, and $m/\sqrt{j}$, we deduce using a union bound that for $n$ large enough
\begin{equation} \label{tailzeta2}\PP\big(|\tilde{\zeta}_m^{(2)}| \geq t\big) \leq C' e^{-\sqrt{t/C'\veps^3}},\end{equation}
where $C'$ is an  absolute constant.
Similarly as for $\zeta_m^{(2)}$, we can  write
$$ \xi_m^{(2)} = \sum_{j<j'} c_{j,j'} \frac{(b_j+g_j)( b_{j'}+g_{j'})}{\sqrt{jj'}} + r_m^{(2)},$$
where $\EE (r_m^{(2)})^2 \lesssim \veps^2 \kappa$ and  with $K = \{ j_1<\ldots<j_p\}$ and $j_0 = k+1$,
\[
 c_{j,j'} =  \sum_{K \subset  \{k+1,\ldots,k+m\}\setminus\{j,j'\}} \prod_{l\in K} \Big( c_l+z_l-2 \Big) \prod_{i=0}^p (j_{i+1}-j_{i}) = (k+m-j')(j'-j)(j-k-1) + O( \veps^2  \kappa m^3).
\]
Let
$$ \tilde{\xi}_m^{(2)} = \sum_{j,j'} (k+m-j')(j'-j)(j-k-1) \frac{(b_j+g_j)( b_{j'}+g_{j'})}{\sqrt{jj'}},$$
and
$$ \delta_m^{(2)} = \sum_{j<j'} \frac{(b_j+g_j)( b_{j'}+g_{j'})}{\sqrt{jj'}} O( \veps^2 \kappa m^3).$$
Similar computations as above show that for any $t>0$, and $n$ large enough
\begin{equation} \label{tailxi2} \PP( |\tilde{\xi}_m^{(2)}|\geq tm) \leq ce^{-\sqrt{t/c\veps^3}},\end{equation}
where $c$ is an absolute constant. Putting together \eqref{tailzeta2} and \eqref{tailxi2}, we get the claim \eqref{claim2} with $x_m^{(2)}= \tilde{\zeta}_m^{(2)} + \tilde{\xi}_m^{(2)}$.
Finally, we find that
$$ \EE (\delta_m^{(2)})^2 \lesssim \frac{m^2}{k_0^2} (\kappa \veps^5 k_0)^2=\kappa^2 \veps^{10}m^2, \ \EE (\eta_m^{(2)})^2 \lesssim \veps^{10} \kappa^2,$$
which yields the last claim \eqref{claim3} with 
$z_m = \delta_m^{(2)} + \eta_m^{(2)} +r_m^{(2)}+r_m^{(1)}$.

\noindent
 Fix now $t>0$. Using a union bound, we get
\begin{align*}
 \PP\big( |x_m|\leq t\max(|x|,|my|)\big)&\leq \PP\big( |a_m+ x_m^{(1)}|\leq 3 t\max(|x|,|my|)\big)\\
& + \PP\big(|x_m^{(2)}|\geq  t\max(|x|,|my|)\big) +\PP\big(|z_m|\geq  t\max(|x|,|my|)\big).\end{align*}
By \eqref{claim1}, \eqref{claim2} and \eqref{claim3}, we get
$$ \PP\big( |x_m|\leq t\max(|x|,|my|)\big)\leq Ct\veps^{-\frac{3}{2}} + Ce^{-c\sqrt{t/\veps^3}} +  \frac{C\kappa^2 \veps^{10}}{t^2}.$$
Choosing $t = \veps^{\frac{5}{2}+\delta}$ and using that $\veps \kappa<1$ and $\delta<1/2$ yields the lemma.
\end{proof}
We return to the proof  of Proposition \ref{tightlower}.
Let now $ k_0 - \lfloor\kappa k_0^{1/3}\rfloor= j_0 < j_1 <\ldots <j_N =  k_0 + \lfloor\kappa k_0^{1/3}\rfloor$ such that $\lfloor \veps k_0^{1/3}\rfloor \leq j_{i+1}-j_i \leq 2\lfloor \veps k_0^{1/3}\rfloor$. One can find such a sequence so that $c\kappa /\veps \leq N\leq c'\kappa/\veps$, where $c,c'$ are absolute constants. Let $(x_i,y_i)$ be such that
 $$ \prod_{j=j_0+1}^{j_i} \big( A_j + W_j\big) (u+\delta_{j_0} v) = x_i u +y_i v.$$
Denote by $r_i = \max(|x_i|,m|y_i|)$. By Lemma \ref{smallblock} we have for any $i$, 
$$ \PP_{\mathcal{F}_{j_i}}    ( r_{i+1} \leq \veps^{\frac{5}{2}+\delta} r_i ) \leq \PP_{\mathcal{F}_{j_i}}    ( |x_{i+1}| \leq \veps^{\frac{5}{2}+\delta} r_i )\lesssim \veps^{1+\delta}.$$
Thus, by induction we find
$$ \PP_{\mathcal{F}_{j_0}}( r_{N} \leq \veps^{N(\frac{5}{2}+\delta)} r_0)\lesssim N\veps^{1+\delta}.$$
 One can check that if $w= xu+yv$ for $x,y\in\RR$ then $\max(|x|,|y|)/\sqrt{2} \leq \|w\| \leq 3\max(|x|,|y|)$. Using that  $N\geq c \kappa /\veps$, we  conclude that
$$ \PP_{\mathcal{F}_{j_0}}\big( \| X_{k_0+\lfloor\kappa k_0^{1/3}\rfloor}\|  \leq (\veps^{c(\frac{5}{2}+\delta)\frac{\kappa}{\veps} }/3\sqrt{2}) \|X_{k_0-\lfloor\kappa k_0^{1/3}\rfloor}\|
\big) \leq C \kappa\veps^{\delta},$$
where $C$ is an absolute constant.
Taking $\veps$ such that $\veps^{\delta} = \kappa^{-2}/C$, we get the claim.
\end{proof}

\subsection{A change of basis}
\label{subsec-change}
\textit{In the rest of this section, we will denote by $\PP$ and $\EE$ the conditional probability and expectation $\PP_{\mathcal{G}_{k_0-l_0}}$ and $\EE_{\mathcal{G}_{k_0-l_0}}$, where $\mathcal{G}_{k_0-l_0}$ is generated by the variables $g_k, b_k$ for $k\leq k_0-l_0$, and assume that we are on the event where
\begin{equation} \label{cond-init-delta}|\delta_{k_0-l_0}|\leq C_\kappa k_0^{-1/3},\end{equation}
for some constant $C_\kappa>0$ depending on $\kappa$.}
Recall the transfer matrix $A_k$ from \eqref{def-Ak}. In the oscillatory regime we now consider, the deterministic part of the transition matrix has complex conjugated eigenvalues which are almost on the unit circle. To better understand the dynamics of the recursion, we will switch to the basis in which this deterministic transition matrix is a rotation. As we will see, this change of variable will have two consequences: it will increase the variance of the noise, and due to the fact that the change of basis is not constant over time, it will induce a certain drift.

We note that $z_k = z\sqrt{n/k}= 2-l/k_0+o(l/k_0)$.
 In the regime where $l\geq 1$, the eigenvalues of $A_{k_0+l}$ are complex 
 conjugates, and denoted
$\lambda_l$ and $\overline{\lambda_l}$, with
\begin{equation}
\label{eq-lambdadef}
 \lambda_l = \rho_l\frac{w_l + i \sqrt{ 4 -w_l^2}}{2}=:\rho_l e^{i\theta_l}, \quad \mbox{\rm where\, $\rho_l  = \sqrt{1-c_{k_0+l}}$\; and\; $w_l = z_{k_0+l}/\rho_l$.}
 \end{equation}
Therefore, up to a multiplicative constant, $A_{k_0+l}$ is the conjugate of  a multiple of a rotation matrix. Explicitly,  if we let for any $l\geq 1$,
\begin{equation} \label{defQ} Q_l =  \left( \begin{array}{cc} \frac{w_{l} }{2} & - \frac{\sqrt{4-w_{l}^2}}{2} \\
1 & 0
\end{array} \right), \ Q_l^{-1} = \left( \begin{array}{cc} 0 & 1 \\ \frac{-2}{\sqrt{4-w_{l}^2}}  & \frac{w_{l}}{\sqrt{4-w_{l}^2}}
\end{array} \right),\end{equation}
then we get the following fact.
\begin{fact}\label{newbasis} For any $l\geq 1$,
$$ A_{k_0+l} = \rho_{l} Q_l R_l Q_l^{-1}, \ W_{k_0+l} =   \rho_{l} Q_l \big(\hat W_l+V_l\big)  Q_l^{-1}$$
where
\begin{equation} R_l =\frac{1}{2}\left( \begin{array}{cc}
w_{l} & -\sqrt{4 -w_{l}^2}\\
\sqrt{4 -w_{l}^2} & w_{l}
\end{array}\right) ,\ \label{defW}\hat W_l =  \left( \begin{array}{cc}
0 & 0 \\
\sqrt{\frac{2}{l}}\zeta_l & 0
\end{array}\right),\end{equation}
 $\zeta_l, V_l$ are centered independent random variables (matrices)  such that $\Var (\zeta_l )=v$ and $\EE\|V_l\|_2^2= O(1/k_0)$. Further,  there exist $C,\lambda_0$ independent of $l$ such that
\begin{equation} \label{boundlaplacezeta} \EE (e^{\lambda_0 |\zeta_l| }) \leq C.\end{equation}
\end{fact}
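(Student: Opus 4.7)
The plan is a direct algebraic verification: both equalities are exact identities, so the content lies in identifying the dominant random term in the change of basis and controlling the remainder.

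Since by \eqref{eq-alpha} we have $\alpha_{k_0+l} = \alpha_{k_0+l-1} = 1$ for all $l \geq 0$, the definition \eqref{def-Ak} simplifies: $A_{k_0+l}$ has first row $(\rho_l w_l, -\rho_l^2)$ and second row $(1, 0)$, while $W_{k_0+l}$ has first row $(-b_{k_0+l}/\sqrt{k_0+l}, -g_{k_0+l}/\sqrt{k_0+l})$ and vanishing second row. The characteristic polynomial of $A_{k_0+l}$ is $\lambda^2 - \rho_l w_l \lambda + \rho_l^2$, with roots $\rho_l e^{\pm i\theta_l}$, so $A_{k_0+l}/\rho_l$ is similar to the real rotation $R_l$; the first identity $A_{k_0+l} = \rho_l Q_l R_l Q_l^{-1}$ is then a routine multiplication using \eqref{defQ} and \eqref{defW}.

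For the second identity I compute $M_l := \rho_l^{-1} Q_l^{-1} W_{k_0+l} Q_l$ entrywise. Because $W_{k_0+l}$ has only its first row nonzero and the first row of $Q_l^{-1}$ is $(0,1)$, the first row of $M_l$ vanishes. A short calculation shows that its second row is
\[ \Bigl( \tfrac{w_l\, b_{k_0+l} + 2 g_{k_0+l}}{\rho_l \sqrt{k_0+l}\sqrt{4-w_l^2}}, \; -\tfrac{b_{k_0+l}}{\rho_l\sqrt{k_0+l}} \Bigr). \]
Using $z_{k_0+l} = 2\sqrt{k_0/(k_0+l)}(1+O(1/n))$ together with $c_{k_0+l} = O(1/k_0)$ gives $4 - w_l^2 = 4l/(k_0+l) + O(1/k_0)$, so the $(2,1)$-entry is of order $\sqrt{2/l}$ in the regime $l = o(k_0)$. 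I then define $\zeta_l$ to be a centered linear combination of $b_{k_0+l}$ and $g_{k_0+l}$ with variance exactly $v$, normalized so that $\sqrt{2/l}\,\zeta_l$ captures the leading order of $(M_l)_{21}$; I place this in the $(2,1)$-slot of $\hat W_l$ and set $V_l := M_l - \hat W_l$, which absorbs the $(2,2)$-entry and the sub-leading part of the $(2,1)$-entry.

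The variance estimate $\EE\|V_l\|^2 = O(1/k_0)$ then follows from straightforward bookkeeping: the $(2,2)$-entry $-b_{k_0+l}/(\rho_l\sqrt{k_0+l})$ contributes $\Theta(1/k_0)$ since $k_0+l = \Theta(k_0)$; the discrepancy in the $(2,1)$-entry is of the form $\sqrt{2/l}\zeta_l$ times a deterministic coefficient of size $O(1/k_0)$ (in the regime $l \ll k_0$) or is of size $O(1/\sqrt{k_0})$ (in the regime $l = \Theta(k_0)$), contributing $O(1/k_0)$ in either case. Centering of $V_l$ is automatic from centering of $b_{k_0+l}, g_{k_0+l}$, and the exponential moment bound \eqref{boundlaplacezeta} is inherited from \eqref{boundlaplace} since $\zeta_l$ is a linear combination of these two variables with coefficients of order $1$. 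The main point requiring care is uniformity of the expansions in $l$ across the full oscillatory range $1 \leq l \leq n-k_0$, where $w_l$ varies from near $2$ down to $z$; the genuinely delicate regime is $l = o(k_0)$, where $4-w_l^2$ is small and the relative $O(1/k_0)$ corrections in the expansion must be tracked to ensure that they indeed contribute $O(1/k_0)$ to $\EE\|V_l\|^2$ rather than something larger.
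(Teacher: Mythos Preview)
The paper states Fact~\ref{newbasis} without proof, treating it as a routine verification; your direct matrix computation is therefore exactly the intended argument, and your entrywise formula for $\rho_l^{-1} Q_l^{-1} W_{k_0+l} Q_l$ is correct.

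There is one quantitative slip in the bookkeeping. You assert that, for $l \ll k_0$, the deterministic coefficient in the $(2,1)$-discrepancy is $O(1/k_0)$. Writing $(4-w_l^2)(k_0+l) = 4l + \gamma_l$ with $\gamma_l = O(1)$ (coming from $z^2 n - 4k_0 \in [0,4)$ and from $4k_0 c_{k_0+l} = O(1)$), the \emph{relative} error between $(M_l)_{21}$ and its $\sqrt{2/l}$-leading term is $O(\gamma_l/l) + O(l/k_0) = O(1/l + l/k_0)$, not $O(1/k_0)$. This yields $\EE|(V_l)_{21}|^2 = O(1/l^3 + l/k_0^2)$, which is $O(1/k_0)$ precisely when $l \gtrsim k_0^{1/3}$. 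Since the paper only invokes the Fact for $l \geq l_0 = \lfloor\kappa k_0^{1/3}\rfloor$, the conclusion stands in the range of use; the phrase ``for any $l \geq 1$'' in the statement is a mild over-claim rather than a flaw in your method. As a separate minor point, a literal multiplication with the stated $Q_l, R_l$ does not give $A_{k_0+l}/\rho_l$ but its conjugate by the swap matrix; this is a harmless sign typo in the paper (flip the sign of $Q_l(1,2)$ or replace $R_l$ by $R_l^T$), inconsequential since only $|\theta_l|$ enters downstream.
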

In the description above, $\hat{W_l}+V_l$ is an explicit linear combinaison of the variables $b_{k_0+l}$ and $g_{k_0+l}$,
\[ \hat{W_l} +V_l= \frac{1}{\sqrt{k_0+l}} \begin{pmatrix} 0 & 0 \\
 \frac{2}{\sqrt{4-w_l^2}}\big( \frac{w_l}{2} b_{k_0+l} + g_{k_0+l}\big)& - b_{k_0+l} \end{pmatrix}.\]
The variable $\zeta_l$ is defined such that $\sqrt{2/l} \zeta_l e_2 e_1^T$ corresponds to the leading term of the above sum. In the sequel, we 
will not 
need the exploit the explicit expression of $\zeta_l$, but only the fact that it constitutes a family of independent random variables with variance $v$ and whose exponential moments are uniformly bounded as stated in \eqref{boundlaplacezeta}.

For any $l\geq l_0$, set
\begin{equation} \label{changebase} Y_l = \frac{r_{l}}{\psi_{k_0-l_0}}Q_{l}^{-1}X_{k_0+l},\end{equation}
where $r_l = \prod_{j=l_0+1}^{l}\rho_j^{-1}$, see \eqref{eq-lambdadef}, and let
$\mathcal{F}_l$ denote
the $\sigma$-algebra generated by the variables $g_k,b_k$ for $k\leq k_0+l$.
An important consequence of Propositions \ref{firstblock} and \ref{tightlower} is the fact that $\log \| Y_{l_0}\|$ is controlled when $\kappa \to \infty$. Recall our convention in this section concerning $\EE$ and $\PP$.
\begin{lemma}\label{controlYl0}There exists a deterministic constant $c_\kappa>0$ such that
$$\PP\big(c_\kappa^{-1}\leq  \| Y_{l_0}\| \leq c_\kappa \big) \leq \kappa^{-1}$$
\end{lemma}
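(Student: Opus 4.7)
\medskip

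\noindent\textbf{Proof proposal.} Since the product defining $r_{l_0}$ is empty, $r_{l_0}=1$, so that $Y_{l_0}=\psi_{k_0-l_0}^{-1}Q_{l_0}^{-1}X_{k_0+l_0}$. The plan is to get the upper bound on $\|Y_{l_0}\|$ from the second-moment estimates in Proposition \ref{firstblock} together with the fine structure of $Q_{l_0}^{-1}$, and the lower bound from Proposition \ref{tightlower} together with a uniform bound on $\|Q_{l_0}\|$.

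A preliminary observation is the size of $Q_{l_0}^{\pm 1}$. From $z_{k_0+l_0}=2-l_0/k_0+o(l_0/k_0)$ and $\rho_{l_0}=1+O(1/k_0)$ one has $w_{l_0}=2+O(l_0/k_0)$ and $\sqrt{4-w_{l_0}^2}=2\sqrt{l_0/k_0}\,(1+o(1))$. From \eqref{defQ} this gives $\|Q_{l_0}\|\le C_0$ for some absolute constant, while $\|Q_{l_0}^{-1}\|$ is of order $\sqrt{k_0/l_0}\asymp k_0^{1/3}/\sqrt{\kappa}$.

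\emph{Upper bound.} Writing $X_{k_0+l_0}/\psi_{k_0-l_0}=x_{l_0}u+y_{l_0}v$ as in Proposition \ref{firstblock} and using $u=(1,1)^T$, $v=(0,1)^T$, direct computation gives
\[
Q_{l_0}^{-1}(xu+yv)=\begin{pmatrix}x+y\\ \dfrac{(w_{l_0}-2)x+w_{l_0}y}{\sqrt{4-w_{l_0}^2}}\end{pmatrix}.
\]
The point is that the dominant $x$-contribution in the second coordinate is multiplied by $(w_{l_0}-2)/\sqrt{4-w_{l_0}^2}=O(\sqrt{l_0/k_0})$, so that $\|Y_{l_0}\|^2\lesssim x_{l_0}^2+(l_0/k_0)x_{l_0}^2+(k_0/l_0)y_{l_0}^2$. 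Taking expectations and invoking Proposition \ref{firstblock} (on the event $|\delta_{k_0-l_0}|\le C_\kappa k_0^{-1/3}$, so that $l_0|\delta_{k_0-l_0}|=O_\kappa(1)$) yields $\EE\|Y_{l_0}\|^2\le M_\kappa^2$ for some constant $M_\kappa$. Markov's inequality then gives $\PP(\|Y_{l_0}\|>\sqrt{2\kappa}\,M_\kappa)\le 1/(2\kappa)$.

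\emph{Lower bound.} Since $X_{k_0+l_0}/\psi_{k_0-l_0}=Q_{l_0}Y_{l_0}$ and $\|Q_{l_0}\|\le C_0$, one has $\|Y_{l_0}\|\ge \|X_{k_0+l_0}\|/(C_0|\psi_{k_0-l_0}|)$. Applying Proposition \ref{tightlower} with parameter $2\kappa$ in place of $\kappa$ (which only worsens the constant $C_{2\kappa}$), this lower bound is at least $(C_0 C_{2\kappa})^{-1}$ with probability at least $1-1/(2\kappa)$. A union bound and the choice $c_\kappa:=\max(\sqrt{2\kappa}\,M_\kappa,\,C_0 C_{2\kappa})$ then yield the claim (with the intended inequality $\PP(c_\kappa^{-1}\le\|Y_{l_0}\|\le c_\kappa)\ge 1-\kappa^{-1}$).

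The main obstacle is precisely the cancellation that saves the upper bound: since $\|Q_{l_0}^{-1}\|$ is of order $k_0^{1/3}$ and $x_{l_0}=O_\kappa(1)$, a naive estimate $\|Y_{l_0}\|\le\|Q_{l_0}^{-1}\|\cdot\|x_{l_0}u+y_{l_0}v\|$ would blow up. What rescues the argument is that $u$ is the common eigenvector of $A_{k_0+l}$ in the degenerate limit $l\to 0$, so it lies nearly in the kernel of $Q_{l_0}^{-1}$; this degeneracy has to be traded against the largeness of $\|Q_{l_0}^{-1}\|$ through the explicit algebraic identity above.
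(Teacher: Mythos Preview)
Your proposal is correct and follows essentially the same approach as the paper: the upper bound comes from Proposition \ref{firstblock} together with the observation that $Q_{l_0}^{-1}u=O(1)$ (which you derive via the explicit computation of $Q_{l_0}^{-1}(xu+yv)$, while the paper just states $\|Q_{l_0}^{-1}u\|=O(1)$ and $\|Q_{l_0}^{-1}\|=O(k_0^{1/3})$), and the lower bound comes from Proposition \ref{tightlower} combined with $\|Q_{l_0}\|\le c$. Your identification of the reversed inequality in the statement is also correct; one minor quibble is that ``applying Proposition \ref{tightlower} with parameter $2\kappa$'' is not literally available since $l_0$ itself depends on $\kappa$, but the paper's own proof is equally informal on this point and the precise constant in front of $\kappa^{-1}$ is immaterial for the sequel.
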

\begin{proof}
We start with the upper bound. Recall that we assumed that $|\delta_{k_0-l_0}|\leq C_\kappa k_0^{-1/3}$. Thus, from Lemma \ref{firstblock} and \eqref{cond-init-delta}, we know that
$$ \frac{X_{k_0+l_0}}{\psi_{k_0-l_0}} =x_{l_0}u+y_{l_0}v,$$
where $\EE ( x_{l_0}^2 ) \leq c_\kappa$ and $\EE (y_{l_0}^2)\leq c_\kappa k_0^{-\frac{2}{3}}$, with $c_\kappa$ a $\kappa$-dependent constant.  Since $z_k = 2- O( \frac{k-k_0}{k_0})$, we find
$$\| Q_{l_0}^{-1} u\| = O(1), \ \| Q_{l_0}^{-1}\| = O( k_0^{\frac{1}{3}}).$$
Since $c_k =O(1/k)$, there exists  an absolute constant $c$
  such that $c^{-1} \leq |r_{l_0}| \leq c$. We deduce that $\EE (\| Y_{l_0}\|^2)\leq c_{\kappa}'$, with $c_\kappa'$  a $\kappa$-dependent
constant.  For the lower bound, note that as $\| Q_{l_0}\| \leq c$, we have that for any $t>0$,
$$ \PP\big( \|Y_{l_0}\| \leq t\big)  \leq \PP\big( \|X_{k_0+l_0}\| \leq c^2t |\psi_{k_0-l_0}|\big).$$
Using Proposition \ref{tightlower}  and \eqref{cond-init-delta}, we get the claim.
\end{proof}
For any $l \geq l_0+1$, we have the recursion:
$$Y_{l} =\big(R_l+\hat W_l+V_l\big)  Q_{l}^{-1}Q_{l-1} Y_{l-1}.$$
It turns out that
the matrices of change of basis matrices $Q_l$ are slowly varying 
and induce a drift in the recursion, which we describe in the following lemma.
\begin{lemma}\label{decompbaseQ}For any $l\geq l_0+1$,
$$ \big(R_l+\hat W_l+V_l\big) Q_{l}^{-1}Q_{l-1}=  R_l +\hat W_l + \Delta_l + B_l,$$
where $B_l$ are independent  random matrices such that $\|\EE B_l\| = O(1/\sqrt{k_0l})$, $\EE\|B_l - \EE B_l\|^2 = O(1/l^2)$, and
\begin{equation}\label{defDelta}  \Delta_l = \left( \begin{array}{cc}
0 & 0 \\
0 & -\frac{1}{2l}\end{array}\right).\end{equation}
\end{lemma}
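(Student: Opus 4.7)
My plan is to carry out a direct matrix computation and then identify the drift via a Taylor expansion. The starting point is an explicit evaluation of $Q_l^{-1}Q_{l-1}$ from \eqref{defQ}: writing $s_l=\sqrt{4-w_l^2}$, multiplication yields
\[ Q_l^{-1}Q_{l-1} = I + E_l, \qquad E_l = \begin{pmatrix} 0 & 0 \\ (w_l-w_{l-1})/s_l & s_{l-1}/s_l-1\end{pmatrix}.\]
Expanding the left-hand side of the claimed identity gives $(R_l+\hat W_l+V_l)Q_l^{-1}Q_{l-1}=R_l+\hat W_l+V_l+(R_l+\hat W_l+V_l)E_l$, so $B_l$ must equal $R_lE_l+\hat W_l E_l + V_l(I+E_l)-\Delta_l$. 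The key structural observation is that $\hat W_l E_l=0$: since $\hat W_l$ is supported in the $(2,1)$ entry while $E_l$ has zero first row, their product vanishes. This cancellation is what prevents a spurious large noise term of order $1/\sqrt{l}$ from appearing in $B_l$.

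To identify $\Delta_l$, I would expand $(R_lE_l)_{22}$. Using $s_{l-1}^2-s_l^2=w_l^2-w_{l-1}^2$, one has
\[ (R_lE_l)_{22}=\frac{w_l(s_{l-1}-s_l)}{2s_l}=\frac{w_l^2(w_l-w_{l-1})(w_l+w_{l-1})}{2s_l(s_{l-1}+s_l)}.\]
Substituting the Taylor expansions $w_l=2\sqrt{k_0/(k_0+l)}(1+O(1/k_0))$ and $s_l=2\sqrt{l/(k_0+l)}(1+O(1/k_0))$, together with the discrete-derivative estimate $|w_l-w_{l-1}|=O(\sqrt{k_0}/(k_0+l)^{3/2})$ (which follows from Assumption \ref{ass-ab} via $c_{k+1}-c_k=O(1/k^2)$), one obtains $(R_lE_l)_{22}=-1/(2l)+O(1/\sqrt{k_0\,l})$. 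Applying the same expansions to the other three entries of $R_lE_l$ shows they are each $O(1/\sqrt{k_0\,l})$, yielding $\|\EE B_l\|=\|R_lE_l-\Delta_l\|=O(1/\sqrt{k_0\,l})$, as required.

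For the stochastic part, independence of the $B_l$'s follows from independence of the $V_l$'s, and $B_l-\EE B_l=V_l(I+E_l)$ is centered. Since $\|I+E_l\|=O(1)$, the variance is controlled by a constant times $\EE\|V_l\|^2$; to reach the sharper $O(1/l^2)$ bound one uses the refined structure $V_l=\rho_l^{-1}Q_l^{-1}W_{k_0+l}Q_l-\hat W_l$, exploiting that the defining choice of $\hat W_l$ cancels the leading part of the $(2,1)$ entry of $\rho_l^{-1}Q_l^{-1}W_{k_0+l}Q_l$, leaving a residual whose variance decays like $l/k_0^2$, while the $(2,2)$ entry $-b_{k_0+l}/(\rho_l\sqrt{k_0+l})$ has variance $O(1/(k_0+l))$; combining these contributions in the regime $l\ge l_0\gtrsim k_0^{1/3}$ yields the claimed $O(1/l^2)$ variance bound.

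The main obstacle I anticipate is the bookkeeping of the Taylor expansions: one must produce the $-1/(2l)$ cancellation at leading order while controlling the remainder by $1/\sqrt{k_0\,l}$ uniformly across the three regimes $l\ll k_0$, $l\sim k_0$, and $l\gg k_0$ (the last being relevant when $|z|$ is small, so that $k_0\ll n$), since the discrete differences $w_l-w_{l-1}$ and $s_{l-1}-s_l$ have qualitatively different asymptotic behavior in each regime and the identification $(R_lE_l)_{22}\approx -1/(2l)$ relies on cancellations that must be tracked to the right order.
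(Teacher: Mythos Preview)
Your approach is essentially the paper's: write $Q_l^{-1}Q_{l-1}=I+E_l$, note $\hat W_lE_l=0$, and extract $\Delta_l$ by Taylor expansion. The paper does this slightly more economically by first simplifying $E_l$ itself to $\Delta_l+O(1/\sqrt{k_0l})$ and then invoking $R_l=I+O(\sqrt{l/k_0})$, rather than expanding $(R_lE_l)_{22}$ directly as you do; note also a slip in your second display, where the numerator should carry $w_l$, not $w_l^2$. Your worry about three separate regimes is unnecessary: the elementary inequality $\sqrt{k_0l}\le(k_0+l)/2$ makes the $O(1/\sqrt{k_0l})$ remainder bound uniform in $l$.

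One caution on the variance bound: your assertion that the contributions $O(l/k_0^2)$ and $O(1/(k_0+l))$ combine to $O(1/l^2)$ for all $l\ge l_0$ is not correct---the $(2,2)$ entry of $V_l$ alone contributes variance $\asymp 1/k_0$, which exceeds $1/l^2$ once $l\gtrsim\sqrt{k_0}$. The paper's own proof is equally loose here (it never explicitly treats the $V_l$ term), and what one actually obtains is $\EE\|B_l-\EE B_l\|^2=O(1/k_0)$. This weaker bound, however, is sufficient for every downstream use of the lemma in Section~\ref{sec-timechange}.
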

\begin{proof}
We have
$  \big(R_l+\hat W_l\big) Q_{l}^{-1}Q_{l-1} =  R_l +N_l + P_l,$
where
$ N_l =  \hat W_lQ_{l}^{-1} Q_{l-1}, \ P_l = (Q_{l}^{-1} Q_{l-1} -I_2)R_l.$
We compute for $l_0 \leq l \leq k_0/\kappa^{-1}$:
\begin{equation} \label{qinvq}Q_{l}^{-1}Q_{l-1}= \left( \begin{array}{cc}
1 & 0 \\
\frac{w_{l}-w_{l-1}}{\sqrt{4-w_{l}^2}} & \sqrt{\frac{4-w_{l-1}^2}{4-w_{l}^2}}
\end{array}\right)= \left( \begin{array}{cc}
1 & 0 \\
0 & 1-\frac{1}{2l}
\end{array}\right) +O\Big( \frac{1}{\sqrt{k_0l}}\Big),\end{equation}
where we used  that $ c_{k_0+l}-c_{k_0+l-1} =O(1/k_0^2)$ and $l\geq l_0$.
Since $R_l = I_2+ O( l/k_0)$, we deduce that
$$ P_l =\Delta_l + O\Big( \frac{1}{\sqrt{k_0 l}}\Big), \ N_l = \hat W_l +H_l,$$
where $H_l$ is a centered random matrix with $\EE \|H_l\|^2 = O(1/l^2)$.  
Finally, as $V_l$ is centered and $\EE \|V_l\|^2 = O(k_0^{-1})$, we observe that $V_l Q_l^{-1} Q_{l-1}$  can be absorbed in the error term $B_l$. This   
completes the proof of the claim.
\end{proof}

\subsection{A time change}
\label{sec-timechange}
 The idea is to study the recursion $Y_k$ at stopping times $l_i$ which will correspond to the times the recursion returns to the direction $(1,0)$. The choice of this specific direction is only made for computational convenience. As we will show, the advantage of making this time change is that we will be able to write down a \textit{closed} recursion for the norm of $Y_{l_i}$ (see Proposition \ref{devepsilon}), and prove that the variance accumulated in the $i^{\text{th}}$-block $[l_i,l_{i+1}]$ is of order $1/j_i$, with $j_i$ as in
\eqref{eq-ip} below. We continue to work under the convention introduced in the beginning of Section \ref{subsec-change}.
\subsubsection{A linearization lemma} Our main tool to control the mean and variance collected in a given block is a  general linearization lemma, which gives an estimate on the error induced by the first order expansion of a product of random matrices of size $2\times 2$. To this end, recall that, given a sequence of $2\times 2$ matrices $C_1,\ldots, C_m$, for any $i<j$, we denote
$$ C_{i,j} = C_j C_{j-1}\ldots C_{i+1},$$
and $C_{i,j} =I_2$ for $i\geq j$.
\begin{lemma}\label{linear}
Let $m\geq 1$. Let  $C_k$, $k=1,\ldots,m$, be deterministic matrices such that for some absolute constant $\bar c\geq 1$,
$\|C_{i,j}\|\leq \bar c$ for all $1\leq i\leq j\leq m$.
Let $S_j \in \mathcal{M}_{2\times 2}$ be independent random matrices such that
$$ \forall 1\leq j\leq m,\quad \|\EE S_j\| \leq  \frac{1}{L}, \quad \EE \| S_j-\EE S_j\|^2 \leq \frac{1}{L},$$
where $L\geq 1$.
Then,
\begin{equation}
\label{eq-expand}
\prod_{j=1}^{m} \big(C_j + S_j\big)  = C_{0,m} + \sum_{j=1}^{m} C_{j,m}S_j C_{0,j-1}  +Z,
\end{equation}
where, for some absolute constant $C$,
$$ \|\EE Z\| \leq \bar c \Big(e^{\frac{ C\bar c m}{L} } - 1- \frac{C \bar c m}{L}\Big),\quad \EE \| Z - \EE Z\|^2\leq e^{\frac{C{\bar c}^2 m}{L}}\Big( e^{\frac{C {\bar c}^2 m}{L}} - 1- \frac{C {\bar c}^2 m}{L}\Big).$$
\end{lemma}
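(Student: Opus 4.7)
The plan is to expand the product $\prod_{j=1}^m (C_j + S_j)$ as a sum over subsets $I \subseteq \{1, \ldots, m\}$, where $I$ records which factors contribute an $S_j$ rather than a $C_j$. For $I = \{i_1 < \cdots < i_p\}$ the corresponding monomial reads
\[
T_I = C_{i_p, m}\, S_{i_p}\, C_{i_{p-1}, i_p - 1}\, S_{i_{p-1}} \cdots C_{i_1, i_2 - 1}\, S_{i_1}\, C_{0, i_1 - 1},
\]
so that $I = \emptyset$ produces $C_{0,m}$, the singletons $|I|=1$ produce the linear correction $\sum_j C_{j,m} S_j C_{0, j-1}$, and one \emph{defines} $Z = \sum_{|I| \geq 2} T_I$. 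Both estimates then reduce to controlling $T_I$ term-by-term and summing over $I$.

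For the mean, independence together with $\|\EE S_j\| \leq 1/L$ yields $\|\EE T_I\| \leq \bar c^{|I|+1} L^{-|I|}$ via the hypothesis $\|C_{i,j}\| \leq \bar c$, and summing over $|I| = p \geq 2$ with $\binom{m}{p} \leq m^p/p!$ immediately gives
\[
\|\EE Z\| \leq \bar c \sum_{p \geq 2} \frac{(\bar c m/L)^p}{p!} = \bar c \left( e^{\bar c m/L} - 1 - \frac{\bar c m}{L} \right).
\]

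For the variance I would further split $S_j = \EE S_j + \hat S_j$ and refine each $T_I = \sum_{K \subseteq I} T_I^{(K)}$, where $T_I^{(K)}$ carries $\hat S_j$ at positions $j \in K$ and $\EE S_j$ at positions $j \in I \setminus K$. Since the $\hat S_j$ are independent and centred, $\EE T_I = T_I^{(\emptyset)}$, so $Z - \EE Z = \sum_{|I| \geq 2,\, \emptyset \neq K \subseteq I} T_I^{(K)}$. The crucial observation is that any cross-covariance $\EE \langle T_I^{(K)}, T_{I'}^{(K')} \rangle$ with $K \neq K'$ vanishes, because some index lying in $K \triangle K'$ contributes an isolated centred factor and kills the expectation. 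On the surviving diagonal $K = K'$, Cauchy--Schwarz together with $\EE \|\hat S_j\|^2 \leq 1/L$ and $\|\EE S_j\| \leq 1/L$ gives
\[
|\EE \langle T_I^{(K)}, T_{I'}^{(K)} \rangle| \leq \bar c^{|I|+|I'|+2}\, L^{-(|I|+|I'|-|K|)}.
\]
Parametrising $q = |K|$, $r = |I\setminus K|$, $r' = |I'\setminus K|$ organises the triple sum as $\bar c^2 \sum_{q \geq 1} \binom{m}{q} L^q \bigl( \sum_{r:\, q+r \geq 2} \binom{m-q}{r} (\bar c/L)^{q+r}\bigr)^2$; splitting the inner sum into $q \geq 2$ (where the constraint $q+r \geq 2$ is automatic and the sum is bounded by $(\bar c/L)^q e^{\bar c m/L}$) and $q = 1$ (where it is bounded by $(\bar c/L)(e^{\bar c m/L}-1)$), and then using $\bar c \geq 1$ to merge exponents, yields the claimed bound of the form $e^{C \bar c^2 m/L}(e^{C \bar c^2 m/L} - 1 - C \bar c^2 m/L)$.

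The main obstacle I expect is the variance bookkeeping. The mean estimate is nearly immediate, but the variance requires (i) justifying carefully that only centred-index sets with $K = K'$ give non-vanishing cross-covariances, and (ii) arranging the combinatorial sum so that the constraint $|I|, |I'| \geq 2$ is propagated through to produce the exact subtracted form $e^\alpha(e^\alpha - 1 - \alpha)$ rather than a cruder $e^{2\alpha}$-type bound. Tracking the three parameters $\bar c$, $m$, and $L$ in their correct roles (so that the final bound has $\bar c^2$ inside both exponential factors, and not just $\bar c$) is where care is needed.
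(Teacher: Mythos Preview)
Your proposal is correct and follows essentially the same route as the paper's proof: expand the product over subsets, bound $\|\EE Z\|$ directly, then for the variance split $S_j=\EE S_j+\hat S_j$, use the orthogonality over the set of centred indices, and organise the surviving combinatorial sum via the $q=|K|=1$ versus $q\geq 2$ dichotomy together with the elementary inequality $(e^x-1)^2\leq 2e^x(e^x-1-x)$. The only cosmetic difference is notation (the paper writes the pair as $(I,J)$ with $I$ the centred indices and $J$ disjoint, whereas you use $(I,K)$ with $K\subseteq I$), and the paper tracks an extra absolute factor $4^{2|I|+|J|+|J'|}$ coming from sub-multiplicativity of the Hilbert--Schmidt norm, which is absorbed into the final constant $C$ exactly as you anticipate.
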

\begin{proof}
Expanding the product $\prod_{j=1}^{m} \big(C_j + S_j\big)$, we get from \eqref{eq-expand} that
$$Z = \sum_{p=2}^{m}\sum_{1\leq  l_1<\ldots <l_p\leq m}\Big( \prod_{i=1}^p C_{l_i, l_{i+1}-1} S_{l_i} \Big) C_{0,l_1-1},$$
where in the inner sum, $l_{p+1}=m+1$.
Thus,
$$
\| \EE Z\| \leq  \sum_{p=2}^{m}\sum_{1\leq l_1<\ldots <l_p\leq m} {\bar c}^{p+1 }\Big(\frac{1}{L}\Big)^p \leq \bar{c}\big( e^{\frac{\bar c m}{L}} -1 - 
\frac{\bar c m}{L}\Big).$$
To compute the second moment of
$\|Z-\EE Z\|$, we denote for any $j$, $\hat S_j = S_j- \EE S_j$. For any $I,J$ disjoint subsets of $\{1,\ldots, m\}$, we set
$$ M_{I,J} =\Big(\prod_{i=1}^p C_{l_i, l_{i+1}-1} M_{l_i} \Big) C_{l,l_1-1},$$
where $l_{p+1}=m+1$, $I\cup J = \{l_1<\ldots<l_p\}$, and
$$ M_l =\begin{cases}
 \hat S_l &\text{ if } l \in I,\\
\EE S_l & \text{ if } l \in J.
\end{cases}$$
With this notation, we have
$$ Z - \EE Z =  \sum_{p=2}^{m}\sum_{K\subset \{1,\ldots,m\} \atop |K|=p} \sum_{K = I\sqcup J\atop |I|\geq 1 }M_{I,J},$$
where $K=I \sqcup J$ means that $(I,J)$ is a partition of $K$.
Since the $\hat S_{l_i}$ are independent and centered random variables,
$$ \EE \tr \big(M_{I,J}^T M_{I', J'}\big) =0,$$ unless $I=I'$. Thus,
$$ \EE \| Z - \EE Z \|^2 = \sum_{I \subset \{1,\ldots, m\} \atop |I|\geq 1}\sum_{J \subset I^c\atop |I\cup J| \geq 2  }\sum_{J' \subset I^c \atop |I\cup J'|\geq 2}  \EE\tr(M_{I,J} M_{I,J'}^T).$$
 But, since $\EE \|\hat S_l  \|^2 \leq 1/L$ and $\|\EE S_l\|\leq 1/L$,
$$ \EE \tr \big( M_{I,J}^T M_{I,J'}) \leq  4^{2|I| +|J|+|J'|} {\bar c}^{(2|I|+|J|+|J'|+2)} \Big( \frac{1}{L}\Big)^{|I| +|J|+| J'|}.$$
Therefore,
$$\EE \| Z - \EE Z \|^2 \leq \sum_{I \subset \{1,\ldots,m\} \atop |I|\geq 1} \Big(\frac{C{\bar c}^2}{L}\Big)^{|I|}\Big(\sum_{J\subset I^c\atop |I\cup J|\geq 2} \Big(\frac{C \bar c }{L}\Big)^{|J|}\Big)^2,$$
where $C>0$ is an absolute constant. Using that 
$$\sum_{J\subset I^c\atop |I\cup J|\geq 2} \Big(\frac{C\bar c }{L}\Big)^{|J|} 
\leq  \Big( e^{\frac{C \bar c m}{L}}-1\Big) \Car_{|I|=1} + e^{\frac{C \bar c m}{L}} \Car_{|I|\geq 2},$$
we conclude that 
\begin{align*}
\EE \| Z - \EE Z \|^2 & \leq \frac{C {\bar c}^2 m}{L}\Big( e^{\frac{C \bar c m}{L}}-1\Big)^{2} + e^{\frac{2C \bar c m}{L}}\Big(  e^{\frac{C {\bar c}^2 m}{L}}-1 - \frac{C {\bar c}^2 m}{L}\Big)\\
&  \leq 2 e^{\frac{2C\bar c m}{L}}\Big(  e^{\frac{C{\bar c}^2 m}{L}}-1 - 
\frac{C{\bar c}^2 m}{L}\Big),
\end{align*}
where in the last inequality 
we used the fact that for any $x\geq 0$,
$ (e^x -1)^2 \leq 2 e^{x}(e^x-1-x)$. Modifying $C$ if needed, 
we obtain the claim.
\end{proof}

\subsubsection{Initialization} We will set the first block by choosing the first time $l_1$ the rotation $R_{l_0,l_1}$ turns the vector $Y_{l_0}$ as close as possible to the direction $(1,0)$. As the noise accumulated in this block will only have a marginal influence, the angle of $Y_{l_1}$ will be close as well to zero.  

 For any $l<l'$, we denote by $T_{l,l'}$ the matrix
\begin{equation} \label{defT} T_{l,l'} = \prod_{j=l+1}^{l'} \big(R_j+\hat W_j+V_j\big) Q_{j+1}^{-1}Q_j = \prod_{j=l+1}^{l'} \big( R_j +\hat W_j + \Delta_j + B_j\big),\end{equation}
where $\Delta_j$, $B_j$ are defined in Lemma \ref{decompbaseQ}.
We define a second block $(l_0, l_1)$ chosen so that the output vector $T_{l_0,l_1} Y_{l_0}$ is, up to a small error, in the direction $e_1=(1,0)$.
\begin{lemma}\label{init}
There exists $\kappa_0$ so that for all  $\kappa>\kappa_0$ and all $n$ large enough, there exists a  $\mathcal{F}_{k_0+l_0}$-measurable
random integer $l_1$  such that $l_0\leq l_1 \leq l_0 + 8\pi \kappa^{-2/3}l_0$ a.s., and for any $\eta>0$,
\begin{equation}
\label{eq-t1}
 T_{l_0,  l_1}Y_{l_0}/\|Y_{l_0}\| =t_1(1,\veps_1)^T,
 \;\;
 \mbox{\rm where}\;\;
 \EE(t_1-1)^2 \lesssim \kappa^{-\frac{3}{2}}, \ \mbox{\rm and} \ 
 \PP( |\veps_1| \geq \kappa^{-\frac{3}{4}+\eta}) \lesssim \kappa^{-2\eta}.
\end{equation}
\end{lemma}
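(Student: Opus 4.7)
My approach is to choose $l_1$ so that the deterministic product of rotations $\prod_{j=l_0+1}^{l_1} R_j$ approximately aligns $Y_{l_0}/\|Y_{l_0}\|$ with $+e_1$, and then to apply the linearization Lemma \ref{linear} to control the noise and drift perturbations. The per-step rotation angle $\theta_j = \arccos(w_j/2)$ satisfies $\theta_j \asymp \sqrt{j/k_0}$, which for $j$ near $l_0$ equals $\sqrt{\kappa}\, k_0^{-1/3}$ with only $O(\kappa^{-2/3})$ relative variation across the candidate window. Writing the $\mathcal{F}_{k_0+l_0}$-measurable unit vector $Y_{l_0}/\|Y_{l_0}\| = (\cos\phi, \sin\phi)^T$, I let $l_1$ be the smallest integer exceeding $l_0$ for which $\Theta_{l_0,l_1} := \sum_{j=l_0+1}^{l_1} \theta_j$ is within $\theta_{l_0}$ of $\phi$ modulo $2\pi$. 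Since the $\theta_j$'s cover a full $2\pi$ period in $\lesssim 2\pi/\theta_{l_0} \asymp k_0^{1/3}/\sqrt{\kappa}$ steps, such an $l_1$ exists with $l_1 - l_0 \lesssim \kappa^{-3/2}\, l_0$, which is comfortably bounded by $8\pi \kappa^{-2/3}\, l_0$.

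Having fixed $l_1$, I apply Lemma \ref{linear} with $C_j = R_j$ (so $\bar c = 1$, rotations being isometries) and $S_j = \hat W_j + \Delta_j + B_j$. Fact \ref{newbasis} and Lemma \ref{decompbaseQ} give $\|\EE S_j\| \lesssim 1/l_0$ and $\EE \|S_j - \EE S_j\|^2 \lesssim 1/l_0$, so $L \sim l_0$ and $m/L = (l_1 - l_0)/l_0 \lesssim \kappa^{-3/2}$. The lemma produces
\[
T_{l_0, l_1} \;=\; \prod_{j=l_0+1}^{l_1} R_j \;+\; \sum_{j=l_0+1}^{l_1} \Big(\prod_{i=j+1}^{l_1} R_i\Big)\, S_j \Big(\prod_{i=l_0+1}^{j-1} R_i\Big) \;+\; Z,
\]
with $\EE \|Z\|^2 \lesssim (m/L)^2 \lesssim \kappa^{-3}$. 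The deterministic rotation term applied to $Y_{l_0}/\|Y_{l_0}\|$ yields a unit vector $(\cos\delta, \sin\delta)^T$ with $|\delta| \leq \theta_{l_0} = o(1)$ as $n\to\infty$, so it contributes $1 + O(\theta_{l_0}^2)$ to $t_1$ and $O(\theta_{l_0}) = o(\kappa^{-1/2})$ to $t_1\veps_1$, both negligible against the target.

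For the linear term, since $\hat W_j = \sqrt{2/j}\,\zeta_j\, e_2 e_1^T$ has rank one and the $\zeta_j$ are independent centered with variance $v$, the contribution to any fixed projection has variance $\sum_{j=l_0+1}^{l_1} (2v/j)\cdot O(1) \lesssim (l_1-l_0)/l_0 \lesssim \kappa^{-3/2}$; the deterministic drifts from $\Delta_j$ (of norm $1/(2j)$) and $\EE B_j$ (of norm $1/\sqrt{k_0 j}$) contribute $\sum O(1/j) \lesssim \kappa^{-3/2}$ in bias; and the centered component of $B_j$ contributes $O(\sum 1/j^2)$, which is negligible. Combining these estimates yields $\EE(t_1 - 1)^2 \lesssim \kappa^{-3/2}$ directly, establishing the first claim. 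For the second, since $t_1 \geq 1/2$ except on an event of probability $\lesssim \kappa^{-3/2}$ and $\EE (t_1 \veps_1)^2 \lesssim \kappa^{-3/2}$, Markov's inequality applied on the good event gives $\PP(|\veps_1|\geq \kappa^{-1/2}) \lesssim \kappa \cdot \kappa^{-3/2} + \kappa^{-3/2} \lesssim \kappa^{-1/2}$.

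I expect the main obstacle to be the variance bookkeeping in the linear term: one must check that the rotations $\prod_{i=j+1}^{l_1} R_i$ and $\prod_{i=l_0+1}^{j-1} R_i$ produce only $O(1)$ coefficients in front of each independent increment $\zeta_j$, and likewise for the drifts, so that the total variance is the harmonic-like sum $\sum 1/j$ rather than something larger. A secondary technical point is verifying the slow variation $\theta_j / \theta_{l_0} = 1 + O(\kappa^{-2/3})$ over $[l_0, l_1]$, which is needed both to bound the discretization residual by $\theta_{l_0}$ and to guarantee that a full $2\pi$-rotation fits inside the window, so that alignment to $+e_1$ (rather than $-e_1$) can always be achieved.
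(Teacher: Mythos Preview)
Your proposal is correct and follows essentially the same approach as the paper: define $l_1$ as the first time the cumulative rotation angle brings $Y_{l_0}/\|Y_{l_0}\|$ within one step-angle of $e_1$, bound $l_1-l_0 \lesssim \kappa^{-3/2}l_0$ using $\theta_j \asymp \sqrt{l_0/k_0}$, and then apply the linearization Lemma \ref{linear} with $C_j=R_j$, $L\sim l_0$, $m/L\lesssim \kappa^{-3/2}$ to get $\EE\|z_1\|^2\lesssim\kappa^{-3/2}$ and conclude via Chebyshev. Your explicit mod-$2\pi$ angle condition is in fact slightly cleaner than the paper's second-component criterion, as it directly guarantees alignment with $+e_1$ rather than $\pm e_1$.
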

\begin{proof}We define
$$ l_1 = \inf \big\{ l \geq l_0 : |\langle R_{l_0,l}Y_{l_0},(0,1)\rangle| \leq 2 \sqrt{l_0/k_0} \| Y_{l_0} \| \big\}.$$
Let $\theta$ denote the argument, in $[0,2\pi)$,  of $Y_{l_0}$ viewed as a two-dimensional vector. Note that if $\theta \leq 2\sqrt{l_0/k_0}$, the claim holds with $l_1=l_0$, since in that case $T_{l_0,l_1} = I_2$, $t_1=\cos(\theta) = 1 + O(l_0/k_0)$, and $\veps_1 = \tan(\theta) = O(\sqrt{l_0/k_0})$. In the following we assume that $\theta >2\sqrt{l_0/k_0}$. Let $\theta_l \in [0,2\pi)$ be the angle of the rotation $R_l$, see \eqref{eq-lambdadef}. We set
$$ l' = \inf \big\{ l\geq l_0 : \theta +\sum_{j= l_0+1}^{l} \theta_{j}   \geq 2\pi\big\}.$$
Since $\sin \theta_l = \frac{1}{2} \sqrt{4-w_{l}^2}$, we deduce that for
$\kappa$ and $n$ large enough and any $1 \leq l\leq  k_0/\kappa$,
we have that
\begin{equation} \label{encadrtheta} \sqrt{l/4 k_0} \leq \theta_l \leq \frac{3}{2}\sqrt{l/k_0}.\end{equation}
Therefore,
\begin{equation}\label{bornel'} l' \leq l_0+ \lceil 2(2\pi - \theta)\sqrt{k_0/l_0}  \rceil\leq l_0+8\pi \kappa^{-\frac{3}{2}}l_0,\end{equation}
where we used the fact that $l_0 =\lfloor \kappa k_0^{1/3}\rfloor$.
By the definition of $l'$, we have that $ 2\pi \leq \theta +\sum_{j= l_0+1}^{l'} \theta_{j}  \leq 2\pi+ \theta_{l'}$,
and
using \eqref{bornel'} and \eqref{encadrtheta}, we get that for $\kappa$ large enough,
$0\leq  \sin(\theta_{l'})\leq \theta_{l'} \leq 2\sqrt{{l_0}/{k_0}}.$
 Therefore,
$$0\leq  \langle R_{l_0,l'}Y_{l_0},(0,1)\rangle\leq \sin (\theta_{l'})  \| Y_{l_0} \|\leq 2\|Y_{l_0}\|\sqrt{l_0/k_0}.$$
Thus, $l_1 \leq l'$.
From Lemma \ref{decompbaseQ}, we have that for any $l\geq l_0$,
$T_{l_0,l} = \prod_{j=l_0+1}^l \big( R_j + S_j\big),$
where $S_j$ are independent matrices such that
$$\| \EE S_j \| =O(1/l_0), \ \EE \| S_j - \EE S_j \|^2 =O( 1/l_0).$$
By Lemma \ref{linear} and the bound \eqref{bornel'}, we deduce that
$ T_{l_0,l_1} = R_{l_0,l_1} + S,$
where $\| \EE S\| =O(\kappa^{-3/2})$ and $\EE \| S-\EE S\|^2 = O(\kappa^{-3/2})$. Therefore,
$$ \EE \| S Y_{l_0}/\|Y_{l_0}\|\|^2=O(\kappa^{-3}),$$
and by the definition of $l_1$, we have that
$$ \| R_{l_0,l_1}Y_{l_0}/ \| Y_{l_0}\|  - e_1 \| =O\Big( \sqrt{l_0/k_0} \Big)= O\Big(\sqrt{\kappa }k_0^{-1/3}\Big),$$
which yields that
$ T_{l_0,l_1} Y_{l_0}/\| Y_{l_0}\| = e_1+ z_1,$
where $\EE \| z_1\|^2 =O(\kappa^{-3/2})$. The first estimate on $t_1$  in \eqref{eq-t1}
 is then straightforward. On the other hand, $\veps_1 = z_1(2)/(1+z_1(1))$. We  write:
$$ \PP( |\veps_1| \geq \kappa^{-3/4 +\eta}) \leq \PP\Big( |z_1(2)| \geq \kappa^{-3/4+\eta}/2\Big)+\PP\Big( z_1(1)\leq -1/2\Big).$$
Using that $\EE \|z_1\|^2 =O(\kappa^{-3/2})$, we get the second part of \eqref{eq-t1}.
\end{proof}
\subsubsection{Decomposition along a block} We will later consider the evolution
at properly chosen
stopping times $l_i$, and therefore we 
need
to understand the effect of the transition matrices
along a block between two different times.
Let $l\geq l_0$ be a stopping time w.r.t the filtration $(\mathcal{F}_j)_{j\geq 0}$ and $m\geq 1$, an $\mathcal{F}_l$-measurable random integer. Using the notation from Lemma \ref{decompbaseQ},
by Lemma 
\ref{linear} we have that
\begin{equation} \label{decompositionT} T_{l,l+m} = S_{l,l+m} + V_{l,l+m} +
  D_{l,l+m} + Z_{l,l+m},\end{equation}
where
\begin{equation}
  \label{eq-VR}
  S_{l,l+m} = R_{l,l+m} + \sum_{j=l+1}^{l+m} R_{j,l+m}\Delta_{j}R_{l,j-1}, \quad   V_{l,l+m} = \sum_{j=l+1}^{l+m} R_{j,l+m} \hat W_j R_{l,j-1},
\end{equation}
\begin{equation}
  \label{eq-Z}
  D_{l,l+m} =   \sum_{j=l+1}^{l+m} R_{j,l+m} B_j R_{l,j-1},\end{equation}
and, for an absolute constant $C$,
  \begin{equation} \label{eqZ}  \EE_{\mathcal{F}_l} ||Z_{l,l+m}|| \lesssim (m/l)^2e^{Cm/l}, \quad \EE_{\mathcal{F}_l} ||Z_{l,l+m}||^2 \lesssim (m/l)^2e^{Cm/l}.
\end{equation}
In the following lemma we compute the leading order of the transition matrix 
$T_{l,l+m}$, along a block where the rotation $R_{l,l+m}$ is close to the identity, by identifying the leading term in the drift $S_{l,l+m}$, and in noise term $V_{l,l+m}$. We use the notation $a=b[2\pi]$ for ``$a$ equals $b$ modulo $2\pi$''.
\begin{lemma}\label{decomprandomblock} Let $l$ be a stopping time w.r.t the filtration $(\mathcal{F}_j)_{j\geq 1}$ and let $m$ be
a $\mathcal{F}_l$- measurable random integer such that $ l+m \leq k_0/\kappa $. Let $\delta \in (-\pi,\pi]$ such that $\delta = \sum_{j=l+1}^{l+m} \theta_j [2\pi]$.
Then,
$$ S_{l,l+m} =
\Big(1- \frac{m}{4l}\Big)I_2 + \left(\begin{array}{cc} -\delta^2/2&-\delta\\
\delta& -\delta^2/2
\end{array}
\right)+\tilde{S}_{l,l+m},$$
\begin{equation}
  \label{eq-Gdef}
  V_{l,l+m} =:\frac{1}{2}\sqrt{\frac{v}{l}}\left( \begin{array}{cc}g_{1}
 & -\sqrt{3} g_{2}\\
\sqrt{3} g_{3} &  -g_{1}
\end{array}\right) + \tilde G_{l,l+m}=:G_{l,l+m}+\tilde G_{l,l+m},
\end{equation}
where $g_{1},g_{2},g_{3}$ are such that conditionally on $\mathcal{F}_l$, $g_{1}$ is decorellated from $(g_{2}, g_{3})$ and
$$  \EE_{\mathcal{F}_l}( g_i) = 0\; \mbox{\rm and} \; \ \EE_{\mathcal{F}_l} (g_i^2) = m, i=1,2,3,
 \quad \EE_{\mathcal{F}_l} (g_{2} g_{3}) = m/3,$$
and  $\tilde G_{l,l+m}$ is a centered random matrix,  such that
$$ \EE_{\mathcal{F}_l }\| \tilde G_{l,l+m}\|^2 \lesssim \Big( \delta^2 + \frac{l}{k_0}+\sqrt{\frac{m}{k_0}}+\frac{m}{l}\Big) \frac{m}{l}+ \sqrt{\frac{k_0}{l^3}} |\delta|,$$
and
\begin{equation} \label{claimSD} \| \tilde{S}_{l,l+m} \| \lesssim \Big( |\delta| + \sqrt{\frac{l}{k_0}}+ \sqrt{\frac{m}{k_0}}+\frac{m}{l}\Big) \frac{m}{l}+ \sqrt{\frac{k_0}{l^3}} |\delta| + |\delta|^3,
\quad \EE_{\mathcal{F}_l} \|D_{l,l+m}\|^2  \lesssim \frac{m}{l^2} + \frac{m^2}{k_0l}.\end{equation}
Finally, there exist absolute constants $\lambda_0,C>0$ so that for any $|\lambda|\leq \lambda_0$,
\begin{equation}\label{controlexpoG}  \EE_{\mathcal{F}_l} e^{\lambda \sqrt{l}\|G_{l,l+m}\|_{\infty}}\leq 4e^{C\lambda^2 m }, \ \EE_{\mathcal{F}_l} e^{\lambda \sqrt{l}\|\tilde{G}_{l,l+m}\|_{\infty}}\leq 4e^{C\lambda^2 m }.\end{equation}
\end{lemma}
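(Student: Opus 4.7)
The plan is to analyze separately each of the four summands in the decomposition \eqref{decompositionT}, viewing $S_{l,l+m}$ as the deterministic drift, $V_{l,l+m}$ as the leading Gaussian noise, $D_{l,l+m}$ as a lower-order independent noise, and $Z_{l,l+m}$ as an already-controlled remainder.

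For $S_{l,l+m}$: the product $R_{l,l+m}=\prod_{j=l+1}^{l+m}R_j$ is (up to negligible corrections) the rotation by $\sum_{j=l+1}^{l+m}\theta_j$; reducing this angle modulo $2\pi$ produces $\delta$, and the Taylor expansion $\cos\delta=1-\delta^2/2+O(\delta^4)$, $\sin\delta=\delta+O(|\delta|^3)$ accounts for the $\bigl(\begin{smallmatrix}-\delta^2/2&\delta\\-\delta&-\delta^2/2\end{smallmatrix}\bigr)$ correction. Write $\Delta_j=-\tfrac{1}{4j}I_2-\tfrac{1}{4j}\mathrm{diag}(1,-1)$; the scalar part contributes $-\tfrac{m}{4l}I_2$ up to $O(m^2/l^2)$, while each conjugate $R_{j,l+m}\,\mathrm{diag}(1,-1)\,R_{l,j-1}$ is itself a rotation of $\mathrm{diag}(1,-1)$ by the angle $\alpha_j-\beta_j$ and therefore averages to zero by trigonometric partial summation, with a boundary term of order $\sqrt{k_0/l^3}\,|\delta|$ coming from the cosine/sine sums. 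The remaining angle-drift errors $O(\sqrt{l/k_0})$ and $O(\sqrt{m/k_0})$ come from replacing each individual $\theta_j$ by the reference $\theta_l=\sqrt{l/k_0}+o(1)$.

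For $V_{l,l+m}$: each summand $R_{j,l+m}\hat W_j R_{l,j-1}$ has entries of the form $\sqrt{2/j}\,\zeta_j\,f_{a,b}(\alpha_j,\beta_j)$ where $f_{a,b}$ is a product of a sine and a cosine of $\alpha_j=\sum_{k=j+1}^{l+m}\theta_k$ and $\beta_j=\sum_{k=l+1}^{j-1}\theta_k$. By independence of the $\zeta_j$'s and $\Var(\zeta_j)=v$, one reads off the mean-zero matrix and a covariance of each entry as $(v/l)\sum_{j}f_{a,b}(\alpha_j,\beta_j)^2$ up to the $j\approx l$ approximation. Product-to-sum identities convert $\sin^2\cos^2$-type sums into oscillatory sums of $\cos(2\alpha_j),\cos(2\beta_j),\cos(2\alpha_j\pm 2\beta_j)$; since the angles $\alpha_j+\beta_j=\Theta-\theta_j$ and $\alpha_j-\beta_j$ wind through many multiples of $2\pi$ over the block, these oscillatory sums contribute only the stated errors, leaving the time-averaged values $1/2,1/2,1/2,-1/2$ that force $\Var(g_i)=m$ and $\mathrm{Cov}(g_2,g_3)=m/3$. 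The factor $\sqrt{3}$ in front of the off-diagonal entries comes precisely from the fact that $\sin^2\alpha\cos^2\beta$ averages to $1/4$ while $\sin^2(\alpha-\beta)$ averages to $1/2$; extracting the common part produces $g_1$, and orthogonalizing the remaining linear combinations yields $g_2,g_3$ with the stated covariance structure (and in particular $g_1$ decoupled from $(g_2,g_3)$). The difference between the true sums and these averaged values is collected into $\tilde G_{l,l+m}$, whose $L^2$ estimate follows by bounding each of the oscillatory sums explicitly.

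For $D_{l,l+m}$: since rotations are isometries, $\|R_{j,l+m}B_jR_{l,j-1}\|$ is of the same order as $\|B_j\|$. By independence, the variance contribution is $\sum_j\EE\|B_j-\EE B_j\|^2=O(m/l^2)$ by Lemma \ref{decompbaseQ}, while the mean sums $\sum_j R_{j,l+m}\EE B_j R_{l,j-1}$ have norm $O(m/\sqrt{k_0l})$, whose square $m^2/(k_0l)$ matches the second part of \eqref{claimSD}. The bound on $Z_{l,l+m}$ is already furnished by \eqref{eqZ}. Finally, \eqref{controlexpoG} is obtained by applying Chebyshev to the weighted sums of independent $\zeta_j$'s that define each entry of $G$ and $\tilde G$: the subgaussian tail bound \eqref{boundlaplacezeta} gives $\EE e^{\lambda\zeta_j}\le e^{C\lambda^2}$ for $|\lambda|\le\lambda_0$, and the trigonometric weights are uniformly bounded by $\sqrt{2/l}$ times an absolute constant, so that the log-Laplace of each entry is controlled by $C\lambda^2 m/l$, which after scaling by $\sqrt{l}$ produces the claimed $e^{C\lambda^2 m}$.

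The main obstacle is the entirely computational but delicate second step: verifying that the off-diagonal covariance $\mathrm{Cov}(g_2,g_3)=m/3$ and the decoupling of $g_1$ from $(g_2,g_3)$ emerge cleanly from the trigonometric averaging, and that the various error terms ($\delta$-dependence, $\sqrt{l/k_0}$ drift, boundary $\sqrt{k_0/l^3}|\delta|$) assemble into the precise $L^2$ bound on $\tilde G_{l,l+m}$. This requires treating carefully the regime where $\delta$ is small (so the winding is almost complete) versus generic, and using partial summation to convert slowly-varying weights into the $\sqrt{k_0/l^3}$ factor.
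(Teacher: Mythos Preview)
Your plan follows the same route as the paper, but the paper makes one simplification that considerably streamlines the trigonometry: rather than tracking two angles $\alpha_j,\beta_j$ separately, it writes $R_{j,l+m}=U_\delta U_{\alpha_{l,j}}^{-1}$ and $R_{l,j-1}=U_{\alpha_{l,j}}U_{\theta_j}^{-1}$ with the \emph{same} cumulative angle $\alpha_{l,j}=\sum_{p=l+1}^j\theta_p$. After absorbing $U_\delta=I_2+O(|\delta|)$ and $U_{\theta_j}^{-1}=I_2+O(\sqrt{l/k_0})$ into the error matrices $H_{l,l+m}$ and $Y_{l,l+m}$, each summand of $V_{l,l+m}$ and of $S_{l,l+m}$ becomes a pure conjugation $U_{\alpha_{l,j}}^{-1}\hat W_jU_{\alpha_{l,j}}$ (respectively $U_{\alpha_{l,j}}^{-1}\Delta_jU_{\alpha_{l,j}}$). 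All four matrix entries are then trigonometric polynomials in the \emph{single} angle $\alpha_{l,j}$, and the covariance structure of $(g_1,g_2,g_3)$ (including the $m/3$) drops out from the averages of $\sin^2\alpha\cos^2\alpha,\sin^4\alpha,\cos^4\alpha$, which equal $1/8,3/8,3/8$.

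There is one slip in your plan that this reduction also fixes: $\alpha_j+\beta_j=\Theta-\theta_j\equiv\delta-\theta_j\pmod{2\pi}$ does \emph{not} wind---it is essentially constant over the block. Hence $\cos(2\alpha_j+2\beta_j)\approx\cos(2\delta)\approx 1$ contributes a non-oscillatory term of order $m$, not an error term; dropping it would give the wrong variances (e.g.\ $\sum_j\sin^2\alpha_j\cos^2\beta_j\approx m/4$ in place of the correct $m/8$). The sums that genuinely oscillate, $\sum_j\cos 2\alpha_{l,j}$ and $\sum_j\sin 2\alpha_{l,j}$, are handled in the paper not by partial summation but by replacing $\alpha_{l,j}$ by the arithmetic progression $\hat\theta(j-l)$ with $\hat\theta=m^{-1}\sum_j\theta_j$ (each term incurs error $O(\sqrt{m/k_0})$ since $|\theta_j-\theta_{j'}|\lesssim\sqrt{m/k_0}$), and then using the closed-form Dirichlet kernel $\sum_{j=1}^m\cos(2\hat\theta j)=\cos(\hat\theta(m+1))\sin(\hat\theta m)/\sin\hat\theta=O(\sqrt{k_0/l}\,|\delta|)$.
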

\begin{proof}
Let $U_\alpha$ denote the rotation matrix with angle $\alpha$,
$$ U_\alpha =\left( \begin{array}{cc}
\cos \alpha & -\sin \alpha\\
\sin \alpha & \cos \alpha
\end{array}\right).$$
Since $R_k = U_{\theta_k}$, see \eqref{eq-lambdadef},
we have that
\begin{equation} \label{devR} R_{l,l+m} = U_\delta =I_2 + \left(\begin{array}{cc} -\delta^2/2&-\delta\\
\delta& -\delta^2/2
\end{array}
\right) + O( |\delta|^3).\end{equation}
 For any $j\geq l$,
\begin{equation} \label{approxR} R_{l,j-1} = U_{\alpha_{l,j}}U_{\theta_j}^{-1},\end{equation}
where $\alpha_{l,j} = \sum_{p=l+1}^{j} \theta_p$, and thus
\begin{equation} \label{approxR2} R_{j,l+m} = U_{\delta}U_{\alpha_{l,j} }^{-1}.\end{equation}
Using the fact that $\| \EE B_j\| =O(1/\sqrt{k_0l})$ and $\EE \| B_j-\EE B_j\|^2 =O(1/l^2)$, we deduce that
$$\| \EE_{\mathcal{F}_l} D_{l,l+m}\| = O\Big( \frac{m}{\sqrt{k_0 l}}\Big), \ \EE_{\mathcal{F}_l} \|D_{l,l+m}- \EE_{\mathcal{F}_l} D_{l,l+m}\|^2 =   
O\Big(\frac{m}{l^2}\Big).$$
Using \eqref{approxR}, \eqref{approxR2} 
and 
the fact that $U_\delta = I_2 +O(\delta)$, $U_{\theta_l} =I_2 +O(\sqrt{l/k_0})$, $\hat{W_j}$ are centered independent random variables with variance $O(1/l)$ for $j\geq m$, we find that
\begin{align}
 V_{l,l+m} &= \sum_{j=l+1}^{l+m} U_\delta U_{\alpha_{l,j}}^{-1} \hat{W_j} U_{\alpha_{l,j}}U_{\theta_j}^{-1} =: \sum_{j=l+1}^{l+m} U_{\alpha_{l,j}}^{-1} \hat{W_j} U_{\alpha_{l,j}} + H_{l,l+m} \nonumber \\
& = \sum_{j=l+1}^{l+m}\frac{\sqrt{2}\zeta_j}{\sqrt{j}} \left( \begin{array}{cc}
 \sin \alpha_{l,j} \cos \alpha_{l,j}   &-\sin^2 \alpha_{l,j}   \\
 \cos^2 \alpha_{l,j}   &  -\sin \alpha_{l,j} \cos \alpha_{l,j}
\end{array}\right) + H_{l,l+m}, \label{Vdecomp}
\end{align}
where 
\begin{equation} \label{poubelle1}\EE_{\mathcal{F}_{l}} H_{l,l+m} =0,  \ \EE_{\mathcal{F}_{l}} \|H_{l,l+m}\|^2 \lesssim (\delta^2 +l/k_0)(m/l).\end{equation}
Similarly, using \eqref{devR} we get 
\begin{equation} \label{decom:S} S_{l,l+m} = I_2 + \left(\begin{array}{cc} -\delta^2/2&-\delta\\
\delta& -\delta^2/2
\end{array}
\right) - \sum_{j=l+1}^{l+m}\frac{1}{2j} \left( \begin{array}{cc}
\sin^2 \alpha_{l,j} & \sin \alpha_{l,j} \cos \alpha_{l,j}\\
\sin \alpha_{l,j}\cos \alpha_{l,j} & \cos^2 \alpha_{l,j}
\end{array}\right) + Y_{l,l+m},\end{equation}
where $Y_{l,l+m}$ is a matrix such that
\begin{equation} \label{poubelle}  \| Y_{l,l+m}\| \lesssim  \Big( |\delta| + \sqrt{\frac{l}{k_0}}\Big) \frac{m}{l} + |\delta|^3.\end{equation}
We compute the leading term in \eqref{Vdecomp} which we denote by $G_{l,l+m}$, while putting into  $\tilde{G}_{l,l+m}$ all the remainder terms, and similarly for $S_{l,l+m}$. We will give a detailed proof for the expansion of the $(1,1)$-coefficient of $S_{l,l+m}$, the computations for the other coefficients and the covariance between the coefficients of $V_{l,l+m}$ being similar and yielding the same error terms. More precisely, we will show that
\begin{equation} \label{eq:claim} \sum_{j=l+1}^{l+m} \frac{\sin^2\alpha_{l,j}}{j} = \frac{m}{2l}+O\Big(\Big( \frac{m}{l}\Big)^2\Big)+O\Big( \sqrt{\frac{m^3}{k_0l^2} }\Big)+O\Big( \sqrt{\frac{k_0}{l^3}} \delta\Big).\end{equation}
Once proven, we obtain using \eqref{Vdecomp} and \eqref{poubelle1}, the claimed estimate on the error $\tilde{G}_{l,l+m}$, similarly, using \eqref{decom:S} and \eqref{poubelle} it yields the estimate on the error $\tilde{S}_{l,l+m}$.

To see \eqref{eq:claim}, note that
on the one hand,
 \begin{equation} \label{eq:1}\sum_{j=l+1}^{l+m} \frac{\sin^2 \alpha_{l,j}}{j} = \sum_{j=l+1}^{l+m} \frac{\sin^2 \alpha_{l,j}}{l} + O\Big(\Big( \frac{m}{l}\Big)^2\Big).\end{equation}
On the other hand,
\begin{equation}\label{eq:2} \sum_{j=l+1}^{l+m} \sin^2\alpha_{l,j} = \frac{m}{2}- \frac{1}{2} \sum_{j=l+1}^{l+m} \cos (2\alpha_{l,j}).\end{equation}
Let $\hat \theta = \frac{1}{m} \sum_{j=l+1}^{l+m}\theta_j$. Let $j \in\{ l+1,\ldots, l+m\}$. From \eqref{eq-lambdadef}
we have that
$$\sin(\theta_j) = \frac{1}{2} \sqrt{4 - w_j^2}=\sqrt{\frac{j}{j+k_0} +O\Big( \frac{1}{k_0}\Big)}.$$
We deduce that for $\kappa$ and $n$ large enough, for any $j \leq k_0/\kappa$,  we have that $\theta_j\leq \pi/3$ .
Thus, for any $l+1 \leq j, j' \leq l+m$,
$$ |\theta_{j'} -\theta_j |\leq 2 |\sin(\theta_j) - \sin(\theta_{j'})| \lesssim \sqrt{\frac{m}{k_0}}.$$
Therefore,
\begin{equation} \label{cont} \sum_{j=l+1}^{l+m}| \cos (2\alpha_{l,j}) -   \cos (2\hat \theta (j-l))|=O\Big( \sqrt{\frac{m^3}{k_0}}\Big).\end{equation}
But,
\begin{equation}\label{sum-cos}\sum_{j=l+1}^{l+m} \cos (2\hat \theta (j-l)) = \cos\big(\hat \theta (m+1)
\big) \frac{\sin( \hat \theta m)}{\sin (\hat \theta)} = O\Big( \sqrt{\frac{k_0}{l}} \delta\Big),\end{equation}
where we used the fact that $(3/2) \sqrt{l/k_0}\geq \hat \theta\geq (1/2) \sqrt{l/k_0}$ by \eqref{encadrtheta}.
Putting together \eqref{eq:claim}, \eqref{eq:1}, \eqref{cont} and \eqref{sum-cos}, we get \eqref{eq:claim}. For the last claim \eqref{controlexpoG}, observe that any coefficient of $V_{l,l+m}$ can be written as $\sum_{j=l+1}^{l+m} a_j \zeta_j/\sqrt{j}$ where $\|a\|_\infty \leq r$ and $r$ is a numerical constant. As a consequence of \eqref{boundlaplacezeta}, the second derivative of the moment generating function of $\zeta_j$ is bounded on $[-\lambda_0/2,\lambda_0/2]$. Since $\EE \zeta_j =0$ and $\EE \zeta_j^2=v$, it follows that $\log \EE e^{\theta \zeta_j} \leq c \theta^2$ for any $|\theta|\leq \lambda_0/2$. Moreover, as the estimate \eqref{boundlaplacezeta} is uniform in $j$, the constant $c$ is uniform in $j$. By the independence of the variables $\zeta_j$, the claim \eqref{controlexpoG} follows.
\end{proof}
\subsubsection{Definition of the blocks}
\label{sec-defblocks}
With $l_1$ as in Lemma \ref{init}, write  $l_1= \nu k_0^{1/3}$, where $\nu$ is an $\mathcal{F}_{l_0}$-measurable random variable such that $\kappa \leq \nu \leq 2\kappa$ almost surely. Let $\tau \in (1/4,2/5)$, set $i_0= \lfloor k_0^{\tau}\rfloor-\lceil\nu^{3/2}\rceil$, $i_1 =\lfloor  k_0^{(1-\tau)/3}\rfloor $,  $h_0 = i_1 -\lceil\sqrt{\kappa}\rceil$, and  define  the deterministic blocks $\hat l_i$ as
$$ \hat l_i =\begin{cases}
 \lfloor k_0^{\frac{1}{3}}(i-1+\nu^{\frac{3}{2}})^{\frac{2}{3}}\rfloor & \text{ if } 1\leq i \leq i_0,\\
 \lfloor k_0/(i_0+i_1-i)^2\rfloor  & \text{ if } i_0<i \leq i_0 + h_0.
\end{cases}$$
The definition of the deterministic blocks $\hat l_i$ -- in particular, the choice of 
 $\tau \in (1/4,2/5)$ -- is motivated by Fact \ref{comparaison}, which collects the important aspects of the sequence $\hat{l}_i$.

 Let $t_0 = i_0+h_0$. A first simple observation is the following.
\begin{fact}
$(\hat l_i)_{1\leq i\leq t_0}$ is an increasing sequence of integers.
\end{fact}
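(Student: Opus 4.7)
The plan is to verify the strict inequality $\hat l_i < \hat l_{i+1}$ separately in three cases: within the first regime $1 \leq i < i_0$, within the second regime $i_0 < i < t_0$, and at the transition $i = i_0 \to i_0+1$ where the defining formula switches. In each regime $\hat l_i$ is the floor of a strictly increasing smooth function of $i$, so it suffices to show that the underlying continuous increment between consecutive integers exceeds $1$.

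For $1 \leq i < i_0$, the map $x \mapsto k_0^{1/3}(x-1+\nu^{3/2})^{2/3}$ has derivative $(2/3)k_0^{1/3}(x-1+\nu^{3/2})^{-1/3}$. Since $x \leq i_0 \leq k_0^\tau$ and $\nu \leq 2\kappa$ give $x-1+\nu^{3/2} \lesssim k_0^\tau$ for $n$ large, this derivative is bounded below by a constant times $k_0^{(1-\tau)/3}$, which diverges as $\tau<1$; thus the discrete increment of the underlying function, and hence of its floor, is at least $1$ for $n$ large. For $i_0 < i < t_0$, the map $x \mapsto k_0/(i_0+i_1-x)^2$ has derivative $2k_0/(i_0+i_1-x)^3$, and the constraint $x < t_0 = i_0+j_0$ forces $i_0+i_1-x \leq i_1 \leq k_0^{(1-\tau)/3}$, so the derivative is at least a constant times $k_0^\tau$, again producing floor increments $\geq 1$ for $n$ large.

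The subtle step is the transition $i_0 \to i_0+1$, where $\hat l_{i_0}$ and $\hat l_{i_0+1}$ are both of order $k_0^{(1+2\tau)/3}$ and there is no slack in leading order. Using $\lceil \nu^{3/2}\rceil \geq \nu^{3/2}$ and $\lfloor k_0^\tau\rfloor \leq k_0^\tau$, I would write
\[
i_0 - 1 + \nu^{3/2} = \lfloor k_0^\tau\rfloor - \lceil \nu^{3/2}\rceil - 1 + \nu^{3/2} \leq k_0^\tau - 1,
\]
hence
\[
\hat l_{i_0} \leq k_0^{1/3}(k_0^\tau-1)^{2/3} = k_0^{(1+2\tau)/3}\bigl(1-k_0^{-\tau}\bigr)^{2/3} \leq k_0^{(1+2\tau)/3} - \tfrac{1}{2}k_0^{(1-\tau)/3}
\]
for $n$ large, by Taylor-expanding $(1-x)^{2/3}$. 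On the other hand, $i_1-1 < k_0^{(1-\tau)/3}$ implies $k_0/(i_1-1)^2 > k_0^{(1+2\tau)/3}$, so $\hat l_{i_0+1} = \lfloor k_0/(i_1-1)^2\rfloor \geq \lfloor k_0^{(1+2\tau)/3}\rfloor$. Combining, the gap $\hat l_{i_0+1} - \hat l_{i_0} \geq \tfrac{1}{2}k_0^{(1-\tau)/3} - 2$ diverges, which closes the proof. The only (mild) obstacle is precisely this bookkeeping at the transition: the two expressions agree at leading order, so one must extract the next-order correction of size $k_0^{(1-\tau)/3}$ from the expansion of $(1-k_0^{-\tau})^{2/3}$ and verify it dominates the $O(1)$ integer-rounding error coming from both floors.
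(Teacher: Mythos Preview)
Your proof is correct and in fact establishes more than the paper's own argument. The paper's proof is a two-line check of only the transition step $i_0\to i_0+1$, and only for the weak inequality: since $i_0-1+\nu^{3/2}\leq k_0^\tau$ one has $k_0^{1/3}(i_0-1+\nu^{3/2})^{2/3}\leq k_0^{(1+2\tau)/3}\leq k_0/i_1^2\leq k_0/(i_1-1)^2$, and floors preserve $\leq$. Monotonicity within each regime is taken as obvious from the monotonicity of the underlying continuous expressions.

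Your argument differs in that you prove \emph{strict} increase throughout, by showing the continuous increments diverge (of order $k_0^{(1-\tau)/3}$ in the first regime, $k_0^{\tau}$ in the second) and hence dominate the $O(1)$ floor error; at the transition you extract the next-order correction from $(1-k_0^{-\tau})^{2/3}$ to produce a gap of order $k_0^{(1-\tau)/3}$. This is more work but yields the strict version that the paper actually invokes later (e.g.\ in the proof of Proposition~\ref{convlawblock}, where ``$(\hat l_i)_i$ is strictly increasing'' is used, and implicitly in Fact~\ref{comparaison}(i), which asserts $\Delta\hat l_i/\hat l_i\asymp 1/j_i$ and thus $\Delta\hat l_i>0$). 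The paper's brevity buys concision at the cost of leaving the strict inequality to the reader; your approach makes it explicit.
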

\begin{proof}
We only need to check that $\hat l_{i_0+1} \geq  \hat l_{i_0}$, i.e. to check that
$k_0^{\frac{1}{3}} (i_0-1+\nu^{\frac{3}{2}})^{\frac{2}{3}} \leq k_0/i_1.$
Since $i_0 -1 +\nu^{3/2} \leq k_0^\tau$, it suffices to check that $ k_0^{(1+ 2\tau)/3} \leq k_0/i_1$,
that is $i_1 \leq k_0^{(1-\tau)/3}$, which is implied by the definition.
\end{proof}
To ease the notation of the blocks $\hat{l}_i$, set
\begin{equation}
  \label{eq-ip}
  j_i=\left\{ \begin{array}{ll}
    i-1+\nu^{3/2},&  1\leq i\leq i_0\\
    i_0+i_1-i,&
i_0< i\leq t_0.
\end{array}\right.
\end{equation}
Let $n_{\kappa}= \hat l_{t_0} $ and  $\Delta \hat l_i=\hat l_{i+1}- \hat l_i$.

 \begin{fact}\label{comparaison} For $\kappa$  and $n$ large enough,
     uniformly in $ i\in \{1,\ldots,t_0\}$,
\begin{itemize}
 \item[$(i).$]
$\Delta \hat l_i/\hat l_i\asymp 1/j_i.$
\item[$(ii).$]   $ \hat l_i/k_0 =   O\big(j_i^{-a} \big),$
where $a = \frac{2}{3}\big( \frac{1}{\tau}-1\big)\in (1,2)$.
\item[$(iii).$] $\sqrt{{k_0}/{\hat l_i^3}}=O(1/j_i).$
\end{itemize}
\end{fact}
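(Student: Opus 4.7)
The proof is a direct check of each of the three items in each of the two regimes defining $\hat l_i$, combined with the constraint $\tau\in(1/4,2/5)$ that pins down how the exponents match. Because $\Delta \hat l_i \to \infty$ as $n\to\infty$ (with $\kappa$ fixed), one can drop the floors up to relative errors of order $1/\Delta\hat l_i$, which are absorbed by the $\asymp$ and $O(\cdot)$ notation. The only structural input needed is the observation that $\hat l_i \asymp k_0^{1/3} j_i^{2/3}$ in the first regime and $\hat l_i \asymp k_0 j_i^{-2}$ in the second; the two regimes match (up to constants) at $i=i_0$ because both sides behave like $k_0^{(1+2\tau)/3}$ there.

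In the first regime ($1\leq i\leq i_0$, $j_i = i-1+\nu^{3/2}$), a Taylor expansion of $x\mapsto x^{2/3}$ at $j_i$ yields $\Delta \hat l_i \asymp k_0^{1/3} j_i^{-1/3}$, so $\Delta \hat l_i/\hat l_i \asymp 1/j_i$, proving (i). For (ii), write $\hat l_i/k_0 \asymp j_i^{2/3}/k_0^{2/3}$; since $j_i\leq i_0 \leq k_0^{\tau}$, one has $k_0^{2/3}\geq j_i^{2/(3\tau)}$, hence $\hat l_i/k_0\leq C j_i^{2/3 - 2/(3\tau)} = C j_i^{-a}$ with exactly the stated $a=(2/3)(1/\tau-1)$. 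This is the step where both bounds on $\tau$ enter: $\tau<2/5$ forces $a>1$, $\tau>1/4$ forces $a<2$. Item (iii) is immediate, since $k_0/\hat l_i^3 \asymp 1/j_i^2$.

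In the second regime ($i_0<i\leq t_0$, $j_{i+1}=j_i-1$), the relation $\hat l_i \asymp k_0/j_i^2$ gives $\Delta \hat l_i \asymp k_0/j_i^3$, so $\Delta\hat l_i/\hat l_i \asymp 1/j_i$, proving (i). For (ii), the required bound $\hat l_i/k_0 \lesssim j_i^{-a}$ reduces to $j_i^{-2}\leq j_i^{-a}$, which holds because $a<2$. The only item with genuine content is (iii) in this regime: $\sqrt{k_0/\hat l_i^3} \asymp j_i^3/k_0$, and the target bound $j_i^3/k_0 \lesssim 1/j_i$ is equivalent to $j_i^4\lesssim k_0$. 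Since $j_i\leq i_1 \leq k_0^{(1-\tau)/3}$, this reads $k_0^{4(1-\tau)/3}\lesssim k_0$, i.e.\ $\tau\geq 1/4$, which is exactly the lower bound imposed on $\tau$. There is no real obstacle in the argument; the only conceptual point is recognizing that the admissible window $\tau\in(1/4,2/5)$ is precisely the one forced by (ii) and (iii).
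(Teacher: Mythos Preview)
The paper states Fact~\ref{comparaison} without proof, so there is nothing to compare against: your direct check is precisely the elementary computation the authors leave implicit, and it is correct. Your identification of which inequality on $\tau$ is responsible for each item --- $\tau<2/5$ forcing $a>1$ in (ii), $\tau>1/4$ forcing both $a<2$ in (ii) and the bound $j_i^4\lesssim k_0$ in (iii) for the second regime --- makes explicit the remark preceding the Fact that ``the choice of $\tau\in(1/4,2/5)$ is motivated by'' these estimates.

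One minor point you gloss over: at the single boundary index $i=i_0$, the increment $\Delta\hat l_{i_0}=\hat l_{i_0+1}-\hat l_{i_0}$ is computed across regimes. Both $\hat l_{i_0}$ and $\hat l_{i_0+1}$ equal $k_0^{(1+2\tau)/3}(1+o(1))$, but the lower-order corrections give $\Delta\hat l_{i_0}\asymp k_0^{\tau}$, so $\Delta\hat l_{i_0}/\hat l_{i_0}\asymp k_0^{(\tau-1)/3}$, which for $\tau>1/4$ is strictly larger than $1/j_{i_0}\asymp k_0^{-\tau}$ (the two exponents coincide only at $\tau=1/4$). This does not affect any application in the paper --- the Fact is used to sum quantities like $\sum_i j_i^{-b}$, and a single anomalous term is harmless --- but strictly speaking the two-sided bound $\asymp 1/j_i$ fails at that one index; it would be cleaner to state (i) as $\Delta\hat l_i/\hat l_i\asymp 1/j_{i+1}$, which does hold uniformly.
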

Note that the bound $(ii)$ is very crude and does not reflect the true behaviour of $\hat{l}_i/k_0$. As announced at the beginning of the section, $1/j_i$ will correspond to the order of the variance accumulated in the $i^{\text{th}}$-block. From \eqref{eq-lambdadef}, one can see that $\theta_l$ is of order $(\hat l_i/k_0)^{1/2}$ in the block $[\hat l_i,\hat l_{i+1}]$. The condition $(ii)$  above ensures that the discretization error coming from the rotation is smaller than the standard deviation of the noise accumulated in a block.
For any $l\leq l'$, we define
\begin{equation}
\label{eq-alphallp}
 \alpha_{l,l'} = \sum_{j = l+1}^{l'} \theta_j.
 \end{equation}
Let $y_1 = T_{l_0, l_1}(Y_{l_0}/\|Y_{l_0}\|)$.
Starting from $l_1$ and $y_1$, we define recursively stopping times $l_i$ and the vectors $y_i = t_i(1,\veps_i)^T$. Assume  that we have
constructed the stopping times $l_1,l_2,\ldots, l_i$ and vectors $v_1,v_2,\ldots, v_i$.
If $l_i<n_{\kappa}$ and $|\veps_i|\leq 1/2$,   we define
\begin{equation} \label{defli} l_{i+1} = \inf \Big\{ l \geq l_i+\Delta \hat l_i :|\veps_i + \delta_{l_i,l}-\frac{\delta_{l_i,l}^2}{2}\veps_i |\leq 6 \sqrt{l_i/k_0}\Big\},\end{equation}
where for any $l\geq 1$,  $\delta_{l_i,l} \in (-\pi,\pi]$ is such that 
$\delta_{l_i,l} = \alpha_{l_i, l} [2\pi]$, and
\begin{equation}\label{defyi} t_{i+1}(1,\epsilon_{i+1})^T=y_{i+1} = T_{l_i,l_{i+1}} y_{i}.\end{equation}
 If $l_i \geq n_{\kappa}$ or $|\veps_i|>1/2$ then we set $l_{i+1} = l_i$ and $y_{i+1} =y_i$. For any $i$, we denote
   $\Delta l_i = l_{i+1} -l_i$. In view of Lemma \ref{decomprandomblock} , one can describe equivalently $l_{i+1}$ as corresponding to the earliest time the rotation $R_{l_i,l_{i+1}}$ brings $y_{i}$ as close as possible to the direction $(1,0)$. The condition $l_{i+1} \geq l_i + \Delta \hat l_i$ is taken so that enough variance is accumulated in the $i^{\text{th}}$-block.
\begin{proposition}\label{estimli}
  For any $i\geq 1$, $l_i$ is measurable on $\mathcal{F}_{l_{i-1}}$. Further, 
  for $\kappa$ large enough,
 if $l_i<n_\kappa$ and $|\veps_i|\leq 1/2$ then
  \begin{equation} \label{upperbound} 0 \leq \Delta l_i-\Delta \hat l_i  \leq \lfloor 10\pi \sqrt{k_0/l_i}\rfloor,\end{equation}
and
\begin{equation} \label{deltaestim}\delta_{l_i, l_{i+1}} = -\veps_i + O\Big(\sqrt{l_i/k_0}\Big) +O\big(\veps_i^3\big).\end{equation}
Further, uniformly in $i$,
  \begin{equation}
    \label{eq-Deltacomp}
    \Delta l_i/ l_i=O\Big(\Delta \hat l_i/\hat l_i\Big).
  \end{equation}
\end{proposition}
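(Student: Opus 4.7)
The plan is to induct on $i$. The base case $i=1$ is supplied by Lemma~\ref{init}, and under the inductive hypotheses $l_i<n_\kappa$ and $|\veps_i|\leq 1/2$ I write $\sigma_i:=\sqrt{l_i/k_0}$. Since $l_i\lesssim k_0/\kappa$ and one checks along the induction that $l_i+\Delta\hat l_i+\lfloor 10\pi/\sigma_i\rfloor\leq 2l_i$, the bound \eqref{encadrtheta} applies throughout the relevant window, so that $\theta_{l+1}\in[\tfrac12\sigma_i,\tfrac32\sigma_i](1+o_\kappa(1))$ for every $l$ considered.

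The measurability claim is essentially automatic: each $\theta_j$ in \eqref{eq-lambdadef} is a deterministic function of $z_j$ and $c_{k_0+j}$, hence the reduction $\delta_{l_i,l}\in(-\pi,\pi]$ of the partial sum $\alpha_{l_i,l}$ is a deterministic function of $(l_i,l)$ alone. The event defining $l_{i+1}$ in \eqref{defli} is therefore a Borel function of the triple $(l_i,\veps_i,l)$, and both $l_i$ and $\veps_i$ belong to $\mathcal{F}_{l_i}$ (the latter by \eqref{defyi}), yielding $l_{i+1}\in\mathcal{F}_{l_i}$, which is the stated assertion with indices shifted by one.

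For the existence and the upper bound \eqref{upperbound} I would analyse the polynomial $g(\delta):=\delta+\tfrac12\veps_i\delta^2-\veps_i$, so that the condition in \eqref{defli} reads $|g(\delta_{l_i,l})|\leq 6\sigma_i$. For $|\veps_i|\leq 1/2$ the quadratic formula gives a unique root $\delta^{*}=\veps_i-\veps_i^3/2+O(\veps_i^5)$ in $(-\pi,\pi]$ (the other root has absolute value at least $4$), while $\sup_{(-\pi,\pi]}|g'|\leq 1+\pi/2$. Hence the preimage $g^{-1}([-6\sigma_i,6\sigma_i])$ contains an arc $I^{*}$ centred at $\delta^{*}$ of length at least $12\sigma_i/(1+\pi/2)>4\sigma_i$, which is strictly larger than the maximum jump $\max\theta_{l+1}\leq (3/2+o(1))\sigma_i$. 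Summing the lower estimates $\theta_{l+1}\geq\sigma_i/2$ across the $\lfloor 10\pi/\sigma_i\rfloor$ steps gives a cumulative angular increment exceeding $5\pi$, so $\delta_{l_i,l}$ wraps around the circle more than twice; since each of its jumps is smaller than $|I^{*}|$, it must land in $I^{*}$ during the traversal, yielding \eqref{upperbound}. The lower bound $\Delta l_i\geq\Delta\hat l_i$ is built into \eqref{defli}. The estimate \eqref{deltaestim} then follows by applying the mean value theorem to $g$ between $\delta^{*}$ and $\delta_{l_i,l_{i+1}}$, using $|g'|\geq 1/2$ on $I^{*}$, and combining with $\delta^{*}=\veps_i+O(\veps_i^3)$.

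For the uniform comparison \eqref{eq-Deltacomp}, I would sum \eqref{upperbound} over $k\leq i$ and use the inductive estimate $l_k\asymp\hat l_k$ to obtain $l_i-\hat l_i=O\bigl(\sum_{k\leq i}\sqrt{k_0/l_k}\bigr)$; Fact~\ref{comparaison}(ii) (with $a>1$) shows that this remainder is $O(\hat l_i)$, closing the induction on sizes. Dividing \eqref{upperbound} by $l_i$ and combining Facts~\ref{comparaison}(i) and~(iii) then delivers $\Delta l_i/l_i\lesssim 1/j_i\asymp\Delta\hat l_i/\hat l_i$ uniformly in $i$. The delicate point of the whole argument is the existence step: the jumps of $\delta_{l_i,l}$ and the arclength of $I^{*}$ both scale like $\sigma_i$, so the proof hinges on the absolute numerical constants ($6$ in \eqref{defli} and $10\pi$ in \eqref{upperbound}) dominating the constant $1+\pi/2$ produced by $\sup|g'|$ and the coefficient from \eqref{encadrtheta}, which is precisely why $\kappa$ must be taken sufficiently large at the outset.
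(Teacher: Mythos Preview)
Your approach is correct and follows a somewhat different route from the paper. For the upper bound \eqref{upperbound} the paper explicitly constructs a target index $l$ by aiming the cumulative angle at $2\pi(k_i+\Car_{\veps_i<0})+s_{l_i}$, where $s_{l_i}=\veps_i^{-1}(\sqrt{1+2\veps_i^2}-1)$ is the exact root of your polynomial $g$, and shows the overshoot is at most one step $\theta_l=O(\sigma_i)$; for \eqref{deltaestim} it builds a companion lower index $l'$ and sandwiches $\alpha_{l_i,l_{i+1}}$ between $2\pi(k_i+\Car_{\veps_i<0})+s_{l_i}\pm O(\sigma_i)$. Your intermediate-value argument (the process wraps around with jumps smaller than the target arc $I^*$, so must land in it) is more conceptual and works equally well for the existence and upper bound.

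There is one small gap in your derivation of \eqref{deltaestim}. You apply the mean value theorem ``using $|g'|\geq 1/2$ on $I^*$'', but $\delta_{l_i,l_{i+1}}$ is only known to lie in the \emph{full} preimage $g^{-1}([-6\sigma_i,6\sigma_i])\cap(-\pi,\pi]$, which a~priori could be strictly larger than your sub-arc $I^*$ (your wrapping argument shows the process hits $I^*$, hence the preimage, but the \emph{first} hit of the preimage could occur earlier). The fix is short: for $\kappa$ large the preimage is itself a single interval of width $O(\sigma_i)$ about $\delta^*$, since the other root of $g$ has modulus $\geq 4$, the local extremum of $g$ (if it lies in $(-\pi,\pi]$) has value bounded away from $0$, and $g'(\delta^*)=\sqrt{1+2\veps_i^2}\geq 1$. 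Once that is said, your MVT step goes through.

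For \eqref{eq-Deltacomp} you work harder than needed. The paper simply notes $\hat l_i\leq l_i$ (immediate from the lower bound $\Delta l_k\geq\Delta\hat l_k$ summed), divides \eqref{upperbound} by $l_i$, and invokes Fact~\ref{comparaison}(i),(iii) to obtain $\Delta l_i/l_i\leq \Delta\hat l_i/\hat l_i+O(\sqrt{k_0/\hat l_i^3})=O(\Delta\hat l_i/\hat l_i)$; the two-sided estimate $l_i\asymp\hat l_i$ is not required here.
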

\begin{proof}
Assume that $l_i <n_{\kappa}$ and $|\veps_i|\leq 1/2$. From the definition of the stopping times $l_{i'}$, this means that for any $i' \leq i$, we have $l_{i'} <n_\kappa$ and $|\veps_{i'}|<1/2$. Therefore,  $l_{i'}$ is defined by \eqref{defli}  for any $i'\leq i$. As $l_1= \hat{l}_1$, we have $l_i \geq \hat{l}_i$.

We  prove first that $l_{i+1}$ is well-defined by \eqref{defli} and show \eqref{upperbound}; the definition then implies that
$l_{i+1}$ is $\mathcal{F}_{l_i}$-measurable. To this end, let
$$ l = \min \big\{ l' \geq l_i: \sum_{j = l_i+1}^{l'} \theta_j \geq 2 \pi(  k_i + \Car_{ \veps_i <0}) + s_{l_i}\big\},$$
where $k_i = \lceil \frac{1}{2\pi}\sum_{j = l_i+1}^{l_i + \Delta \hat l_i} \theta_j \rceil$, and
\begin{equation}\label{encadalpha} s_{l_i} = \veps_i^{-1} \Big(1- \sqrt{ 1+2\veps_i^2}\Big).\end{equation}
We will show that $l\in\mathcal{A}_i:=
\Big\{ l \geq l_i+\Delta \hat l_i :| \veps_i+\delta_{l_i,l}-\delta_{l_i,l}^2 \veps_i/2 |\leq 6 \sqrt{l_i/k_0}\Big\}$.
From the definition of $l$, we have
\begin{equation} \label{encadrement} 2\pi (k_i+\Car_{\veps_i<0}) +s_{l_i}\leq\sum_{j = l_i+1}^{l} \theta_j\leq 2\pi(k_i+\Car_{\veps_i<0}) + s_{l_i} + \theta_l.\end{equation}
As we observed in \eqref{encadrtheta}, for $\kappa$ and $n$ large enough, we have for any $1 \leq j \leq n_{\kappa}$ that
\begin{equation} \label{encadtheta} \sqrt{ j/4k_0} \leq \theta_j \leq 2 \sqrt{ j/k_0}.\end{equation}
From the estimate above, if we let $l'=l_i + \Delta \hat l_i + 
  \lfloor 10\pi\sqrt{ k_0/l_i}\rfloor$, then
  \[ \sum_{j=l_i +1}^{l'} \theta_j \geq 2\pi (k_i-1) + 5 \pi \geq 2\pi(k_i+ \Car_{\veps_i<0}) +s_{l_i},\]
where we used the fact that $|s_{l_i}|\leq |\veps_i| \leq 1/2$. Therefore, by definition of $l$
\begin{equation}\label{upperboundli} l \leq l_i + \Delta \hat l_i + 
  \lfloor 10\pi\sqrt{ k_0/l_i}\rfloor.\end{equation}
Since $\Delta \hat l_i \leq \hat l_i \leq l_i$ and $\sqrt{k_0/l_i}= o_\kappa (l_i)$, by Fact \ref{comparaison} (iii), we get for $\kappa$ large enough that
$l\leq 3l_i$, and thus
\begin{equation}\label{boundthetal} \theta_l \leq 4 \sqrt{ l_i/k_0}.\end{equation}
Therefore, we deduce from \eqref{encadalpha} that
$$2\pi( k_i+\Car_{\veps_i <0}) +s_{l_i}\leq\sum_{j = l_i+1}^{l} \theta_j\leq 2\pi (k_i+\Car_{\veps_i <0}) + s_{l_i} + 4 \sqrt{ l_i/k_0}.$$
Since $|s_{l_i}| + 4\sqrt{l_i/k_0} <2\pi$  for $\kappa$ large enough, we have that $ s_{l_i} \leq \delta_{l_i,l} \leq s_{l_i} +4 \sqrt{l_i/k_0}$. As $ \veps_i+ s_{l_i}-s_{l_i}^2 \veps_i/2=0,$
we deduce that
$$| \veps_i+\delta_{l_i,l} - \delta_{l_i,l}^2 \veps_i/2 |\leq 6\sqrt{ l_i/k_0},$$
where we used the facts that $|\veps_i| \leq 1/2$ and that
$|s_{l_i} + 4\sqrt{l_i/k_0}|\leq 1$
for $\kappa$ large enough.  This shows that $l\in \mathcal{A}_i$, and \eqref{upperboundli} yields the claimed upper bound \eqref{upperbound}. To prove \eqref{deltaestim}, observe that we already proved that
\begin{equation}
  \label{eq-already}
  \alpha_{l_i,l_{i+1}}\leq 2\pi ( k_i+\Car_{\veps_i<0}) + s_{l_i} + 4\sqrt{l_i/k_0}.
\end{equation}
To prove a lower bound on $\alpha_{l_i, l_{i+1}}$, we will show a lower bound on $l_{i+1}$. We set
$$ l' = \max \big\{ l'' \geq l_i: \sum_{j = l_i +1}^{l''} \theta_j \leq 2 \pi(k_i +\Car_{\veps_i<0})+ s_{l_i}-10\sqrt{l_i/k_0}\big\}.$$
 Note that $l' <l$ so that $\theta_{l'+1} \leq 4\sqrt{l_i/k_0}$ by \eqref{boundthetal}. Thus,
$$ s_{l_i} -14\sqrt{l_i/k_0}\leq \delta_{l_i,l'} \leq  s_{l_i} -10\sqrt{l_i/k_0}.$$
Therefore,
$$ \veps_i +\delta_{l_i,l'} - \delta_{l_i,l'}^2 \veps_i/2   < -6\sqrt{l_i/k_0},$$
for $\kappa$ large enough. Thus $l' <l_{i+1}$. Combined with \eqref{eq-already}, we obtain that
$$ 2\pi (k_i+\Car_{\veps_i<0}) +s_{l_i} -8\sqrt{l_i/k_0}\leq \alpha_{l_i,l_{i+1}} \leq  2\pi ( k_i+\Car_{\veps_i<0}) + s_{l_i} + 4\sqrt{l_i/k_0},$$
which ends the proof of \eqref{deltaestim}.

To see \eqref{eq-Deltacomp}, we note from
\eqref{upperbound} that
\[\Delta l_i/l_i=
  \Delta \hat l_i/l_i+O\Big(\sqrt{k_0/l_i^3}\Big)
  = O(\Delta \hat l_i/\hat l_i)+O\Big(\sqrt{k_0/\hat l_i^3}\Big)
 = O\Big(\Delta \hat l_i/\hat l_i\Big),
\]
where we used that $\hat l_i\leq l_i$ and Fact \ref{comparaison}.
\end{proof}
Combining Proposition \ref{estimli} with Fact \ref{comparaison} yields the following corollary.
\begin{corollary}\label{controlrandom}
For any $1 \leq i\leq t_0$,
\begin{equation} \label{controlrandomvariance} 
  \Delta l_i/l_i =O( 1/j_i), \quad  l_i/k_0 =O(j_i^{-a}),\end{equation}
where $a>1$ is as in Fact \ref{comparaison} (ii).
Moreover, if $l_i<n_\kappa$ and $|\veps_i|\leq 1/2$, then $l_i \geq \hat l_i$, and as a consequence
\begin{equation}  \label{controlrandom2}\sqrt{k_0/l_i^3} =O( 1/j_i).\end{equation}
\end{corollary}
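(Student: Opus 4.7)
The plan is to show the two-sided comparison $\hat l_i \leq l_i \lesssim \hat l_i$ and then read off all three inequalities of the corollary directly from Fact \ref{comparaison} and Proposition \ref{estimli}.

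First I would establish the lower bound $l_i \geq \hat l_i$ by induction on $i$, under the non-stopping hypothesis $l_j < n_\kappa$ and $|\veps_j|\leq 1/2$ for all $j \leq i$. The base case is immediate: $l_1 = \nu k_0^{1/3}$ while $j_1 = \nu^{3/2}$, so $\hat l_1 = \lfloor k_0^{1/3}\nu\rfloor \leq l_1$. For the inductive step, the defining infimum \eqref{defli} enforces $l_{i+1} \geq l_i + \Delta \hat l_i$, so combined with the inductive hypothesis, $l_{i+1} \geq \hat l_i + \Delta \hat l_i = \hat l_{i+1}$. Applied to Fact \ref{comparaison}(iii), this already yields \eqref{controlrandom2}, namely $\sqrt{k_0/l_i^3} \leq \sqrt{k_0/\hat l_i^3} = O(1/j_i)$.

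Next, I would obtain the matching upper bound $l_i \lesssim \hat l_i$ by iterating the gap estimate \eqref{upperbound} from Proposition \ref{estimli}. Combining with the just-established lower bound and Facts \ref{comparaison}(i),(iii),
$\Delta l_i - \Delta \hat l_i \leq 10\pi\sqrt{k_0/l_i} \leq 10\pi\,\hat l_i \cdot \sqrt{k_0/\hat l_i^3} \lesssim \hat l_i/j_i \asymp \Delta \hat l_i$,
so $\Delta l_i \lesssim \Delta \hat l_i$. Telescoping and using $l_1 \lesssim \hat l_1$ yields $l_i \lesssim \hat l_i$, and then Fact \ref{comparaison}(ii) gives $l_i/k_0 \lesssim \hat l_i/k_0 = O(j_i^{-a})$, which is the second half of \eqref{controlrandomvariance}. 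The first half, $\Delta l_i/l_i = O(1/j_i)$, is the direct combination of \eqref{eq-Deltacomp} with Fact \ref{comparaison}(i). Indices $i$ beyond a potential stopping time are handled trivially, since there $l_i$ is frozen and $\Delta l_i = 0$; the bound $l_i/k_0 = O(j_i^{-a})$ at such $i$ is an elementary bookkeeping check using the explicit forms of $\hat l_i$ in the two phases and the choice $\tau \in (1/4,2/5)$.

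No step of this plan is genuinely hard: the only conceptual point requiring care is the inductive propagation of the lower bound $l_i \geq \hat l_i$, which rests squarely on the stopping condition built into \eqref{defli} (the infimum is taken over $l \geq l_i + \Delta \hat l_i$); everything else reduces to substituting the estimates supplied by Fact \ref{comparaison} and Proposition \ref{estimli} into one another.
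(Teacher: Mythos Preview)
Your proposal is correct and follows essentially the same approach as the paper's proof. The only difference is presentational: you establish the lower bound $l_i\geq \hat l_i$ first and then use it to derive $\Delta l_i\lesssim \Delta\hat l_i$ and hence $l_i\lesssim \hat l_i$, whereas the paper reverses this order (the dependence on $l_i\geq \hat l_i$ is still there in the paper's derivation of \eqref{bornesupDelta}, just not made explicit). Your ordering is arguably cleaner; the content is identical.
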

\begin{proof}
Note that \eqref{eq-Deltacomp} is vacuously true  if  $l_i\geq n_\kappa$ or $|\veps_i|>1/2$, since in that case
$\Delta l_i = 0$. Combining \eqref{eq-Deltacomp} with Fact \ref{comparaison} (i), we obtain that for any $i\leq t_0$,
\[  \Delta l_i/l_i \lesssim 1/j_i,\]
which proves the first part of \eqref{controlrandomvariance}.
Let $p\leq i$. If $l_p<n_\kappa$ and $|\veps_p|\leq 1/2$, then combining \eqref{upperbound} and Fact \ref{comparaison} (i), (iii), we get
\begin{equation} \label{bornesupDelta}  \Delta l_p \lesssim \Delta \hat l_p.\end{equation}
If $l_p\geq n_\kappa$ or $|\veps_p|>1/2$, the last inequality is trivially true since $\Delta l_i =0$ in that case. Summing the last display  for $p\leq i-1$,
we get that
\begin{equation} \label{bornesupli} l_i \lesssim \hat l_i,\end{equation}
and using Fact \ref{comparaison} (ii) gives the second part of \eqref{controlrandomvariance}.

We finally prove \eqref{controlrandom2}. It will follow from Fact \ref{comparaison} (iii) and the fact that if $l_i<n_\kappa$ and $|\veps_i|\leq 1/2$ then $l_i \geq \hat l_i$. Indeed, by construction of the stopping times $l_i$, this means that $l_p<n_\kappa$ and $|\veps_p|\leq 1/2$ for any $p\leq i$. Therefore, for any $p\leq i$, $l_p$ is defined by \eqref{defli} and we have $\Delta l_p\geq \Delta \hat l_p$ for any $p\leq i$. Since $l_1 = \hat l_1$,  we have $l_i\geq \hat l_i$.
\end{proof}
In the next lemma, we prove that the definition of the stopping times $l_i$ implies that if $\veps_i$ is small enough, then $\veps_{i+1}$ is small as well, showing that the time change stabilizes the direction $e_1$ in the recursion.
We recall that
for a matrix $A$ we denote  $\|A\|_{\infty} = \max_{i,j} |A_{i,j}|$.
\begin{lemma}\label{recursionborne}There exists a constant $c_0>0$ such that for any  $\veps\leq c_0$ and
  $i\geq 1$,
if ,  $l_i<n_{\kappa}$, $|\veps_i| \leq 2\veps$ and $\| V_{l_i,l_{i+1}}+D_{l_i,l_{i+1}}+ Z_{l_i,l_{i+1}}\|_{\infty} \leq \veps/2$, then
$$|\veps_{i+1}|\leq 3\veps/4 + O(1/\sqrt{j_{i+1}}), \quad |1-(t_{i+1}/t_i)| \leq 3\veps/4 +O(1/j_{i+1}).$$
\end{lemma}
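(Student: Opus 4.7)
The plan is to expand $y_{i+1} = T_{l_i,l_{i+1}} y_i$ coordinate-wise using the decomposition $T = S + V + D + Z$ from \eqref{decompositionT} and the explicit formula for $S$ in Lemma \ref{decomprandomblock}. Setting $\delta := \delta_{l_i, l_{i+1}}$, the second coordinate of $S_{l_i,l_{i+1}}(1,\veps_i)^T$ equals $(1 - \Delta l_i/(4 l_i))\veps_i - \delta - \delta^2 \veps_i/2$, and the very definition \eqref{defli} of $l_{i+1}$ was set up precisely so that $\delta + \delta^2 \veps_i/2 - \veps_i = O(\sqrt{l_i/k_0})$. This yields the key cancellation, reducing the second coordinate of $S_{l_i,l_{i+1}}(1,\veps_i)^T$ to $-\Delta l_i \veps_i/(4 l_i) + O(\sqrt{l_i/k_0})$.

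Next I would bound each error source. By Lemma \ref{decomprandomblock} combined with Corollary \ref{controlrandom} and Fact \ref{comparaison}, and using $|\delta| \lesssim |\veps_i| + O(\sqrt{l_i/k_0}) \lesssim \veps$, the bound on $\tilde S$ gives $\|\tilde S_{l_i, l_{i+1}}\| \lesssim 1/j_i + \veps^3$ (since $\Delta l_i/l_i = O(1/j_i)$, $\sqrt{k_0/l_i^3} = O(1/j_i)$, and $\sqrt{\Delta l_i/k_0}$ is uniformly small). The standing hypothesis $\|V+D+Z\|_\infty \leq \veps/2$ together with $|1|+|\veps_i| \leq 1+2\veps$ gives $\|(V+D+Z)(1, \veps_i)^T\|_\infty \leq (\veps/2)(1+2\veps)$. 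The quadratic contributions $\delta^2/2$ and $\delta \veps_i$ in the first coordinate are $O(\veps^2)$, and $\sqrt{l_i/k_0} = O(j_i^{-a/2}) = o(1/\sqrt{j_i})$ since $a > 1$ by Fact \ref{comparaison}(ii). Writing $y_{i+1}/t_i = (A, B)^T$, these estimates yield
\[
|1 - A| \leq \veps/2 + C\veps^2 + O(1/j_i), \qquad |B| \leq \veps/2 + C\veps^2 + O(1/\sqrt{j_i}),
\]
for some absolute constant $C$. Then $t_{i+1}/t_i = A$ immediately gives the second bound in the statement, while $\veps_{i+1} = B/A$ combined with $A \geq 1/2$ (for $\veps$ small and $\kappa$ large) yields $|\veps_{i+1}| \leq 2|B|$, the extra factor $2$ being absorbed into a new quadratic-in-$\veps$ term.

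The main obstacle is achieving the sharp constant $3/4$: since the hypothesis only controls $\|V+D+Z\|_\infty \leq \veps/2$, this already forces an unimprovable $\veps/2$ term in both bounds, so the remaining quadratic errors $C\veps^2$ (coming from $\tilde S$, from the $\delta^2/2$ and $\delta\veps_i$ corrections, and from the division $B/A$) must be dominated by $\veps/4$. This is achieved by choosing $c_0$ so that $C c_0 \leq 1/4$, so that $\veps \leq c_0$ guarantees $\veps/2 + C\veps^2 \leq 3\veps/4$, with the residual $O(1/j_{i+1})$ or $O(1/\sqrt{j_{i+1}})$ terms produced exactly as in the statement.
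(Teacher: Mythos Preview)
Your approach is the same as the paper's: expand $y_{i+1}=T_{l_i,l_{i+1}}y_i$ via the decomposition \eqref{decompositionT}, use the explicit form of $S$ from Lemma \ref{decomprandomblock}, exploit the defining condition \eqref{defli} to cancel the second coordinate of $S(1,\veps_i)^T$ down to $O(\sqrt{l_i/k_0})-\veps_i\Delta l_i/(4l_i)$, and bound all remaining terms using Corollary \ref{controlrandom} and Fact \ref{comparaison}. Your estimates on $\|\tilde S\|$, on the quadratic corrections $\delta^2/2$, $\delta\veps_i$, and on $(V+D+Z)(1,\veps_i)^T$ are all correct and match the paper's equations \eqref{eq-firstco}--\eqref{eq-normS}.

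There is, however, a genuine gap in your division step. From $|B|\le \veps/2+C\veps^2+O(1/\sqrt{j_{i+1}})$ and the crude bound $A\ge 1/2$ you deduce $|\veps_{i+1}|\le 2|B|\le \veps+2C\veps^2+O(1/\sqrt{j_{i+1}})$. The leading term here is $\veps$, not $3\veps/4$, and the discrepancy $\veps/4$ is \emph{linear} in $\veps$; it cannot be ``absorbed into a new quadratic-in-$\veps$ term'' as you claim. No choice of $c_0$ rescues this.

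The fix is immediate and uses a bound you have already proved: instead of $A\ge 1/2$, use your own estimate $|1-A|\le \veps/2+C\veps^2+O(1/j_{i+1})=O(\veps)$. Then $1/A=1+O(\veps)$ and
\[
|\veps_{i+1}|=\frac{|B|}{|A|}=|B|\bigl(1+O(\veps)\bigr)\le \frac{\veps}{2}+C'\veps^2+O\Bigl(\frac{1}{\sqrt{j_{i+1}}}\Bigr),
\]
since $|B|\cdot O(\veps)=O(\veps^2)$. Now choosing $c_0$ with $C'c_0\le 1/4$ gives $|\veps_{i+1}|\le 3\veps/4+O(1/\sqrt{j_{i+1}})$, exactly as in the paper (which invokes \eqref{minoti} for precisely this purpose).
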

\begin{proof}Let $i\geq 1$ such that $l_i <n_\kappa$, $|\veps_i|\leq 2\veps$ and $\| V_{l_i,l_{i+1}}+D_{l_i,l_{i+1}}+  Z_{l_i,l_{i+1}}\|_{\infty} \leq \veps/2$.
We have $y_{i+1} = T_{l_i,l_{i+1}}y_i$. Therefore, writing $(\tilde{u}_i,\tilde{v}_i)^T =   \tilde{S}_{l_i,l_{i+1}} (1,\veps_i)^T$ and $M_{l_i,l_{i+1}}=V_{l_i,l_{i+1}}+D_{l_i,l_{i+1}} +Z_{l_i,l_{i+1}}$, we have that
\begin{align}
  \label{eq-firstco}
  \frac{t_{i+1}}{t_i} =
  1 - \frac{\Delta l_i}{4l_i} - \frac{\delta_{l_i,l_{i+1}}^2}{2} -\delta_{l_i,l_{i+1}} \veps_i +
  M_{l_i,l_{i+1}}(1,1) +
  M_{l_i,l_{i+1}}(1,2)\veps_i+\tilde{u_i},
\end{align}
where for any matrix $M$, we denote by $M(k,\ell)$ its $(k,\ell)$-entry.
Using the fact that $ |\veps_i -\delta_{l_i,l_{i+1}} -\delta_{l_i,l_{i+1}}^2 \veps_i /2|=O(\sqrt{l_i/k_0})$
by the definition of $l_i$, we obtain
\begin{equation}
  \label{eq-secondco}
  \frac{t_{i+1}}{t_i} \veps_{i+1} =O\Big( \sqrt{\frac{l_i}{k_0}}\Big)- \frac{\Delta l_i}{4l_i}\veps_i+ M_{l_i,l_{i+1}}(2,1) + M_{l_i,l_{i+1}}(2,2) \veps_i+\tilde{v}_i.\end{equation}
By Proposition \ref{estimli} and  Fact \ref{comparaison}, we deduce that 
\begin{equation}\label{controldelta}  |\delta_{l_i,l_{i+1}}| \lesssim \veps +
  1/j_{i+1} .\end{equation}
But, by Lemma  \ref{decomprandomblock} (with $l=l_i$ and  $m=\Delta l_i$),  we have that
$$ \| \tilde{S}_{l_{i},l_{i+1}}\| \lesssim  \Big( |\delta_{l_i,l_{i+1}}| +\sqrt{\frac{l_i}{k_0}}+ \sqrt{\frac{\Delta l_i}{k_0}}\Big) \frac{\Delta l_i}{l_i}  +\sqrt{\frac{k_0}{l_i^3}} |\delta_{l_i,l_{i+1}}| + |\delta_{l_i,l_{i+1}}|^3.$$
Using  \eqref{controldelta} and Corollary \ref{controlrandom},
we get
\begin{equation}
  \label{eq-normS}\| \tilde{S}_{l_{i},l_{i+1}}\| \lesssim |\veps|^3 + |\veps|/j_{i+1}.
\end{equation}
Equation \eqref{eq-firstco}
together with \eqref{eq-normS},
the assumption $\|M_{l_i,l_{i+1}}\|_{\infty}\leq
\veps/2$, and Corollary \ref{controlrandom}, yield
\begin{equation}\label{minoti} |t_{i+1}/t_i -1| \leq \veps/2 +O(\veps^2)  +O(1/j_{i+1}).
\end{equation}
On the other hand, by  Fact \ref{comparaison},
\eqref{eq-secondco} and \eqref{eq-normS}, we get
$$|t_{i+1}\veps_{i+1}/t_i| \leq \veps/2 +O(\veps^2) +
O(1/j_{i+1}^{a/2})\leq\veps/2 +O(\veps^2) + O(1/\sqrt{j_{i+1}}).$$
Using \eqref{minoti} and $\veps^2\leq c_0\veps$,
 the claim follows.
\end{proof}
Recall the notation introduced in Lemma \ref{decomprandomblock}. The next lemma allows for the propagation of errors from block to block.
\begin{lemma}
 \label{cor-1}
For any $i\leq t_0$,
 \begin{equation}
 \label{eq-gammablock}
 \PP_{\mathcal{F}_i}\Big(\|G_{l_i,l_{i+1}}\|_\infty+\|\tilde{G}_{l_i,l_{i+1}}\|_\infty+
 \|D_{l_i,l_{i+1}}\|_\infty+\|Z_{l_i,l_{i+1}}\|_\infty>j_{i+1}^{-5/12}
 \Big)\leq  Cj_{i+1}^{-7/6},
 \end{equation}
where $C$ is an absolute positive constant.
 \end{lemma}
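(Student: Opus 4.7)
The plan is to bound each of the four matrix norms separately and conclude by the union bound. If $l_i\geq n_\kappa$ or $|\veps_i|>1/2$ then by construction $l_{i+1}=l_i$, so $\Delta l_i=0$ and all four matrices vanish; the bound is then trivial. Assume henceforth $l_i<n_\kappa$ and $|\veps_i|\leq 1/2$, so that Proposition \ref{estimli} gives $|\delta_{l_i,l_{i+1}}|\lesssim 1$, and Corollary \ref{controlrandom} supplies $\Delta l_i/l_i=O(1/j_i)$, $l_i/k_0=O(j_i^{-a})$ with $a>1$, and $\sqrt{k_0/l_i^3}=O(1/j_i)$.

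For $D_{l_i,l_{i+1}}$ and $Z_{l_i,l_{i+1}}$, a direct application of Chebyshev's inequality, using respectively the $L^2$ bounds in \eqref{claimSD} and in \eqref{eqZ}, should suffice. Inserting the estimates of Corollary \ref{controlrandom} (and splitting the two block regimes $i\leq i_0$, where $\hat l_i\asymp k_0^{1/3}j_i^{2/3}$, versus $i>i_0$, where $\hat l_i\asymp k_0/j_i^2$), both second moments reduce to $\lesssim j_i^{-2}$. Chebyshev at level $t=j_{i+1}^{-5/12}$ then produces a tail bound of order $j_i^{-2}\cdot j_{i+1}^{5/6}\asymp j_{i+1}^{-7/6}$, as required.

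The subtle part concerns $G_{l_i,l_{i+1}}$ and $\tilde G_{l_i,l_{i+1}}$: their $L^2$ size is only of order $1/j_i$, so Chebyshev would yield only $j_i^{-1/6}$, far from what is needed. The remedy is to apply the exponential Markov inequality to the sub-Gaussian estimate \eqref{controlexpoG}, obtaining for $|\lambda|\leq\lambda_0$
\[
\PP_{\mathcal{F}_{l_i}}\big(\|\tilde G_{l_i,l_{i+1}}\|_\infty\geq t\big)\leq 4\exp\big(C\lambda^2 \Delta l_i-\lambda\sqrt{l_i}\,t\big),
\]
and analogously for $G$. Choosing $\lambda$ to be the optimum $\sqrt{l_i}\,t/(2C\Delta l_i)$ when this lies in $[0,\lambda_0]$, and $\lambda_0$ otherwise, yields a Bernstein-type tail $\PP\leq 4\exp(-c\min(l_i t^2/\Delta l_i,\,\sqrt{l_i}\,t))$. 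With $t=j_{i+1}^{-5/12}$ and $l_i/\Delta l_i\gtrsim j_i$ one has $l_i t^2/\Delta l_i\gtrsim j_i^{1/6}$; a short case analysis across the two block regimes, using the explicit form of $\hat l_i$, shows $\sqrt{l_i}\,t\gtrsim j_i^{1/6}$ as well. Consequently the probability is at most $\exp(-c\, j_i^{1/6})$, which dominates $j_{i+1}^{-7/6}$ for large $j_i$ and is bounded by a constant multiple of it for bounded $j_i$.

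The main obstacle is precisely this handling of $\tilde G_{l_i,l_{i+1}}$: the $L^2$ estimate of Lemma \ref{decomprandomblock} alone loses a factor $j_i$ relative to the required tail, and one must rely on the sub-Gaussian Laplace transform control \eqref{controlexpoG} inherited from Fact \ref{newbasis}. The only non-routine verification is that the linear part $\sqrt{l_i}\,t$ of the Bernstein exponent is comparable to or larger than the quadratic part across both regimes of the block construction; this is a bookkeeping computation involving the bounds of Corollary \ref{controlrandom}, but no ingredient beyond what is already in the excerpt.
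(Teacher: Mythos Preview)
Your proposal is correct and follows essentially the same route as the paper: Chebyshev for $D$ and $Z$ (with second moments $\lesssim j_{i+1}^{-2}$), and the exponential Markov/sub-Gaussian bound \eqref{controlexpoG} for $G$ and $\tilde G$ to reach $\exp(-c\,j_{i+1}^{1/6})$. The only cosmetic difference is that the paper, rather than carrying the full Bernstein minimum $\min(l_it^2/\Delta l_i,\sqrt{l_i}\,t)$, directly verifies that $t=j_{i+1}^{-5/12}$ lies in the sub-Gaussian range $t\lesssim \Delta l_i/\sqrt{l_i}$ (equivalently $j_{i+1}^{7/6}=o(\hat l_i)$), so only the quadratic exponent is needed.
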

The choice of the exponent $5/12$ 
will be justified in the proof of Lemma \ref{erreur}, 
where it will be crucial to have a control on the noise 
of the order of $j_i^{-c}$ with $c\in (1/3,1/2)$.
 \begin{proof}Observe first that the statement is vacuously true if $|\veps_i|>1/2$ or $l_i =n_\kappa$, since in that case $\Delta l_i =0$. Assume now that $|\veps_i|\leq 1/2$ and $l_i<n_\kappa$. By Lemma \ref{decomprandomblock} and \eqref{eqZ}, we have that
 \[ \EE_{\mathcal{F}_i} \| D_{l_i,l_{i+1} }\|^2_{\infty} + \EE_{\mathcal{F}_i} \|Z_{l_i,l_{i+1}}\|_\infty^2 \lesssim \frac{\Delta l_i}{l_i^2}+\frac{(\Delta l_i)^2}{k_0l_i}+\Big(\frac{\Delta l_i}{l_i}\Big)^2\lesssim \Big(\frac{\Delta l_i}{l_i}\Big)^2,\]
where we used the fact that $l_i\leq k_0$. From Corollary \ref{controlrandom}, we know that $\Delta l_i /l_i \lesssim j_{i+1}^{-1}$ and $l_i\geq \hat l_i$. Thus,
 \begin{equation} \label{moment2DZ}  \EE_{\mathcal{F}_i} \| D_{l_i,l_{i+1} }\|^2_{\infty} +\EE_{\mathcal{F}_i} \|Z_{l_i,l_{i+1}}\|_\infty^2 \lesssim 1/ j_{i+1}^2.\end{equation}
Therefore, by Markov's inequality,
\begin{equation}\label{boundZ} \PP_{\mathcal{F}_i}( \| D_{l_i,l_{i+1} }\|_{\infty}+\|  Z_{l_i,l_{i+1}}\|_\infty  \geq j_{i+1}^{-5/12}/2)
 \lesssim j_{i+1}^{-7/6}.\end{equation}
Let now $X = \| G_{l_i, l_{i+1}}\|_\infty$ or $\| \tilde{G}_{l_i, l_{i+1}}\|_\infty$.  By Lemma \ref{decomprandomblock},  there exist absolute constants $C,\lambda_0>0$ such that for any $|\lambda|\leq \lambda_0$,
\[ \EE_{\mathcal{F}_{l_i}} e^{\lambda \sqrt{l_i}X}\leq 4e^{ C\lambda^2 \Delta l_i}.\]
Therefore, there exist absolute constants $c,r>0$ such that for any $|t| \leq r \Delta l_i/\sqrt{l_i}$ we have
\[ \PP( X>t) \leq4e^{-ct^2 l_i/\Delta l_i}.\]
We claim that for $n$ large enough, $j_{i+1}^{-5/12} \leq 4r \Delta l_i/\sqrt{l_i}$. Provided this is true, we can conclude the proof.
Indeed, Corollary \ref{controlrandom} yields
\[ \Delta l_i/l_i \gtrsim \Delta \hat l_i/\hat l_i\gtrsim 1/j_{i+1}.\]
Therefore, there exists an absolute constant $c'>0$ such that
\[ \PP\big( X>j_{i+1}^{-5/12}/4\big) \lesssim e^{- c' j_{i+1}^{1/6}}\lesssim j_{i+1}^{-7/6}.\]
Using \eqref{boundZ} and a union bound ends the proof.
We are left with proving
 the claim that $j_{i+1}^{-5/12} \leq 4 r \Delta  l_i /\sqrt{l_i}$ a.s for $n$ large enough. Using \eqref{bornesupli} and \eqref{bornesupDelta}, we obtain that
$\Delta l_i/\sqrt{l_i} \gtrsim \Delta \hat l_i/\sqrt{\hat l_i}.$
Since $\Delta \hat l_i /\hat l_i \gtrsim 1/j_{i+1}$ by Fact \ref{comparaison} (i), we have that
$\Delta l_i/\sqrt{l_i} \gtrsim \sqrt{\hat l_i}/j_{i+1}$. It remains to check that $j_{i+1}^{7/6} = o(\hat l_i)$. When $i\leq i_0$, this is equivalent to say that $j_i^{7/6}=o(k_0^{1/3}j_i^{2/3})$, that is $j_i =o(k_0^{2/3})$. This latter condition is true when $i\leq i_0$ as $i_0\leq k_0^\tau$ and $\tau<2/5$. In the regime where $i\geq i_0$, one obtains similarly that $j_{i+1}^{7/6} =o(\hat{l}_i)$, using the fact that $\tau >1/4$.
\end{proof}

We next introduce recursively ``good events'', on which the control on block length propagates.
Define $\mathcal{E}_1 = \{ |\veps_1|\leq j_1^{-5/12} \}$ and,
for any $i\geq 2$,
\begin{equation}
\label{eq-Ei}
\mathcal{E}_i 
= \mathcal{E}_1 
\bigcap \bigcap_{q=1}^{i-1}
\big\{ \big( \| G_{l_{q},l_{q+1}}\|_{\infty}+\| \tilde{G}_{l_{q},l_{q+1}}\|_{\infty}+\|D_{l_{q},l_{q+1}}\|_{\infty}+\|Z_{l_{q},l_{q+1}}\|_{\infty}\big) \leq
j_{q+1}^{-5/12}/2
\big\}.
\end{equation}
Note that with this choice, the sets $\mathcal{E}_i$ are nondecreasing, that is, for any $i\geq 1$, $\mathcal{E}_{i+1}\subset \mathcal{E}_i$.
Applying Lemma \ref{recursionborne} to $\varepsilon=j_l^{-5/12}$, for $\kappa$ large enough it follows by induction that on the event $\mathcal{E}_i$ that for any $1\leq l\leq i$,  $|\veps_l|\leq j_l^{-5/12}$. As a consequence of Lemma \ref{cor-1},
we have the following lemma, that states that with high probability, all events $\mathcal{E}_i$ occur simultaneously.
\begin{lemma}\label{cotnrolbruit}Let $\mathcal{E}_\infty = \bigcap_{i \geq 1} \mathcal{E}_i$. Then,
$\PP( \mathcal{E}_\infty) \underset{\kappa \to \infty}{\longrightarrow} 1.$
\end{lemma}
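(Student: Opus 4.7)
The plan is to exploit the telescoping structure of the decreasing sequence $\mathcal{E}_1\supset\mathcal{E}_2\supset\cdots$, writing
\[
\mathcal{E}_\infty^c \;=\; \mathcal{E}_1^c \,\sqcup\, \bigsqcup_{i\geq 1}(\mathcal{E}_i\setminus\mathcal{E}_{i+1}),
\]
so that $\PP(\mathcal{E}_\infty^c)=\PP(\mathcal{E}_1^c)+\sum_{i}\PP(\mathcal{E}_i\setminus\mathcal{E}_{i+1})$. Only the indices $i\leq t_0$ contribute, since past $t_0$ the stopping times stagnate and all block increments vanish. As in the rest of the section, $\PP$ stands for conditional probability on $\mathcal{F}_{k_0-l_0}$ on the event $\{|\delta_{k_0-l_0}|\leq C_\kappa k_0^{-1/3}\}$; the final conclusion is convergence in probability, which follows from the $\kappa$-deterministic bounds below together with Proposition~\ref{prop-scalar}.

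For the initial term, recall from the proof of Lemma~\ref{init} that $T_{l_0,l_1}Y_{l_0}/\|Y_{l_0}\|=e_1+z_1$ with $\EE\|z_1\|^2=O(\kappa^{-3/2})$ and $\veps_1=z_1(2)/(1+z_1(1))$. Since $j_1=\nu^{3/2}\leq(2\kappa)^{3/2}$, the threshold satisfies $j_1^{-5/12}\asymp\kappa^{-5/8}$, so Chebyshev applied to $\|z_1\|$ gives
\[
\PP(\mathcal{E}_1^c)\leq\PP(|\veps_1|>j_1^{-5/12})\lesssim \kappa^{-3/2}/\kappa^{-5/4}=\kappa^{-1/4}.
\]

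For the generic term, $\mathcal{E}_i$ is $\mathcal{F}_{l_i}$-measurable because the $l_q$ are stopping times and each of $G_{l_q,l_{q+1}},\tilde G_{l_q,l_{q+1}},D_{l_q,l_{q+1}},Z_{l_q,l_{q+1}}$ is $\mathcal{F}_{l_{q+1}}$-measurable for $q<i$. The iterated application of Lemma~\ref{recursionborne} recorded just after \eqref{eq-Ei} shows that, on $\mathcal{E}_i$, $|\veps_i|\leq j_i^{-5/12}\leq 1/2$ for $\kappa$ large, so the non-vacuous case of Lemma~\ref{cor-1} is in force. Its proof adapts to threshold $j_{i+1}^{-5/12}/2$ with constants rescaled by an absolute factor, yielding
\[
\PP_{\mathcal{F}_{l_i}}\!\Big(\|G_{l_i,l_{i+1}}\|_\infty+\|\tilde G_{l_i,l_{i+1}}\|_\infty+\|D_{l_i,l_{i+1}}\|_\infty+\|Z_{l_i,l_{i+1}}\|_\infty>\tfrac12 j_{i+1}^{-5/12}\Big)\leq C\,j_{i+1}^{-7/6}
\]
on $\mathcal{E}_i$, and integrating over $\mathcal{E}_i$ gives $\PP(\mathcal{E}_i\setminus\mathcal{E}_{i+1})\leq C\,j_{i+1}^{-7/6}$.

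It remains to sum using the explicit form of $j_i$ from \eqref{eq-ip}. In the scalar phase $1\leq i\leq i_0$ with $j_{i+1}=i+\nu^{3/2}$, integral comparison gives $\sum_{i=1}^{i_0}(i+\nu^{3/2})^{-7/6}\lesssim \nu^{-1/4}\lesssim \kappa^{-1/4}$. In the second phase $i_0<i\leq t_0$, setting $k=i_0+i_1-i-1$ (so $k$ decreases from $\sim i_1$ to $\lceil\sqrt\kappa\rceil-1$) yields $\sum_{i>i_0} j_{i+1}^{-7/6}\lesssim \kappa^{-1/12}$. Combining, $\PP(\mathcal{E}_\infty^c)\lesssim \kappa^{-1/12}\to 0$ as $\kappa\to\infty$, which is the lemma. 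The only delicate point, and where I expect the main obstacle, is the simultaneous tuning of exponents: the choice $5/12\in(1/3,1/2)$ is forced because it must lie strictly below $1/2$ for Lemma~\ref{recursionborne} to propagate $|\veps_i|\leq j_i^{-5/12}$ from block to block (the dominant contribution is $\tfrac34\veps+O(1/\sqrt{j_{i+1}})$), yet strictly above $1/3$ so that Lemma~\ref{cor-1}'s tail bound $j_{i+1}^{-7/6}$ is summable with decay in $\kappa$ driven by $\min_i j_i\gtrsim\sqrt\kappa$.
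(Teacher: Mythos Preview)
Your proof is correct and follows essentially the same approach as the paper: decompose $\mathcal{E}_\infty^c$ as $\mathcal{E}_1^c$ plus the telescoping differences $\mathcal{E}_i\setminus\mathcal{E}_{i+1}$, control the first via Lemma~\ref{init} and each difference via the conditional tail bound of Lemma~\ref{cor-1}, then sum the resulting $j_{i+1}^{-7/6}$ over $i\leq t_0$ using the explicit form of the $j_i$'s. You are in fact more careful than the paper in two minor respects: you observe that the threshold in $\mathcal{E}_1$ is $j_1^{-5/12}\asymp\kappa^{-5/8}$ rather than the $\kappa^{-1/2}$ appearing in the statement of Lemma~\ref{init} (and extract the needed bound directly from its proof), and you make the $\kappa$-decay in the two phases of the sum explicit, whereas the paper simply asserts $\sum j_{i+1}^{-7/6}\leq C_\kappa\to 0$.
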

\begin{proof}
We have that 
\begin{align*}
&\PP\Big( \bigcup_{i\geq 1} \mathcal{E}_i^c\Big)\leq\PP(\mathcal{E}_1^c) +\sum_{q=1}^{i_0+i_1}\PP(\mathcal{E}_{q}\cap \mathcal E_{q+1}^c)\\
&\leq\PP(\mathcal{E}_1^c) +\sum_{q=1}^{i_0+i_1}  \PP\Big( ( \| G_{l_{q},l_{q+1}}\|_{\infty}+\| \tilde{G}_{l_{q},l_{q+1}}\|_{\infty}+\|D_{l_{q},l_{q+1}}\|_{\infty}+\|Z_{l_{q},l_{q+1}}\|_{\infty}>j_{q+1}^{-5/12}/2\mid \mathcal{E}_q\Big).
\end{align*}
Observe that $j_1^{-5/12} \geq (2\kappa)^{-5/8}$. By Lemma \ref{init}, we get $\PP( \mathcal{E}_1^c) =o_\kappa(1)$ when $\kappa \to +\infty$. 
Using  Lemma \ref{cor-1},
the sum on the right hand side of the last display is bounded above by
$O(\sum_{i=1}^{i_0+i_1} j_{i+1}^{-7/6}) = O( \sum_{ j\geq \kappa^{3/2}} j^{-7/6}) \leq C_\kappa,$
where $C_\kappa\to_{\kappa\to\infty} 0.$
\end{proof}

In the sequel, we write $\PP_i$ and $\EE_i$ for probabilities and expectations conditioned on $\mathcal{F}_{l_i}$. The next proposition uses the events $\mathcal{E}_i$ to get a control on the growth of the norms $t_i$.
\begin{proposition}\label{increm}There exists $b>1$ such that for any $i\geq 2$,
 \begin{equation}\label{devepsilon}
 \veps_{i} = G_{l_{i-1},l_{i}}(2,1) + \eta_{i},\end{equation}
where $G_{l_{i-1},l_{i}}$ is as in \eqref{eq-Gdef} and
$\EE_{i-1}( \eta_{i}^2 \Car_{\mathcal{E}_i}) \lesssim j_i^{-b}$. Further,
\begin{equation} \label{devti}t_{i+1}/t_{i} = 1 +(3v/8)\cdot (\Delta l_{i-1}/l_{i-1})- \Delta l_i/4l_i +G_{l_{i},l_{i+1}}(1,1) + \xi_{i+1},
\end{equation}
where  $|\EE_{i-1} (\xi_{i+1} \Car_{\mathcal{E}_{i}})| \lesssim j_i^{-b}$ and $\EE_{i-1}( \xi_{i+1}^2 \Car_{\mathcal{E}_{i}}) \lesssim j_i^{-b}$.
\end{proposition}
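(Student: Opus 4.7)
The plan is to read off two scalar identities from the recursion $y_{i+1}=T_{l_i,l_{i+1}}y_i$ by substituting the decomposition \eqref{decompositionT} together with the explicit shape of $S_{l_i,l_{i+1}}$ and $V_{l_i,l_{i+1}}=G_{l_i,l_{i+1}}+\tilde G_{l_i,l_{i+1}}$ given by Lemma \ref{decomprandomblock}. These are exactly \eqref{eq-firstco}--\eqref{eq-secondco}. Throughout, on $\mathcal E_i$ we have the a priori estimates $|\veps_i|\leq j_i^{-5/12}$ (Lemma \ref{recursionborne}), $t_i/t_{i-1}=1+O(j_i^{-5/12})$, $\|\tilde G\|_\infty+\|D\|_\infty+\|Z\|_\infty\lesssim j^{-5/12}$ per block (via \eqref{eq-gammablock}), $\Delta l_i/l_i\lesssim 1/j_{i+1}$, $l_i/k_0\lesssim j_i^{-a}$ with $a>1$, $\sqrt{k_0/l_i^3}\lesssim 1/j_i$ (Corollary \ref{controlrandom}), and $\delta_{l_i,l_{i+1}}=\veps_i+r_i$ with $r_i=O(\sqrt{l_i/k_0})+O(\veps_i^3)$ (Proposition \ref{estimli}).

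To prove \eqref{devepsilon} I shift indices in \eqref{eq-secondco} and divide by $t_i/t_{i-1}$: the leading term is precisely $G_{l_{i-1},l_i}(2,1)$, and $\eta_i$ collects $\tilde G_{l_{i-1},l_i}(2,1)$, $D_{l_{i-1},l_i}(2,1)$, $Z_{l_{i-1},l_i}(2,1)$, the $\tilde S$ and $M(\cdot,2)$ cross terms in $\veps_{i-1}$, the drift $-(\Delta l_{i-1}/4l_{i-1})\veps_{i-1}$, the correction $O(\sqrt{l_{i-1}/k_0})$, and the multiplicative error from $t_i/t_{i-1}-1$. The conditional second moment bound $\EE_{i-1}(\eta_i^2\Car_{\mathcal E_i})\lesssim j_i^{-b}$ follows from the variance bounds of Lemma \ref{decomprandomblock} (order $1/j_i$ for $\tilde G(2,1)$, order $1/j_i^2$ for $D$, $Z$, and $\|\tilde S\|^2$), the $\mathcal F_{l_{i-1}}$-measurable smallness $\veps_{i-1}^2\lesssim j_{i-1}^{-5/6}$ absorbing the cross terms, and Corollary \ref{controlrandom} ensuring that each surviving power of $\Delta l_{i-1}/l_{i-1}$, $l_{i-1}/k_0$, or $\sqrt{k_0/l_{i-1}^3}$ decays faster than $1/j_i$.

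For \eqref{devti} I plug $\delta_{l_i,l_{i+1}}=\veps_i+r_i$ into \eqref{eq-firstco}, which yields the crucial algebraic cancellation
\begin{equation*}
-\tfrac12\delta_{l_i,l_{i+1}}^2+\delta_{l_i,l_{i+1}}\veps_i=\tfrac12\veps_i^2-\tfrac12r_i^2;
\end{equation*}
the dangerous linear-in-$\veps_i$ piece disappears and only $\tfrac12\veps_i^2$ survives as a new quadratic drift, up to $O(l_i/k_0+\veps_i^6)$. I then apply \eqref{devepsilon} already proved to write $\tfrac12\veps_i^2=\tfrac12G_{l_{i-1},l_i}(2,1)^2+G_{l_{i-1},l_i}(2,1)\eta_i+\tfrac12\eta_i^2$. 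From $G_{l_{i-1},l_i}(2,1)=(\sqrt{3v}/2)l_{i-1}^{-1/2}g_3$ with $\EE_{i-1}[g_3^2]=\Delta l_{i-1}$, the conditional expectation of the first piece is exactly $\tfrac{3v}{8}\cdot\tfrac{\Delta l_{i-1}}{l_{i-1}}$, producing the stated deterministic drift. The Gaussian-like term $G_{l_i,l_{i+1}}(1,1)$ is extracted from $V_{l_i,l_{i+1}}(1,1)$, and everything else (the centered square $G^2-\EE_{i-1}G^2$, $G\eta_i$, $\eta_i^2$, $r_i^2$, $\tilde G(1,1)+D(1,1)+Z(1,1)+\tilde S(1,1)$, and the cross term $[M(1,2)+\tilde S(1,2)]\veps_i$) is collected into $\xi_{i+1}$.

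The mean and variance bounds on $\xi_{i+1}$ then follow by aggregation: the centered square has zero $\EE_{i-1}$-mean and variance $\lesssim(\Delta l_{i-1}/l_{i-1})^2\lesssim j_i^{-2}$ by the sub-Gaussian estimate \eqref{controlexpoG}; Cauchy--Schwarz with the $L^2$-bound on $\eta_i$ already proved controls $G\eta_i$ and $\eta_i^2$; Lemma \ref{decomprandomblock} and \eqref{eqZ} handle $\tilde G$, $D$, $Z$, $\tilde S$, with the $D$-mean controlled through the tower property and $\|\EE_i D_{l_i,l_{i+1}}\|\lesssim\Delta l_i/\sqrt{k_0 l_i}$; the $\veps_i$-cross terms use $|\veps_i|\leq j_i^{-5/12}$ together with $\|M\|_\infty\leq j_{i+1}^{-5/12}$. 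The main obstacle is the combinatorial bookkeeping required to place every single remainder under a \emph{common} exponent $b>1$; the two critical inputs that make this possible are the cancellation displayed above, which removes the would-be linear drift in $\veps_i$, and the condition $a>1$ in Fact \ref{comparaison}(ii), which is precisely where the choice $\tau\in(1/4,2/5)$ enters, guaranteeing that every error of the form $l_i/k_0$ or $\sqrt{k_0/l_i^3}$ beats $1/j_i$ by a strict positive power and so fits under some $b\in(1,\min(7/6,2a-1))$.
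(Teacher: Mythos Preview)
Your approach is essentially the paper's: you correctly identify the two scalar equations coming from \eqref{decompositionT} and Lemma \ref{decomprandomblock}, the algebraic cancellation $-\tfrac12\delta^2+\delta\veps_i=\tfrac12\veps_i^2-\tfrac12 r_i^2$, and the extraction of the drift $(3v/8)\Delta l_{i-1}/l_{i-1}$ from $\tfrac12\EE_{i-1}G_{l_{i-1},l_i}(2,1)^2$. Two points, however, need repair.

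First, your variance claim ``order $1/j_i$ for $\tilde G(2,1)$'' is too weak and, taken literally, breaks the argument: since $\tilde G_{l_{i-1},l_i}(2,1)$ sits in $\eta_i$ \emph{without} a factor of $\veps_{i-1}$, a second moment of order $1/j_i$ would force $\EE_{i-1}(\eta_i^2\Car_{\mathcal E_i})\gtrsim j_i^{-1}$, incompatible with any $b>1$. What Lemma \ref{decomprandomblock} actually gives, once you feed in $|\delta_{l_{i-1},l_i}|\lesssim j_i^{-5/12}$, $l_{i-1}/k_0\lesssim j_i^{-a}$, $\sqrt{\Delta l_{i-1}/k_0}\lesssim j_i^{-(1+a)/2}$ and $\sqrt{k_0/l_{i-1}^3}\lesssim j_i^{-1}$ from Corollary \ref{controlrandom}, is $\EE_{i-1}\|\tilde G_{l_{i-1},l_i}\|^2\lesssim j_i^{-17/12}$ (the paper records $j_i^{-5/4}$ in its Lemma ``erreur''). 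This is where the exponent $5/12$ in the definition of $\mathcal E_i$ is actually used.

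Second, you do not address the fact that $\mathcal E_i$ is $\mathcal F_{l_i}$-measurable but \emph{not} $\mathcal F_{l_{i-1}}$-measurable. In particular, your statement that the centered square $G_{l_{i-1},l_i}(2,1)^2-\EE_{i-1}G^2$ has ``zero $\EE_{i-1}$-mean'' is only true before multiplying by $\Car_{\mathcal E_i}$; the indicator is correlated with $G_{l_{i-1},l_i}$. The paper fixes this by replacing $\Car_{\mathcal E_i}$ with $\Car_{\mathcal E_{i-1}}$ (which \emph{is} $\mathcal F_{l_{i-1}}$-measurable) at the cost of a Cauchy--Schwarz error controlled via $\PP_{i-1}(\mathcal E_{i-1}\setminus\mathcal E_i)\lesssim j_i^{-7/6}$ from Lemma \ref{cor-1} together with the fourth-moment bound \eqref{controlexpoG}. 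The same device is needed wherever you invoke $\EE_{i-1}$-centering (e.g.\ for $\tilde G(1,2)\veps_i$ in $\xi_{i+1}$).
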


\begin{proof}We will first prove \eqref{devepsilon}.
Let $i\geq 1$. On the event $\mathcal{E}_i$, $l_{i+1}$ is well-defined by \eqref{defli} and $y_{i+1} =t_{i+1}(1,\veps_{i+1})^T$ by \eqref{defyi}. Using the decomposition \eqref{decompositionT} of the transition matrix $T_{l_i,l_{i+1}}$ and Lemma \ref{decomprandomblock}, we obtain that on the event $\mathcal{E}_i$,
$$ t_{i+1}/t_i = 1 - \Delta l_i/4l_i - \delta_{l_i,l_{i+1}}^2/2 -\delta_{l_i,l_{i+1}} \veps_i +G_{l_i,l_{i+1}}(1,1) +G_{l_i,l_{i+1}}(1,2)\veps_i+u_i.$$
and 
\[ \veps_{i+1}(t_{i+1}/t_i) =\veps_i+\delta_{l_i,l_{i+1}} -  \veps_i\delta_{l_i,l_{i+1}}^2/2- \veps_i \Delta l_i/4l_i+ G_{l_i,l_{i+1}}(2,1) + G_{l_i,l_{i+1}}(2,2) \veps_i+v_i,\]
where $(u_i,v_i)^T = (\tilde{S}_{l_i,l_{i+1}} + \tilde{G}_{l_i,l_{i+1}} + D_{l_i,l_{i+1}} +Z_{l_i,l_{i+1}})(1,\veps_i)^T$ (see \eqref{eq-VR}, \eqref{eq-Z} and \eqref{eq-Gdef}
for definitions). By the definition \eqref{defli} of the stopping time $l_i$,  we have that
\begin{equation}\label{devepsi} \veps_{i+1}  t_{i+1}/t_i  =O\Big( \sqrt{l_i/k_0}\Big)- \veps_i\Delta l_i/4l_i+ G_{l_i,l_{i+1}}(2,1) + G_{l_i,l_{i+1}}(2,2) \veps_i+v_i.\end{equation}
Using \eqref{deltaestim} of Proposition \ref{estimli}, we conclude that
$$  \delta_{l_i,l_{i+1}} \veps_i + \delta_{l_i,l_{i+1}}^2/2 = {-\veps_i^2/2} +O\Big( \sqrt{l_i/k_0}|\veps_i|\Big) +O(\veps_i^4),$$
which yields that on the event $\mathcal{E}_i$,
\begin{equation}
\label{eq-newstar}
 \frac{t_{i+1}}{t_i} = 1 - \frac{\Delta l_i}{4l_i} +\frac{3\veps_i^2}{2}+G_{l_i,l_{i+1}}(1,1) +G_{l_i,l_{i+1}}(1,2)\veps_i +O\Big( \sqrt{\frac{l_i}{k_0}}|\veps_i|\Big) +O(\veps_i^4)+u_i.
 \end{equation}
Using that on the event $\mathcal{E}_{i+1}$ we have  that $|\veps_i|\lesssim j_i^{-5/12}$, and
\[\| G_{l_i,l_{i+1}}\|_\infty +  \| \tilde{G}_{l_i,l_{i+1}}\|_\infty +\| Z_{l_i,l_{i+1}}\|_\infty  \leq j_{i+1}^{-5/12},\]
we get, as a direct consequence of Lemma \ref{recursionborne}, that  on the event $\mathcal{E}_{i+1}$,
\begin{equation}\label{controlti}
\Big|\frac{t_{i+1}}{t_i} -1\Big|\lesssim  j_{i+1}^{-5/12}.
\end{equation}
Using Corollary \ref{controlrandom}, \eqref{controlrandomvariance}, and the fact that $|\veps_i|\lesssim j_i^{-1/4}$ on $\mathcal{E}_i$, we get that on $\mathcal{E}_i$,
\[ O\Big( \sqrt{l_i/k_0}\Big) - \veps_i \Delta l_i/4l_i  = O\Big( j_i^{-a/2}\Big),\]
where $a>1$ is as in Fact \ref{comparaison} (ii).
We deduce from \eqref{devepsi} that on $\mathcal{E}_i$,
$$ \frac{t_{i+1}}{t_i} \veps_{i+1} =G_{l_i,l_{i+1}}(2,1) + \tilde \veps_{i+1},$$
where
$$\tilde{\veps}_{i+1} =O\big( j_i^{-a/2 }\big) + G_{l_i,l_{i+1}}(2,2) \veps_i+v_{i}.$$
Using \eqref{controlti}, we deduce that on $\mathcal{E}_{i+1}\subset \mathcal{E}_i$,
$$ \veps_{i+1} = G_{l_i,l_{i+1}}(2,1) +\eta_{i+1},$$
where
$$ \eta_{i+1} = \frac{t_i}{t_{i+1}} \tilde{\veps}_{i+1} - O\Big( \frac{t_{i+1}}{t_i} -1\Big) G_{l_i,l_{i+1}}(2,1).$$
In order to control the error term $\tilde{\veps}_{i+1}$, we need the following.
\begin{lemma}\label{erreur}On the event $\mathcal{E}_i$,
$$  | \EE_{i} u_{i+1}|  +| \EE_{i}
 v_{i+1}|  \lesssim j_i^{-5/4}, \quad \EE_{i}(u_{i+1}^2+ v_{i+1}^2)
 \lesssim  j_i^{-5/4}.$$
\end{lemma}
\begin{proof}[Proof of Lemma \ref{erreur}.] Recall that $(u_{i+1},v_{i+1})^T = (\tilde{S}_{l_i,l_{i+1}} + \tilde{G}_{l_i,l_{i+1}} +D_{l_i,l_{i+1}}+ Z_{l_i,l_{i+1}})(1,\veps_i)^T$.
Since $l_{i+1}$ is $\mathcal{F}_{l_i}$-measurable, we have by Lemma  \ref{decomprandomblock} that $\EE_{i} (\tilde{G}_{l_i,l_{i+1}})=0$, and therefore
\[ \EE_{i}\Big(  \tilde{G}_{l_i,l_{i+1}}(1,\veps_i)^T \Big) =0.\]
By Lemma \ref{decomprandomblock} and Proposition \ref{estimli}, we have  that on $\mathcal{E}_i$,
\[ \EE_i \| D_{l_i,l_{i+1}} \| \leq \Big( \EE_i \| D_{l_i,l_{i+1}} \|^2\Big)^{1/2} \lesssim \frac{\sqrt{\Delta \hat l_i}}{\hat l_i} + \frac{\Delta \hat l_i}{\sqrt{k_0 \hat l_i}}.\]
Using Fact \ref{comparaison} and the definition of $\hat l_i$, one gets that 
\[ \EE_i \| D_{l_i,l_{i+1}} \| \lesssim j_i^{-5/4}.\]
Further,  by \eqref{eq-normS}, we have that on $\mathcal{E}_i$, $ \| \tilde{S}_{l_{i},l_{i+1}}\| \lesssim  j_i^{-5/3}$. (The exponent $5/12$ in the definition \eqref{eq-Ei} of the event $\mathcal{E}_i$ was precisely chosen to handle this error term).  Thus, by \eqref{eqZ}, we get that
\[ | \EE_{i}u_{i+1}| + | \EE_{i} v_{i+1}| \lesssim   \EE_{i} \big( \| D_{l_i,l_{i+1}}\|\big)+  \EE_{i} \big( \| Z_{l_i,l_{i+1}}\|\big)+\EE_{i} \big( \| \tilde{S}_{l_i,l_{i+1}}\|\big) \lesssim j_i^{-5/4}.\]
Using Lemma \ref{decomprandomblock}, we obtain that on the event $\mathcal{E}_i$,
$$ \EE_{i} \big( \| \tilde{G}_{l_i,l_{i+1}}\|^2\big)\lesssim \Big( \EE_{i} \big( \delta_{l_i,l_{i+1}}^2\big) +\frac{l_i}{k_0}+ \sqrt{\frac{\Delta l_i}{k_0}} \Big) \frac{\Delta l_i}{l_i} + \sqrt{\frac{k_0}{l_i^3}} \EE_{i}\big(|\delta_{l_i,l_{i+1}}|\big).$$
On the event $\mathcal{E}_i$, we have that $|\veps_i|\leq j_i^{-5/12}$ and therefore, by
\eqref{controldelta},
 $|\delta_{l_i,l_{i+1}}|\lesssim j_i^{-5/12}$. 
By Corollary \ref{controlrandom}, we have that $l_i/k_0 \lesssim j_i^{-a}$ with $a>1$, $\Delta l_i /l_i \lesssim j_i^{-1}$ and on the event $\mathcal{E}_i$, $\sqrt{k_0/l_i^3} \lesssim j_i^{-1}$. Thus, we deduce that on the event $\mathcal{E}_i$,
$ \EE_{i} \big( \| \tilde{G}_{l_i,l_{i+1}}\|^2\big)\lesssim j_i^{-5/4}.$
In addition, we have  by \eqref{moment2DZ} that
$ \EE_{i} \big( \| D_{l_i,l_{i+1}}\|^2+ \| Z_{l_i,l_{i+1}}\|^2\big) \lesssim j_i^{-2}$.
Thus, we conclude that on $\mathcal{E}_i$, as $|\veps_i| \lesssim 1$,
\begin{align*} \EE_{i}  (u_{i+1}^2 +v_{i+1}^2) &\lesssim   \EE_{i} \big( \| Z_{l_i,l_{i+1}}\|^2\big)+\EE_{i} \big( \| \tilde{S}_{l_i,l_{i+1}}\|^2\big)+\EE_{i} \big( \| \tilde{G}_{l_i,l_{i+1}}\|^2\big) \lesssim  j_i^{-5/4},
\end{align*}
which ends the proof.
\end{proof}
We continue with the proof of \eqref{devepsilon}.
Using Lemma \ref{erreur} and the fact that $|\veps_i|\leq 2/j_i^{5/12}$ on $\mathcal{E}_i$, we deduce that on $\mathcal{E}_i$,
$$ \EE_{i} \tilde{\veps}_{i+1}^2\lesssim j_i^{-a} + j_i^{-\frac{5}{6}}\EE_i G_{l_i,l_{i+1}}(2,2)^2 + j_i^{-5/4}.$$
But $\EE_i \|G_{l_i, l_{i+1}}\|^2 \lesssim \Delta l_i/l_i$ by Lemma \ref{decomprandomblock}. Using Corollary \ref{controlrandom}, we get that
\begin{equation} \label{controlG} \EE_i \|G_{l_i, l_{i+1}}\|^2 \lesssim j_i^{-1}.\end{equation}
Thus, we deduce that  on $\mathcal{E}_i$,
$$ \EE_{i} \tilde{\veps}_{i+1}^2\lesssim j_i^{-(5/4) \wedge a}.$$
As  $\mathcal{E}_{i+1}\subset \mathcal{E}_i$ by definition, we obtain that
\begin{align*}
&\EE_i \big[\Car_{\mathcal{E}_{i+1}}\eta_{i+1}^2 \big]\lesssim \EE_i\Big[\Car_{\mathcal{E}_{i+1} }\Big(\frac{t_i}{t_{i+1}} \tilde{\veps}_{i+1}\Big)^2\Big] + \EE_i\Big[ \Car_{\mathcal{E}_{i+1}} \Big(\frac{t_{i+1}}{t_i} -1\Big)^2 G_{l_i,l_{i+1}}(2,1)^2\Big]
 \lesssim j_i^{-(5/4) \wedge a},
\end{align*}
where we used \eqref{controlG}. This ends the proof of the first claim \eqref{devepsilon}.

We turn to the proof of \eqref{devti}. Assume that $i\geq 2$.  Recall that on $\mathcal{E}_{i}$,
\begin{equation}
\label{eq-recall}
 \frac{t_{i+1}}{t_i} = 1 - \frac{\Delta l_i}{4l_i} +\frac{3\veps_i^2}{2}+G_{l_i,l_{i+1}}(1,1) +G_{l_i,l_{i+1}}(1,2)\veps_i +O\Big( \sqrt{\frac{l_i}{k_0}}\veps_i\Big) +O(\veps_i^4)+u_{i+1}.
 \end{equation}
Let
$$ \tilde \xi_{i+1} = G_{l_i,l_{i+1}}(1,2)\veps_i +O\Big( \sqrt{\frac{l_i}{k_0}}\veps_i\Big) +O(\veps_i^4)+u_{i+1}.$$ We will first prove the following lemma.
\begin{lemma} With $a\in (1,2)$ as in Fact \ref{comparaison} (ii),
  \label{erreurbis}
$$ |\EE_{i-1}  \big(\Car_{\mathcal{E}_{i}}\tilde{\xi}_{i+1}\big) |\lesssim j_i^{-(a+1)/2},\quad \EE_{i-1}\big(\Car_{\mathcal{E}_{i}} \tilde{\xi}_{i+1}^2\big) \lesssim j_i^{-\frac{3}{2}}.$$
\end{lemma}
\begin{proof}[Proof of Lemma \ref{erreurbis}.]
Since $l_{i+1}$ is $\mathcal{F}_{l_i}$-meaurable, we know by Lemma \ref{decomprandomblock} that $\EE_{i}( G_{l_i,l_{i+1}} )=0$. As a consequence,
\begin{equation}\label{esperr1} \EE_{i-1}\Big( \Car_{\mathcal{E}_{i}} G_{l_i,l_{i+1}}(1,2) \veps_i \Big) =0.\end{equation}
Using \eqref{controlG} and the fact that $|\veps_i|\lesssim j_i^{-5/12}\lesssim j_i^{-1/4}$ on $\mathcal{E}_i$, we deduce that
\begin{equation}\label{err1} \EE_{i-1}\Big( \Car_{\mathcal{E}_{i}} G_{l_i,l_{i+1}}(1,2)^2 \veps_i^2 \Big) \lesssim j_i^{-\frac{3}{2}}.\end{equation}
Note that \eqref{devepsilon} and \eqref{controlG} yield that
\begin{equation} \label{controlvarepsilon}\EE_{i-1} \big(\Car_{\mathcal{E}_i} \veps_i^2\big) \lesssim j_i^{-1}.\end{equation}
Using the fact that on $\mathcal{E}_i$,  $l_i/k_0 \lesssim j_i^{-a}$ by Corollary \ref{controlrandom} and $|\veps_i|\lesssim j_i^{-5/12}$, we obtain that
\begin{equation} \label{err:2}  \EE_{i-1} \big(\Car_{\mathcal{E}_i} \veps_i^2 l_i/k_0\big) +\EE_{i-1} \big(\Car_{\mathcal{E}_i}\veps_i^8\big)\lesssim j_i^{-(a+1)} + j_i^{-\frac{10}{3} }\lesssim j_i^{-(a+1)},\end{equation}
where we used the fact that $a<2$, see Fact \ref{comparaison}(ii).
Putting together  \eqref{esperr1} and \eqref{err:2}, and using Lemma \ref{erreur}, we deduce that
\[  |\EE_{i-1} \Car_{\mathcal{E}_i} \tilde{\xi}_{i+1} | \lesssim j_i^{-\frac{1}{2}(a+1)}.\]
 On the other hand, combining together \eqref{err1}, \eqref{err:2} and Lemma \ref{erreur}, we have that
\[ \EE_{i-1} \big(\Car_{\mathcal{E}_i} \tilde{\xi}_{i+1}^2 \big)\lesssim j_i^{-\frac{3}{2}}+j_i^{-(a+1)}.\]
Using the fact that $a>1$ ends the proof.
\end{proof}
The last estimate needed is contained in the next lemma.
\begin{lemma}\label{secondmomenteps}
With notation as above,
$$ \veps_i^2 = \frac{3v}{4} \frac{\Delta l_{i-1}}{ l_{i-1}} + r_i,$$
where
$$| \EE_{i-1} ( \Car_{\mathcal{E}_i} r_i) | \lesssim j_i^{-a},\quad \EE_{i-1} (\Car_{\mathcal{E}_i} r_i^2) \lesssim j_i^{-\frac{3}{2}}.$$
\end{lemma}
\begin{proof}
Observe first that
$$  \EE_{i-1} \big( \Car_{\mathcal{E}_i} \big( \veps_i^2 -\EE_{i-1}( \veps_i^2\mid \mathcal{E}_{i}) \big)^2 \big) \leq 
\EE_{i-1} \big(\Car_{\mathcal{E}_{i}} \veps_i^4 \big).$$
Using \eqref{controlvarepsilon} and the fact that $|\veps_i|\lesssim j_i^{-1/4}$ on $\mathcal{E}_i$, we obtain that  $\EE_{i-1} \big( \Car_{\mathcal{E}_i} \veps_i^4\big) \lesssim j_i^{-3/2}$, and therefore
\[  \EE_{i-1} \big( \Car_{\mathcal{E}_i} \big( \veps_i^2 -\EE_{i-1}( \Car_{\mathcal{E}_{i}}\veps_i^2 ) \big)^2 \big) \lesssim j_i^{-\frac{3}{2}}.\]
It now remains to show that 
\begin{equation}\label{claimvar}\EE_{i-1}( \Car_{\mathcal{E}_{i}}\veps_i^2 ) = (3v/4)\cdot  (\Delta l_{i-1}/l_{i-1}) + O( j_i^{-a}).\end{equation}
By \eqref{devepsilon}, we know that $\veps_i = G_ {l_{i-1}, l_i}(2,1) + \eta_i$, where $\EE_{i-1}( \Car_{\mathcal{E}_{i}} \eta_i^2 )\lesssim j_i^{-a}$ with $a>1$. Since $\EE_{i-1} ( \Car_{\mathcal{E}_{i-1}} G_{l_{i-1}, l_i}(2,1)^2) \lesssim j_i^{-1}$ as we proved in \eqref{controlG}, we get
\begin{align} \EE_{i-1} \big( \Car_{\mathcal{E}_i} \veps_i^2\big) & =  \EE_{i-1} \big( \Car_{\mathcal{E}_i} G_{l_{i-1},l_i}(2,1)^2\big) +  2 \EE_{i-1} \big( \Car_{\mathcal{E}_i} G_{l_{i-1},l_i}(2,1)\eta_i\big)  + \EE_{i-1} \big( \Car_{\mathcal{E}_i} \eta_i^2\big) \nonumber \\
& =   \EE_{i-1} \big( \Car_{\mathcal{E}_i} G_{l_{i-1},l_i}(2,1)^2\big)  + O\big( j_i^{-(a+1)/2}\big)+ O\big( j_i^{-a}\big).\label{moment2}
\end{align}
 Since $a>1$, we get
\begin{equation} \label{varepsi} \EE_{i-1} \big( \Car_{\mathcal{E}_i} \veps_i^2\big) =  \EE_{i-1} \big( \Car_{\mathcal{E}_i} G_{l_{i-1},l_i}(2,1)^2\big) + O\big(j_i^{-a}\big).\end{equation}
Further, using  Lemma \ref{decomprandomblock}, we have that
\begin{equation}\label{varcomp}\EE_{i-1}\Car_{\mathcal{E}_{i-1}} G_{l_{i-1}, l_i}(2,1)^2=(3v/4)\cdot (\Delta l_{i-1}/ l_{i-1}).\end{equation}
On the other hand, using the Cauchy-Schwarz inequality, we get that
$$
\EE_{i-1} \big( \Car_{\mathcal{E}_{i-1}\setminus \mathcal{E}_i}G_{l_{i-1},l_i}(2,1)^2 \big)
\leq  \PP_{i-1} \big( \mathcal{E}_{i-1}\setminus \mathcal{E}_i \big)^{1/2} \EE_{i-1}\big( \Car_{\mathcal{E}_{i-1}}G_{l_{i-1},l_i}(2,1)^4\big)^{1/2}.
$$
From \eqref{controlexpoG}, we deduce that in particular
\[ \EE_{i-1} e^{ t \sqrt{l_i/\Delta l_i} |G_{l_{i-1}, l_i}(2,1)|} \leq C,\]
where $t, C$ are absolute constants, so that
\begin{equation} \label{moment4} \EE_{i-1} G_{l_{i-1},l_i}(2,1)^4 \lesssim  (\Delta l_i/l_i)^2\lesssim j_i^{-2},\end{equation}
where we used \eqref{controlrandomvariance} in the second inequality.
By Lemma \ref{cor-1}, we have that $\PP_{i-1} ( \mathcal{E}_{i-1}\setminus \mathcal{E}_i) \lesssim j_i^{-7/6}$. Therefore,
\[  \EE_{i-1} \big( \Car_{\mathcal{E}_i} G_{l_{i-1},l_i}(2,1)^2\big)  =  \EE_{i-1} \big( \Car_{\mathcal{E}_{i-1}} G_{l_{i-1},l_i}(2,1)^2\big)  +O( j_i^{-4/3}).\]
Using \eqref{varepsi} and \eqref{varcomp}, we deduce that
\[  \EE_{i-1} \big( \Car_{\mathcal{E}_i} \veps_i^2\big) = (3v/4)\cdot (\Delta l_{i-1}/ l_{i-1})+O( j_i^{-(4/3)\wedge a}),\]
which ends the proof of  \eqref{claimvar} and therefore of the lemma.
\end{proof}
Combining the last lemma with \eqref{eq-recall} and Lemma \ref{erreurbis} yields \eqref{devti} and completes
the proof of Proposition \ref{increm}.
\end{proof}
\begin{proposition}\label{convlawblock}
Let $i_* = \min \big\{ i \geq 1: l_i =l_{i+1}\big\}$ and denote by $\nu_{n,\kappa}^{(0)}$ the conditional law of
$$ \frac{\log \Big|\frac{t_{i_*}}{t_1}\Big|- \frac{v-1}{6}\log n}{\sqrt{\frac{v}{6}\log n }},$$
given $\mathcal{F}_{k_0-l_0}$.
Then,
$$ \lim_{\kappa \to \infty} \limsup_{n\to \infty} d(\nu_{n,\kappa}^{(0)},\gamma) =0.$$
\end{proposition}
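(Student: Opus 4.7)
The plan is to decompose $\log|t_{i_*}/t_1|$ as a martingale plus a deterministic-looking drift plus a negligible remainder, and apply a conditional martingale central limit theorem. Throughout, I would work on the event $\mathcal E_\infty$, which by Lemma \ref{cotnrolbruit} has probability tending to $1$ as $\kappa\to\infty$; on it, $l_i\asymp \hat l_i$ by Corollary \ref{controlrandom}, $i_*=t_0$ by construction, and $t_i>0$ for all $i\leq i_*$ using \eqref{controlti} and Lemma \ref{init}. Since $|t_{i+1}/t_i-1|\lesssim j_{i+1}^{-5/12}$ by \eqref{controlti}, Taylor expansion gives
\[
\log(t_{i+1}/t_i)=(t_{i+1}/t_i-1)-\tfrac12(t_{i+1}/t_i-1)^2+O(j_{i+1}^{-5/4}),
\]
and substituting \eqref{devti} together with the compensator decomposition $G_{l_i,l_{i+1}}(1,1)^2=(v/4)\Delta l_i/l_i+(G^2-\EE G^2)$ yields
\[
\log|t_{i_*}/t_1|=M+D+R,\qquad M:=\sum_{i=1}^{i_*-1}G_{l_i,l_{i+1}}(1,1),
\]
with drift $D:=\sum_{i=1}^{i_*-1}\!\bigl(\tfrac{3v}{8}\tfrac{\Delta l_{i-1}}{l_{i-1}}-\tfrac{\Delta l_i}{4l_i}-\tfrac{v}{8}\tfrac{\Delta l_i}{l_i}\bigr)$, and $R$ collecting the cubic Taylor errors, the cross terms produced by expanding $(t_{i+1}/t_i-1)^2$, the centered squares $G^2-\EE G^2$, and all $\xi_{i+1}$'s.

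Next I would evaluate $D$ and the predictable variance $\sigma_n^2:=\sum_i(v/4)\Delta l_i/l_i$ of $M$. Reindexing $\sum\Delta l_{i-1}/l_{i-1}=T+O_\kappa(1)$ with $T:=\sum_{i=1}^{i_*-1}\Delta l_i/l_i$ (boundary terms controlled by Lemma \ref{init} and Corollary \ref{controlrandom}) gives $D=\tfrac{v-1}{4}T+O_\kappa(1)$ and $\sigma_n^2=\tfrac{v}{4}T+O_\kappa(1)$. Writing $\Delta l_i/l_i=\log(l_{i+1}/l_i)+O\bigl((\Delta l_i/l_i)^2\bigr)$ and telescoping,
\[
T=\log(l_{i_*}/l_1)+O\Bigl(\sum_i j_i^{-2}\Bigr)=\log(l_{i_*}/l_1)+O(1),
\]
where the remainder is summable by Corollary \ref{controlrandom}. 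Combined with $l_1\asymp \kappa k_0^{1/3}$ and $l_{i_*}\asymp k_0/\kappa$, this yields $T=\tfrac23\log n+O_\kappa(1)$, hence $D=\tfrac{v-1}{6}\log n+O_\kappa(1)$ and $\sigma_n^2=\tfrac{v}{6}\log n+O_\kappa(1)$.

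The sum $M$ is a martingale with respect to $(\mathcal F_{l_i})$ (centered by Lemma \ref{decomprandomblock}). The Laplace estimate \eqref{controlexpoG} gives uniform sub-Gaussian tails for the normalized increments $\sqrt{l_i/\Delta l_i}\,G_{l_i,l_{i+1}}(1,1)$; the maximum conditional variance is $\lesssim 1/\sqrt\kappa$, and the summed conditional third moments are $\lesssim \sum j_i^{-3/2}=O(1)$. Lyapunov's condition is therefore satisfied, and Brown's conditional martingale CLT gives $M/\sqrt{v\log n/6}\leadsto\mathcal N(0,1)$ in probability conditionally on $\mathcal F_{k_0-l_0}$.

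Finally, I would show $R=O_p(1)$. The cubic Taylor errors $\sum O(j_{i+1}^{-5/4})$ are deterministically summable; the cross terms from $(t_{i+1}/t_i-1)^2$ (expanded via \eqref{devti}) all have $L^2$-norm bounded by $O(\sum j_i^{-3/2})=O(1)$ by Cauchy--Schwarz and the variance estimates above; $\sum_i(G^2-\EE G^2)$ is a martingale with conditional variance $\lesssim \sum(\Delta l_i/l_i)^2=O(1)$; and Proposition \ref{increm} gives $|\EE[\xi_{i+1}\Car_{\mathcal E_i}\mid\mathcal F_{l_{i-1}}]|\lesssim j_i^{-b}$ together with $\EE[\xi_{i+1}^2\Car_{\mathcal E_i}\mid\mathcal F_{l_{i-1}}]\lesssim j_i^{-b}$ with $b>1$, so both the sum of conditional means and the variance of the centered part are summable, giving $\sum\xi_{i+1}=O_p(1)$. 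Thus $R=o_p(\sqrt{\log n})$, and taking $n\to\infty$ first and then $\kappa\to\infty$ absorbs the $O_\kappa(1)$ discrepancies in drift and variance together with $\PP(\mathcal E_\infty^c)\to 0$. The main obstacle is showing $T$ concentrates at $\tfrac23\log n$ to precision $O_\kappa(1)$ despite the random block lengths, which hinges on the summability $\sum j_i^{-2}=O(1)$; this is precisely what the choice $\tau\in(1/4,2/5)$ and the constraint $j_i\geq\lceil\sqrt\kappa\rceil$ in the block construction are engineered to provide.
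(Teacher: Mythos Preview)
Your proposal follows essentially the same route as the paper: Taylor-expand $\log(t_{i+1}/t_i)$ using Proposition \ref{increm}, identify the drift $(v-1)/4\sum\Delta l_i/l_i$ via reindexing, evaluate $\sum\Delta l_i/l_i=\log(l_{i_*}/l_1)+O(1)=\tfrac23\log n+O_\kappa(1)$, and apply a martingale CLT to $\sum G_{l_i,l_{i+1}}(1,1)$. Two small technical points deserve care. First, on $\mathcal E_\infty$ one only has $i_*\le t_0$ (not equality), so the martingale $M$ has a random number of terms; the paper handles this by padding with auxiliary independent increments $W_{i+1}$ on $\mathcal E_i^c$ to reach a fixed length $t_0$ before invoking the CLT. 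Second, the error terms $\xi_{i+1}$ (and likewise $\tilde\zeta_{i+1}$) are controlled conditionally on $\mathcal F_{l_{i-1}}$, not $\mathcal F_{l_i}$, so the centered increments are only a two-step martingale difference; the paper makes this explicit by splitting into even and odd indices in Lemma \ref{moment2err}. Neither point changes the strategy, and your sketch already contains the ingredients to close both.
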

\begin{proof}
We have
$$  \log |t_{i_*}/t_1| = \sum_{i=1}^{i_*-1} \log |t_{i+1}/t_{i} |.$$
By \eqref{controlti}, we deduce in particular that for any $i\geq 1$, on the event $\mathcal{E}_{i+1}$,
\begin{equation} \label{lowerboundtic}\Big| \frac{t_{i+1}}{t_i}- 1\Big|\leq c,\end{equation}
where $c<1$ is some absolute positive constant independent of $i$. Proposition \ref{increm} implies that for $i\geq 2$,
\[ \frac{t_{i+1}}{t_i} =1+ \frac{3v}{8}\frac{\Delta l_{i-1}}{l_{i-1}}-\frac{\Delta l_i}{4l_i}+G_{l_{i},l_{i+1}}(1,1)+ \xi_{i+1},\]
where $|\EE_{i-1}\big( \Car_{\mathcal{E}_{i}}  \xi_{i+1} \big)|\lesssim j_i^{-b}$ and $\EE_{i-1}\big( \Car_{\mathcal{E}_{i}} \xi_{i+1}^2 \big)\lesssim j_i^{-b}$, for some $b>1$.
 By using the Taylor expansion of the $\log$ to the second order, we  find that on $\mathcal{E}_{i+1}$,
$$ \log \frac{t_{i+1}}{t_{i}} = \frac{3v}{8}\frac{\Delta l_{i-1}}{l_{i-1}}-\frac{\Delta l_i}{4l_i}+G_{l_{i},l_{i+1}}(1,1) - \frac{1}{2}G_{l_{i},l_{i+1}}(1,1)^2+ \tilde \xi_i,$$
where $|\EE_{i-1}\big( \Car_{\mathcal{E}_{i}} \tilde \xi_{i+1} \big)|\lesssim j_i^{-b}$ and $\EE_{i-1}\big( \Car_{\mathcal{E}_{i}} \tilde \xi_{i+1}^2 \big)\lesssim j_i^{-b}$. The same argument as in the proof of Lemma \ref{secondmomenteps} shows that
\[ G_{l_{i},l_{i+1}}(1,1)^2=  \frac{v\Delta l_i}{4l_i} + \zeta_{i+1},\]
where $|\EE_{i-1}\big( \Car_{\mathcal{E}_{i}} \zeta_{i+1} \big)|\lesssim j_i^{-b}$ and $\EE_{i-1}\big( \Car_{\mathcal{E}_{i}} \zeta_{i+1}^2 \big)\lesssim j_i^{-b}$. Therefore, for any $i\geq 2$, on the event $\mathcal{E}_i$,
\[ \log \frac{t_{i+1} }{t_i} =  \frac{3v}{8}\frac{\Delta l_{i-1}}{l_{i-1}}-\frac{v+2}{8}\frac{\Delta l_i}{l_i}+G_{l_{i},l_{i+1}}(1,1)+ \tilde{\zeta}_{i+1},\]
where $|\EE_{i-1}\big( \Car_{\mathcal{E}_{i}} \tilde{\zeta}_{i+1} \big)|\lesssim j_i^{-b}$ and $\EE_{i-1}\big( \Car_{\mathcal{E}_{i}} \tilde{\zeta}_{i+1}^2 \big)\lesssim j_i^{-b}$. Thus, on $\mathcal{E}_\infty$,
\begin{align*}
\log \frac{t_{i_*}}{t_1}   &=  \sum_{i=2}^{i_*-1} \Big(\frac{3v}{8}\frac{\Delta l_{i-1}}{l_{i-1}}- \frac{v+2}{8}\frac{\Delta l_i}{l_i}+G_{l_{i},l_{i+1}}(1,1) +  \tilde{\zeta}_{i+1}\Big)+\log \frac{t_2}{t_1}\\
& = \sum_{i=1}^{i_*-1}\frac{v-1}{4}\frac{\Delta l_{i}}{l_{i}}+\sum_{i=2}^{i_*-1}\big(G_{l_{i},l_{i+1}}(1,1) +  \tilde{\zeta}_{i+1}\big)+O(1),
\end{align*}
where the fact that for any $i$, $\Delta l_i/l_i =O(j_i^{-1})$ by Corollary \ref{controlrandom} and \eqref{lowerboundtic}. Note that for any $i$, $\sum_{l=l_i+1}^{l_{i+1}} l^{-1} = \Delta l_i/l_i + O( \Delta l_i/l_i)^2$. Therefore,
\begin{equation} \label{calculsum}  \sum_{i=1}^{i_*-1} \Delta l_i/l_i  =\sum_{l=l_1+1}^{l_{i_*}}l^{-1} + O\Big(\sum_{i=1}^{\infty} \Big(\Delta l_i/l_i\Big)^2\Big)=
 \log (l_{i_*}/l_1) + O(1), \end{equation}
where we used again that  $\Delta l_i/l_i = O(j_i^{-1})$.
From the definition \eqref{defli} of the stopping times $l_i$, we deduce that on $\mathcal{E}_\infty$, we have $i_* = \inf\{ i\geq 2: l_i > n_\kappa\}$ and by Corollary \ref{controlrandomvariance}, $l_i \geq \hat l_i$ for any $i\leq i_*$. Since $\hat l_{t_0} =  n_\kappa$ and $(\hat l_i)_i$ is strictly increasing, we obtain that $i_*\leq t_0$. Besides, we know by \eqref{bornesupli} that on $\mathcal{E}_\infty$, $l_{i_*}\lesssim \hat l_{i_*}$. Therefore,
\begin{equation} \label{encardrelfinal} n_\kappa \leq l_{i*}\lesssim n_\kappa.\end{equation}
As $l_1 = \nu k_0^{1/3}$, with $\kappa \leq \nu \leq 2\kappa$ a.s, and $n_\kappa = \lfloor k_0/\lfloor \sqrt{\kappa}\rfloor^2 \rfloor$, we deduce that on $\mathcal{E}_\infty$,
\[ \log \frac{t_{i_*}}{t_1}  =\frac{v-1}{6} \log n+\sum_{i=2}^{i_*-1}\big(G_{l_{i},l_{i+1}}(1,1) +  \tilde{\zeta}_{i+1}\big)+O_\kappa(1).\]
In the next lemma we control the error terms $\tilde{\zeta}_{i+1}$.
 \begin{lemma}\label{moment2err}
\[ \EE \Big( \sum_{i=2}^{i_*-1} \tilde{\zeta}_{i+1}\Car_{\mathcal{E}_i}\Big)^2 = O(1).\]
\end{lemma}
\begin{proof}
Since $|\EE_{i-1} \tilde{\zeta}_{i+1}\Car_{\mathcal{E}_i} |\lesssim j_i^{-b}$, with $b>1$, we get
\begin{align*}
\EE \Big( \sum_{i=2}^{i_*-1} \tilde{\zeta}_{i+1}\Car_{\mathcal{E}_i}\Big)^2 &\leq 2\EE \Big( \sum_{i=2}^{i_*-1} \big(\tilde{\zeta}_{i+1}\Car_{\mathcal{E}_i}-\EE_{i-1}[\tilde{\zeta}_{i+1}\Car_{\mathcal{E}_i}]\big) \Big)^2+2\EE\Big(\sum_{i=2}^{i_*-1} \EE_{i-1}[\tilde{\zeta}_{i+1}\Car_{\mathcal{E}_i}]\big) \Big)^2\\
& =2\EE \Big( \sum_{i=2}^{i_*-1} \big(\tilde{\zeta}_{i+1}\Car_{\mathcal{E}_i}-\EE_{i-1}[\tilde{\zeta}_{i+1}\Car_{\mathcal{E}_i}]\big) \Big)^2+O(1).
\end{align*}
Note that $\big(\tilde{\zeta}_{i+1}\Car_{\mathcal{E}_i}-\EE_{i-1}[\tilde{\zeta}_{i+1}\Car_{\mathcal{E}_i}]\big)_i$ is not a $\mathcal{F}_{l_i}$-martingale, but 
\[\Big(\sum_{i=1}^{j} \big(\tilde{\zeta}_{2i+1}\Car_{\mathcal{E}_{2i}}-\EE_{2i-1}[\tilde{\zeta}_{2i+1}\Car_{\mathcal{E}_{2i}}]\big)\Big)_{j\leq \lfloor (i_*-1)/2\rfloor}\]
 is a martingale w.r.t the filtration $(\mathcal{F}_{k_0+l_{2j+1}})_{j\leq \lfloor( i_*-1)/2\rfloor}$, and similarly
\[\Big(\sum_{i=2}^{j} \big(\tilde{\zeta}_{2i}\Car_{\mathcal{E}_{2i}}-\EE_{2(i-1)}[\tilde{\zeta}_{2i}\Car_{\mathcal{E}_{2i}}]\big)\Big)_{j\leq \lfloor i_*/2\rfloor}\]
 is a martingale w.r.t the filtration $(\mathcal{F}_{k_0+l_{2j}})_{j\leq \lfloor i_*/2\rfloor}$. Therefore,
\begin{align*}
\EE \Big( \sum_{i=2}^{i_*-1} \big(\tilde{\zeta}_{i+1}\Car_{\mathcal{E}_i}-\EE_{i-1}[\tilde{\zeta}_{i+1}\Car_{\mathcal{E}_i}]\big) \Big)^2& \lesssim \sum_{i=1}^{\lfloor( i_*-1)/2\rfloor} \EE\Big(\tilde{\zeta}_{2i+1}\Car_{\mathcal{E}_{2i}}-\EE_{2i-1}[\tilde{\zeta}_{2i+1}\Car_{\mathcal{E}_{2i}}]\Big)^2\\
&+ \sum_{i=2}^{\lfloor i_*/2\rfloor} \EE\Big(\tilde{\zeta}_{2i}\Car_{\mathcal{E}_{2i-1}}-\EE_{2(i-1)}[\tilde{\zeta}_{2i}\Car_{\mathcal{E}_{2i-1}}]\Big)^2
=O(1),
\end{align*}
where we used the fact that $\EE_{i-1}(\Car_{\mathcal{E}_i} \tilde{\zeta}_{i+1}^2)\lesssim j_i^{-b}$ with $b>1$.
\end{proof}
Observe that $(\sum_{i=2}^{j-1} G_{l_{i},l_{i+1}}(1,1)\Car_{\mathcal{E}_i})_{j\geq 3}$ is a martingale w.r.t the filtration $(\mathcal{F}_{k_0+l_{j}})_{j\geq 3}$. 
One can check that on the event $\mathcal{E}_\infty$, the quadratic variation of this martingale converges indeed to $(v/6)\log n$. In order to be able to apply the martingale central limit theorem, we will introduce a sequence $(W_i)_{i\geq 3}$ of independent random variables, independent from $\mathcal{F}_n$ such that
$$ \EE W_i =0, \ \EE W_i^2 = v \Delta \hat l_{i-1}/4\hat l_{i-1}, \ \EE |W_i|\leq 1,$$
and we define for any $j\geq 3$
$$ M_{j,\kappa} = \sum_{i=2}^{j-1} G_{l_{i},l_{i+1}}(1,1)\Car_{\mathcal{E}_i}+\sum_{i=2}^{j-1} W_{i+1}\Car_{\mathcal{E}_i^c}.$$
As we have shown, on the event $\mathcal{E}_\infty$, $i_*\leq t_0$. Thus, on this event we have  $M_{t_0,\kappa} =\sum_{i=2}^{i_*-1} G_{l_i,l_{i+1}}(1,1)$.
As a consequence of Lemma \ref{moment2err} and the fact that $\PP(\mathcal{E}_\infty) \to 1$ as $\kappa \to \infty$ by Lemma \ref{cotnrolbruit}, we get that for any $\delta >0$,
\[ \lim_{\kappa \to \infty} \limsup_{n\to \infty}\PP\Big( \Big| \log \frac{t_{i_*}}{t_1} - \frac{v-1}{6} \log n - M_{t_0,\kappa}\Big| > \delta \sqrt{\log n} \Big) =0.\]
Therefore, it remains to show that for any $\kappa\geq 1$,
\begin{equation}\label{CLTM} \frac{M_{t_0,\kappa}}{\sqrt{\frac{v}{6}\log n}} \underset{n\to \infty}{\leadsto} \gamma,\end{equation}
where $\leadsto$ denotes  weak convergence.
Let $\tilde{\mathcal{F}}_i$ denote the $\sigma$-algebra generated by $(W_k)_{k\leq i}$. Clearly, $(M_{j,\kappa})_{j\geq 1}$ is a martingale w.r.t the filtration $(\mathcal{G}_j )_{j\geq 1}$ with $\mathcal{G}_j=\tilde{\mathcal{F}}_j \vee \mathcal{F}_{k_0+l_{j}}$.   For any $j\geq 3$, set $\sigma_j^2 = \EE((M_{j,\kappa}-M_{j-1,\kappa})^2|\mathcal{G}_{j-1})$. We have
\begin{align*}
\sum_{i=3}^{t_0} \sigma_i^2&= \sum_{i=2}^{t_0-1} \frac{v\Delta l_i}{4l_i}\Car_{\mathcal{E}_i} + \sum_{i=2}^{t_0-1} \frac{v\Delta \hat l_i}{4\hat l_i}\Car_{\mathcal{E}_i^c}.
\end{align*}
Define
$$ j_ * = \left\{\begin{array}{ll}
1,& \omega\not \in \mathcal{E}_2,\\
\max\big\{  i\leq t_0 : \omega \in \mathcal{E}_i\big\},& \mbox{\rm else}.
\end{array}
\right.$$
Then, we can write
$$
\sum_{i=3}^{t_0} \sigma_i^2= v\big(\sum_{i=2}^{j_*} \Delta l_i/4l_i + \sum_{i=j_*+1}^{t_0-1} \Delta \hat l_i/4\hat l_i\big).$$
Using the same arguments as in the proof of \eqref{calculsum}, we deduce that
\[ \sum_{i=3}^{t_0} \sigma_i^2= v\big(\sum_{l=l_2+1}^{l_{j_*}}l^{-1}/4 +\sum_{l=\hat l_{j_*}+1}^{\hat l_{t_0}}l^{-1}/4\big)+O(1).\]
As $\omega \in \mathcal{E}_{j_*}$, we have $\hat l_{j_*} \leq l_{j_*} \leq C \hat l_{j_*}$ by Corollary \ref{controlrandom} and \eqref{bornesupli}, with $C$ an absolute constant. Therefore,
$ \sum_{l=\hat l_{j_*}+1}^{l_{j_*}} l^{-1} \leq (l_{j_*}-\hat l_{j_*})/\hat l_{j_*} \leq C.$
Thus,
\[\sum_{i=3}^{t_0} \sigma_i^2=v \sum_{l=l_2+1}^{\hat l_{t_0}}l^{-1}/4 +O(1)=\frac{v}{6}\log n + O_\kappa(1),\]
where in the last equality we used that
that $\Delta l_1/l_1 =O(1)$ by \eqref{controlrandomvariance}, that $l_1= \nu k_0^{1/3}$ with $\nu\in [\kappa, 2\kappa]$, and that
$\hat l_{t_0}\sim \kappa^{-1}k_0$.
Besides, we proved in \eqref{moment4} that
$$  \sum_{i=2}^{t_0-1} \EE \big( \Car_{\mathcal{E}_i} G_{l_i,l_{i+1}}^4\big) =O(1).$$
Thus, by the martingale central limit theorem, see e.g  \cite[Theorem 3.3]{Hall-Heyde}, we conclude that   if $\tilde \nu^{(0)}_{\kappa,n}$ is the conditional law of
$ M_{t_0,\kappa}/\sqrt{\frac{v}{6}\log n}$
given $\mathcal{F}_{k_0-l_0}$, then
$\tilde \nu_{\kappa,n}^{(0)} \underset{n \to \infty}{\leadsto}  \gamma.$
\end{proof}
We are now ready to give a proof of Proposition \ref{osci}.
\begin{proof}[Proof of Proposition \ref{osci}]
  We first show that it is sufficient to prove the same statement with $\| X_n\|$ instead of $|\psi_n|$. Since $X_k=(\psi_k,\psi_{k-1})$,
  we have the inequality
$|\psi_n| \leq \| X_n \| \leq |\psi_n| (1+|\psi_{n-1}|/|\psi_n|)$, and therefore
it suffices for this purpose to prove the following.
\begin{lemma}\label{controlratio} For any $\veps>0$, and $n$ large enough,
$\PP\big( | \psi_n| \leq \veps |\psi_{n-1}| \big)\lesssim \veps^{1/6}.$
\end{lemma}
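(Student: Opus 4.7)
The plan is to reduce the event $\{|\psi_n|\le\veps|\psi_{n-1}|\}$ to a polynomial anti-concentration estimate. Since $n-2>k_0$ (as $k_0\le z^2n/4<n$), we have $\alpha_n=\alpha_{n-1}=\alpha_{n-2}=1$, and three applications of the recursion~\eqref{basic-recursion} express $\psi_{n-2},\psi_{n-1},\psi_n$ as polynomials of degrees $1,2,3$ respectively in the six independent noise variables $\xi=(b_{n-2},g_{n-2},b_{n-1},g_{n-1},b_n,g_n)$, with coefficients measurable in $\mathcal{F}_{n-3}$. An explicit calculation shows that the cubic coefficient of $b_nb_{n-1}b_{n-2}$ in $\psi_n$ is $-\psi_{n-3}/\sqrt{n(n-1)(n-2)}$, which is non-degenerate (of modulus comparable to $|\psi_{n-3}|/n^{3/2}$) on the high-probability good event $\{|\psi_{n-3}|\gtrsim\|X_{n-3}\|\gtrsim 1\}$ provided by the oscillatory-regime analysis of Sections~\ref{sec-osci}--\ref{subsec-change}, in particular via Proposition~\ref{tightlower} (transported to $k=n-3$).

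On this good event, the event $\{|\psi_n|\le\veps|\psi_{n-1}|\}$ is controlled by polynomial anti-concentration for a cubic polynomial in independent variables satisfying~\eqref{boundlaplace}. The concrete execution is to integrate the one-dimensional Rogozin-type anti-concentration (as encoded in the paper's Lemma~\ref{anticonc}) successively along the three independent directions $b_n$, then $b_{n-1}$, then $b_{n-2}$: each integration contributes a factor of $\veps^{1/3}$ after balancing against the normalization $|\psi_{n-1}|$ at the appropriate scale, and together they yield the stated $\veps^{1/3}$ bound. The exponent $1/3$ arises naturally from the cubic degree of the polynomial expression for $\psi_n$, distributing the ``loss'' equally across the three iterated one-dimensional anti-concentration estimates.

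The main obstacle is that the densities of $b_k,g_k$ are not assumed to be pointwise bounded, so the classical Carbery--Wright polynomial anti-concentration (which requires log-concavity of the underlying laws) is not directly applicable. The paper's own Lemma~\ref{anticonc}, already used to derive~\eqref{claim1} in Lemma~\ref{smallblock}, provides the substitute one-dimensional anti-concentration bound under~\eqref{boundlaplace}, and the three-step cubic lift is the essential new ingredient here. A secondary technical point is controlling the residual contribution from the complementary bad event where either $|\psi_{n-3}|$ is anomalously small or the higher-order coefficients become singular; this is handled by a straightforward union bound, using the lower-tail tightness of $\|X_{n-3}\|$ inherited from the oscillatory-regime analysis together with the sub-exponential tails from~\eqref{boundlaplace} applied to each of the $b_{n-j},g_{n-j}$.
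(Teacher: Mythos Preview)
Your approach has a fundamental scale mismatch. In the recursion~\eqref{eq-recpsik} with $\alpha_k=1$, each noise variable $b_k,g_k$ enters $\psi_k$ only through a factor $1/\sqrt{k}\sim 1/\sqrt{n}$. Conditionally on $\mathcal F_{n-1}$,
\[
\frac{\psi_n}{\psi_{n-1}}=z_n-\frac{b_n}{\sqrt n}-\Big(1-c_n+\frac{g_n}{\sqrt n}\Big)\frac{\psi_{n-2}}{\psi_{n-1}},
\]
so the total fluctuation of $\psi_n/\psi_{n-1}$ induced by $(b_n,g_n)$ is $O(1/\sqrt{n})$ on the event where $|\psi_{n-2}/\psi_{n-1}|=O(1)$. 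Three steps of the recursion only give you noise of aggregate scale $O(1/\sqrt n)$, and your cubic coefficient $\psi_{n-3}/n^{3/2}$ is even smaller. Since the lemma is stated for \emph{fixed} $\veps>0$ and $n\to\infty$, the event $\{|\psi_n/\psi_{n-1}|\le\veps\}$ lives at scale $\veps\gg 1/\sqrt n$, and Lemma~\ref{anticonc} applied to $b_n$ (or iterated three times) yields a bound $\gtrsim \veps\sqrt n\to\infty$, which is vacuous. The claimed ``$\veps^{1/3}$ from cubic degree'' mechanism would only be relevant if $\veps\lesssim n^{-1/2}$, which is not the regime of interest.

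This is exactly why the paper does \emph{not} go back a bounded number of steps: it sets $n_\veps=n-\lfloor\veps n\rfloor$ and linearizes the product $\prod_{k=n_\veps+1}^{n}(A_k+W_k)$ using Lemmas~\ref{linear} and~\ref{controlA}, producing $X_n=A_{n_\veps,n}X_{n_\veps}+G_nX_{n_\veps}+\mathcal Y_n$ with $\EE_{\mathcal F_{n_\veps}}\|G_n\|^2\asymp\veps$. The $\Theta(\veps n)$ independent increments accumulate variance of order $\veps$, and that is what drives the anti-concentration of $\psi_n/\psi_{n-1}$ at scale $\veps$ via Lemma~\ref{anticonc}. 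A secondary issue: your appeal to Proposition~\ref{tightlower} ``transported to $k=n-3$'' to get $|\psi_{n-3}|\gtrsim\|X_{n-3}\|$ is circular --- that proposition controls $\|X_{k_0+l_0}\|$, not the ratio $|\psi_{n-3}|/\|X_{n-3}\|$, and the latter is precisely the content of Lemma~\ref{controlratio} shifted by three indices.
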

\begin{proof}
Recall that for any $k\geq k_0$, we have that $X_k = (A_k+W_k)X_{k-1}$ with $A_k$ and $W_k$  defined in \eqref{eq-Xk}.
We start by showing the following behaviour of $A_k$  for  $k\geq k_0(1+\kappa^{-1} )$.
\begin{fact}\label{controlA}There exists a constant $C_\kappa$ depending on $\kappa$ only  such that
for any $(1+\kappa^{-1}) k_0 \leq k < l \leq n$,
$ \|  A_{k,l}\| \leq C_\kappa.$
\end{fact}
\begin{proof}
According to Fact \ref{newbasis}, for any $l\geq 1$,
$ A_{k_0+l} =\sqrt{1- c_{k_0+l}} Q_l R_l Q_l^{-1},$
where $R_l$ is a rotation matrix, and $Q_l$ is defined in \eqref{defQ}. Thus, for any $1\leq l<l'\leq n-k_0$,
$$ \|\prod_{j=l+1}^{l'} A_{k_0+j} \| \leq \Big( \prod_{j=l+1}^{l'} (1-c_{k_0+j})\Big)^{\frac{1}{2}}\|Q_{l'}\| \prod_{p=l+1}^{l'-1} \|Q_{p+1}^{-1} Q_p \|\cdot \| Q_{l+1}^{-1}\|.$$
Using the first equality of \eqref{qinvq}, we obtain that for any $p\geq \kappa^{-1} k_0$, 
$ \| Q_{p+1}^{-1} Q_p\| \leq 1+O_{\kappa}(1/n).$
Morevoer, for any $l\geq \kappa^{-1}k_0$, $\| Q_{l+1}^{-1}\| =O_{\kappa}(1)$, as one can check with \eqref{defQ}. Also, for any $l$, we have that $\|Q_l\|\leq C$.
Therefore, using the fact that $c_{k_0+j} \lesssim 1/n$, we get that
$ \|\prod_{j=l+1}^{l'} A_{k_0+j} \| \leq C_{\kappa},$
for some constant $C_\kappa$ depending on $\kappa$.
\end{proof}
Let $\veps>0$ and set 
$ n_{\veps}= n-\lfloor \veps n \rfloor$. Using Lemma \ref{linear} together with the fact that $\EE_{\mathcal{F}_{n_\veps}} \| W_k\|^2 \lesssim 1/n$ for any $k \geq n_\veps$ and
Fact \ref{controlA}, we get that
$$ X_n = A_{n_\veps,n} X_{n_{\veps}} +G_nX_{n_{\veps}} + \mathcal{Y}_n,$$
where $\EE_{\mathcal{F}_{n_\veps}} \| \mathcal{Y}_n\|^2 = O_{\kappa}( \veps^2 \| X_{n_\veps}\|^2)$, and
\begin{equation} \label{defGn}G_n=  \sum_{k=n_{\veps}+1}^n A_{k,n} W_k A_{n_{\veps}, k-1}.\end{equation}
By Fact \ref{controlA} we have that
$\EE_{\mathcal{F}_{n_\veps}} \|G_n\|^2 =O(\veps)$. The proof of Lemma \ref{controlratio} will follow classical 
anti-concentration arguments using the randomness of the $\lfloor \veps n \rfloor$ steps. Let $s>0$ such that $s\sqrt{\veps}\leq 1$ and define the event
$$ \mathcal{E} = \Big\{ |A_{n_\veps,n}X_{n_\veps}(2)| \geq s \sqrt{\veps} \| X_{n_\veps}\|,\ \|G_n\|\leq s\sqrt{\veps}/4, \|\mathcal{Y}_n\| \leq  s \sqrt{\veps} \| X_{n_\veps}\|/4 \Big\}.$$
On the event $\mathcal{E}$, one can linearize $\psi_n/\psi_{n-1}$ around $A_{n_\veps,n} X_{n_\veps}(1)/A_{n_\veps,n} X_{n_\veps}(2)$ into a sum of independent random variables given $\mathcal{F}_{n_\veps}$ and use the anti-concentration estimate of Lemma \ref{anticonc}, together with a second moment estimate
that controls the tail of the second order noise $\mathcal{Y}_n$. On the 
complementary event $\mathcal{E}^c$, either $\psi_{n}$ is small or the noise is large. The later case is again handled by a second moment estimate whereas the former case is handled as before by observing that
$\psi_{n}$ is at first order the sum of independent random variables given $\mathcal{F}_{n_\veps}$ and using again the anti-concentration Lemma \ref{anticonc}.

We now proceed with the detailed proof. 
 On the event $\mathcal{E}$, we have $|G_n X_{n_\veps}(2) +\mathcal{Y}_n(2)| \leq (1/2)|A_{n_\veps,n} X_{n_\veps}(2)|$, so that
$$ \frac{1}{\psi_{n-1}} = \frac{1}{A_{n_\veps,n}X_{n_\veps}(2)} - \frac{G_n X_{n_\veps}(2)+\mathcal{Y}_n(2)}{A_{n_\veps,n}X_{n_\veps}(2)^2} + O\bigg( \frac{| G_n X_{n_\veps}(2)|^2+ |\mathcal{Y}_{n}(2)|^2}{s^3 \veps^{3/2} \| X_{n_\veps}\|^{3}}\bigg).$$
Using again the fact that $|A_{n_\veps,n}X_{n_\veps}(2)| \geq s \sqrt{\veps} \|X_{n_\veps}\|$, we get
\begin{align*}
  \frac{1}{\psi_{n-1}} &= 
 \frac{1}{A_{n_\veps,n}X_{n_\veps}(2)} - \frac{G_n X_{n_\veps}(2)}{A_{n,n_\veps}X_{n_\veps}(2)^2}\\
 & + O\bigg( \frac{\| G_n \|^2}{s^3 \veps^{3/2}\|X_{n_\veps}\|}\bigg)+O\Bigg( \frac{\|\mathcal{Y}_n\|}{s^2\veps \|X_{n_\veps}\|^2}\Bigg) + O\bigg( \frac{\|\mathcal{Y}_{n}\|^2}{s^3 \veps^{3/2} \| X_{n_\veps}\|^3}\bigg).\end{align*}
 Since on $\mathcal{E}$ we have that
 $\|\mathcal{Y}_n\| \leq  s \sqrt{\veps} \| X_{n_\veps} \|/4$, 
 we deduce that
\[  \frac{1}{\psi_{n-1}} = \frac{1}{A_{n_\veps,n}X_{n_\veps}(2)} - \frac{G_n X_{n_\veps}(2)}{A_{n,n_\veps}X_{n_\veps}(2)^2}+O\bigg( \frac{\| G_n \|^2}{s^3 \veps^{3/2}\|X_{n_\veps}\|}\bigg)+O\Bigg( \frac{\|\mathcal{Y}_n\|}{s^2\veps \|X_{n_\veps}\|^2}\Bigg).\]
Therefore,
\begin{equation}\label{psin} \frac{\psi_n}{\psi_{n-1}} = \frac{A_{n_\veps,n}X_{n_\veps}(1)}{A_{n_\veps,n}X_{n_\veps}(2)} + L_n + Z_n,\end{equation}
where, again on $\mathcal{E}$,
\begin{eqnarray} \label{defLn} \qquad L_n &=&- \frac{A_{n_\veps,n}X_{n_\veps}(1)}{A_{n_\veps,n}X_{n_\veps}(2)^2}G_n X_{n_\veps}(2) +\frac{1}{A_{n_\veps,n}X_{n_\veps}(2)}G_n X_{n_\veps}(1),\\
|Z_n| &\lesssim &\frac{|G_n X_{n_\veps}(2)|. |G_n X_{n_\veps}(1)|}{|A_{n_\veps,n} X_{n_\veps}(2)|^2}+ \frac{|\mathcal{Y}_n(1)|}{|\psi_{n-1}|}+|\psi_n| \Big(\frac{\|G_n\|^2}{s^3 \veps^{3/2}\|X_{n_\veps}\|}+\frac{\|\mathcal{Y}_n\|}{s^2 \veps \|X_{n_\veps}\|^2}\Big).\nonumber
\end{eqnarray}
On the event $\mathcal{E}$, we have that
$|A_{n_\veps,n} X_{n_\veps}(2)|\geq s\sqrt{\veps} \|X_{n_\veps}\|$
and $|\psi_{n-1}|\geq (s/2) \sqrt{\veps} \|X_{n_\veps}\|$. Moreover, from  Fact \ref{controlA} we deduce that $|A_{n_\veps,n} X_{n_\veps}(1)|\lesssim \|X_{n_\veps}\|$. Therefore,
$$ |Z_n| \lesssim \frac{\|G_n\|^2 }{s^2 \veps} + \frac{\|\mathcal{Y}_n\|}{s\sqrt{\veps} \|X_{n_\veps}\|}+\frac{\|G_n\|^2}{s^3 \veps^{3/2}}+  \frac{ \|\mathcal{Y}_n\|}{s^2 \veps \|X_{n_\veps}\|}.$$
Since $s\sqrt{\veps} \leq 1$, we get that
$$ |Z_n| \lesssim\frac{\|G_n\|^2}{s^3 \veps^{3/2}}+  \frac{ \|\mathcal{Y}_n\|}{s^2 \veps \|X_{n_\veps}\|}.$$
In view of \eqref{psin}, we obtain, by performing a union bound:
\[  \PP \Big( \Big\{ \Big|\frac{\psi_n}{\psi_{n-1}}\Big| \leq t \Big\} \cap \mathcal{E}\Big) \leq 
 \EE\Big(\Car_{\mathcal{E}'}\PP_{\mathcal{F}_{n_\veps}}\Big( \Big|\frac{A_{n_\veps,n}X_{n_\veps}(1)}{A_{n_\veps,n}X_{n_\veps}(2)} +L_n \Big|\leq 2t \Big) \Big) + \PP\big(\{ |Z_n|\geq t\} \cap\mathcal{E}\big),\]
where $\mathcal{E}' = \{  |A_{n_\veps,n}X_{n_\veps}(2)| \geq s \sqrt{\veps} \| X_{n_\veps}\|\}$.
Again by a union bound and Chebychev's inequality, we get that
\[ \PP(|Z_n| \geq t) \lesssim \frac{\EE \|G_n\|^2 }{s^3 \veps^{3/2} t} + \frac{\EE \|\mathcal{Y}_n\|^2}{s^4\veps^2 \|X_{n_\veps}\|^2 t^2} .\]
But $\EE_{\mathcal{F}_{n_\veps}} \| G_n\|^2 = O(\veps)$
and $\EE_{\mathcal{F}_{n_\veps}} \| \mathcal{Y}_n\|^2 =O(\veps^2 \| X_{n_\veps}\|^2)$. Therefore,
\begin{equation} \label{tailZn} \PP(|Z_n| \geq t) \lesssim t^{-1}s^{-3} \veps^{-1/2} + s^{-4}t^{-2}.\end{equation}
In view of  \eqref{defLn} and \eqref{defGn}, one can check that conditionning on $\mathcal{F}_{n_\veps}$, on the event $\mathcal{E}'$, $L_n$ is the sum of $\lfloor n\veps\rfloor$ independent random variables $V_k$, $n_\veps+1 \leq k\leq n$, such that 
\[ V_k = - \frac{A_{n_\veps,n}X_{n_\veps}(1)}{A_{n_\veps,n}X_{n_\veps}(2)^2}A_{k,n} W_k A_{n_\veps, k-1}  X_{n_\veps}(2) +\frac{1}{A_{n_\veps,n}X_{n_\veps}(2)}A_{k,n} W_k A_{n_\veps, k-1}  X_{n_\veps}(1).\]
Since $W_k$ is centered and $\EE_{\mathcal{F}_{k}} \|  W_k \|^2 \lesssim 1/n$, we deduce that 
$$\EE_{\mathcal{F}_{n_\veps}} V_k = 0, \ \EE_{\mathcal{F}_{n_\veps}} V_k^2\lesssim s^{-4}\veps^{-2} n^{-1},$$
where we used Fact \ref{controlA} and the fact that on $\mathcal{E}'$, $|A_{n_\veps, n} X_{n_\veps}(2) | \geq s\sqrt{\veps} \|X_{n_\veps}\|$. 
Note that $V_k = r_k b_k + s_k g_k$, where $r_k$ and $s_k$ are $\mathcal{F}_{n_\veps}$-measurable, and $b_k$, $g_k$ are independent and have their Laplace transform uniformly bounded as in \eqref{boundlaplace}. We deduce that
$$ \ \EE_{\mathcal{F}_{n_\veps}} |V_k|^3\leq M' \Big( \EE_{\mathcal{F}_{n_\veps}}V_k^2 \Big)^{3/2},$$
where $M'$ depends on $M$ and $\lambda_0$ of \eqref{boundlaplace}. In particular, 
  on $\mathcal{E}'$,  $\EE_{\mathcal{F}_{n_\veps}} L_n^2=O(1/s^4\veps)$. By Lemma \ref{anticonc}, we obtain that on $\mathcal{E}'$, for any $t \gtrsim  1/s^2\veps\sqrt{n}$,
\begin{equation} \label{anticoncLn}  \PP_{\mathcal{F}_{n_\veps}}\Big( \Big|\frac{A_{n_\veps,n}X_{n_\veps}(1)}{A_{n_\veps,n}X_{n_\veps}(2)} +L_n \Big|\leq 2t \Big)\lesssim s^2 \sqrt{\veps} t.\end{equation}
Combining  \eqref{anticoncLn} and \eqref{tailZn}, we get that
\begin{equation}\label{ratioE}\PP \Big( \big\{ |{\psi_n}/{\psi_{n-1}}| \leq t \big\} \cap \mathcal{E}\Big) \lesssim s^2\sqrt{\veps} t +  t^{-1}s^{-3} \veps^{-1/2} + s^{-4}t^{-2}.\end{equation}
On the other hand,
\[\PP\Big( \Big\{ \Big|\frac{\psi_n}{\psi_{n-1}}\Big| \leq t \Big\} \cap \mathcal{E}^c\Big)  \leq \PP\Big( |\psi_{n}| \leq \frac{3ts}{2} \sqrt{\veps} \|X_{n_{\veps} } \| \Big) + \PP\big( \|G_n\| \geq \frac{s}{4} \sqrt{\veps}\big)+\PP\big(\|\mathcal{Y}_n\|\geq \frac{s}{4} \sqrt{\veps} \| X_{n_\veps}\|\big).\]
Using Chebychev's inequality and the fact that $\EE_{\mathcal{F}_{n_\veps}} \|G_n\|^2 = O(\veps)$ and $\EE_{\mathcal{F}_{n_\veps}} \|\mathcal{Y}_n\|^2=O(\veps^2 \|X_{n_\veps}\|^2)$, we find that
\begin{equation}\label{tailYnGn} \PP_{\mathcal{F}_{n_\veps}} \big( \|G_n\| \geq s\sqrt{\veps}/4\big)+\PP_{\mathcal{F}_{n_\veps}} \big(\|\mathcal{Y}_n\|\geq 
  s \sqrt{\veps} \| X_{n_\veps}\|/4\big) \lesssim s^{-2}.\end{equation}
But, using a union bound
\begin{align}\PP\Big( |\psi_{n}| \leq 3ts \sqrt{\veps} \|X_{n_{\veps} } \|/2 \Big) & \leq \PP\Big( | A_{n_\veps,n} X_{n_{\veps}}(1) +G_nX_{n_{\veps}}(1) | \leq 2ts \sqrt{\veps} \|X_{n_{\veps} } \| \Big)\nonumber \\
& +\PP\Big(| \mathcal{Y}_n(1)|\geq ts \sqrt{\veps} \|X_{n_{\veps} } \|/2\Big).\label{unionboudEc}
\end{align}
Using a similar argument as in \eqref{anticoncLn}, we deduce by Lemma \ref{anticonc} that if 
 $ts\sqrt{\veps} \gtrsim 1/n$, then
\begin{equation} \label{anticonclinear} \PP_{\mathcal{F}_{n_\veps}} \Big( | A_{n_\veps,n} X_{n_{\veps}}(1) +G_nX_{n_{\veps}}(1) | \leq 2ts \sqrt{\veps} \|X_{n_{\veps} } \| \Big) \lesssim ts,\end{equation}
where we used that  $\EE_{\mathcal{F}_{n_\veps}}\|G_n\|^2 = O(\veps)$. 
Using again Chebychev's inequality, we obtain together with \eqref{anticonclinear} that if $ts\sqrt{\veps} \gtrsim 1/n$, then
\begin{equation} \label{antinconcpsinEc} \PP_{\mathcal{F}_{n_\veps}} \Big( |\psi_{n}| \leq 3ts \sqrt{\veps} \|X_{n_{\veps} } \|/2 \Big)  \lesssim ts + \veps/ t^2s^2 +1/s^2.\end{equation}
Putting together \eqref{antinconcpsinEc} and \eqref{tailYnGn} and taking expectations, we conclude that for any $t\gtrsim 1/s\sqrt{\veps} n$,
\[\PP\Big( \big\{ |{\psi_n}/{\psi_{n-1}}| \leq t \big\} \cap \mathcal{E}^c\Big)  \leq ts + \veps t^{-2}s^{-2} +s^{-1}.\]
Together with \eqref{ratioE}, we find, using that $s\sqrt{\veps} \leq 1$ that if $t\gtrsim 1/s^2 \veps n$, then
$$\PP\big(  |{\psi_n}/{\psi_{n-1}}| \leq t \big) \lesssim  
s^2\sqrt{\veps} t  +  t^{-1}s^{-3} \veps^{-1/2} + 
s^{-4}t^{-2}+ts + \veps t^{-2}s^{-2} +s^{-1}.$$
Taking $t= \veps$ and $s= \veps^{-2/3}$, we get that
$\PP(  |\psi_n/\psi_{n-1}| \leq \veps) \lesssim \veps^{1/6}.$
\end{proof}
We proceed with the proof of Proposition \ref{osci}. We can write
$$ \frac{\|X_n\|}{|\psi_{k_0- l_0}|} = \frac{\|X_n\|}{\|X_{k_0+n_\kappa}\|}\frac{\|X_{k_0+n_\kappa}\|\cdot |t_1|}{|\psi_{k_0-l_0}t_{i_*}|} \frac{|t_{i_*}|}{|t_1|},$$
where $i_* = \inf \{ i\geq 1 : l_i  =l_{i+1}\}$ and $t_i$ are
as in \eqref{defyi}.
We first show that the noise accumulated between the iterations $k_0+n_\kappa$ and $n$ is negligible, meaning that the random variables $\log (\| X_n\|/\|X_{k_0+n_\kappa}\|)$ are  tight.
\begin{lemma}
\label{lastblock}There exists $c_\kappa>0$ such that
$$ \lim_{\kappa\to \infty} \liminf_{n\to \infty} \PP\Big(c_\kappa^{-1} \leq  
\|X_n\|/\|X_{k_0+n_\kappa}\|\leq c_\kappa\Big)=1.$$
\end{lemma}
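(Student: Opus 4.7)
The plan is as follows. Set $M := \prod_{k=k_0+n_\kappa+1}^n (A_k + W_k)$, so that $X_n = M X_{k_0+n_\kappa}$ and $\sigma_{\min}(M) \leq \|X_n\|/\|X_{k_0+n_\kappa}\| \leq \|M\|$. Since the statement only requires tightness with $\kappa$ fixed, it suffices to show that both $\|M\|$ and $\sigma_{\min}(M)^{-1}$ are $O_{P,\kappa}(1)$ as $n \to \infty$, and no anti-concentration or CLT input is needed.

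For the upper bound on $\|M\|$, I would apply Lemma \ref{linear} with $C_k = A_k$ and $S_k = W_k$. Lemma \ref{controlA} provides $\|A_{k,l}\| \leq C_\kappa$, giving $\bar c = C_\kappa$; together with $\EE W_k = 0$ and $\EE \|W_k\|^2 = O(1/k_0)$, one may take $L \asymp k_0$. Since $z \in (-2,2)\setminus\{0\}$ implies $n - k_0 - n_\kappa = O(k_0)$, the ratio $m/L$ appearing in the conclusion of Lemma \ref{linear} is $O(1)$. The lemma then yields the decomposition
$$M = A_{k_0+n_\kappa,n} + \sum_{k=k_0+n_\kappa+1}^n A_{k,n}\, W_k\, A_{k_0+n_\kappa,k-1} + Z,$$
with $\EE \|Z\|^2 = O_\kappa(1)$, while the linear term has total variance bounded by $C_\kappa^4 \sum_k \EE\|W_k\|^2 = O_\kappa(1)$ by independence. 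Combined with $\|A_{k_0+n_\kappa,n}\| \leq C_\kappa$, this yields $\|M\| = O_{P,\kappa}(1)$.

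For the lower bound on $\sigma_{\min}(M)$, I would use the $2\times 2$ identity $\sigma_{\min}(M) \geq |\det M|/\|M\|$, reducing matters to bounding $|\det M|$ from below. Since the second row of $W_k$ vanishes and $\alpha_k = 1$ throughout the oscillatory regime, a direct computation gives $\det(A_k + W_k) = 1 - c_k + g_k/\sqrt{k}$, whence
$$\det M = \prod_{k=k_0+n_\kappa+1}^n \Big(1 - c_k + \frac{g_k}{\sqrt{k}}\Big).$$
A union bound using \eqref{boundlaplace} over $k \geq k_0+n_\kappa$ implies $\sup_k |g_k|/\sqrt{k} \leq 1/2$ with probability tending to $1$; on that event, Taylor expansion of the logarithm reduces the analysis to controlling the sums
$$\sum_k \frac{g_k}{\sqrt{k}},\qquad \sum_k \frac{g_k^2}{k},\qquad \sum_k c_k,\qquad \sum_k \frac{|g_k|^3}{k^{3/2}}.$$
The first has variance $\sum_k v/k = O(1)$ and is tight; the second has expectation $O(1)$ and variance $O(1/k_0)$; the third is deterministic and $O(1)$; and the fourth is $O_P(1/\sqrt{k_0})$. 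Hence $\log|\det M| = O_P(1)$, so $|\det M|$ is bounded below by a positive absolute constant with probability $\to 1$, and combined with $\|M\| = O_{P,\kappa}(1)$ this gives $\sigma_{\min}(M)^{-1} = O_{P,\kappa}(1)$, completing the proof.

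The main point to verify carefully is the tightness of the remainder $Z$ in Lemma \ref{linear}, which rests on $m/L = O(1)$ uniformly in $n$; this in turn hinges on $n - k_0 = \Theta(k_0)$, i.e.\ on $z$ being bounded away from $\pm 2$. Pushing toward the spectral edge would force $m/L$ to blow up and necessitate reintroducing the block-and-linearize scheme of Section \ref{sec-timechange}, but within the regime at hand the argument is essentially a single application of Lemma \ref{linear} plus the determinant identity for $2\times 2$ matrices.
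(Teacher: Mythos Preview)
Your proposal is correct and follows essentially the same strategy as the paper: bound $\|M\|$ in $L^2$ (you invoke Lemma \ref{linear}, the paper does the equivalent direct combinatorial expansion \eqref{controlM}, both relying on Lemma \ref{controlA}), and for the lower bound use $\sigma_{\min}(M) \geq |\det M|/\|M\|$ together with the explicit product formula for $\det M$ and a truncation/second-moment argument for $\log|\det M|$ (the paper packages this last step as Lemma \ref{lem-orphan}). Your route through Lemma \ref{linear} is slightly cleaner bookkeeping than the paper's expansion, but the two are equivalent; note incidentally that your sign $1-c_k+g_k/\sqrt{k}$ in $\det(A_k+W_k)$ is the correct one (the paper has a harmless typo), though of course this is irrelevant to the tightness since $g_k$ is centered.
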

\begin{proof}
 We start by proving that there exists $c_\kappa>0$ such that 
\begin{equation}\label{tightnessupperbound} \lim_{\kappa\to \infty} \liminf_{n\to \infty} \PP\Big( \|X_n\|/\|X_{k_0+n_\kappa}\|>c_\kappa\Big)=0.\end{equation}
 Let $n' = k_0+ n_\kappa$ and  $M = \prod_{k=n'+1}^{n} (A_k + W_k)$. Using the fact that $(W_j)_{j\geq 1}$ are centered independent random matrices, we get 
$$ \EE\tr M^T M  = \sum_{(M_i)_{n'+1\leq i\leq n}\atop M_i \in \{ A_k, W_k\}} \EE \tr| M_{n'+1}M_{n'+2}\ldots M_n|^2.$$
Let $\ell_n = n-k_0-n_\kappa$. Using Lemma \ref{controlA}, we obtain that
\begin{equation}\label{controlM} 
 \EE\tr M^T M   \leq \sum_{ p=0}^{\ell_n} \sum_{n'<i_1<\ldots <i_p\leq n} C_{\kappa}^{2(p+1)} 
n^{-p}
\leq C_{\kappa}^2e^{C_\kappa^2\ell_n/n}=O_\kappa(1).
\end{equation}
We get that
$ \EE (   ||X_n||/||X_{n'}||)^2 \lesssim 1,$
and conclude the proof of \eqref{tightnessupperbound}.
To complete the proof of Lemma \ref{lastblock}, it is sufficient to prove that the smallest singular value of $M$, denoted $s_1$,
 is bounded away from $0$ with large probability. More precisely, we show that
there exists $m_\kappa>0$ depending on $\kappa$ only such that
$$ \lim_{\kappa\to \infty} \liminf_{n\to \infty} \PP( s_1 \leq m_\kappa^{-1} ) =0.$$
 Note first that \eqref{controlM} entails that if $s_2$ is the largest singular value of $M$, then
$$ \lim_{\kappa \to \infty} \liminf_{n\to \infty}\PP( s_2 \geq m_\kappa')=0,$$
for some constant $m_\kappa'$ depending on $\kappa$ only.
 On the other hand,
$$ \mathrm{det}(M)= \prod_{k=n'+1}^n\Big( 1 - c_k -\frac{g_k}{\sqrt{k}}\Big).$$
As $s_1= |\det M|/s_2$, Lemma \ref{lastblock} is an immediate consequence of
Lemma \ref{lem-orphan} below.
\end{proof}
\begin{lemma}
  \label{lem-orphan}
\[ \lim_{t \to 0} \limsup_{n\to \infty} \PP\Big( \Big|\prod_{k=n'+1}^n( 1 - c_k -g_k/\sqrt{k})\Big| \leq t \Big)=0.\]
\end{lemma}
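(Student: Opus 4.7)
The plan is to pass to logarithms. Write $P_n := \prod_{k=n'+1}^n(1-c_k-g_k/\sqrt{k})$ and $L_n := \log|P_n| = \sum_{k=n'+1}^n \log|1-c_k-g_k/\sqrt{k}|$. Since $\PP(|P_n|\le t)=\PP(L_n\le \log t)$, the claim reduces to showing that $L_n$ is uniformly bounded in probability, and in fact I will show $\EE[L_n^2 \mathbf{1}_{E_n}]$ is bounded independently of $n$ on a good event $E_n$ of asymptotic probability $1$.

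\emph{Good event.} Define $E_n := \bigcap_{k=n'+1}^n \{|g_k|\le \sqrt{k}/2\}$. Since $n'\ge k_0\asymp n$, the Laplace bound \eqref{boundlaplace} (together with the exponential Chebyshev inequality) gives $\PP(|g_k|\ge \sqrt{k}/2)\le Me^{-\lambda_0\sqrt{k}/2}$, and a union bound yields $\PP(E_n^c)\lesssim n e^{-c\sqrt{n}}\to 0$. On $E_n$, every factor $1-c_k-g_k/\sqrt{k}$ lies in $[1/4, 7/4]$ for $n$ large (as $c_k=O(1/k)=o(1)$), so each logarithm is real and admits a Taylor expansion $\log(1+y)=y-y^2/2+O(y^3)$ with uniform error.

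\emph{Mean and variance.} Set $y_k := -c_k-g_k/\sqrt{k}$. On $E_n$,
\[
  L_n\mathbf{1}_{E_n} = \mathbf{1}_{E_n}\Big(-\sum_k c_k - \sum_k \tfrac{g_k}{\sqrt{k}} - \tfrac12\sum_k\big(c_k+\tfrac{g_k}{\sqrt{k}}\big)^2 + O\big(\textstyle\sum_k |y_k|^3\big)\Big).
\]
Because $k_0\sim z^2 n/4$ and $n_\kappa\lesssim k_0$, the ratio $n/n'$ is bounded by a constant depending only on $(z,\kappa)$, so $\sum_{k=n'+1}^n 1/k = \log(n/n')+O(1/n) = O(1)$. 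Hence $\sum c_k = O(1)$ deterministically; the centered independent sum $\sum g_k/\sqrt{k}$ has variance $\sum (v+O(1/k))/k = O(1)$; the quadratic piece $\sum g_k^2/k$ has mean $\sum \EE g_k^2/k = O(1)$ and variance $\sum \Var(g_k^2)/k^2 \le (\max_k \EE g_k^4)\sum 1/k^2 = O(1)$ (the fourth moments being uniformly bounded by \eqref{boundlaplace}); and the cubic remainder has expectation $O(\sum k^{-3/2}\EE|g_k|^3)=O(n^{-1/2})$. Combining these, $\EE[L_n^2\mathbf{1}_{E_n}]\le C$ for an absolute constant $C$.

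\emph{Conclusion.} By Chebyshev, $\PP(L_n\le \log t,\, E_n)\le C/(\log t)^2$, so
\[
  \PP(|P_n|\le t)\le \PP(E_n^c) + C/(\log t)^2,
\]
and sending $n\to\infty$ followed by $t\to 0$ gives the claim. The only real work is the uniform $O(1)$ bound on $\EE[L_n^2\mathbf{1}_{E_n}]$, and within that the single nontrivial point is the variance of the quadratic chaos $\sum g_k^2/k$, which however requires only bounded fourth moments of the $g_k$'s, an immediate consequence of the exponential integrability \eqref{boundlaplace}; all other terms are routine first- and second-moment computations for sums of independent variables.
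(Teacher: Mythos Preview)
Your proof is correct and follows essentially the same approach as the paper: restrict to a good event where the logarithm admits a Taylor expansion, and control the resulting sum via first- and second-moment estimates on the linear and quadratic pieces (the paper truncates at level $(\log n)^2$ rather than $\sqrt{k}/2$ and uses only the one-sided bound $\log(1+y)\ge y-y^2/2$ to control the left tail in probability, but these are cosmetic differences). The only small omission is that to conclude $\EE[L_n^2\mathbf{1}_{E_n}]\le C$ you need the \emph{second} moment of the cubic remainder, not just its first; this is immediate, since on $E_n$ one has $|y_k|^3\le (3/4)y_k^2$, so the cubic term is pointwise dominated by the quadratic one you have already bounded in $L^2$.
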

\begin{proof}[Proof of Lemma \ref{lem-orphan}]
 Define for any $k \in \{n'+1,\ldots, n\}$, 
$$\tilde g_k = 
\big(g_k\wedge (\log n)^2 \big)\vee (-(\log n)^2),$$
 and set $ \mu_k = \log \Big( 1- c_k - \frac{\tilde g_k}{\sqrt{k}} \Big)$. We will first prove that 
\begin{equation} \label{claimYk} \lim_{s\to \infty }\liminf _{n\to \infty}\PP\Big( \sum_{k=n'+1}^n \mu_k \leq -s\Big) =0.\end{equation}
We have the inequality
\begin{equation} \label{encadrementlog} -c_k- \frac{\tilde g_k}{\sqrt{k}} - \frac{1}{2} \Big( c_k+ \frac{\tilde g_k}{\sqrt{k}} \Big)^2 \leq \mu_k \leq -c_k- \frac{\tilde g_k}{\sqrt{k}}.\end{equation}
Since $c_k=O(1/n)$ for $k\geq n'$, it is sufficient to prove that
\begin{equation} \label{claimgtilde} \lim_{s\to \infty} \liminf_{n\to \infty} \PP\Big( \sum_{k=n'+1}^n \frac{\tilde{g}_k}{\sqrt{k}}\geq s \Big) = 0, \  \lim_{s\to \infty} \liminf_{n\to \infty} \PP\Big( \sum_{k=n'+1}^n \frac{\tilde{g}_k^2}{k}\geq s \Big) = 0.\end{equation}
One can check that the bound \eqref{boundlaplace} on the log-Laplace transform of $g_k$ entails that $ \EE \tilde g_k = o(1/n)$.  Using Chebychev's inequality and the fact that uniformly in $k$, $\EE\tilde{g}_k^2 =O(1)$, we get \eqref{claimgtilde} and therefore \eqref{claimYk}. To complete the proof, it remains to show that 
$$\PP\Big( \sup_{n'+1< k \leq n} |g_k| \geq (\log n)^2\Big) \underset{n\to \infty}{\longrightarrow} 0.$$
But the bound \eqref{boundlaplace} 
on the log-Laplace transform of $g_k$ together with their independence
imply that
$$ \EE  \sup_{n'+1< k \leq n} |g_k|  =O(\log n),$$
which completes the proof of Lemma \ref{lem-orphan}.
\end{proof}

We continue with the proof of Proposition \ref{osci}.
Using the fact that the change of basis described in \eqref{changebase}, only induces a change of the norm $\| X_{k_0+n_\kappa}\|$ by an absolute multiplicative constant, we obtain by Lemma \ref{controlYl0} and Lemma \ref{init} the following.
\begin{lemma} \label{changebasis}There exists a constant $C_\kappa>0$ such that
$$\lim_{\kappa \to \infty} \liminf_{n\to \infty} \PP\Big( C_\kappa^{-1} \leq \frac{\|X_{k_0+n_\kappa}\|\cdot |t_1|}{| \psi_{k_0-l_0} \cdot  t_{i_*}|}\leq C_\kappa\Big)= 1,$$
where $i_* = \inf \{ i\geq 2 :l_i = l_{i+1}\}$.
\end{lemma}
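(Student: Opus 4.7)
The strategy is to bring $\|X_{k_0+n_\kappa}\|$ to the form $|\psi_{k_0-l_0}|\cdot|t_{i_*}|\cdot\|Y_{l_0}\|$ up to an absolute multiplicative constant by passing through the nearby stopping time $l_{i_*}$, and then to invoke Lemmas \ref{controlYl0} and \ref{init} to control $\|Y_{l_0}\|$ and $|t_1|$. Throughout we work on $\mathcal{E}_\infty$, which has probability tending to $1$ as $\kappa\to\infty$ by Lemma \ref{cotnrolbruit}.

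Step 1 (change of basis at $l_{i_*}$). By \eqref{changebase} and the definition of $y_i$,
$$X_{k_0+l_{i_*}}=\frac{\psi_{k_0-l_0}}{r_{l_{i_*}}}\,Q_{l_{i_*}}Y_{l_{i_*}}=\frac{\psi_{k_0-l_0}}{r_{l_{i_*}}}\,\|Y_{l_0}\|\,t_{i_*}\,Q_{l_{i_*}}(1,\veps_{i_*})^T.$$
Since $c_k=O(1/k)$ implies $\log r_l=O(l/n)$, we have $r_{l_{i_*}}=1+O(1/\kappa)$. On $\mathcal{E}_\infty$, $|\veps_{i_*}|\le 1/2$, and since $|w_l|\le 2$, a direct computation using \eqref{defQ} gives
$$\big\|Q_{l_{i_*}}(1,\veps_{i_*})^T\big\|^2=\bigl(w_{l_{i_*}}/2-\veps_{i_*}\sqrt{4-w_{l_{i_*}}^2}/2\bigr)^2+1\;\asymp\;1$$
with absolute constants. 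Consequently,
$$\|X_{k_0+l_{i_*}}\|\;\asymp\;|\psi_{k_0-l_0}|\cdot|t_{i_*}|\cdot\|Y_{l_0}\|,$$
again with absolute constants. This is the sense in which \eqref{changebase} ``induces a change of norm by an absolute multiplicative constant'': evaluated at the stopping time $l_{i_*}$, the factor $Q_{l_{i_*}}$ only modifies the norm by a bounded amount because $Y_{l_{i_*}}$ points essentially in the direction $(1,0)$.

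Step 2 (passing from $l_{i_*}$ to $n_\kappa$). On $\mathcal{E}_\infty$ one has $l_{i_*-1}<n_\kappa\le l_{i_*}$ with $l_{i_*}\asymp n_\kappa$, so $n_\kappa$ lies inside the single block $[l_{i_*-1}+1,l_{i_*}]$. By the block decomposition in Lemmas \ref{decompbaseQ}, \ref{decomprandomblock} and the control in Lemma \ref{recursionborne}, the norm of the partial transition matrix from index $n_\kappa$ to $l_{i_*}$ (and vice versa) is bounded by a constant $C_\kappa$ depending only on $\kappa$. Therefore
$$\|X_{k_0+n_\kappa}\|\;\asymp_\kappa\;\|X_{k_0+l_{i_*}}\|.$$

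Step 3 (conclusion). Combining Steps 1 and 2,
$$\frac{\|X_{k_0+n_\kappa}\|\,|t_1|}{|\psi_{k_0-l_0}|\,|t_{i_*}|}\;\asymp_\kappa\;\|Y_{l_0}\|\cdot|t_1|.$$
By Lemma \ref{controlYl0}, $\|Y_{l_0}\|\in[c_\kappa^{-1},c_\kappa]$ with probability at least $1-\kappa^{-1}$; by Lemma \ref{init} combined with Markov's inequality, $|t_1|\in[1/2,3/2]$ with probability $1-O(\kappa^{-1/2})$. Intersecting these events with $\mathcal{E}_\infty$ and letting $\kappa\to\infty$ yields the claim.

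The main obstacle is Step 2: controlling the norm change of $X$ over the partial last block $[l_{i_*-1}+1,n_\kappa]$, since the block length can be a non-negligible fraction of $l_{i_*}$ (when $j_{i_*-1}$ is of order $1$). However the a priori bounds established for full blocks apply verbatim to any sub-interval of a block, which is what makes Step 2 go through with $\kappa$-dependent but $n$-independent constants.
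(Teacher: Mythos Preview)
Your overall strategy is sound and in fact handles one point more carefully than the paper does: the paper simply asserts that on $\mathcal{E}_\infty$ one has $l_{i_*}=n_\kappa$ and then works directly at $n_\kappa$, using the crude bounds $\|Q_{n_\kappa}\|\le c$ and $\|Q_{n_\kappa}^{-1}\|\le c\kappa^{1/2}$ (absorbed into $C_\kappa$) to pass between $\|X_{k_0+n_\kappa}\|$ and $\|Y_{n_\kappa}\|$. Your Step~1 is a nice refinement: by computing $Q_{l_{i_*}}(1,\veps_{i_*})^T$ explicitly you avoid the $\sqrt{\kappa}$ loss entirely, and you correctly recognise that $l_{i_*}$ need only satisfy $n_\kappa\le l_{i_*}\lesssim n_\kappa$, so a bridge is required.

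However, Step~2 has a real gap. You need a \emph{two-sided} bound $\|X_{k_0+n_\kappa}\|\asymp_\kappa\|X_{k_0+l_{i_*}}\|$, i.e.\ both the operator norm and the smallest singular value of the transition $M=\prod_{j=k_0+n_\kappa+1}^{k_0+l_{i_*}}(A_j+W_j)$ must be controlled. The lemmas you cite (Lemmas~\ref{decompbaseQ}, \ref{decomprandomblock}, \ref{recursionborne}) describe how a vector of the specific form $(1,\veps_i)$ evolves across a block; they do not bound $\|M^{-1}\|$. The upper bound $\|M\|\le C_\kappa$ follows from Lemma~\ref{controlA} combined with Lemma~\ref{linear} exactly as in \eqref{controlM}. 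For the lower bound you need a separate argument: since $\det(A_j+W_j)=1-c_j-g_j/\sqrt{j}$ and the block has length $l_{i_*}-n_\kappa\lesssim n_\kappa/\sqrt{\kappa}$, a second-moment computation (as in Lemma~\ref{lem-orphan}) shows $|\det M|$ is bounded away from $0$ with probability $\to 1$ as $\kappa\to\infty$, and hence $s_1(M)=|\det M|/s_2(M)\ge C_\kappa^{-1}$. Alternatively, work in $Y$-coordinates: $T_{n_\kappa,l_{i_*}}$ equals a rotation plus a perturbation whose norm is $O((\Delta l_{i_*-1}/l_{i_*-1})^{1/2})=O(\kappa^{-1/4})$ with high probability, so $\|T\|,\|T^{-1}\|=1+o_\kappa(1)$. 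Either route closes the gap; as written, your Step~2 does not.
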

\begin{proof}For $\kappa$ large enough, on the event $\mathcal{E}_\infty$ we have $l_{i_*} =n_\kappa$. According to \eqref{defyi}, we have that 
 $${Y_{n_\kappa}}/{\|Y_{l_0}\|} = t_{i_*}(1,\veps_{i_*})^T .$$
As $|\veps_{i_*}|\leq 1$ on $\mathcal{E}_{\infty}$, we have on this event
\[1 \leq \frac{\|Y_{n_\kappa}\|}{\|Y_{l_0}\|.|t_{i_*}|} \leq \sqrt{2}.\]
Using the fact that  $\PP(\mathcal{E}_\infty) \to 1$ as $\kappa \to \infty$ by Lemma \ref{cotnrolbruit} and Lemma \ref{controlYl0}, we obtain that
\[ \lim_{\kappa \to \infty }\liminf_{n\to \infty}\PP\Big( C_\kappa^{-1} \leq \|Y_{n_\kappa}\|/|t_{i_*}|\leq C_\kappa \Big)=1,\]
where $C_\kappa$ is some positive function increasing with $\kappa$.
But we know from \eqref{changebase} that $\psi_{k_0-l_0}Y_{n_\kappa} = r_{n_\kappa}  Q_{n_\kappa}^{-1} X_{k_0+ n_\kappa}$. One can check that there exists an absolute constant $c>0$ such that 
$$ c^{-1} \leq r_{n_\kappa} \leq c, \  \| Q_{n_\kappa}\| \leq c, \ \| Q_{n_\kappa}^{-1}\| \leq c\kappa^{1/2},$$
which, together with Lemma \ref{init}, concludes the  proof of Lemma \ref{changebasis}.
\end{proof}
From Lemmas \ref{lastblock} and \ref{changebasis}, we deduce that
$$ \log \frac{\| X_n\|}{|\psi_{k_0-l_0}|} = \log \Big|\frac{t_{i^*}}{t_1}\Big| + \eta_n,$$
where
$$\lim_{\kappa \to \infty} \limsup_{n \to \infty} \PP( | \eta_n|\geq c_\kappa) =0,$$
for some function $c_\kappa$ growing to $\infty$ when $\kappa \to \infty$. The convergence in law of Proposition \ref{convlawblock} ends the proof of Proposition \ref{osci}.
\end{proof}

\section{The scalar regime}
  \label{sec-scalar}
  We provide in this section the proof of Proposition \ref{prop-scalar},
  which as discussed in Section \ref{sec-structure}, is based on an effective
  linearization of the recursion. From \eqref{eq-Xdelta} we obtain 
  \eqref{eq-deltak} where 
  $1+\delta_1=(z\sqrt{n}-b_1)/\alpha_1$, and 
  \begin{equation}
    \label{eq-ukvk}
 u_k =   \frac{z_k}{\alpha_k} - 1 - \frac{1-c_{k} + g_k/\sqrt{k}}{\alpha_k \alpha_{k-1}} - \frac{b_k}{\alpha_{k}\sqrt{k}} , \quad   v_k= \frac{1-c_{k} + g_k/\sqrt{k}}{\alpha_{k} \alpha_{k-1}}. 
 \end{equation}
 This leads to the linearized equation 
 \eqref{eq-bardeltak0}, whose solution is
 \begin{equation}
   \label{eq-bardeltak}\bdelta_k = 
 \sum_{j=2}^k u_j \prod_{l=j+1}^{k}v_l + \delta_1 \prod_{l=2}^k v_l, \quad
 k\geq 2.  \end{equation}
We denote $\Delta_k=\delta_k-\bdelta_k$
and introduce the auxilliary sequence
\begin{equation}
  \label{Deltabar}
  \begin{cases}
	\overline{\Delta_k} = -v_k {\bar\delta_{k-1}}^2 + v_k \overline{\Delta_{k-1}},  \ \forall k\geq 2, \\
	\overline{\Delta_1} = \Delta_1=0.
	\end{cases}
      \end{equation}	
      (The sequence $\overline{\Delta_k}$ is obtained by linearizing the 
      recursion for $\Delta_k$.)
       
       In studying the sequences $(\delta_k)_{k \leq k_0-l_0}$ and
       $(\bdelta_k)_{k\leq k_0-l_0}$,  it will be convenient to
       consider separately the negligible  regime 
       $k \leq (1-\varepsilon)k_0$ from the harder
       $ (1-\varepsilon)k_0 \leq k \leq k_0-l_0$, where $\varepsilon>0$ is an arbitrary (small enough) constant.
       In each of the regions, we derive bounds on 
       the first two moments of $\sum  \bdelta_k$ and 
       $\sum \overline{ \Delta_k}$, a-priori estimates on the individual terms,
       and use those to complete the proof of  Proposition \ref{prop-scalar}.

       Throughout the section, we omit integer parts (thus, writing
       $x$ for $\lfloor x\rfloor$) where no confusion or significant error
       in the estimates occur.
       \subsection{Bounds on the coefficients}
       We collect in this short section some useful bounds on the various 
       coefficients entering our recursions.
       \begin{lemma}\label{event}
  	For any $k \leq k_0$, 
	\begin{equation}
	  \label{eq-Euk}
	  \EE(u_k) = \frac{1}{\alpha_{k}^2} (1-c_k)\big( 1-\frac{\alpha_{k}}{\alpha_{k-1}}    \big) 
	  \quad  \text{and}\quad
	  \mathrm{Var}(u_k)\Big(1+O\Big(\frac{1}{k}\Big)\Big)= \frac{v}{\alpha_{k}^2 k} + \frac{v}{\alpha_{k}^2  \alpha_{k-1}^2 k }  .
	\end{equation}
	Consequently, 
	there exists a universal constant $C_{u}=C_{u,\varepsilon}$ such that
		\begin{equation}
		\label{eq-Euk1}
	\forall k \leq (1-\varepsilon)k_0, \quad	\EE(u_k) \leq \frac{C_u}{n}, \quad \text{ and } \quad \forall k \leq k_0 ,  
	\mathrm{Var}(u_k) \leq \frac{C_u}{n}. \end{equation}
Further, for  $\varepsilon$ small enough, 
for any $k \in \{(1-\varepsilon)k_0, \dots, k_0-l_0 \}$, 
\begin{equation}\label{eq:meanu}
  \EE(u_k) = \frac{1}{2\sqrt{k_0 (k_0-k)}}+ O\Big(\frac{1}{k_0}\Big)  .
\end{equation}
Finally, there exists $N \in \NN$ such that for all $n \geq N$ 
	and for all $l \leq (1-\varepsilon)k_0$, $ \EE(v_l^2) \leq \alpha_{l}^{-2}$.
\end{lemma}
The proofs of this lemma is a
consequence of Assumption \ref{ass-ab}, the relation $\alpha_k^2 -z_k \alpha_k +(1-c_k) = 0$ and the development of $\alpha_k$ near $k=k_0$.

The next lemma is again an elementary consequence of the uniform bound \eqref{boundlaplace} on the Laplace transform of the variables $b_k$ and $g_k$, using a union bound argument.
\begin{lemma}[Maximum of $b_k$'s and $g_k$'s]\label{subexpolemma}
There exists a constant $C>0$ such that, with a probability going to $1$ as $n\to\infty$,
\begin{equation}\label{eq:subexpolemma}
\forall k \leq k_0, \ |b_k|\leq C \log n \text{ and } |g_k|\leq C \log n.
\end{equation}
Similarly,
there 
exists a constant $C>0$ such that, 
with a probability going to $1$ as $n\to\infty$.
\begin{equation}
  \label{eq-subbound1}
  \forall k \leq k_0,  \
  |u_k| \leq \frac{C \log n}{\sqrt{n}}  \quad \text{ and } \quad |1-c_k+g_k/\sqrt{k}| \leq 1 + \frac{C \log n}{\sqrt{k}}.  
\end{equation}
\end{lemma}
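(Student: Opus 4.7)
The approach for both assertions is standard Chernoff--Markov together with the deterministic structure of $\alpha_k$.

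For the pointwise bound on $b_k$ and $g_k$, I would apply Markov's inequality to the exponential moment bound \eqref{boundlaplace}:
\[
\PP(|b_k|\geq C\log n)\leq e^{-\lambda_0 C\log n}\,\EE(e^{\lambda_0 |b_k|})\leq M n^{-\lambda_0 C},
\]
and analogously for $g_k$. A union bound over $k\leq k_0\leq n$ gives total probability at most $2Mn^{1-\lambda_0 C}$, which tends to zero provided $C$ is chosen with $\lambda_0 C>1$. Call $\mathcal{G}_n$ the resulting good event; the first display of the lemma holds on $\mathcal{G}_n$, and the bound $|1-c_k+g_k/\sqrt k|\leq 1+C'\log n/\sqrt k$ then follows immediately from the triangle inequality together with $c_k=O(1/k)=O(1/\sqrt k)$, absorbing the $|c_k|$ contribution into the $\log n/\sqrt k$ term by enlarging the constant.

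For the bound on $u_k$, I would split $u_k=d_k+r_k$ with
\[
d_k=\frac{z_k}{\alpha_k}-1-\frac{1-c_k}{\alpha_k\alpha_{k-1}},\qquad r_k=-\frac{g_k}{\alpha_k\alpha_{k-1}\sqrt k}-\frac{b_k}{\alpha_k\sqrt k}.
\]
Since $\EE b_k=\EE g_k=0$, we have $d_k=\EE u_k$, so $|d_k|\leq C_u/n$ by Lemma \ref{event}. For the random part $r_k$, the key observation is that $\alpha_k\sqrt k\gtrsim\sqrt n$ and $\alpha_{k-1}\geq 1$ uniformly in $k\leq k_0$: in the scalar regime $k<k_0-l_0$, the definition \eqref{eq-alpha} gives $\alpha_k\geq z_k/2=z\sqrt{n/k}/2$, so that $\alpha_k\sqrt k\geq z\sqrt n/2$, while for $k_0-l_0\leq k\leq k_0$, $\alpha_k=1$ and $\sqrt k\asymp\sqrt{k_0}\asymp\sqrt n$. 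Combined with the pointwise bounds $|b_k|,|g_k|\leq C\log n$ on $\mathcal G_n$, each term of $r_k$ is bounded by $O(\log n/\sqrt n)$, which yields the claim.

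No genuine obstacle arises; the only detail requiring care is to verify the lower bound on $\alpha_k\sqrt k$ uniformly across the boundary at $k=k_0-l_0$, which is direct from \eqref{eq-alpha}, and to keep in mind that $\alpha_{k-1}$ sits in the scalar branch while $\alpha_k$ sits in the transition branch for a single value of $k$, causing no quantitative issue since both are of order one.
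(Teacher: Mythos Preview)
Your proposal is correct and follows exactly the approach the paper indicates: the paper does not write out a proof but simply states that the lemma is ``an elementary consequence of the uniform bound \eqref{boundlaplace} on the Laplace transform of the variables $b_k$ and $g_k$ together with a union bound.'' Your Chernoff--Markov estimate plus union bound is precisely this, and your decomposition $u_k=d_k+r_k$ together with the uniform lower bound $\alpha_k\sqrt{k}\gtrsim\sqrt{n}$ correctly handles the second display.
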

We denote the event in Lemma \ref{subexpolemma} by $\Omega_n$.
\subsection{The negligible regime: $k \leq (1-\varepsilon)k_0$.}
\label{subsec-easy}
The goal of this section is to show that the negligible 
regime does not contribute
to the fluctuations in the linearized equations. More precisely, we prove the following.
\begin{proposition} \label{easypart}
The sequences of random variables 
\[ \Big(\frac{1}{\sqrt{\log n}}\sum_{k=1}^{(1-\varepsilon)k_0} \bdelta_k\Big)_{n \in \NN} \quad \text{ and } \quad \Big(\frac{1}{\sqrt{\log n}}\sum_{k=1}^{(1-\varepsilon)k_0} \overline{ \Delta_k}\Big)_{n \in \NN}\] 
converge in probability to $0$.
\end{proposition}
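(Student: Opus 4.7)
The plan is to show that the second moment of $\sum_{k\le(1-\varepsilon)k_0}\bdelta_k$ and the first absolute moment of $\sum_{k\le(1-\varepsilon)k_0}\overline{\Delta_k}$ are both $O_\varepsilon(1)$; since these are much smaller than $\sqrt{\log n}$, Chebyshev/Markov then yield the convergence in probability to $0$. Set $K=(1-\varepsilon)k_0$ throughout.

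First I would exchange the order of summation in the explicit formula \eqref{eq-bardeltak}, writing
$$\sum_{k=1}^K \bdelta_k \;=\; \sum_{j=2}^K u_j\,T_{j,K}\;+\;\delta_1\,T_{1,K}, \qquad T_{j,K}:=\sum_{k=j}^K\prod_{l=j+1}^{k}v_l.$$
Two basic independences drive everything: (i) the $u_j$'s are mutually independent, and (ii) $u_j$ depends only on $(b_j,g_j)$ while $T_{j,K}$ depends only on $(g_{j+1},\ldots,g_K)$, so $u_j$ is independent of $T_{j,K}$. The crucial uniform estimate is the boundedness of $T_{j,K}$ in $L^2$: since $\alpha_l\ge(1+\sqrt{\varepsilon})/\sqrt{1-\varepsilon}>1$ for every $l\le K$, Lemma~\ref{event} gives $\EE v_l^2\le\alpha_l^{-2}$, and the triangle inequality in $L^2$ yields
$$\sqrt{\EE T_{j,K}^{\,2}}\;\le\;\sum_{k\ge j}\prod_{l=j+1}^k\alpha_l^{-1}\;\le\; C_\varepsilon \quad\text{uniformly in }j\le K,$$
using that $\alpha_l^{-1}\le 1-c_\varepsilon<1$ once $l$ is close to $K$ (for smaller $l$ the factors are even smaller, since $\alpha_l$ is decreasing in $l$).

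Splitting $u_j=\EE u_j+\hat u_j$ and using $|\EE u_j|\le C_u/n$ from \eqref{eq-Euk1}, Cauchy--Schwarz gives $\EE\bigl(\sum_j\EE u_j\,T_{j,K}\bigr)^2\le C_\varepsilon(\sum_j|\EE u_j|)^2 \le C_\varepsilon(KC_u/n)^2 = O_\varepsilon(1)$. For the centered part, the independences (i)--(ii) collapse all cross terms, leaving
$$\EE\Bigl(\sum_{j=2}^K\hat u_j\,T_{j,K}\Bigr)^2 \;=\; \sum_{j=2}^K\mathrm{Var}(u_j)\,\EE T_{j,K}^{\,2} \;\le\; K\cdot\tfrac{C_u}{n}\cdot C_\varepsilon \;=\; O_\varepsilon(1),$$
and the boundary term $\delta_1T_{1,K}$ is $O(1/n)$ in $L^2$ since $|\delta_1|\lesssim (1+|b_1|)/\sqrt n$. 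This proves the first claim. The same steps incidentally give $\EE\bdelta_j^2=O(1/n)$ uniformly in $j\le K$, a fact I will re-use below.

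For the second sum, iterating \eqref{Deltabar} yields $\overline{\Delta_k}=-\sum_{j=2}^k v_j\bdelta_{j-1}^{\,2}\prod_{l=j+1}^k v_l$, so the same exchange of summation gives
$$\sum_{k=1}^K \overline{\Delta_k}\;=\;-\sum_{j=2}^K v_j\,\bdelta_{j-1}^{\,2}\,T_{j,K}.$$
I would estimate this in $L^1$: since $v_j$ is independent of both $\bdelta_{j-1}$ (which depends only on $(b_l,g_l)_{l\le j-1}$) and $T_{j,K}$, and $\bdelta_{j-1}$ is independent of $T_{j,K}$, Cauchy--Schwarz gives
$$\EE\bigl|v_j\,\bdelta_{j-1}^{\,2}\,T_{j,K}\bigr| \;\le\; \bigl(\EE v_j^{\,2}\,\EE T_{j,K}^{\,2}\bigr)^{1/2}\bigl(\EE\bdelta_{j-1}^{\,4}\bigr)^{1/2} \;\le\; \frac{C_\varepsilon}{\alpha_j}\bigl(\EE\bdelta_{j-1}^{\,4}\bigr)^{1/2}.$$
A matching fourth-moment bound $\EE\bdelta_j^{\,4}=O(1/n^2)$ would be obtained by conditioning on $(v_l)_{l\le j}$: conditionally, $\bdelta_j$ is an affine combination $\sum_l u_l\,P_{l,j}+\delta_1P_{0,j}$ of independent sub-exponential variables (Lemma~\ref{subexpolemma}) with deterministic coefficients, hence by the standard sub-exponential Rosenthal-type inequality $\EE[X^4\,|\,v]\lesssim(\EE[X^2\,|\,v])^2$; taking expectations and using the same geometric estimate $\prod\EE v_l^{\,2}\le\prod\alpha_l^{-2}$ as in the first part controls the outer expectation. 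Summing over $j\le K$ and using $\sum_{j\le K}\alpha_j^{-1}=O(K)=O(n)$ gives $\EE\bigl|\sum_k\overline{\Delta_k}\bigr|=O_\varepsilon(1)$, as required.

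The only delicate point is the conditional-moment step: although $u_l$ and $v_l$ share the Gaussian $g_l$ (so the summands $u_l\prod_{m>l}v_m$ are not fully independent across $l$), conditioning on the entire sequence $(v_l)$ turns $\bdelta_j$ into a genuine independent sum and reduces the fourth-moment estimate to the already-controlled product $\prod\alpha_l^{-2}$; the rest is routine bookkeeping.
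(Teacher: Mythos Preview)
Your treatment of $\sum\bdelta_k$ is essentially identical to the paper's: the same exchange of summation, the same $W_j$ (your $T_{j,K}$), the same split $u_j=\EE u_j+\hat u_j$, and the same use of $\EE W_j^2\le C_\varepsilon$ via the geometric bound $\EE v_l^2\le\alpha_l^{-2}$.

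For $\sum\overline{\Delta_k}$ you also arrive at the paper's formula $-\sum_j v_j\bdelta_{j-1}^2 W_j$, but you then take a detour. You already observe that $v_j$, $\bdelta_{j-1}$, and $T_{j,K}$ are \emph{mutually} independent; the paper simply uses this to factor
\[
\EE\bigl|v_j\,\bdelta_{j-1}^2\,W_j\bigr|=\EE|v_j|\cdot\EE\bdelta_{j-1}^2\cdot\EE|W_j|\lesssim \tfrac{1}{n},
\]
invoking only the second-moment bound $\EE\bdelta_{j-1}^2=O(1/n)$ that you have just re-derived. Your Cauchy--Schwarz step instead forces you to control $\EE\bdelta_{j-1}^4$, which you then sketch via conditioning on $(v_l)$ and Rosenthal. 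That argument can be made to work, but as written it is loose: after conditioning, $\bdelta_j$ has a random conditional mean (the $g_l$-part of $u_l$ survives), so ``$\EE[X^4\mid v]\lesssim(\EE[X^2\mid v])^2$'' needs the conditional mean handled separately, and the subsequent step ``taking expectations and using $\prod\EE v_l^2\le\prod\alpha_l^{-2}$'' must actually bound $\EE\bigl[(\EE[X^2\mid v])^2\bigr]$, not $(\EE X^2)^2$. None of this is needed: drop the Cauchy--Schwarz and use the independence you already noted.
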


A preliminary a-priori bound on $\bdelta_k$ is the following.
\begin{lemma}\label{part1second}
 There exists a universal constant $c$ so that
 for $k \leq (1-\varepsilon)k_0 $,
 \[ \EE(\bdelta_k^2) \leq \frac{c}{n}.\]
\end{lemma}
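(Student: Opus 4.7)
The plan is to work directly with the one-step recursion $\bar\delta_k = u_k + v_k \bar\delta_{k-1}$ from \eqref{eq-bardeltak0}. The crucial structural observation is that $(u_k,v_k)$ depends only on $(b_k,g_k)$, hence is independent of $\bar\delta_{k-1}$, which is measurable with respect to $\sigma(b_1,g_1,\ldots,b_{k-1},g_{k-1})$. Taking expectations therefore gives the clean linear recursions
\begin{align*}
 m_k &= \EE(u_k) + \EE(v_k)\,m_{k-1}, \\
 M_k &= \EE(u_k^2) + 2\EE(u_k v_k)\,m_{k-1} + \EE(v_k^2)\,M_{k-1},
\end{align*}
where $m_k := \EE(\bdelta_k)$ and $M_k := \EE(\bdelta_k^2)$. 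The strategy is to iterate the first recursion to obtain a uniform bound on $m_k$, and then to iterate the second.

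The quantitative input is a strict contraction coming from the fact that $\alpha_l$ is bounded away from $1$ in this regime. Indeed, for $l \leq (1-\veps)k_0$ we have $z_l = z\sqrt{n/l} \geq 2/\sqrt{1-\veps}$, so $\alpha_l \geq z_l/2 \geq 1+c_0\veps$ for some $c_0>0$. Combined with Lemma \ref{event}, this gives $\EE(v_l^2) \leq \alpha_l^{-2} \leq 1-c_1\veps$, and $|\EE(v_l)| = (1-c_l)/(\alpha_l\alpha_{l-1}) \leq 1-c_1\veps$ as well. From \eqref{eq-Euk1} one has $|\EE(u_l)|,\EE(u_l^2) \leq C/n$; and from \eqref{eq-ukvk} together with $|z_l/\alpha_l - 1| \lesssim 1$ and $\EE(v_l^2)\leq 1$, the cross term $\EE(u_l v_l) = (z_l/\alpha_l - 1)\EE(v_l) - \EE(v_l^2)$ is bounded by a universal constant (the $b_l$-term vanishes by independence of $b_l$ from $v_l$).

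For the initial data, $\delta_1 = (z\sqrt{n}-b_1)/\alpha_1 - 1$ with $\alpha_1 \asymp z\sqrt{n}$ (since $z_1 = z\sqrt{n} \to \infty$), so $|m_1|,M_1 = O(1/n)$. Iterating the first recursion then yields
\[
 |m_k| \leq \frac{C}{n}\sum_{j\geq 0}(1-c_1\veps)^j + (1-c_1\veps)^{k-1}|m_1| \leq \frac{C'}{n}
\]
for every $k \leq (1-\veps)k_0$, with $C'$ depending only on $\veps$. Feeding this back into the second-moment recursion, the cross term $2\EE(u_k v_k) m_{k-1}$ is $O(1/n)$ and can be absorbed into the inhomogeneous part, giving $M_k \leq C''/n + (1-c_1\veps)M_{k-1}$, and a second iteration delivers $M_k \leq c/n$ uniformly.

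The main (and really only) obstacle is establishing the strict contraction $\EE(v_k^2)\leq 1-c_1\veps$, which is what keeps the cumulative variance bounded. This contraction degenerates as $k \uparrow k_0$ (where $\alpha_l\to 1$), which is precisely why the delicate analysis on the window $(1-\veps)k_0 \leq k \leq k_0-l_0$, and its more refined decay rate $v_k \sim 1 - \sqrt{k/k_0}$, has to be carried out separately.
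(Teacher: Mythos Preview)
Your argument is correct and proceeds somewhat differently from the paper's proof. The paper works with the explicit solution \eqref{eq-bardeltak}, splits $\bdelta_k$ into a centered part $\sum_j (u_j-\EE u_j)\prod_l v_l$ and a mean part $\sum_j \EE(u_j)\prod_l v_l$, and bounds the second moment of each piece using the contraction $\prod_l \EE(v_l^2)$ which makes the sums geometric. You instead keep the one-step recursion and iterate scalar inequalities for $m_k=\EE\bdelta_k$ and $M_k=\EE\bdelta_k^2$, exploiting the same contraction $\EE(v_l^2)\leq \alpha_l^{-2}\leq 1-c_1\veps$. Your route is arguably cleaner here: it avoids the double splitting and makes the role of the cross term $\EE(u_k v_k)m_{k-1}$ transparent; the only extra work is the first-moment pass needed to control that cross term. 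The paper's decomposition, on the other hand, generalises more directly to the delicate regime $(1-\veps)k_0\leq k\leq k_0-l_0$, where the explicit product expressions are needed for the sharp asymptotics of Lemmas \ref{Bounds W} and \ref{lemma3}.

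One small remark: your bound $|\EE(u_l v_l)|\leq C$ can be made explicit via $z_l/\alpha_l-1=(1-c_l)/\alpha_l^2$ (from $\alpha_l^2-z_l\alpha_l+(1-c_l)=0$), giving $|\EE(u_l v_l)|\leq 2$ directly.
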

\begin{proof}
  We use the decomposition
\[ \bdelta_k = \sum_{j=1}^k (u_j-\EE(u_j)) \prod_{l={j+1}}^k v_l + \sum_{j=1}^k \EE(u_j) \prod_{l={j+1}}^k v_l +\delta_1 \prod_{l=2}^k v_l\]
and we bound the second moment of each term. Using Lemma \ref{event}, we obtain
\[ \EE\Big( \Big( \sum_{j=1}^k (u_j-\EE(u_j)) \prod_{l={j+1}}^k v_l\Big)^2 \Big) \leq \frac{C_u}{n}\sum_{j=1}^k \prod_{l={j+1}}^k \EE(v_l^2) \leq \frac{C}{n}  \]
and, as there exists $C_1$ such that for any $l,l' \leq (1-\varepsilon)k_0$ , $\mathbb{E}(v_lv_{l'}) \leq C_1<1 $, developping the square gives
\[ \EE\Big( \Big(\sum_{j=1}^k \EE(u_j) \prod_{l={j+1}}^k v_l\Big)^2 \Big) \leq \frac{C}{n^2} .  \]
Finally, the second moment of term involving $\delta_1$ is exponentially small as, for $l \leq (1-\varepsilon)k_0$, $\EE(v_l^2) \leq C_1 <1$ .
This yields the bound on $\EE \bdelta_k^2$. 
\end{proof}

Before presenting the proof of Proposition \ref{easypart}, we introduce some notation.
	For any $j \in [1,(1-\varepsilon)k_0]$, define
	$ W_j = \sum_{k=j}^{(1-\varepsilon)k_0} \prod_{l=j+1}^k v_l .$ Then,
\begin{equation} \label{sum delta}
\sum_{k=1}^{(1-\varepsilon)k_0} { \bdelta_k} = \sum_{j=2}^{(1-\varepsilon)k_0} u_j W_j + {\delta_1}  W_1
\end{equation}
and
\begin{equation} \label{sum Delta}
\sum_{k=2}^{(1-\varepsilon)k_0} \overline{ \Delta_k} = \sum_{j=2}^{(1-\varepsilon)k_0} -v_j  \bdelta_{j-1}^2 W_j.
\end{equation}

\begin{proof}[Proof of Proposition \ref{easypart}]
	The proof proceeds by 
	showing 
	that the second moment of the left side of
	\eqref{sum delta} 
	 and the (absolute) first moment of the left side
	 of \eqref{sum Delta}
	are uniformly bounded in $n$.
	
	Since for any $l \leq (1-\varepsilon)k_0$,  $\EE(|v_l|) \leq 1/\alpha_{(1-\varepsilon)k_0}$ and $\EE(v_l^2) \leq 1/\alpha_{(1-\varepsilon)k_0}^2$, see
	Lemma \ref{event},
	the 
	second moment of the
	product in the definition of $W_j$ decays exponentially
with its length and therefore
there exists a constant $C>0$ independent of $j$ and 
$n$ such that $\EE(W_j^2) \leq C$. 
To control the second moment of \eqref{sum delta}
	we decompose
	\[ \sum_{k=1}^{(1-\varepsilon)k_0} {\bdelta_k} - \delta_1 W_1 = \sum_{j=2}^{(1-\varepsilon)k_0} u_j W_j = \sum_{j=2}^{(1-\varepsilon)k_0} (u_j-\EE(u_j)) W_j + \sum_{j=2}^{(1-\varepsilon)k_0} \EE(u_j) W_j =: W_1^u+W_2^u, \]
	and we bound the second moment of each term.
	Using the fact that the variables $u_jW_j$  are decorrelated and $u_j$ is independent of $W_j$, we obtain that
	\begin{align*}
	\EE\big[ (W_1^u)^2\big]&=\sum_{j=1}^{(1-\varepsilon)k_0} \mathrm{Var}(u_j) \EE(W_j^2) \leq C_u C,\\
\EE \big[ (W_2^u)^2\big]&\leq \Big(  \sum_{j=1}^{(1-\varepsilon)k_0}\frac{C_u^2}{n^2}\Big)
\Big(\sum_{j=1}^{(1-\varepsilon)k_0}
\EE(W_{j}^2)\Big) \leq C_u^2 C,
	\end{align*}
where we used the
Cauchy-Schwarz's 
inequality. Since $\EE(\delta_1 W_1)^2 = O(1)$, this shows the first claim.
	
	To bound the first moment of \eqref{sum Delta}, we note that  $\EE\bdelta_{j-1}^2 =O( 1/n)$ for any $j\geq 2$. Since $\bdelta_{j-1}, v_j, W_j$ are independent, we can write
	\[ \sum_{k=2}^{(1-\varepsilon)k_0} \EE(|\overline{\Delta_k}|) = \sum_{j=2}^{(1-\varepsilon)k_0}\EE (|v_j|) \EE (|W_j|) \EE( \bdelta_{j-1}^2)   \lesssim1, \]
where we used the fact that $\EE |W_j| =O(1)$ and $\EE |v_j| =O(1)$.
\end{proof}

Recall the event $\Omega_n$ (see Lemma \ref{subexpolemma}). We proceed to derive
a uniform bound on the variables $\delta_k$ and $\bdelta_k$, in the negligible regime.
\begin{lemma} \label{apriori}
There exists a constant $C$ 
so that, on the event $\Omega_n$,
for all $k \leq (1-\varepsilon) k_0$,
\begin{align*}
|\delta_k|  \leq \frac{C{\log n}}{\sqrt{n}} ,\quad 
|{\bdelta_k}|  \leq \frac{C{\log n}}{\sqrt{n}},
\quad |\Delta_k| \leq \frac{C (\log n)^2}{n},\quad
|\overline{\Delta_k}| \leq \frac{C (\log n)^2}{n},
\end{align*}
and
\begin{equation}\label{RecursionDelta}
\Delta_k =-v_k \bdelta_{k-1}^2 + v_k(1-2\bdelta_{k-1}) \Delta_{k-1} - 
v_k \Delta_{k-1}^2 + v_k \frac{\delta_{k-1}^3}{1+\delta_{k-1}}.
\end{equation}
\end{lemma}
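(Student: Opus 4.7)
The plan is to exploit a uniform contraction of $v_k$ in the negligible regime, derive the claimed recursion for $\Delta_k$ by Taylor expansion, and then close a bootstrap induction using the $\bar\delta_k,\overline{\Delta_k}$ bounds as forcing terms.

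\textbf{Step 1 (uniform contraction).} For $k\le(1-\veps)k_0$ one has $z_k^2/4=nz^2/(4k)\ge 1/(1-\veps)$, so the definition \eqref{eq-alpha} of $\alpha_k$ yields a uniform lower bound $\alpha_k\ge 1+c_\veps$ for some $c_\veps>0$ independent of $k,n$. Combined with Lemma~\ref{subexpolemma} and the definition \eqref{eq-ukvk} of $v_k$, this gives, on $\Omega_n$ and for all $n$ large enough,
\[ |v_k|\le \frac{1+C\log n/\sqrt k}{\alpha_k\alpha_{k-1}}\le \rho<1, \quad k\le(1-\veps)k_0,\]
where $\rho=\rho(\veps)\in(0,1)$ is independent of $k,n$. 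This is the key feature of the negligible regime.

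\textbf{Step 2 (a priori bounds on $\bar\delta_k$ and $\overline{\Delta_k}$).} Starting from \eqref{eq-bardeltak} and using $|u_j|\le C\log n/\sqrt n$ on $\Omega_n$ together with $\prod_{l=j+1}^k|v_l|\le\rho^{k-j}$, one gets
\[ |\bar\delta_k|\le \sum_{j=2}^k \frac{C\log n}{\sqrt n}\rho^{k-j}+|\delta_1|\rho^{k-1}\lesssim \frac{\log n}{\sqrt n},\]
since $|\delta_1|\lesssim \log n/\sqrt n$ on $\Omega_n$. Iterating \eqref{Deltabar} in closed form, $\overline{\Delta_k}=-\sum_{j=2}^k v_j\bar\delta_{j-1}^2\prod_{l=j+1}^k v_l$, hence $|\overline{\Delta_k}|\lesssim\sum_{j=2}^k\rho^{k-j+1}(\log n/\sqrt n)^2\lesssim (\log n)^2/n$.

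\textbf{Step 3 (derivation of \eqref{RecursionDelta}).} Provided $|\delta_{k-1}|\le 1/2$ (to be justified by the closing induction), write $\delta_{k-1}/(1+\delta_{k-1})=\delta_{k-1}-\delta_{k-1}^2+\delta_{k-1}^3/(1+\delta_{k-1})$. Substituting into \eqref{eq-deltak}, expanding $\delta_{k-1}^2=\bar\delta_{k-1}^2+2\bar\delta_{k-1}\Delta_{k-1}+\Delta_{k-1}^2$, and subtracting $\bar\delta_k=u_k+v_k\bar\delta_{k-1}$ gives exactly \eqref{RecursionDelta}.

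\textbf{Step 4 (bootstrap induction for $\Delta_k$).} Using \eqref{RecursionDelta} together with Steps~1--2,
\[ |\Delta_k|\le \rho\Big(\bar\delta_{k-1}^2+\bigl(1+2|\bar\delta_{k-1}|\bigr)|\Delta_{k-1}|+\Delta_{k-1}^2+\frac{|\delta_{k-1}|^3}{|1+\delta_{k-1}|}\Big).\]
Assume inductively $|\Delta_{k-1}|\le C'(\log n)^2/n$; then $|\delta_{k-1}|\le|\bar\delta_{k-1}|+|\Delta_{k-1}|\le 2C\log n/\sqrt n\le 1/2$ for $n$ large, validating Step~3. Inserting the bounds from Step~2 and absorbing the $(1+O(\log n/\sqrt n))$ factor into a slightly enlarged contraction $\rho'\in(\rho,1)$ gives
\[ |\Delta_k|\le \rho'\,|\Delta_{k-1}|+K\frac{(\log n)^2}{n}\]
for some universal $K$ (the cubic and quadratic error terms being of lower order). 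With $\Delta_1=0$, this yields $|\Delta_k|\le C'(\log n)^2/n$ for all $k\le(1-\veps)k_0$, provided $C'\ge K/(1-\rho')$; the bound on $\delta_k$ then follows from $\delta_k=\bar\delta_k+\Delta_k$. The only subtle point is making sure the induction constant $C'$ is chosen uniformly in $k,n$, which is precisely why one must absorb the $O(\log n/\sqrt n)$ perturbation of the multiplier $v_k(1-2\bar\delta_{k-1})$ into $\rho'<1$ rather than into the remainder.
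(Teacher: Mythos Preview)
Your proof is correct and follows essentially the same strategy as the paper: establish a uniform contraction $|v_k|\le\rho<1$ on $\Omega_n$ in the negligible regime, deduce the $\bar\delta_k$ and $\overline{\Delta_k}$ bounds by summing the resulting geometric series, derive \eqref{RecursionDelta} via the expansion $\delta/(1+\delta)=\delta-\delta^2+\delta^3/(1+\delta)$, and close by induction on $\Delta_k$. The only cosmetic difference is in Step~4: the paper iterates the recursion to closed form $|\Delta_k|\le 2\sum_{j}(|\bar\delta_{j-1}|+|\Delta_{j-1}|)^2\prod_{l=j}^k|v_l|$ and bounds the whole sum via the constant $W=\sum\rho^j$, whereas you keep the one-step inequality $|\Delta_k|\le\rho'|\Delta_{k-1}|+K(\log n)^2/n$ and absorb the $(1+2|\bar\delta_{k-1}|)$ perturbation into $\rho'<1$; both lead to the same bound with equivalent bookkeeping.
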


\begin{proof}
	We start by showing that, on $\Omega_n$, there exists a constant 
	$W=W_{\varepsilon}$ such that, 
	\[ \forall k \leq (1-\varepsilon) k_0, \quad \sum_{j=1}^k \prod_{l=j+1}^k |v_l| \leq W. \] Indeed, on  $\Omega_n$ we have that
	\eqref{eq-subbound1} holds, and therefore, recalling that
	$\alpha_k \geq z\sqrt{n/k}/2$, we have that for all $n$ large, 
	\begin{equation}\label{bounvlOmega}   |v_k| \leq \frac{1}{\alpha_{(1-\varepsilon)k_0}}, \quad
	k\leq (1-\veps)k_0,\end{equation}
which leads to, as $\alpha_{(1-\veps)k_0}<1$,
	\[  \sum_{j=1}^k \prod_{l=j+1}^k |v_l| \leq \sum_{j=1}^k \frac{1}{\alpha_{(1-\varepsilon)k_0}^{k-j-1}} \leq \frac{1}{1-1/\alpha_{(1-\varepsilon)k_0}}:=W.\]
	\textbf{Bound on $\bdelta_k$:} 
	On $\Omega_n$ we have that
	\[ {\bdelta_k } = \sum_{j=1}^k u_j \prod_{l=j+1}^k v_l + 
	\delta_1 \prod_{l=2}^k v_l =O\Big(\frac{\log n}{\sqrt{n}}\Big),  \] 
where we used the fact that $\delta_1 = b_1/\alpha_1 +O(1/n)$.

\noindent
\textbf{Bound on $\Delta_k$ and $\delta_k$:}	
If $n$ is large enough, 
then
$\Delta_k$ satisfies
\begin{align}
\label{eq-aprioriDk}
\Delta_k & = v_k \Big( \frac{\delta_{k-1}}{1+\delta_{k-1}} - {\bdelta_{k-1}} \Big) 
= v_k \Delta_{k-1} - v_k\delta_{k-1}^2 + v_k \frac{\delta_{k-1}^3}{1+\delta_{k-1}} \nonumber \\
& = -v_k \bdelta_{k-1}^2 + v_k(1-2{\bdelta_{k-1}}) \Delta_{k-1} - v_k \Delta_{k-1}^2 + v_k \frac{\delta_{k-1}^3}{1+\delta_{k-1}},
\end{align}
which proves \eqref{RecursionDelta}.

On the event $\Omega_n$ where $|{\bdelta_k}| \leq C_{\bdelta}
\log n/\sqrt{n}$ for any $k\leq (1-\veps)k_0$, we prove by induction on $k$ that,  
with $C'=4W(C_{\bdelta})^2$, for $n$ large enough, for any $k \leq (1-\varepsilon)k_0$,
$ |\Delta_k| \leq C' (\log n)^2/n .$

Solving the recursion 
\eqref{RecursionDelta} and substituting the last inequality,
which implies in particular 
together with the bounds on $\bdelta_k$ that $|\delta_k|<1/2$,
we get for all $n$ large that
\begin{align*}
  |\Delta_k| & \leq 2\sum_{j=1}^k (|\bdelta_{j-1}|+|\Delta_{j-1}|)^2 \prod_{l=j}^k |v_l|
  \leq \frac{2(\log n)^2 W}{n}\Big({C'}^2\frac{(\log n)^2}{n}+C_{\bdelta}^2\Big)
  \leq C'\frac{(\log n)^2}{n}.
 \end{align*}
 This completes the induction. The same argument applies to $\overline{\Delta_k}$
 (only simpler, in view of the linearity of the recursion for the latter)
 and  completes the proof of the lemma.
\end{proof}

\subsection{The contributing regime: $(1-\varepsilon)k_0 \leq k \leq k_0-l_0$.}
\label{subsec-contrib}
Recall that $l_0=\kappa k_0^{1/3}$.
We study the 
recursion for the $\delta_k$ in the region where $\alpha_k$ 
is getting close to $1$. This corresponds to $|z_k|$ approaching $2$, 
the edge of the support of the semi-circle law.
The strategy remains the same: in this section,
we study the auxiliary 
sequences $\bdelta_k$ and $\overline{\Delta_k}$. 
In the following Section \ref{sec-det},
we will prove that, with a probability going to $1$ as $\kappa\to\infty$,
those auxiliary sequences remain sufficiently close to
the sequences $\delta_k$ and $\Delta_k$ to obtain the result.

In this part, it is convenient to reindex the sequences with respect to the distance to $k_0$, through the following operation.
\begin{definition}[Star indices]
  For $l\in \{0,\ldots,k_0\}$, we
  define the operator $l\mapsto l^*\in \{0,\ldots,k_0\}$ by 
$l^* =k_0-l.$
\end{definition}
\noindent
We begin by proving a limit law for 
$\sum_{k=(1-\varepsilon)k_0}^{k_0-l_0} \bdelta_k$.
\begin{proposition} \label{TCL delta bar}
For any fixed $\kappa>0$, 
\[ 
\frac{1}{\sqrt{\frac{v}{3} \log n}} \Big(\sum_{k=(1-\varepsilon)k_0}^{k_0-l_0} { \bdelta_k}-\frac{\log n}{6}\Big) \underset{n\to +\infty}{\leadsto}\gamma, 
\]
where $\gamma$ is the standard Gaussian law.
\end{proposition}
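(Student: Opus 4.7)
My plan is to apply a Lyapunov-type CLT for sums of independent random variables to $S := \sum_{k=(1-\veps)k_0}^{k_0-l_0}\bdelta_k$, after an Abel-type rearrangement. Interchanging summation in \eqref{eq-bardeltak}, I will write
\[
  S = \sum_{j=2}^{k_0-l_0} u_j\, A_j + \delta_1\, A'_1,
  \qquad A_j := \sum_{k = j \vee \lfloor(1-\veps)k_0\rfloor}^{k_0-l_0} \prod_{l=j+1}^{k} v_l,
\]
with $A'_1$ a boundary term. Proposition \ref{easypart}, together with a parallel estimate on $\delta_1 A'_1$ using the exponential decay of $\prod v_l$ when $l\le(1-\veps)k_0$, implies that the portion of $S$ supported on $j\le(1-\veps)k_0$ contributes $o_{\PP}(\sqrt{\log n})$, so the analysis reduces to $j \in [(1-\veps)k_0, k_0-l_0]$, where $A_j = \sum_{k=j}^{k_0-l_0}\prod_{l=j+1}^{k} v_l$.

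The next step will be to replace the random weight $A_j$ by its expectation $\bar A_j := \EE A_j$. The key independence is that $u_j$ depends only on $(b_j,g_j)$ while $A_j$ depends only on $\{g_l\}_{l>j}$ (because the inner product starts at $l=j+1$), so $u_j \perp A_j$ for each $j$. Writing $v_l = \bar v_l(1+\eta_l)$ with $\eta_l$ centered of variance $O(1/k_0)$ and using the exponential decay of $\prod \bar v_l$ on the scale $L_j \sim \sqrt{k_0/(k_0-j)}$, I would establish $\Var(A_j) \lesssim \bar A_j^{\,2}\,L_j/k_0 \lesssim \sqrt{k_0}/(k_0-j)^{3/2}$. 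The diagonal contribution $\sum_j \Var(u_j)\Var(A_j)\lesssim \sum_{m=l_0}^{\veps k_0}1/(\sqrt{k_0}\,m^{3/2}) = O(1/\sqrt{k_0 l_0})$ is then negligible; for the off-diagonal cross-terms $\EE[u_{j_1}(A_{j_1}-\bar A_{j_1})u_{j_2}(A_{j_2}-\bar A_{j_2})]$ with $j_1<j_2$, the independence of $u_{j_1}$ from $(A_{j_1},u_{j_2},A_{j_2})$ factors the expectation through $|\EE u_{j_1}| = O(1/n)$, and Cauchy--Schwarz bounds the total off-diagonal contribution by $O(k_0^{1/2}/n) = o(1)$. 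Hence $S = \tilde S + o_{\PP}(1)$ where $\tilde S := \sum_j u_j \bar A_j$.

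Once linearized, $\tilde S$ is a sum of independent random variables with $|\EE u_j|=O(1/n)$, so $\EE\tilde S=O(\log n/n)$ is negligible. A continuous approximation based on $v_l\approx 1-2\sqrt{(k_0-l)/k_0}$ and a saddle-point evaluation of $\prod\bar v_l$ will yield the pointwise asymptotic $\bar A_j \sim \tfrac12\sqrt{k_0/(k_0-j)}$, while Lemma \ref{event} gives $\Var(u_j)\sim 2v/k_0$ near $k_0$. Substituting $m=k_0-j$ and using $k_0\sim z^2n/4$, $l_0\sim \kappa k_0^{1/3}$,
\[
  \Var(\tilde S) \sim \sum_{m=l_0}^{\veps k_0}\frac{2v}{k_0}\cdot\frac{k_0}{4m} \sim \frac{v}{2}\log\!\Big(\frac{\veps k_0}{l_0}\Big) \sim \frac{v}{3}\log n.
\]
The Lyapunov $(2+\delta)$ condition follows from \eqref{boundlaplace}, which yields $\EE|u_j|^{2+\delta}\lesssim n^{-1-\delta/2}$, combined with $\sum_m \bar A_j^{\,2+\delta}\lesssim k_0^{1+\delta/2}/l_0^{\delta/2}$, making the normalized sum vanish.

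The hardest step will be the linearization $A_j \leadsto \bar A_j$: showing that the random-product fluctuations do not spoil the leading-order variance. This requires tracking two competing scales near criticality --- the decay length $L_j$ of the mean product and the diffusive size of the log-product fluctuations --- and verifying that $L_j/k_0$ is integrable against $\bar A_j^{\,2}/k_0$ on $[l_0,\veps k_0]$. A secondary delicate point is the precise identification of the constant $v/3$, which relies on $\Var(u_j)$ receiving contributions of order $v/k_0$ from both $b_j$ and $g_j$ and on $\bar A_j$ carrying the inverse-square-root edge profile.
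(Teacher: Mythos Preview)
Your proposal is correct and, after the Abel rearrangement, coincides with the paper's proof: your weight $A_j$ is exactly the paper's $W_j$ from \eqref{eq-Wj}, your linearized sum $\tilde S=\sum_j u_j\bar A_j$ is $\sum_k G_k$ from \eqref{Gk}, and your linearization error $\sum_j u_j(A_j-\bar A_j)$ is $\sum_k R_k$. The only cosmetic difference is that the paper replaces $v_l\to\EE v_l$ in each $\bdelta_k$ before summing (yielding the $G_k/R_k/S_k$ split) whereas you interchange the sums first; and for the linearization error the paper further splits $u_j=(u_j-\EE u_j)+\EE u_j$ (its $A+B$) while you handle the off-diagonal directly via the factorization through $\EE u_{j_1}$. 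Both routes give the same estimates, and both feed into the same Lindeberg/Lyapunov CLT with the variance computed via $\bar A_j\sim\tfrac12\sqrt{k_0/j^*}$ (the paper's Lemma~\ref{Bounds W}).
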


The strategy of the proof of Proposition \ref{TCL delta bar} is the following: first we decompose $ \bdelta_k$ as a weighted sum of independent
random variables 
and a negligible rest $R_k+S_k$. We will prove that $\sum (R_k+S_k)/\sqrt{\log n}$ 
converges to zero in probability by some moment estimates, 
and that $\sum G_k/\sqrt{\log n}$ converges in distribution towards a 
certain Gaussian random variable, by using a CLT for sums of independent variables.

The mean in the limiting CLT (for $\sum \delta_k$) will come from
$\sum \Delta_k$, or, as we will show in the next section, from
$\sum \overline {\Delta_k}$, which we control in the second main result of
this section.
 \begin{proposition} \label{Limit Delta bar}
For any fixed $\kappa$, 
\[ \frac{1}{\sqrt{\log n}} \Big( \sum_{k=(1-\varepsilon)k_0}^{k_0-l_0} \overline{ \Delta_k} + \frac{v}{6} \log n \Big) \underset{n\to+\infty}{\longrightarrow}  0, \]
in probability.
\end{proposition}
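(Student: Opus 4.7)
My plan is to prove the convergence by bounding the mean and variance of $S_n:=\sum_{k=(1-\varepsilon)k_0}^{k_0-l_0}\overline{\Delta_k}$, showing that $\EE[S_n]+(v/6)\log n = O_{\varepsilon,\kappa}(1)$ and $\Var(S_n)=O_{\varepsilon,\kappa}(1)$, and concluding by Chebyshev's inequality. Iterating \eqref{Deltabar} gives
\[ \overline{\Delta_k} \;=\; \overline{\Delta_{(1-\varepsilon)k_0}}\!\!\prod_{l=(1-\varepsilon)k_0+1}^k\!\!v_l \;-\; \sum_{j=(1-\varepsilon)k_0+1}^k\bdelta_{j-1}^2\prod_{l=j}^k v_l, \]
and by the a priori bound $|\overline{\Delta_{(1-\varepsilon)k_0}}|\lesssim (\log n)^2/n$ from Lemma \ref{apriori} together with $\sum_k|\prod v_l|=O_\varepsilon(1)$, the boundary term is $o(1)$. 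Fubini then yields
\[ S_n \;=\; -\!\!\sum_{j=(1-\varepsilon)k_0+1}^{k_0-l_0}\!\!\!\bdelta_{j-1}^2\,W_j \;+\; o(1), \qquad W_j\,:=\,\sum_{k=j}^{k_0-l_0}\prod_{l=j}^k v_l. \]
The key observation is that, since the $b_l,g_l$ are independent across $l$, $\bdelta_{j-1}$ is $\sigma((b_l,g_l)_{l<j})$-measurable while $W_j$ is $\sigma((b_l,g_l)_{l\geq j})$-measurable, so $\bdelta_{j-1}$ and $W_j$ are \emph{independent}.

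The asymptotics I would need are for $\EE[\bdelta_{j-1}^2]$ and $\EE[W_j]$. Using $\alpha_l=1+\sqrt{(k_0-l)/k_0}(1+o(1))$ near $k_0$, so that $\log\EE v_l \approx -2\sqrt{(k_0-l)/k_0}$, a Laplace/Riemann-sum argument (in the spirit of Remark \ref{rem-asym}) will give, uniformly in $(1-\varepsilon)k_0<j\leq k_0-l_0$ with $m_j:=k_0-j$,
\[ \EE[W_j] \;=\; \tfrac{1}{2}\sqrt{k_0/m_j}\,(1+o(1)), \qquad \EE[\bdelta_{j-1}^2] \;=\; \frac{v}{2\sqrt{m_j\,k_0}}\,(1+o(1)), \]
the second coming from \eqref{eq-bardeltak} and the variance formula in Lemma \ref{event} (the mean part of $\bdelta_{j-1}$ is $O(1/\sqrt n)$ and hence negligible). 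Combining via independence, $\EE[\bdelta_{j-1}^2 W_j]\sim v/(4 m_j)$, and summing with $l_0=\kappa k_0^{1/3}$ and $\log k_0=\log n+O(1)$,
\[ \EE[S_n] \;=\; -\frac v4\!\!\sum_{m=l_0}^{\varepsilon k_0}\!\!\frac{1}{m}\,(1+o(1)) \;=\; -\frac{v}{6}\log n + O_{\varepsilon,\kappa}(1), \]
which is $-(v/6)\log n$ up to an error that is certainly $o(\sqrt{\log n})$.

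For the variance, I would expand $\Var(S_n)=\sum_{j_1,j_2}\operatorname{Cov}(\bdelta_{j_1-1}^2 W_{j_1},\bdelta_{j_2-1}^2 W_{j_2})$ and exploit the nested recursive structure. For $j_1<j_2$, one has $\bdelta_{j_2-1}=P_{j_1,j_2}\bdelta_{j_1-1}+Z_{j_1,j_2}$ with $P_{j_1,j_2}:=\prod_{l=j_1}^{j_2-1}v_l$ and $Z_{j_1,j_2}$ depending only on noise variables in $[j_1,j_2-1]$. Expanding $\bdelta_{j_2-1}^2=P_{j_1,j_2}^2\bdelta_{j_1-1}^2+2P_{j_1,j_2}\bdelta_{j_1-1}Z_{j_1,j_2}+Z_{j_1,j_2}^2$ and using uniform sub-Gaussian tails for $\bdelta_k$ inherited from \eqref{boundlaplace}, the dominant contribution to the covariance is $\lesssim P_{j_1,j_2}^2\,\sigma_{j_1-1}^4\,\EE W_{j_1}\,\EE W_{j_2}$ with $\sigma_j^2:=\EE[\bdelta_j^2]$. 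A second Laplace evaluation gives $\sum_{j_2\geq j_1}P_{j_1,j_2}^2\EE W_{j_2}\asymp k_0/m_{j_1}$, and hence $\Var(S_n)\lesssim \sum_{m=l_0}^{\varepsilon k_0}\sqrt{k_0}/m^{5/2}\lesssim \kappa^{-3/2}$.

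The main technical obstacle will be controlling the small randomness of $W_j$: writing $v_l=\bar v_l(1+\eta_l)$ with $\eta_l=O(\log n/\sqrt n)$ on $\Omega_n$ and expanding the products to first or second order introduces additional correlations that must be shown to be lower order in both the mean asymptotics and the covariance bound. The Laplace/Riemann asymptotics themselves are routine once one identifies $m=k_0-k$ as the natural scaling variable and tracks the cubic exponent $m^{3/2}/\sqrt{k_0}$ governing the decay of the products; with the mean and variance estimates in hand, Chebyshev's inequality delivers $(S_n+(v/6)\log n)/\sqrt{\log n}\to 0$ in probability.
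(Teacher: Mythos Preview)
Your overall strategy---rewrite $S_n$ via Fubini as $-\sum_j \bdelta_{j-1}^2 W_j$ plus a negligible boundary term, then compute the first moment using the independence of $\bdelta_{j-1}$ and $W_j$, and finish with a variance bound and Chebyshev---is exactly the paper's approach. Your mean computation, with $\EE[\bdelta_{j-1}^2]\sim v/(2\sqrt{k_0 m_j})$ and $\EE[W_j]\sim \tfrac12\sqrt{k_0/m_j}$, matches the paper's Lemmas~\ref{Bounds W} and~\ref{lemma3} and gives the correct $-\tfrac{v}{6}\log n$.

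The variance argument is where you diverge, and your sketch has a real gap. You propose to expand the covariances via $\bdelta_{j_2-1}=P_{j_1,j_2}\bdelta_{j_1-1}+Z_{j_1,j_2}$ and assert that the dominant contribution is $\lesssim \EE[P_{j_1,j_2}^2]\,\sigma_{j_1-1}^4\,\EE W_{j_1}\,\EE W_{j_2}$. But for $j_2-j_1\gg\sqrt{k_0/m_{j_1}}$ (which is most pairs), $P_{j_1,j_2}$ is exponentially small while $Z_{j_1,j_2}$ carries essentially all of $\bdelta_{j_2-1}$; so the $Z^2$ and cross terms are \emph{not} lower order in size, and the covariance is small only through the independence structure linking $\bdelta_{j_1-1}^2$, $W_{j_1}$, $Z_{j_1,j_2}$ and $W_{j_2}$ (noting that $W_{j_1}$ overlaps with both $Z_{j_1,j_2}$ and $W_{j_2}$). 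Your sketch does not address this, and the ``dominant term'' claim as stated is incorrect. The paper avoids this entirely by a different decomposition: it writes
\[
\Var\Big(\sum_j v_j\bdelta_{j-1}^2 W_j\Big)=\sum_j \Var(v_j\bdelta_{j-1}^2)\,\EE[W_j^2]+\EE\Big(\sum_j \EE[v_j\bdelta_{j-1}^2]\,(W_j-\EE W_j)\Big)^2,
\]
and bounds the first sum using the fourth-moment estimate $\EE[\bdelta_k^4]\lesssim 1/(k_0 k^*)$ (Lemma~\ref{lemma3}) and the second using the variance of $W_j$ (Lemma~\ref{Bounds W}), obtaining $\Var(S_n)=o(\log n)$ rather than your claimed $O(1)$. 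If you want to pursue the direct covariance route, you will need to exploit that $Z_{j_1,j_2}^2 W_{j_2}$ is independent of $\bdelta_{j_1-1}^2$ and then control $\operatorname{Cov}(W_{j_1},Z_{j_1,j_2}^2 W_{j_2})$ carefully; this is doable but is precisely the ``technical obstacle'' you flag at the end, and it is not a small remainder.
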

Before proving Proposition \ref{TCL delta bar} and \ref{Limit Delta bar}, we present several estimates which will be essential in the proofs.  Recall the definition \eqref{eq-ukvk} of $v_k$. 
The following lemma uses a  straight-forward expansion of the coefficients
$\alpha_k$ around the critical point $k=k_0$. The elementary proof, 
which uses the fact that the sequence $\alpha_l$ is decreasing, is
omitted.

\begin{lemma}\label{alpha}
	For any $l  \in [(1-\varepsilon)k_0, k_0-l_0]$, 
\begin{equation}
  \label{eq-alphab}
\alpha_{l} = 1 + \sqrt{\frac{k_0-l}{k_0}} + O\Big(\frac{k_0-l}{k_0}\Big) = 1 + \sqrt{\frac{l^*}{k_0}} + O\Big(\frac{l^*}{k_0}\Big) 
\end{equation}
and for any $(1-\veps)k_0 \leq j<k \leq k_0-l_0$,
\[ e^{-2(k-j)(\sqrt{j^*/k_0} +O(j^*/k_0) )}  \leq   \prod_{l=j+1}^k  \EE(v_l) \leq e^{-2(k-j)(\sqrt{k^*/k_0} - O(k^*/k_0) )}   \]
and 
\[e^{-4(k-j)(\sqrt{j^*/k_0} +O(j^*/k_0) )}    \leq   \prod_{l=j+1}^k  \EE(v_l^2) \leq e^{-4(k-j)(\sqrt{k^*/k_0} - O(k^*/k_0) ) }.   \]
\end{lemma} 
For any $j \in [(1-\varepsilon)k_0, k_0-l_0]$, we define
\begin{equation}
  \label{eq-Wj}
  W_j = \sum_{k=j}^{k_0-l_0} \prod_{l=j+1}^k v_l.
\end{equation}
In order to control $W_j$, we will also  need  a finer estimate than that provided by  Lemma \ref{alpha}, in the form of the following 
important auxiliary computation. 
\begin{lemma}
  \label{lem-aux}
Fix $\alpha>0$ and assume that the sequence $\gamma_l$ satisfies
\begin{equation}
  \label{eq-gammal}
  \gamma_l=e^{-\alpha \sqrt{l^*/k_0}+O(l^*/k_0)}.
\end{equation}
Then,  for $j^*\geq 2l_0$,
\begin{equation}
  \label{eq-sumgammal}
  A_{j}:= \sum_{k=j}^{k_0-l_0} \prod_{l=j+1}^k\gamma_l=\frac{1}{\alpha} 
  \sqrt{\frac{k_0}{j^*}}+O\Big(\sqrt{\frac{k_0}{j^*}}e^{- \bar \mu/2}\Big)+O\Big(\frac{k_0}{(j^*)^2}\Big)+O(1),
\end{equation}
where $\mu=\mu_j =2\alpha (j^*)^{3/2}/ (3\sqrt{k_0})$ and $\bar \mu=\mu(1-(l_0/j^*)^{3/2})$. 

Similarly, for $k\in [(1-\varepsilon)k_0, k_0-l_0]$,
\begin{equation}
  \label{eq-sumgammal1}
  B_{k}:= \sum_{j=(1-\varepsilon)k_0}^{k} \frac{1}{j}
  \prod_{l=j+1}^k\gamma_l=
  \frac{1}{\alpha \sqrt{k_0 k^*}}
  +O\Big(\frac{1}{k_0}\Big).
\end{equation}
\end{lemma}
\begin{proof}
  We have
  \begin{equation}
    \label{eq-prodgamma}
    \prod_{l=j+1}^k\gamma_l=e^{-\mu+\mu (k^*/j^*)^{3/2}+O( (j^*-k^*)j^*/k_0)+O(\sqrt{k^*/k_0})}.\end{equation}
  Therefore,
  \[ A_j=\sum_{m=0}^{j^*-l_0} e^{-\mu(1-(1-m/j^*)^{3/2})+O\big( m j^*/k_0\big)+
  O\big(\sqrt{\frac{j^*-m}{k_0}}\big)}.
\]
Set now $m/j^*=x$. Using Riemann integration, we obtain that
\[ \bar A_j:=A_j+O(1)=
j^* \int_0^{1-l_0/j^*} e^{-\mu(1-(1-x)^{3/2})+O(x (j^*)^2/k_0)
+O(\sqrt{(j^*/k_0) (1-x)})} dx.\]
Making the change of variables $z=\mu(1-(1-x)^{3/2})$ so that $x=1-(1-z/\mu)^{2/3}$, we obtain that
\begin{equation}
  \label{eq-Aj}
  \bar A_j=\frac1\alpha \sqrt{\frac{k_0}{j^*}}
\int_0^{\bar \mu} \frac{e^{-z+O\big( (z/\mu) (j^*)^2/k_0\big)+
O\big( \sqrt{j^*/k_0}\big)}}{\Big(1-\frac{z}{\mu}\Big)^{1/3}} dz.\end{equation}
Now, since $\bar \mu/\mu\in (1/2,1)$, we have that $1/(1-z/\mu)^{1/3}=1+ O\big(\sum_{i=1}^\infty (z/\mu)^i\big)$
and 
\[e^{-z+O\big( (z/\mu) (j^*)^2/k_0\big)+
O\big( \sqrt{j^*/k_0}\big)}=e^{-z(1+O(\sqrt{j^*/k_0}))}
+O\big(\sqrt{j^*/k_0}\big)\]
uniformly in $z\in [0,\bar \mu]$.
Therefore,
\begin{eqnarray}
  \label{eq-mainerror}
  \bar A_j&=&\frac1\alpha \sqrt{\frac{k_0}{j^*}}
\int_0^{\bar \mu} e^{-z(1-O(\sqrt{j^*/k_0}))}\Big(1+O\big( \sum_{i=1}^\infty (z/\mu)^i \big)
\Big)dz+O(1)\\
&=&\frac1\alpha \sqrt{\frac{k_0}{j^*}}+O\big(\sqrt{\frac{k_0}{j^*}}e^{-\bar \mu/2}\big)+O\Big(\frac{k_0}{(j^*)^2}\Big)+O(1).
\nonumber
\end{eqnarray}    
In the last equality, we used that 
\[\int_0^{\bar \mu} e^{-z} z^i dz= 
  \int_0^{\bar \mu} e^{-z+i\log z} dz =ie^{i\log i}\int_0^{\bar \mu/i}e^{-i(x-\log x)}dx= \left\{
\begin{array}{ll}
  O(e^{i\log (i/e)+o(i)}),& i\leq \bar \mu,\\
  O(\bar \mu^i e^{-\bar \mu}),& i>\bar\mu.
\end{array}\right.
\]
We then use that, with $f(x)=x\log(x/ e)$, we have that
$f(x)\leq f(1/\mu)+f'(1/\mu)(x-1/\mu)/2$ for $x-1/\mu\leq \log \mu/\mu$ and therefore (using $x=i/\mu$),
\[ \sum_{i=1}^{ \bar \mu} e^{i\log (i/e)+o(i)-i\log \mu}
  \leq e^{\mu f(1/\mu)}\sum_{i=1}^{ \bar \mu} e^{-\frac 12 (i-1) \log (\mu)+o(i)}
  +\sum_{i=\log \mu}^{\bar \mu} e^{-i} =O(1/\mu)\] 
  while, recalling that $\bar \mu/\mu=1-\delta$ with $\delta=(l_0/j^*)^{3/2}$
  and that $\bar \mu\geq c/\delta$,
  \[ e^{-\bar \mu}\sum_{ i=\hat \mu}^{\infty} (\frac{\bar \mu}{\mu}\big)^i
    =O\Big( e^{-\bar \mu} \frac{1}{\delta} (1-\delta)^{\bar \mu}\Big)=
    O(e^{-\bar\mu/2}).\]
Altogether, the term involving the infinite sum in the right hand side of
\eqref{eq-mainerror} is of order $\sqrt{k_0/j^*}(1 /\mu+e^{-\bar \mu/2})=
O(k_0/(j^*)^2)+O(\sqrt{k_0/j^*}e^{-\bar \mu/2})$.

The argument for $B_k$ is similar. Indeed, let $\nu=2\alpha (k^*)^{3/2} /3\sqrt{k_0}$. We can rewrite \eqref{eq-prodgamma} as 
  \begin{equation}
    \label{eq-prodgamma1}
    \prod_{l=j+1}^k\gamma_l=e^{-\nu [(j^*/k^*)^{3/2}-1]+O( (j^*-k^*)j^*/k_0)+O(\sqrt{k^*/k_0})}.\end{equation}
  Therefore,
  \[ B_k=\frac{1}{k^*}\sum_{l=k^*}^{\varepsilon k_0} \frac{1}{(k_0/k^*)-(l/k^*)}
  e^{-\nu[(l/k^*)^{3/2}-1]+O\Big( l (l-k^*)/k_0\Big)+
  O\Big(\sqrt{\frac{k^*}{k_0}}\Big)}.
\]
Setting $x=l/k^*$, one then obtains that
\begin{eqnarray*}
&&  B_k+O(1/k_0)=\int_1^{\varepsilon k_0/k^*}
\frac{1}{k_0/k^*-x} e^{-\nu(x^{3/2}-1)+O\Big(x(x-1)(k^*)^2/k_0\Big)+O(\sqrt{k^*/k_0})} dx\\
&&= \frac{2}{3}
(1+O(\sqrt{k^*/k_0}))\int_0^{(\varepsilon k_0/k^*)^{3/2}-1}
\frac{1}{(k_0/k^*-(y+1)^{2/3})}\frac{1}{(y+1)^{1/3}}e^{-\nu y+E_y} dy,
\end{eqnarray*}
where $E_y=O( (y+1)^{2/3}( (y+1)^{2/3}-1)/k_0))$.
The dominant contribution occurs near $y=0$ and gives the result.
\end{proof}

As in Section \ref{subsec-easy}, the coefficients $W_j$ appear when 
solving linear recursions. Building on Lemma \ref{lem-aux},
we provide in the next lemma some estimates on these coefficients.
\begin{lemma}\label{Bounds W}
There exists a universal constant $C$ such that 
	\[  \EE(W_{j}) = \begin{cases} 
	  \frac{1}{2}\sqrt{\frac{k_0}{j^*}}+ \varepsilon_{j} & \text{ if } (1-\varepsilon)k_0 \leq j \leq k_0- 2l_0\\
	  \varepsilon_j &  \text{ if } k_0- 2l_0 
	  \leq j \leq k_0-l_0
	\end{cases}  \]
with
\[  |\varepsilon_j| \leq \begin{cases}
  C \frac{k_0}{{j^*}^2} +
C+\frac{C}{2} \sqrt{\frac{k_0}{j^*}}
 e^{-\frac{2 (j^*)^{3/2} (1-(l_0/j^*)^{3/2})}{3\sqrt{k_0}}} & \text{ if } (1-\varepsilon)k_0 \leq j \leq  k_0- 2l_0, 
\\
C(j^*-l_0) &  \text{ if } k_0- 2 l_0 
\leq j \leq k_0-l_0,
\end{cases}  \]
	and 
	\[ \EE(W_j^2)- (\EE W_j)^2\leq \varepsilon'_j,\]
	with
\[  |\varepsilon'_j| \leq \begin{cases}
  C\frac{k_0^{1/3}}{j^*}
 & \text{ if } (1-\varepsilon)k_0 \leq j \leq k_0- 2l_0,\\
C \frac{(j^*-l_0)^3}{n}=o(j^*-l_0)
&  \text{ if } k_0- 2l_0
\leq j \leq k_0-l_0.
\end{cases}  \]	
\end{lemma}
\begin{proof}[Proof of Lemma \ref{Bounds W}] We split the proof into
  estimates of first and second moments.\\
	\textbf{Bound on $\EE(W_j)$:}
Let $j \in [(1-\varepsilon)k_0, k_0-l_0]$. Since the variables $v_l$ are independent, we have that
\[ \EE(W_j) = \sum_{k=j}^{k_0-l_0} \prod_{l=j+1}^k \EE(v_l) \leq \sum_{k=j}^{k_0-l_0} \prod_{l=j+1}^k (1-c_l)   . \]
Since $c_l =O(1/k_0)$ for any $l\in[(1-\veps)k_0, k_0-l_0]$, 
there exists a constant $M$ such that  $\EE (W_j) \leq M (j^*-l_0)$.

For 
$(1-\varepsilon) k_0\leq j\leq k_0-2l_0$ we apply  
Lemma \ref{lem-aux} (with $\alpha=2$) to obtain that
  \begin{equation}
    \label{eq-W1top2} \Big| \EE(W_j)-\frac12 \sqrt{\frac{k_0}{j^*}}\big(1-
    O\big(e^{-\frac{2 (j^*)^{3/2} (1-(l_0/j^*)^{3/2})}{3\sqrt{k_0}}}\big)\big) \Big|\leq 
    O\Big(\frac{k_0}{  {j^*}^2}\Big) + O(1).\end{equation}
\noindent
\textbf{Bound on $\EE(W_j^2)$:}
To obtain this bound, we expand the square and
group the terms: 
\begin{align*}
\EE(W_j^2) = \sum_{k_1,k_2 = j+1}^{k_0-l_0} \EE\Big( \prod_{l_1=j+1}^{k_1} v_{l_1}\prod_{l_2=j+1}^{k_2} v_{l_2} \Big ) = \sum_{k_1,k_2 = j}^{k_0-l_0} \prod_{l_1=j+1}^{k_1 \wedge k_2} \EE(v_{l_1}^2) \prod_{l_2=k_1 \wedge k_2+1}^{k_1 \vee k_2} \EE(v_{l_2}) .
\end{align*}
Using the fact that for any $l \in [(1-\varepsilon)k_0, k_0-l_0] $, $\EE(v_l^2) = \EE(v_l)^2(1+O(\frac{1}{n}))$, we get
\begin{align*}
		\EE(W_j^2)-\EE(W_j)^2 
		&= \sum_{k_1,k_2 = j}^{k_0-l_0} (\prod_{l_1=j+1}^{k_1 \wedge k_2} \EE(v_{l_1}^2)-\prod_{l_1=j+1}^{k_1 \wedge k_2}\EE(v_{l_1})^2) \prod_{l_2=k_1 \wedge k_2+1}^{k_1 \vee k_2} \EE(v_{l_2}) \\
		&\leq \sum_{k_1,k_2 = j+1}^{k_0-l_0} \prod_{l_1=j+1}^{k_1\wedge k_2} \EE(v_{l_1})^2  \prod_{l_2=k_1\wedge k_2+1}^{k_1\vee k_2} \EE(v_{l_2}) (\big( 1 + \frac{C}{n}  \big)^{{k_1+k_2-2j}}-1).
\end{align*}
Using that $ (( 1 + \frac{C}{n}  )^{{k_1+k_2-2j}}-1) \leq \frac{C'(k_1+k_2-2j)}{n}$
and bounding the products by constants, we get
\begin{align*}
\EE(W_j^2)-\EE(W_j)^2 \leq C''  \sum_{k_1,k_2 = j+1}^{k_0-l_0} \frac{k_1+k_2-2j}{n} = 2C'' (j^*-l_0) \sum_{k_1= j+1}^{k_0-l_0} \frac{k_1-j}{n}.
\end{align*}
This leads to 
\[\EE(W_j^2)-\EE(W_j)^2  \leq  O\Big(\frac{(j^*-l_0)^3}{n}\Big),\]
which is the claimed bound 
in the regime  $j \in [k_0-2l_0, k_0-l_0] $.

 For $j \in [(1-\veps)k_0,k_0-2l_0]$,
 by Lemma \ref{alpha}, we have
 \begin{eqnarray*}
  \EE(W_j^2)-( \EE W_j)^2 & = & 
  \sum_{k_1,k_2 = j+1}^{k_0-l_0} 
 \Big( \prod_{l_1=j+1}^{k_1 \wedge k_2} \EE(v_{l_1}^2) 
 -  \prod_{l_1=j+1}^{k_1 \wedge k_2} (\EE v_{l_1})^2\Big)
 \prod_{l_2=k_1 \wedge k_2+1}^{k_1 \vee k_2} \EE v_{l_2}\\
 & \leq&  
\sum_{k_1,k_2 = j+1}^{k_0-l_0} 
 \prod_{l_1=j+1}^{k_1 \wedge k_2} (\EE v_{l_1})^2 \Big( \big(1+O\big(\frac1n\big)\big)^{k_1\wedge k_2-j}-1\Big) 
 \prod_{l_2=k_1 \wedge k_2+1}^{k_1 \vee k_2} \EE v_{l_2}  \\&\leq &
 C \sum_{k_1,k_2 = j+1}^{k_0-l_0} 
\frac{k_1\wedge k_2-j}{n} 
\prod_{l_1=j+1}^{k_1 \wedge k_2} (\EE v_{l_1})^2 
 \prod_{l_2=k_1 \wedge k_2+1}^{k_1 \vee k_2} \EE v_{l_2}.
  \end{eqnarray*}
  Using Lemma \ref{alpha} and the substitutions 
  $r=k_1\wedge k_2, l=k_1\vee k_2$, we obtain therefore that
 \begin{eqnarray*}
  \EE(W_j^2)-( \EE W_j)^2 & \leq  & C
  \sum_{r=j+1}^{k_0-l_0} \sum_{l=r}^{k_0-l_0}
  \frac{r-j}{n} e^{-2(r-j)\sqrt{\frac{r^*}{k_0}}} \cdot
  e^{-(l-r)\sqrt{\frac{l^*}{k_0}}}\\
  &\leq & C' \frac1n \sqrt{\frac{k_0}{l_0^*}}
  \sum_{r=j+1}^{k_0-l_0} (r-j) e^{-2(r-j)\sqrt{\frac{r^*}{k_0}}}
  \leq C'' \frac{k_0^{1/3}}{j^*}.
\end{eqnarray*}
\end{proof}

\begin{proof}[Proof of Proposition \ref{TCL delta bar}]
  Recall from \eqref{eq-bardeltak} that
  for any $k \geq (1-\varepsilon)k_0$,
  \begin{equation}
    \label{eq-Sk}\bdelta_k = \bdelta_{(1-\varepsilon)k_0-1} \prod_{l=(1-\varepsilon)k_0}^k v_l + \sum_{j=(1-\varepsilon)k_0}^{k} u_j \prod_{l=j+1}^k v_l=:
    S_k+\sum_{j=(1-\varepsilon)k_0}^{k} u_j \prod_{l=j+1}^k v_l. \end{equation}
We define for any $k \geq (1-\varepsilon)k_0$
\begin{equation} \label{Gk}
G_k = \sum_{j=(1-\varepsilon)k_0}^k u_j \prod_{l=j+1}^k \EE v_l,
\quad R_k =  \sum_{j=(1-\varepsilon)k_0}^k u_j \Big( \prod_{l=j+1}^k v_l - \prod_{l=j+1}^k \EE v_l \Big),
\end{equation}
which allows us to write
\begin{equation}
\label{eq-sumbardelta}
 \bdelta_k = G_k + R_k +S_k.
\end{equation}
In the next two lemmas, we prove a central limit theorem for $\sum_{k=(1-\varepsilon)k_0}^{k_0-l_0} G_k$ and we prove that $\frac{1}{\sqrt{\log n}}
\sum_{k=(1-\varepsilon)k_0}^{k_0-l_0} (R_k+S_k)$ converges to $0$ in probability.
\begin{lemma} \label{Lemma1}
	We have 
	\[ \EE\Big(\sum_{k=(1-\varepsilon)k_0}^{k_0-l_0} G_k\Big) = \frac{1}{6} \log n+ O(1),
	 \quad
	 \mathrm{Var}\Big(\sum_{k=(1-\varepsilon)k_0}^{k_0-l_0} G_k\Big) =
	 \frac{v}{3} \log n +O(1).  \]
	Furthermore, 
	\[ \frac{1}{ \sqrt{(v/3)\log n}} \Big(  \sum_{k=(1-\varepsilon)k_0}^{k_0-l_0} G_k - \frac{1}{6} \log n \Big) \underset{n\to+\infty}{\leadsto} \gamma.\]
\end{lemma}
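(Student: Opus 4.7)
The plan is to observe that, after swapping the two summations,
\[
\sum_{k=(1-\varepsilon)k_0}^{k_0-l_0} G_k \;=\; \sum_{j=(1-\varepsilon)k_0}^{k_0-l_0} u_j \tilde W_j, \qquad \tilde W_j := \EE W_j = \sum_{k=j}^{k_0-l_0} \prod_{l=j+1}^k \EE(v_l),
\]
so that $\sum G_k$ is a weighted sum of independent random variables with deterministic coefficients. The mean, variance, and CLT can then be established separately and combined.

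For the variance, $\mathrm{Var}(\sum_k G_k) = \sum_j \mathrm{Var}(u_j)\, \tilde W_j^2$. Substituting $\mathrm{Var}(u_j) = (1+O(1/j))(v/(\alpha_j^2 j) + v/(\alpha_j^2\alpha_{j-1}^2 j))$ from Lemma \ref{event}, together with $\tilde W_j^2 = k_0/(4j^*) + \text{error}$ from Lemma \ref{Bounds W}, and using $\alpha_j \to 1$ as $j \to k_0$, the leading contribution is $\sum_{j^* = l_0}^{\varepsilon k_0} v/(2 j^*)$. Since $l_0 = \kappa k_0^{1/3}$ gives $\log(\varepsilon k_0/l_0) = (2/3) \log n + O_\kappa(1)$, this produces $(v/3) \log n$ to leading order. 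The error terms from Lemma \ref{Bounds W} and the contribution of the short boundary range $j^* \in [l_0, l_0(1+(\log n)^{1/7})]$ (where only the weaker bound of Lemma \ref{Bounds W} applies) are each easily seen to be $o(\log n)$.

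For the mean, $\EE\sum_k G_k = \sum_j \EE(u_j)\, \tilde W_j$. Rewriting $\EE(u_j) = (1-c_j)(1/\alpha_j - 1/\alpha_{j-1})/\alpha_j$ exhibits a telescoping structure in $1/\alpha_j$, and one performs a summation-by-parts pairing this against $\tilde W_j/\alpha_j$. Using the recursion $\tilde W_{j-1} = 1 + \EE(v_j) \tilde W_j$ one sees that the forward differences $\tilde W_{j+1}/\alpha_{j+1} - \tilde W_j/\alpha_j$ are smaller than what a term-by-term estimate would suggest, so that the bulk of the sum cancels and only boundary contributions at $j^* = l_0$ and $j^* = \varepsilon k_0$ remain, each of size $O(1)$. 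Making this cancellation precise while controlling the remainder terms from Lemma \ref{Bounds W} will be the main technical obstacle, since a naive bound based on $|\EE(u_j)| \tilde W_j \asymp 1/(4 j^*)$ would overestimate the total by a factor of $\log n$.

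Finally, the CLT follows from the Lindeberg-Feller theorem applied to the triangular array $\{(u_j - \EE u_j)\tilde W_j\}_j$. Its total variance is $(1+o(1))(v/3)\log n$ by the computation above, and on the high-probability event $\Omega_n$ of Lemma \ref{subexpolemma} each term is uniformly bounded by
\[
|u_j - \EE u_j|\, \tilde W_j \;\lesssim\; \frac{\log n}{\sqrt{n}} \cdot \sqrt{\frac{k_0}{l_0}} \;=\; O\!\left(\frac{\log n}{n^{1/6}}\right) \;=\; o(\sqrt{\log n}),
\]
so the Lindeberg condition is trivially satisfied. Combined with the mean bound $\EE \sum G_k = O(1)$, this gives $\sum_k G_k/\sqrt{(v/3)\log n} \leadsto \gamma$.
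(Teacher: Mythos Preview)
Your variance computation and the CLT step are essentially the paper's approach; the only minor difference is that the paper verifies Lindeberg via the third-moment bound $\EE|X_j|^3 \lesssim {j^*}^{-3/2}$, whereas you use the deterministic bound on $|u_j|$ valid on $\Omega_n$. Both work.

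The mean bound is where you diverge substantially from the paper, and where your proposal has a gap. The paper's argument is one line: it invokes $|\EE(u_j)| \leq C_u/n$ from Lemma~\ref{event} (the uniform bound \eqref{eq-Euk1}) together with $\tilde W_j \lesssim \sqrt{k_0/j^*}$ from Lemma~\ref{Bounds W}, giving
\[
\sum_j |\EE(u_j)|\,\tilde W_j \;\lesssim\; \frac{1}{n}\sum_{j^*=l_0}^{\varepsilon k_0}\sqrt{\frac{k_0}{j^*}} \;=\; O(1)
\]
directly, with no summation by parts needed. Your assertion that $|\EE(u_j)|\tilde W_j \asymp 1/(4j^*)$ amounts to $\EE(u_j)\asymp 1/(2\sqrt{k_0 j^*})$, which is indeed what the \emph{exact} formula \eqref{eq-Euk} combined with $\alpha_{j-1}-\alpha_j\asymp 1/(2\sqrt{k_0 j^*})$ from Lemma~\ref{alpha} yields near $k_0$; note that this is in tension with the uniform $O(1/n)$ bound \eqref{eq-Euk1} the paper uses. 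But if your estimate of the individual terms is correct, your proposed summation-by-parts fix cannot deliver $O(1)$: every term $\EE(u_j)\tilde W_j$ is nonnegative (since $\alpha_j<\alpha_{j-1}$), Abel summation is an identity, and any reorganization must return the same total $\asymp \tfrac{1}{6}\log n$. The ``cancellation'' you describe in $\tilde W_{j+1}-\tilde W_j$ is just the statement that $\tilde W_j$ varies slowly, which does not help when the summand itself has a definite sign. So either \eqref{eq-Euk1} is applicable and your argument is unnecessary, or it is not and your argument cannot close the gap; in neither case does the summation-by-parts route establish the $O(1)$ claim.
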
 
\noindent
\begin{lemma}\label{Lemma2}
	We have
	$$ 
	\sum_{k=(1-\varepsilon)k_0}^{k_0-l_0}( R_k+S_k)
	\underset{n\to +\infty}{\longrightarrow} 0,
	$$
in probability. 
\end{lemma}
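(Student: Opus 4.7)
The plan is to bound the second moments of $\sum_k S_k$ and $\sum_k R_k$ separately and show they are $o(\log n)$, then conclude by Chebyshev's inequality.

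For the $S_k$ contribution, I would factor out the common factor $\bdelta_{(1-\varepsilon)k_0-1}$:
\[ \sum_{k=(1-\varepsilon)k_0}^{k_0-l_0} S_k = \bdelta_{(1-\varepsilon)k_0-1} \cdot \widetilde W, \quad \widetilde W := \sum_{k=(1-\varepsilon)k_0}^{k_0-l_0}\prod_{l=(1-\varepsilon)k_0}^k v_l. \]
Crucially $\widetilde W$ depends only on $(v_l)_{l \ge (1-\varepsilon)k_0}$ and so is independent of $\bdelta_{(1-\varepsilon)k_0-1}$. Lemma \ref{part1second} gives $\EE \bdelta_{(1-\varepsilon)k_0-1}^2 = O(1/n)$, and an argument parallel to the proof of Lemma \ref{Bounds W} (evaluated at $j^* \asymp \varepsilon k_0$) shows $\EE \widetilde W^2 = O(1)$, so that $\EE(\sum_k S_k)^2 = O(1/n)$, which is more than enough.

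The heart of the proof is the $R_k$ term. I would apply the telescoping identity
\[ \prod_{l=j+1}^k v_l - \prod_{l=j+1}^k \EE v_l = \sum_{m=j+1}^{k} \Big(\prod_{l=j+1}^{m-1} v_l\Big)(v_m-\EE v_m)\Big(\prod_{l=m+1}^{k} \EE v_l\Big), \]
and then exchange the order of summation over $k,j,m$. After regrouping this rewrites
\[ \sum_{k=(1-\varepsilon)k_0}^{k_0-l_0} R_k = \sum_{m} (v_m - \EE v_m)\, \EE(W_m)\, M_{m-1}, \quad M_{m-1}:=\sum_{j=(1-\varepsilon)k_0}^{m-1} u_j \prod_{l=j+1}^{m-1} v_l, \]
where I used $\sum_{k \ge m}\prod_{l=m+1}^k \EE v_l = \EE(W_m)$ by independence. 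The key structural observation is that $M_{m-1}$ is measurable with respect to $\mathcal{F}_{m-1}=\sigma(b_l,g_l: l\le m-1)$, while $v_m - \EE v_m$ is centered and independent of $\mathcal{F}_{m-1}$. Thus the sum is a martingale difference sum in $m$, all cross-terms vanish, and
\[ \EE\Big(\sum_k R_k\Big)^2 = \sum_m \Var(v_m)\, (\EE W_m)^2\, \EE M_{m-1}^2. \]

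Now it suffices to plug in the three estimates: Lemma \ref{event} yields $\Var(v_m) = O(1/k_0)$; Lemma \ref{Bounds W} yields $(\EE W_m)^2 = O(k_0/m^*)$; and the claim I would establish, in analogy with Lemma \ref{part1second} but in the contributing regime, is $\EE M_{m-1}^2 = O(1/\sqrt{k_0 m^*})$. The latter follows by expanding the square: the diagonal $j_1 = j_2$ yields $\sum_j \Var(u_j)\prod_{l=j+1}^{m-1}\EE v_l^2 \lesssim k_0^{-1}\sum_j e^{-c(m-j)\sqrt{m^*/k_0}} = O(1/\sqrt{k_0 m^*})$, using the exponential decay from Lemma \ref{alpha}; the off-diagonal pairs, where one must track the small correlation $\mathrm{Cov}(u_j,v_j) = O(1/k_0)$ coming from the shared $g_j$, are of strictly lower order. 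Combining,
\[ \EE\Big(\sum_k R_k\Big)^2 \lesssim \sum_{m=(1-\varepsilon)k_0}^{k_0-l_0} \frac{1}{k_0}\cdot\frac{k_0}{m^*}\cdot\frac{1}{\sqrt{k_0 m^*}} \lesssim \frac{1}{\sqrt{k_0}}\sum_{m^*=l_0}^{\varepsilon k_0} \frac{1}{(m^*)^{3/2}} \lesssim \frac{1}{\sqrt{k_0 l_0}} = o(1), \]
which is $o(\log n)$. The main obstacle is the bound on $\EE M_{m-1}^2$: one has to be careful that $u_j$ and $v_j$ are not independent (both involve $g_j$), but writing $u_j - \EE u_j = -(v_j - \EE v_j) - b_j/(\alpha_j\sqrt j)$ confines the coupling to a single index, so the cross contributions in the moment expansion are easily absorbed into the diagonal estimate.
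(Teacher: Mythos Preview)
Your proof is correct; the $S_k$ part matches the paper's approach (independence of $\bdelta_{(1-\varepsilon)k_0-1}$ from $\widetilde W$, plus the bound $\EE\bdelta_{(1-\varepsilon)k_0-1}^2=O(1/n)$), while the $R_k$ part takes a genuinely different route.

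For $R_k$ the paper does not telescope. Instead it writes
\[
\sum_k R_k = \sum_{j} (u_j-\EE u_j)(W_j-\EE W_j) + \sum_j \EE(u_j)(W_j-\EE W_j)=:A+B,
\]
observes that $u_j$ is independent of $W_j$ (the latter depends only on $(g_l)_{l>j}$), so that $\EE A^2=\sum_j \Var(u_j)\Var(W_j)$, and then feeds in the \emph{variance} estimates on $W_j$ from Lemma~\ref{Bounds W}; the $B$ term is handled by Cauchy--Schwarz and the same variance bounds. Your telescoping identity trades this for a martingale structure in $m$: you only need the \emph{mean} of $W_m$ from Lemma~\ref{Bounds W}, together with the bound $\EE M_{m-1}^2=O(1/\sqrt{k_0 m^*})$, which is essentially the diagonal computation done later in Lemma~\ref{lemma3} for $\EE\bdelta_k^2$. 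So you are swapping one auxiliary input (the variance of $W_j$) for another (the second moment of $M_{m-1}$), neither of which is free. Your approach does yield the sharper bound $\EE(\sum_k R_k)^2=O((k_0 l_0)^{-1/2})=o(1)$, whereas the paper only obtains $o(\log n)$; both suffice here. Two minor points: the claim $(\EE W_m)^2=O(k_0/m^*)$ hides the error terms $\varepsilon_m$ of Lemma~\ref{Bounds W}, which dominate the main term in a narrow range of $m^*$ near $l_0$, but this range contributes only $o(1)$ to your sum and does not affect the conclusion; and your handling of the $u_j$--$v_j$ correlation via the identity $u_j-\EE u_j=-(v_j-\EE v_j)-b_j/(\alpha_j\sqrt j)$ is clean and correct.
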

These two lemmas combined gives the result of Proposition \ref{TCL delta bar}.
\end{proof}
\begin{proof}[Proof of Lemma \ref{Lemma1}]
  Recall \eqref{eq-Wj} Lemma \ref{Bounds W}. We start by writing
\[\sum_{k=(1-\varepsilon)k_0}^{k_0-l_0} G_k = \sum_{j=(1-\varepsilon)k_0}^{k_0-l_0} u_j \EE(W_j). \]
Hence, using Lemma \ref{Bounds W}  and \eqref{eq:meanu}, we get
\begin{align*}
		\EE\Big(\sum_{k=(1-\varepsilon)k_0}^{k_0-l_0}  G_k\Big)  = & \sum_{j=(1-\varepsilon)k_0}^{k_0-l_0} (\EE u_j) \EE W_j
		=  \sum_{j=(1-\varepsilon)k_0}^{k_0-2l_0}
		\frac{1}{4\sqrt{k_0 j^*}} \sqrt{\frac{k_0}{j^*}} 
	+  O(1)\\
	= &  \frac{1}{4}\log{\varepsilon k_0} - \frac{1}{4} \log(2l_0)
	+O(1)= \frac{1}{6} \log n  + O(1).
\end{align*}

\textbf{Variance computation: } 
By the independence of the $u_j$'s from $W_l$ with $l\geq j$, we have that
\begin{equation*}
\mathrm{Var}\Big(\sum_{j=(1-\varepsilon)k_0}^{k_0-l_0}u_j \EE W_j  \Big) = \sum_{j=(1-\varepsilon)k_0}^{k_0-l_0} \mathrm{Var}(u_j) (\EE W_j)^2.
\end{equation*}
We recall that from Lemma \ref{event} and Lemma \ref{alpha}, we have
\[ \Big( 1+ O\Big( \frac{1}{j}\Big)\Big)\mathrm{Var}(u_j)=\frac{v}{\alpha_{j}^2 j} + \frac{v}{\alpha_{j}^2  \alpha_{j-1}^2 j } = \frac{2v}{j}\Big(1+O\Big(\sqrt{\frac{j^*}{k_0}}\Big)\Big).   \]
We develop the products and we compute each sum. The only term
which eventually contributes is
\begin{align*}
  &\sum_{j=(1-\varepsilon)k_0}^{k_0-l_0}\frac{2v}{j} (\EE W_j)^2  =   \sum_{j=(1-\varepsilon)k_0}^{k_0-2l_0}
  \frac{2v}{j} \frac{1}{4}\Big(\frac{k_0}{j^*} + C\sqrt{\frac{k_0}{j^*}}\varepsilon_j\Big)+
  \sum_{k_0-2l_0}^{k_0-l_0} O\Big(\frac{1}{n}\Big) \varepsilon_j^2\\
& = 
\sum_{j=(1-\varepsilon)k_0}^{k_0-2l_0}
\frac{vk_0}{2jj^*} + O(1)=\frac{v}{2} 
\sum_{j=(1-\varepsilon)k_0}^{k_0-2l_0}
\Big(\frac{1}{j^*}+\frac{1}{j}\Big)+O(1)=\frac{v}{3}\log n+O(1).
\end{align*}
The only term left to bound is
\[ \sum_{j=(1-\varepsilon)k_0}^{k_0-l_0} \frac{2v}{j} O\Big(\sqrt{\frac{j^*}{k_0}}\Big) (\EE W_j)^2. \]
We use the different estimates on $\EE(W_j)$ and obtain for this last sum the
bounds 
\[\sum_{j=(1-\varepsilon)k_0}^{k_0-2l_0}
\frac{1}{j}\sqrt{\frac{j^*}{k_0}} .\frac{k_0}{j^*}
=O(1)\]
and
\[ \sum_{k_0-2l_0}^{k_0-l_0} \frac{1}{j}\sqrt{\frac{j^*}{k_0}} (j^*-l_0)^2 \leq C \frac{l_0^{7/2}}{k_0^{3/2}}
= o(1)
  .\]
This allows us to conclude that
\[ \mathrm{Var}\Big(\sum_{j=(1-\varepsilon)k_0}^{k_0-l_0} G_k\Big) = 
\frac{v}{3} \log n + O(1).\]
\textbf{Central Limit Theorem}
We show the convergence of
$ \mathcal{W}_n=
\sum_{j=(1-\varepsilon)k_0}^{k_0-l_0} X_j/\sqrt{v \log n/3}$
with $X_j= (u_j-\EE(u_j)) \EE(W_j)$
towards a centered Gaussian random variable. This is a direct application of Lindeberg's Central Limit Theorem as we deal with sums of independent random variables. We already know that $\mathrm{Var}(\mathcal W_n)=v\log n/3(1+o(1))$, 
so all we have to check is Lindeberg's condition, i.e. that
\[ \frac{1}{\log n} \sum_{j=(1-\varepsilon)k_0}^{k_0-l_0} \EE\big[ (X_j-\EE(X_j))^2   \Car_{|X_j-\EE(X_j)|\ > \delta \log n} \big] \underset{n\to +\infty}{\longrightarrow} 0.\]
To check this condition, 
we use Markov's inequality
 $$\EE\big[ (X_j-\EE(X_j))^2   \Car_{|X_j-\EE(X_j)|\ > \delta \log n}\big] \leq  \EE(|X_j-\EE(X_j)|^3) / (\delta \log n)$$
along with $\EE(|X_j-\EE(X_j)|^3) \leq C /{j^*}^{3/2}$, which comes from the fact that the (absolute) third moments of the random variables $b_k$'s and $g_k$'s are uniformly bounded in $k$. Thus, Lindeberg's CLT completes the proof.
\end{proof}
\begin{proof}[Proof of Lemma \ref{Lemma2}]
	The proof is a second moment computation. Recall
	the definition of $R_k$ and $S_k$, see \eqref{eq-Sk} and
	\eqref{Gk}. We begin with $S_k$, and develop the product in its definition as
\[ \EE( (\sum_{k=(1-\varepsilon)k_0}^{k_0-l_0} \prod_{l=(1-\varepsilon)k_0}^k v_l)^2 ) =\sum_{k_1, k_2=(1-\varepsilon)k_0}^{k_0-l_0} \prod_{l=(1-\varepsilon)k_0}^{k_1 \wedge k_2} \EE(v_l^2) \prod_{l'=k_1 \wedge k_2+1}^{k_1 \vee k_2} \EE v_{l'}.   \]
We recall, see \eqref{eq-ukvk} and Lemma \ref{event}, 
that there exists a universal constant $C$ such that for any $l \in [(1-\varepsilon)k_0,k_0-l_0]$,
\[ \max\{ \log \EE(v_l), \log \EE(v_l^2) \} \leq -\sqrt{\frac{l^*}{k_0}} + C\frac{l^*}{k_0} . \]
This implies that 
\[  \prod_{l=(1-\varepsilon)k_0}^{k_1 \wedge k_2} \EE(v_l^2) \prod_{l'=k_1 \wedge k_2}^{k_1 \vee k_2} \EE(v_{l'}) \leq e^{-\frac{k_1-(1-\varepsilon)k_0}{2}(\sqrt{\frac{k^*_1}{k_0}} + C\frac{k^*_1}{k_0})}  
e^{-\frac{k_2-(1-\varepsilon)k_0}{2}
(\sqrt{\frac{k^*_2}{k_0}} + C\frac{k^*_2}{k_0})} \]
and finally
\begin{align*}
  &\sum_{k_1, k_2=(1-\varepsilon)k_0}^{k_0-l_0} \prod_{l=(1-\varepsilon)k_0}^{k_1 \wedge k_2} \EE(v_l^2) \prod_{l'=k_1 \wedge k_2}^{k_1 \vee k_2} \EE(v_{l'})  \leq \Big(\sum_{k=(1-\varepsilon)k_0}^{k_0-l_0} e^{-\frac{k-(1-\varepsilon)k_0}{2}(\sqrt{\frac{k^*}{k_0}}+ C\frac{k^*}{k_0})
 }\Big)^2 \\
 & \leq \Big(\sum_{k=(1-\varepsilon)k_0}^{k_0-l_0} e^{-\frac14(k-(1-\varepsilon)k_0)(\sqrt{k^*/k_0})} \Big)^2=:\Big(\sum_{k=(1-\varepsilon)k_0}^{k_0-l_0}
e^{-H(k)}\Big)^2,
\end{align*}
where 
$H(x) =\frac14 (x-(1-\varepsilon)k_0) \sqrt{(k_0-x)/k_0}$.
The function $H(x)\geq 0$ on $x\in [(1-\varepsilon)k_0, k_0-l_0]$,
increases on $[(1-\varepsilon)k_0, (1-\varepsilon/3)k_0)$
and decreases  on $[(1-\varepsilon/3)k_0,k_0]$. 
For $k\in [ (1-\varepsilon)k_0 + 8 \log n / \sqrt{\varepsilon},k_0-l_0]$, 
we thus
have that 
\[H(k) \geq \min \{H((1-\varepsilon)k_0 +
8 \log n / \sqrt{\varepsilon}), H(k_0-l_0)\}\geq \log n,\]
and therefore,
\[ \sum_{k=(1-\varepsilon)k_0}^{k_0-l_0} e^{-H(k)} \leq  \frac{8 \log n}{\sqrt{\varepsilon}}  + \frac{\varepsilon k_0}{n}. \]
We square this bound and we recall that $\EE(\bdelta_{(1-\varepsilon)k_0-1}^2) \leq \frac{C}{n}$, see Lemma \ref{part1second},
to conclude that $S_k$ converges to $0$ in $L^2$. 

We next turn to $R_k$, see \eqref{Gk}, and write
\[ \sum_{k=(1-\varepsilon)k_0}^{k_0-l_0} R_k = 
  \sum_{j=(1-\varepsilon)k_0}^{k_0-l_0} (u_j-\EE(u_j)) (W_j-\EE(W_j) ) + \sum_{j=(1-\varepsilon)k_0}^{k_0-l_0} \EE(u_j) (W_j - \EE(W_j)) = A + B.\]
We bound the second moment of each term.
We use the independence of the $u_j$'s and $W_j$'s to get
\[ \EE(A^2) =  \sum_{j=(1-\varepsilon)k_0}^{k_0-l_0} \mathrm{Var}(u_j) \mathrm{Var}(W_j) \leq \sum_{j=(1-\varepsilon)k_0}^{k_0-l_0} \frac{C_u}{n}\varepsilon'_j
\to_{n\to\infty} 0.   \]

On the other hand, we expand the square to get
\begin{align*}
\EE(B^2) & 
 \leq  \Big(\sum_{j=(1-\varepsilon)k_0}^{k_0-l_0} \EE u_{j} \mathrm{Var}(W_{j}) \Big)^2.
\end{align*}
We use again the fact that $\EE u_j \leq C_u/n$ and the bound on $\mathrm{Var}(W_j)$ from Lemma \ref{Bounds W} to obtain
\[ \sum_{j=(1-\varepsilon)k_0}^{k_0-l_0} \EE u_{j} \mathrm{Var}(W_{j})
\leq C\sum_{j=(1-\varepsilon)k_0}^{k_0-2l_0} \frac{1}{k_0^{2/3} j^*}+
\sum_{j=k_0-2l_0}^{k_0-l_0} \frac{(j^*-l_0)^3}{n^2} =o(1).
\]
Thus,
$ \EE(A^2+B^2)\to_{n\to\infty} 0$,
 which completes the proof of the lemma.
\end{proof}

\begin{proof}[Proof of Proposition \ref{Limit Delta bar}]
Recall \eqref{Deltabar}, which gives that
\begin{equation}
  \label{eq-contbarD}
  \sum_{k=(1-\varepsilon)k_0}^{k_0-l_0} \overline{ \Delta_k} = \sum_{j=(1-\varepsilon)k_0}^{k_0-l_0} -v_j \overline{ \delta_{j-1}}^2 W_j + \overline{\Delta}_{(1-\varepsilon)k_0} \sum_{k=(1-\varepsilon)k_0}^{k_0-l_0} \prod_{l=(1-\varepsilon)k_0}^k v_l .  
\end{equation}
Because 
$\EE(\overline{\Delta}_{(1-\varepsilon)k_0} ^2) \leq C/n$, see Lemma 
\ref{part1second}, the same argument that gave the control on $S_k$ in
the proof of Lemma \ref{Lemma2} gives that
\[ \overline{\Delta}_{(1-\varepsilon)k_0} \sum_{k=(1-\varepsilon)k_0}^{k_0-l_0} \prod_{l=(1-\varepsilon)k_0}^k v_l  \xrightarrow[n \to \infty]{L^2} 0. \]
Hence we only have to estimate the first term in the right
hand side of \eqref{eq-contbarD}. The next lemma, whose proof is deferred,
will be essential in the computation of the mean and variance of that term.
Recall \eqref{eq-bardeltak}.
\begin{lemma}\label{lemma3} For $ k\in [(1-\varepsilon)k_0,k_0-l_0]$ we have that
\begin{equation}
  \EE( \bdelta_{k-1}^2) =\frac{v}{2\sqrt{k_0 k^*}}+ O\big(\frac{1}{k_0}\big),
\end{equation}
and 
\[ 
\EE( \bdelta_k^4  ) = O\Big(\frac{1}{k_0k^*}\Big).\]
\end{lemma}
\noindent
\textbf{The first moment}
We show that 
\begin{equation}
  \label{eq-barDkfirst}
  \sum_{j=(1-\varepsilon)k_0}^{k_0-l_0} \EE(\overline{ \Delta_j}) = -\frac{v}{6} \log n + o(\sqrt{\log n}). 
\end{equation}
Using the independence between the $u_j$'s and $v_j$'s for different indices, we have to compute
\begin{align*}
\EE\Big[ \sum_{j=(1-\varepsilon)k_0}^{k_0-l_0} &  -\EE(v_j)\EE(\bdelta_{j-1}^2) \EE(W_j)\Big] \\
& =  -\sum_{j=(1-\varepsilon)k_0}^{k_0-2l_0} 
\Big(1+O\Big(\sqrt{\frac{j^*}{k_0}}\Big)\Big)\Big(\frac{v \sqrt{k_0}}{2j \sqrt{j^*}}+ O\Big(\frac{1}{k_0}\Big)\Big)\Big(\frac{1}{2} \sqrt{\frac{k_0}{j^*}}+ \varepsilon_j)  \\
& \quad \quad \quad  + \sum_{j=k_0-2l_0}^{k_0-l_0} \Big(1+O\Big(\sqrt{\frac{j^*}{k_0}}\Big)\Big)O(\frac{1}{\sqrt{j^*k_0}}) \varepsilon_j
=- \frac{v}{6}\log n+O(1),
\end{align*}
where the first equality used \eqref{eq-ukvk},
Lemma \ref{Bounds W} and
Lemma \ref{lemma3}, while
the last equality follows from Lemma \ref{Bounds W}.

\noindent
\textbf{Variance:}
We show that 
\[ \mathrm{Var}\Big(\sum_{k=(1-\varepsilon)k_0}^{k_0-l_0} \overline{ \Delta_k}\Big) = o(\log n).  \]
Using that $W_j$ is independent of $\{v_i \bar \delta_{i-1}^2, i\leq j\}$,
we write
\begin{equation*}
\mathrm{Var}\Big( \sum_{j=(1-\varepsilon)k_0}^{k_0-l_0} v_j  \bdelta_{j-1}^2 W_j \Big) =  \sum_{j=(1-\varepsilon)k_0}^{k_0-l_0} \mathrm{Var}(v_j  \bdelta_{j-1}^2) \EE(W_j^2) 
+ \EE\Big(\sum_{j=(1-\varepsilon)k_0}^{k_0-l_0} \EE(v_j \bdelta_{j-1}^2) (W_j-\EE(W_j))  \Big)^2
\end{equation*}
We use the bounds from Lemma \ref{Bounds W} and \ref{lemma3} and we have
\begin{align*}
&\sum_{j=(1-\varepsilon)k_0}^{k_0-l_0} \mathrm{Var}(v_j \bdelta_{j-1}^2) \EE(W_j^2)   \lesssim \sum_{j=(1-\varepsilon)k_0}^{k_0-l_0}  \EE( \bdelta_{j-1}^4) \EE(W_j^2)  \\
& \lesssim  \sum_{j=(1-\varepsilon)k_0}^{k_0-2l_0} \frac{1}{(k_0j^*)} \frac{k_0}{j^*} 
+ \sum_{k_0-2l_0}^{k_0-l_0}  \frac{1}{(k_0j^*)} (j^*-l_0)^2
\end{align*}
which goes to zero as $n\to\infty$.
For the last term, we expand the square and 
the Cauchy-Schwarz inequality to obtain
\begin{align*}
&\EE\Big(\sum_{j=(1-\varepsilon)k_0}^{k_0-l_0} (\EE v_j  \bdelta_{j-1}^2) (W_j-\EE W_j)  \Big)^2 
 \leq \Big(\sum_{j = (1-\varepsilon)k_0}^{k_0-l_0} \frac{C}{\sqrt{k_0 j^*}} (\mathrm{Var} \, W_{j})^{1/2} \Big)^2.
\end{align*}
Using the estimates from Lemma \ref{Bounds W}, we get
 \[ \sum_{j = (1-\varepsilon)k_0}^{k_0-l_0} \frac{C}{\sqrt{k_0 j^*}} 
  (\mathrm{Var}\, W_{j})^{1/2}
  \leq \sum_{j = (1-\varepsilon)k_0}^{k_0-2l_0}
  \frac{Ck_0^{1/6} }{\sqrt{k_0 (j^*)^2}} 
  + \sum_{j =k_0-2l_0}^{k_0-l_0} 
  \frac{C}{\sqrt{n k_0 l_0}} (j^*-l_0)^{3/2}=o(1).
\]
Combining the different bounds presented here, we deduce that 
\[ \frac{1}{\log n} \mathrm{Var}\Big(\sum_{k=(1-\varepsilon)k_0}^{k_0-l_0} 
\overline{\Delta_k}\Big) \xrightarrow[n \to \infty ]{} 0 \]
which implies, together with \eqref{eq-barDkfirst}
and Markov's
inequality, that
\[ \frac{1}{\sqrt{\log n}}\Big( \sum_{k=(1-\varepsilon)k_0}^{k_0-l_0} 
\overline{\Delta_k} + \frac{v}{6} \log n \Big) \underset{n\to +\infty}{\longrightarrow} 0, \]
in probability.
\end{proof}
\begin{proof}[Proof of Lemma \ref{lemma3}]
  Recall from \eqref{eq-bardeltak} that for all $k \leq k_0$
	\[  \bdelta_{k} = \sum_{j=(1-\varepsilon)k_0}^k u_j \prod_{l=j+1}^k v_l = \sum_{j=(1-\varepsilon)k_0}^k (u_j-\EE u_j) \prod_{l=j+1}^k v_l +\sum_{j=(1-\varepsilon)k_0}^k \EE u_j \prod_{l=j+1}^k v_l . \]
	To obtain a good bound on $\EE( \bdelta_k^2)$, we expand the square. Only the first term will contribute, the other two being negligible. For the first term, we use the independence between $u_j$ and $\prod_{l>j} v_l$ to get
	\begin{align*}
	\EE((\sum_{j=(1-\varepsilon)k_0}^k (u_j-\EE u_j) \prod_{l=j+1}^k v_l  )^2) = \sum_{j=(1-\varepsilon)k_0}^k \mathrm{Var}(u_j) \prod_{l=j+1}^k \EE(v_l^2).
	\end{align*}
	From Lemma \ref{event}, we have
	\[\mathrm{Var}(u_j)=
	\frac{2v}{j}(1+O(\sqrt{j^*/k_0})).   \]
	Combining this estimate with the second part of 
	Lemma \ref{lem-aux} (with $\alpha=4$) and Lemma \ref{alpha}
	leads to
	\begin{eqnarray*}
	  \sum_{j=(1-\varepsilon)k_0}^k  \mathrm{Var}(u_j) \prod_{l=j+1}^k 
	\EE(v_l^2) &=& 2v  \sum_{j=(1-\varepsilon)k_0}^k 
	\Big(\frac{1}{j}+O\big(\sqrt{\frac{j^*}{k_0}}\big)\Big)
	\prod_{l=j+1}^k \Big(1-4\sqrt{\frac{l^*}{k_0}}+O\big(\frac{l^*}{k_0}\big)\Big)\\
	&=&
	\frac{1}{2\sqrt{k_0 k^*}}+O\big(\frac{1}{k_0}\big).\end{eqnarray*}
	
	We prove now that the other terms in the development of the square of 
	$\EE \bdelta_{k}^2$ are of lower order.
	We start with
	\[ \EE((\sum_{j=(1-\varepsilon)k_0}^k \EE( u_j) \prod_{l=j+1}^k v_l)^2 ) \leq \frac{C_u^2}{n^2} \EE(W_k^2)= O(1/k_0), \]
	thanks to Lemma \ref{Bounds W}.
	To end the proof of the bound on $\EE ( \bdelta_{k}^2)$, we have to bound
	\begin{align*}
	\EE \Big( \sum_{j_1,j_2=(1-\varepsilon)k_0}^k & (u_j-\EE(u_j)) \EE (u_{j_2})\prod_{l_1=j_1+1}^k v_{l_1}  \prod_{l_2=j_2+1}^k v_{l_2} \Big)\\
	& = \sum_{j_1,j_2=(1-\varepsilon)k_0, j_1>j_2}^k \EE(u_{j_2}) \prod_{l=j_2+1}^{j_1-1} \EE(v_l) \EE(u_{j_1} v_{j_1}) \prod_{l'=j_1+1}^{k} \EE(v_{l'}^2 )
	\end{align*}
	where we used the independence between the different terms. An elementary computation shows that $\EE(u_jv_j) = \EE(u_j)\EE(v_j) + \mathrm{Var}(v_j) \leq C/ (k_0\alpha_j^2)$. Combining this observation with the previously established bounds leads to
	\[ \EE \Big( \sum_{j_1,j_2=(1-\varepsilon)k_0}^k  (u_j-\EE(u_j)) \EE (u_{j_2})\prod_{l_1=j_1+1}^k v_{l_1}  \prod_{l_2=j_2+1}^k v_{l_2} \Big) \leq \frac{C}{k_0^2}  \EE(W_k^2) \leq O\Big(\frac{1}{k_0}\Big) .  \]
	
\noindent \textbf{Bound on $\EE( \bdelta_{k}^4)$:}
We start from the decomposition 
\[ \overline{ \delta_{k}} = \sum_{j=(1-\varepsilon)k_0}^k u_j \prod_{l=j+1}^k v_l = \sum_{j=(1-\varepsilon)k_0}^k (u_j-\EE( u_j)) \prod_{l=j+1}^k v_l +\sum_{j=(1-\varepsilon)k_0}^k \EE (u_j) \prod_{l=j+1}^k v_l . \]
and we bound the fourth moment of each term.
For the first term, we expand the product and group the terms with same index, using the notation $\tilde{u}_j=u_j-\EE (u_j)$. This quadruple sum over the indices $j_1 \leq j_2 \leq j_3 \leq j_4$ can be decomposed as the sum of the following:
\begin{align*}
&\sum_{j=(1-\varepsilon)k_0}^k \EE(\tilde{u}_j^4) \prod_{l=j_1+1}^k \EE(v_l^4),\qquad
j_1=j_2=j_3=j_4, 
\\
&\sum_{j<j_4=(1-\varepsilon)k_0}^k \EE(\tilde{u}_j^3) \prod_{l_1=j+1}^{j_4-1} \EE(v_{l_1}^3) \EE(\tilde{u}_{j_4} v_{j_4}^3) \prod_{l_2=j_4+1}^k\EE(v_{l_2}^4), \qquad j_1=j_2= j_3 <j_4, \\
&\sum_{j_1<j_3 =(1-\varepsilon)k_0}^k \EE(\tilde{u}_{j_1}^2) \prod_{l_1=j_1+1}^{j_3-1} \EE(v_{l_1}^2) \EE(\tilde{u}_{j_3}^2v_{j_3}^2) \prod_{l_3=j_3+1}^{k} \EE(v_{l_3}^4),\qquad j_1=j_2 < j_3=j_4 ,  \\
& \kern-4mm\sum_{j_1<j_3<j_4=(1-\varepsilon)k_0} \kern-7mm \EE(\tilde{u}_{j_1}^2) \prod_{l_1=j_1+1}^{j_3-1}\kern-2mm \EE(v_{l_1}^2) \EE(\tilde{u}_{j_3}^2v_{j_3}^2) \kern-2mm \prod_{l_3=j_3+1}^{j_4-1}\kern-2mm \EE(v_{l_3}^3)\EE(\tilde{u}_{j_4}^2v_{j_4}^3)\prod_{l_4=j_4+1}^k\EE(v_{l_4}^4), 
j_1=j_2 < j_3 < j_4.
\end{align*}
(The terms corresponding to $j_1<j_2$ do not appear because their value is $0$ as $\EE(\tilde{u}_{j_1})=0$.) We note
the following
elementary bounds, which follow
from the definitions of $u_j$ and $v_j$:
\begin{align*}
\EE(v_l)  \leq \frac{1+O(1/n)}{\alpha_{l}^2} , \;
\EE(v_l^2)  \leq \frac{1+O(1/n)}{\alpha_l^4} , \;
\EE(v_l^3)   \leq \frac{1+O(1/n)}{\alpha_l^6} ,\;
\EE(v_l^4)  \leq \frac{1+O(1/n)}{\alpha_l^8} .	
\end{align*}
We deduce from Lemma \ref{alpha} that we can write for any $c>0$, with $n$ large enough
\[ \prod_{l={j+1}}^k \frac{1 + C/l}{\alpha_l^c} \leq e^{-c(k-j)(\sqrt{k^*/k_0} + C/k_0)} \leq  e^{-\frac{c}{2}(k-j)\sqrt{k^*/k_0}}. \]
From this bound, we obtain the estimate which will be used intensively in this lemma
$\sum_{j=(1-\varepsilon)k_0}^k e^{-\frac{c}{2}(k-j)\sqrt{k^*/k_0}} \leq C \sqrt{k_0/k^*}$.
The following bounds are also elementary
\begin{align*}
& \exists C >0, \forall (1-\varepsilon)k_0 \geq j \leq k_0, \quad  \EE(\tilde{u}_j^2) \leq \frac{C}{n} , \quad  \EE(\tilde{u}_j^3) \leq \frac{C}{n^{3/2}}  ,\quad \EE(\tilde{u}_j^4) \leq \frac{C}{n^2}  \\
& \EE(\tilde{u}_j v_j^3) \leq \frac{C}{\alpha_j^6 n} ,\quad 
\EE(\tilde{u}_j^2 v_j^2) \leq \frac{C}{\alpha_j^4 n} ,\quad 
\EE( \tilde{u}_j^3 v_j ) \leq \frac{C}{\alpha_j^4 n^{3/2}}.
\end{align*}
We use these inequalities in each of the four sums and we get, up to multiplicative constants
\begin{align*}
 &\sum_{j=(1-\varepsilon)k_0}^k \frac{1}{n^2} e^{-4(k-j)\sqrt{k^*/k_0}} \lesssim \frac{ \sqrt{k_0}}{n^2 k^*} = O\Big(\frac{1}{k_0 k^*}\Big), 
 \quad j_1=j_2=j_3=j_4, \\
&      \sum_{j<j_4=(1-\varepsilon)k_0}^k \frac{1}{n^{3/2}} \prod_{l_1=j+1}^{j_4-1} \frac{1}{\alpha_{l_1}^6} \frac{1}{\alpha_{j_4}^6n} \prod_{l_2=j_4+1}^k\frac{1}{\alpha_{l_2}^8} \lesssim \frac{1}{n^{5/2}} \frac{k_0}{k^*} = O\Big(\frac{1}{k_0k^*}\Big) ,\quad j_1=j_2= j_3 <j_4,  \\
 &    \sum_{j_1<j_3 =(1-\varepsilon)k_0}^k \frac{1}{n} \prod_{l_1=j_1+1}^{j_3-1} \frac{1}{v_{l_1}^4} \frac{1}{\alpha_{j_3}^4 n} \prod_{l_3=j_3+1}^{k} \frac{1}{\alpha_{l_3}^8} \lesssim \frac{1}{n^2} \frac{k_0}{k^*} = O\Big(\frac{1}{k_0 k^*}\Big),\quad j_1=j_2 < j_3=j_4 ,\\
&  \sum_{j_1<j_3<j_4=(1-\varepsilon)k_0} \kern-3mm \frac{1}{n} \prod_{l_1=j_1+1}^{j_3-1} \frac{1}{\alpha_{l_1}^4} \frac{1}{\alpha_{j_3}^4 n} \prod_{l_3=j_3+1}^{j_4-1} \frac{1}{\alpha_{l_3}^6}\frac{1}{n\alpha_{j_3}^6}\prod_{l_4=j_4+1}^k\frac{1}{\alpha_{l_4}^8} \lesssim \frac{1}{n^3} \frac{k_0^{3/2}}{{k^*}^{3/2}} = O\Big(\frac{1}{k_0k^*}\Big),\\
&\kern60mm \qquad\qquad \qquad \qquad \qquad j_1=j_2 < j_3 < j_4.
\end{align*}
Each of these terms is at most of order $1/(k_0k^*)$, which is sufficient for this bound.

For the second term, we expand the product, and bound each term using the previous bounds
	which, along with $\EE(u_j) \leq C_u/n$, gives
\begin{align*}
\EE\Big( \Big(\kern-2mm\sum_{j=(1-\varepsilon)k_0}^k \kern-2mm \EE (u_j) \prod_{l=j+1}^k v_l\Big)^4 \Big) &= 4!\kern-6mm\sum_{j_1\leq j_2 \leq j_3 \leq j_4  = (1-\varepsilon)k_0}^k \kern-2mm \frac{C_u^4}{n^4} \prod_{l=j_1+1}^{j_2-1}\kern-1mm\EE(v_{l_1})\prod_{l=j_2}^{j_3-1}\EE(v_{l_2}^2)\prod_{l=j_3}^{j_4-1}\EE(v_{l_3}^3)\prod_{l=j_4}^{k}\EE(v_{l_4}^4)\\
& \leq \Big(\sum_{j=(1-\varepsilon)k_0}^k \frac{C_u}{n} e^{-(k-j)(\sqrt{k^*/k_0} + O(k^*/k_0))}\Big)^4 \leq \frac{C}{k_0^2 {k^*}^2}.
\end{align*}
\end{proof}

\subsection{Deterministic bounds on good events}
\label{sec-det}
The purpose of this section is to show that, with a probability going to $1$ as $n\to\infty$, the approximation of $\delta_k$ by $\overline{ \delta_k}$ is good enough. 
This lemma will also be a starting point for the next section, where we need a precise control over the amplitude of $\delta_{k_0-l_0}$.

For any $i \in [1,\varepsilon^{3/2} k_0]$, we define $s_i=k_0- i^{2/3}k_0^{1/3}$ and its inverse function $i(k)$, defined as $\max\{i , s_i \geq k\}$.
Introduce the event
\begin{equation}
\label{eq-OCG}\Omega_{C}= \bigcap_{i=1}^{\varepsilon^{3/2} k_0} 
\bigcap_{k=s_{i+1}}^{s_i} \Big\{  |\bar \delta_k|
\leq C\frac{\log i+10}{(k_0s_{i}^*)^{1/4}}\Big\}.
\end{equation}

\begin{lemma}[Approximation lemma] \label{Deterministic delta}
	For any $\gamma$, there exists a constant $C$ so that 
	\[\PP(\Omega_{C})\geq 1-\gamma.\]
\end{lemma}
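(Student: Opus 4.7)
The plan is to observe that the target inequality says $|\bdelta_k|$ is at most $C(\log i+10)$ of its own standard deviation: Lemma~\ref{lemma3} gives $\sqrt{\EE\bdelta_k^2}\lesssim (k_0 s_i^*)^{-1/4}$ uniformly for $k\in[s_{i+1},s_i]$, so the real content is a pointwise subgaussian tail bound for $\bdelta_k$, combined with a union bound over $k$ and $i$.

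First I would reduce to the event $\Omega_n$ of Lemma~\ref{subexpolemma}, which has probability $1-o(1)$; on $\Omega_n$ one has $|v_l|\leq 1$ and, via Lemma~\ref{alpha}, $\big|\prod_{l=j+1}^k v_l\big|\leq e^{-(k-j)\sqrt{k^*/k_0}/2}$ throughout the contributing regime. Decomposing
\[ u_j=\mu_j-\frac{g_j}{\alpha_j\alpha_{j-1}\sqrt j}-\frac{b_j}{\alpha_j\sqrt j},\qquad v_l=\bar v_l+\frac{g_l}{\alpha_l\alpha_{l-1}\sqrt l},\]
with $\mu_j,\bar v_l$ deterministic, and conditioning on $\{g_l\}_{l\leq k}$, the $b$-part of $\bdelta_k$, namely $\bdelta_k^{(b)}:=-\sum_{j}\frac{b_j}{\alpha_j\sqrt j}\prod_{l=j+1}^k v_l$, is an affine functional of the independent $\{b_j\}$ whose coefficients have sum of squares $\lesssim\sqrt{k_0}/(k\sqrt{k^*})\asymp (k_0k^*)^{-1/2}$ on $\Omega_n$. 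By the uniform exponential moment bound~\eqref{boundlaplace} on $b_j$, a standard Chernoff argument yields
\[ \PP\big(|\bdelta_k^{(b)}|>t\,\big|\,\{g_l\}\big)\leq 2\exp\!\big(-c t^2\sqrt{k_0 k^*}\big)\]
on $\Omega_n$, uniformly in the relevant range of $t$.

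Next I would handle the residual $g$-dependence. The natural move is to linearize $\prod_{l=j+1}^k v_l=\prod\bar v_l\cdot\prod(1+\xi_l)$ with $\xi_l=g_l/(\bar v_l\alpha_l\alpha_{l-1}\sqrt l)=O(\log n/\sqrt l)$ on $\Omega_n$, and Taylor expand. Collecting the first-order terms, the $g$-contribution to $\bdelta_k$ becomes a linear functional in the independent $g_l$'s whose sum of squared coefficients is again $O((k_0k^*)^{-1/2})$ (this is essentially the variance computation already performed for Lemma~\ref{lemma3}), while the chaos remainders are, on $\Omega_n$, deterministically of strictly smaller order $(\log n)^{O(1)}(k_0 k^*)^{-3/4}$ (each extra factor of $\xi_l$ costs $O(\log n/\sqrt l)$ when summed against the geometric decay of the products). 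A second Chernoff bound applied to the linear $g$-piece, together with absorption of the chaos remainder into a slightly larger constant, yields the unconditional tail
\[ \PP\big(\{|\bdelta_k|>t\}\cap\Omega_n\big)\leq 4\exp\!\big(-c t^2\sqrt{k_0 k^*}\big),\]
valid for $t\lesssim (\log n)$, which is more than enough for our purposes.

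Finally the union bound. Applied with $t=C(\log i+10)/(k_0 s_i^*)^{1/4}$ to each $k\in[s_{i+1},s_i]$, the exponent becomes $cC^2(\log i+10)^2$ (using $k^*\asymp s_i^*$ throughout the block). Since $s_i-s_{i+1}=O(i^{-1/3}k_0^{1/3})\leq k_0^{1/3}$, summing over $k$ in the block and then over $i\leq\veps^{3/2}k_0$ gives a total mass $\lesssim\sum_{i\geq 1}k_0^{1/3}\exp(-cC^2(\log i+10)^2)$. For $C=C(\gamma)$ large this is $\leq\gamma/2$: the $i=1$ contribution is $k_0^{1/3}\exp(-100cC^2)$, controllable by taking $C^2\gg \log k_0$, and the tail in $i$ is super-polynomially summable. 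Combined with $\PP(\Omega_n^c)\to 0$ this yields the lemma. The main obstacle I foresee is the careful chaos decomposition of $\bdelta_k$ in the second step; the crucial point is that on $\Omega_n$ each $|\xi_l|$ is small, so the Taylor truncation is legitimate and the problem reduces to two independent subgaussian concentration statements.
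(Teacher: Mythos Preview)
Your pointwise subgaussian tail for $\bdelta_k$ is plausible (modulo some care with the nonlinear $g$-dependence), but the union bound in the last step does not close with a \emph{constant} $C$. You write that the $i=1$ contribution is $k_0^{1/3}\exp(-100cC^2)$, ``controllable by taking $C^2\gg\log k_0$.'' But the lemma asserts the existence of $C$ depending only on $\gamma$, not on $n$; indeed the downstream Lemma~\ref{Deterministic Delta} fixes such a $C$ and then chooses $\kappa$ large as a function of $C$. With $C$ constant, your union bound gives a total mass of order $k_0^{1/3}\cdot O_C(1)$, which diverges. The block $[s_2,s_1]$ alone already contains $\asymp k_0^{1/3}$ indices, while your pointwise tail at level $t\asymp C\cdot k_0^{-1/3}$ is only $\exp(-cC^2)$; there is no way to beat the block cardinality without exploiting correlations among the $\bdelta_k$.

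The paper handles exactly this issue by \emph{not} union-bounding over all $k$. It first replaces $\bdelta_k$ by $G_k=\sum_j u_j\prod_{l>j}\EE(v_l)$, whose products are deterministic, and controls the remainders $R_k,S_k$ in $L^2$ (Lemma~\ref{DeterministicboundR}). For $G_k$ it then (Lemma~\ref{DeterministicLemma}) bounds only the block endpoints $G_{s_i}$ directly---where the logarithmic weight $(\log i+10)$ makes the sum over $i$ converge---and inside each block uses a dyadic chaining based on the increment variance bound $\Var(G_a-G_b)\lesssim (b-a)/k_0$. The chaining converts the $k_0^{1/3}$ points into $O(\log k_0)$ scales, with the variance at scale $2^{-l}$ shrinking like $2^{-l}$, so the factor $2^l$ from the union bound is absorbed into the exponent and the total is bounded uniformly in $n$. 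If you want to rescue your approach, you would need to replace the naive union bound by a maximal inequality of this type; the pointwise tail alone is not enough.
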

\begin{proof}
	This lemma is just the combination of Lemmas
	\ref{DeterministicLemma} and \ref{DeterministicboundR} below.
\end{proof}

For $C>0$, we control the magnitude of all $|G_k|$ from \eqref{Gk}
through the event
\begin{equation}
\label{eq-OCG1}
\widetilde \Omega_{C}= \bigcap_{i=1}^{\varepsilon^{3/2} k_0} 
\bigcap_{k=s_{i+1}}^{s_i} \Big\{|G_{k}| 
\leq C\frac{\log i+10}{(k_0s_{i}^*)^{1/4}}\Big\}.
\end{equation}
\begin{lemma}[Control on $\widetilde 
	\Omega_{C}$] \label{DeterministicLemma}
	For any $\gamma>0$, there exists a constant $C$ such that 
	$\mathbb{P}(\widetilde
	\Omega_{C})\geq 1-\gamma$.
\end{lemma}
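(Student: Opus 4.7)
The plan is to bound $G_k$ by a Bernstein-type concentration inequality exploiting the sub-exponential tails coming from \eqref{boundlaplace}, and then to lift the pointwise bound to a uniform-in-block bound by exploiting the slow variation of $G_k$ along a block $[s_{i+1},s_i]$.

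First I would establish the required moment estimates. Writing $G_k = \sum_{j=(1-\varepsilon)k_0}^k u_j c_j^{(k)}$ with $c_j^{(k)} = \prod_{l=j+1}^k \EE(v_l)$, independence of the $u_j$'s together with Lemma \ref{event} (giving $\Var(u_j) = O(1/k_0)$ and $|\EE u_j| = O(1/n)$) and the exponential decay of $\prod \EE(v_l)^2$ at rate $\sqrt{k^*/k_0}$ from Lemma \ref{alpha} yield, by summing a geometric series,
\[ \Var(G_k) \;\lesssim\; \frac{1}{\sqrt{k_0 k^*}}, \qquad |\EE G_k| \;\lesssim\; \frac{1}{\sqrt{k_0 k^*}}. \]
In particular, since $k^* \geq s_i^*$ for $k\in[s_{i+1},s_i]$, the target threshold $t_i := C(\log i + 10)(k_0 s_i^*)^{-1/4}$ is at least $C(\log i + 10)$ standard deviations of $G_k$.

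Next, since each summand $u_j c_j^{(k)}$ is sub-exponential with norm $O(c_j^{(k)}/\sqrt{k_0})$ (using \eqref{boundlaplace} and that $\max_j c_j^{(k)} \leq 1$), Bernstein's inequality for sums of independent sub-exponentials applied at the endpoint $k = s_i$ gives
\[ \PP\big(|G_{s_i}| > t_i/2\big) \;\leq\; 2\exp\!\Big(\!-c\min\!\big(t_i^2\sqrt{k_0 s_i^*},\ t_i\sqrt{k_0}\big)\Big) \;\leq\; 2\exp\!\big(-c'\,C(\log i + 10)\big), \]
the last inequality holding for $C \geq C_0(\varepsilon)$, since then both arguments of the minimum exceed $c'C(\log i+10)$.

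The main technical obstacle is to upgrade this pointwise estimate to a uniform bound over the whole block $[s_{i+1},s_i]$: a naive union bound over the $\sim i^{-1/3}k_0^{1/3}$ points in the block would introduce a factor depending on $n$. To avoid this, I would use the one-step recursion $G_k = \EE(v_k) G_{k-1} + u_k$ to solve backward from $s_i$ and write, for $k \in [s_{i+1},s_i]$,
\[ G_k \;=\; \Big(\prod_{l=k+1}^{s_i}\EE(v_l)\Big)^{-1}\!\Big(G_{s_i} \,-\, \sum_{j=k+1}^{s_i} u_j \prod_{l=j+1}^{s_i}\EE(v_l)\Big). \]
Because the block length is of the same order as the correlation length $\sqrt{k_0/s_i^*}$, the prefactor is $\Theta(1)$. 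The residual partial sum is a sub-exponential sum of at most $s_i - k \lesssim i^{-1/3}k_0^{1/3}$ independent terms, whose total variance is $O(i^{-1/3}/k_0^{2/3})$, much smaller than $t_i^2 \asymp (\log i+10)^2/\sqrt{k_0 s_i^*}$. A Doob-type maximal inequality for this reverse-in-$k$ martingale, combined with a second Bernstein bound, then controls its supremum over the block by $t_i/2$ with probability at least $1 - \exp(-c''C(\log i + 10))$.

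Finally, union-bounding over $i \in \{1,\ldots,\varepsilon^{3/2}k_0\}$ gives
\[ \PP\big(\widetilde\Omega_C^c\big) \;\leq\; \sum_{i\geq 1} 4\exp\!\big(-c''\,C(\log i + 10)\big) \;\leq\; C'''\,e^{-10\,c''\,C}, \]
which is less than $\gamma$ for $C=C(\gamma,\varepsilon)$ large enough. The delicate part of the argument is the reverse-martingale maximal inequality for the residual partial sum, where one must avoid losing a logarithmic factor that would spoil the single-point exponential rate.
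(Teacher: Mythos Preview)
Your approach is correct and takes a genuinely different route from the paper for the uniform-in-block control. Both proofs begin identically: moment bounds on $G_k$ via the geometric decay of $\prod\EE(v_l)$, followed by a sub-exponential concentration inequality at the block endpoints $s_i$ (the paper packages this as Lemma~\ref{lem-subgaussian}, you invoke Bernstein directly). The divergence is in how the endpoint control is extended to all $k$ in a block. The paper computes the variance of the \emph{increments} $G_a-G_b$ within a block, then runs a dyadic chaining argument over $[s_{i+1},s_i]$ to bound $\max_k|G_k-G_{s_i}|$. You instead invert the recursion to write $G_k = p_k^{-1}(G_{s_i}-R_k)$ with $p_k=\prod_{l=k+1}^{s_i}\EE(v_l)=\Theta(1)$ and $R_k$ a short partial sum of independent terms, then apply a martingale Bernstein (Freedman-type) maximal inequality to control $\max_k|R_k|$ in one stroke. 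Your route is more direct and avoids the bookkeeping of chaining; the paper's chaining is a bit more robust in that it only needs increment variance bounds and not the explicit algebraic inversion. Note that your claim that $\Var(R_k)$ is ``much smaller'' than $t_i^2$ is slightly overstated: the ratio is exactly of order $(\log i+10)^2$, which is precisely the margin needed for the Bernstein exponent to be summable in $i$ --- so a plain $L^2$ Doob inequality would indeed fail here, and you are right that the sub-exponential maximal bound is the essential ingredient.
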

Before proving Lemma \ref{DeterministicLemma}, we recall the following fact.
\begin{lemma}
	\label{lem-subgaussian}
	Assume that  $x_j$ are independent, zero mean random
	variables of variance $1$,
	satisfying $\EE(e^{\eta |x_j|})< \bar c$ for some $\bar c$, $\eta$ independent  of $j$.
	Let $\beta_j\geq 0$ be deterministic constants bounded above 
	by $\bar \beta\leq 1$,
	and set $W=\sum_{i=1}^n \beta_j x_j$ and $\sigma^2=\sum_{i=1}^n \beta_j^2$. 
	Then, there exists a constant $c=c(\eta,\bar c)>0$ such that  
	\begin{equation}
	\label{mod-dev}
	\PP(W>t)\leq  
	e^{-c t^2/\sigma^2}+
	e^{-c t/\bar \beta}.
	\end{equation}
\end{lemma}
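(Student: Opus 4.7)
The plan is to prove this by the standard Cramér--Chernoff (Bernstein-type) method, which is exactly the right tool: the two terms on the right-hand side of \eqref{mod-dev} correspond to the sub-Gaussian (small $t$) and sub-exponential (large $t$) regimes of the MGF of $W$.

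First I would establish a uniform Gaussian-like bound on the single-variable MGF: there exists $\lambda_1 = \lambda_1(\eta,\bar c) \in (0,\eta/2]$ and $C_1 = C_1(\eta,\bar c) > 0$ such that, for all $|\lambda| \leq \lambda_1$ and all $j$,
\[
\EE(e^{\lambda x_j}) \leq e^{C_1 \lambda^2}.
\]
This follows from Taylor expanding the exponential: the hypothesis $\EE(e^{\eta|x_j|}) \leq \bar c$ together with $\sum_{k \geq 0} \eta^k \EE(|x_j|^k)/k! \leq \bar c$ gives $\EE(|x_j|^k) \leq \bar c \, k!/\eta^k$ for every $k$, and combining this with $\EE(x_j) = 0$, $\EE(x_j^2) = 1$ yields $\EE(e^{\lambda x_j}) \leq 1 + \lambda^2/2 + \sum_{k\geq 3} |\lambda|^k \bar c /\eta^k$, which is bounded by $e^{C_1 \lambda^2}$ for $|\lambda|$ small enough.

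Then by independence, for any $\lambda \leq \lambda_1/\bar\beta$ (so that $\lambda \beta_j \leq \lambda_1$ for every $j$),
\[
\EE(e^{\lambda W}) = \prod_j \EE(e^{\lambda \beta_j x_j}) \leq \exp\bigl(C_1 \lambda^2 \sigma^2\bigr),
\]
and Markov's inequality gives $\PP(W > t) \leq \exp(-\lambda t + C_1 \lambda^2 \sigma^2)$. It now suffices to optimize in $\lambda \in (0, \lambda_1/\bar\beta]$. If $t/(2C_1 \sigma^2) \leq \lambda_1/\bar\beta$, I would take $\lambda = t/(2C_1\sigma^2)$ and obtain the sub-Gaussian bound $e^{-t^2/(4C_1\sigma^2)}$. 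Otherwise (i.e. when $t > 2C_1 \lambda_1 \sigma^2/\bar\beta$), the saturated choice $\lambda = \lambda_1/\bar\beta$ gives $C_1 \lambda^2 \sigma^2 \leq \lambda t/2$, hence $\PP(W > t) \leq e^{-\lambda_1 t/(2\bar\beta)}$. Setting $c = \min(1/(4C_1), \lambda_1/2)$ and adding the two contributions (one of which is vacuous in each regime) yields \eqref{mod-dev}.

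There is no genuine obstacle: the whole argument is a textbook Chernoff bound, and the only mild care needed is in the threshold split between the two regimes and in tracking that the constants $\lambda_1, C_1$ depend only on $\eta, \bar c$ (so uniformly in $j$ and in $n$), which is immediate from the uniform moment bound.
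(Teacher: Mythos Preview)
Your proof is correct and follows essentially the same Cram\'er--Chernoff strategy as the paper: both establish $\EE(e^{\lambda \beta_j x_j}) \leq e^{C\lambda^2\beta_j^2}$ for $|\lambda|$ bounded by a multiple of $1/\bar\beta$ (the paper via the inequality $z^3 \leq c e^{\eta z/2}$ applied to the Taylor remainder, you via the moment bounds $\EE|x_j|^k \leq \bar c\,k!/\eta^k$), and then deduce \eqref{mod-dev} by optimizing the Chernoff bound --- you carry this last step out explicitly, whereas the paper simply cites \cite[Theorem~3.4.15]{Petrov}.
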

\begin{proof}
	We note first that there exists a constant $c=c(\eta)$ so that 
	for $z\in \mathbb{R}_+$ we have that $z^3 \leq  c e^{\eta z/2}$. Therefore,
	for all $0<\lambda< \eta/(2 \bar \beta)$,
	\begin{align*}
	\EE(e^{\lambda \beta_j x_j})
	&\leq 1+ \frac12 \lambda^2 \beta_j^2+\frac{\lambda^3 \beta_j^3 }{6}\EE \big( |x_j|^3
	e^{\lambda \beta_j |x_j|}\big)\\
	& \leq 1+\frac12 \lambda^2 \beta_j^2+  \frac{c\eta
		\lambda^2\beta_j^2}{12}\EE e^{(\eta/2+\lambda \beta_j )|x_j|}\leq e^{C(\eta,\bar c)
		\lambda^2  \beta_j^2}. 
	\end{align*}
	The conclusion now follows from \cite[Theorem 3.4.15]{Petrov}.
\end{proof}
\begin{proof}[Proof of Lemma \ref{DeterministicLemma}] 
	We consider the blocks $[s_{i+1},s_i]$, 
	control first the values of $G_{s_i}$ and then use chaining
	within the block
	to control  the maximum 
	of the increments $ |G_{s_i}-G_k|$ for $k$ in the block.
	Intuitively, the proof is sharp
	because the variables $G_k$ are strongly correlated with a
	block $k\in[s_{i+1},s_i]$ and weakly correlated across non
	adjacent blocks.
	Throughout the
	proof, we assume that $C>1$ so that $C^2>C$. \\
	\textbf{Step 1: Control at blocks' endpoints.}
	We begin by noting that 
	$ \mathrm{Var}(G_k) \leq C'/\sqrt{k_0k^*}. $
	Indeed, see \eqref{eq-Euk} and Lemma \ref{alpha},
	\[\mathrm{Var}(G_k) = \sum_{j=(1-\varepsilon)k_0}^k \mathrm{Var}(u_j) \prod_{l=j+1}^k \EE(v_l)^2 \leq \sum_{j=(1-\varepsilon)k_0}^k \frac{C_u}{n} e^{-c(k-j) \sqrt{k^*/k_0}} \leq \frac{C'}{\sqrt{k_0k^*}}. \]
	We now apply Lemma 
	\ref{lem-subgaussian}, 
	noting from \eqref{Gk} that we can write
	$G_k = \sum_{j=(1-\veps) k_0}^k x_j \beta_j$ with $\beta_j\leq c/\sqrt{k_0}:=\bar \beta$ 
	and $x_j$ satisfying the conditions of the lemma with 
	$\eta=O(1)$, namely
	$x_j=(u_j-\EE u_j)/\mathrm{Var}(u_j)$. Writing 
	$t=C(\log i+10)/(k_0k^*)^{1/4}$ and $G=C'/\sqrt{k_0k^*}$, 
	we obtain that there exists a constant $C_2$ (independent of $i,k$)
	such that
	for $n$ large enough and $k \in [s_{i+1},s_i]$,
	\[\mathbb{P}\Big(\frac{G_{k}(k_0k^*)^{1/4}}{\log i+10} > C \Big) \leq  
	e^{-c C^2(\log i+10)^2/C'}+e^{-c C\sqrt{k_0}(\log i+10)/(k_0 k^*)^{1/4}}
	\leq 
	e^{-C_2 C  (\log i+10)}.\]
	In particular, we conclude from the union bound
	that for any
	$\delta >0$, there exists a constant $C=C(\delta)>0$ with 
	$CC_2>1$,
	such that with probability at least $1-\delta$,
	\[ \sup_{i \in [1,\veps^{3/2} k_0]} \frac{G_{s_i}(k_0s_i^*)^{1/4}}{\log i+10} \leq C .\]
	\noindent
	\textbf{Step 2: Variance of the increments within blocks.}
	We prove that there exists a universal constant 
	$\bar C$ such that, within a block $s_{i+1} \leq a< b \leq s_{i}$,
	\begin{equation} \label{vraincre}\Var((k_0 s_i^*)^{1/4}(G_{a} - G_{b}))
	  \leq \bar C(b-a) \sqrt{{s_i^*}/{k_0}}.\end{equation}
	Indeed, 
	we decompose the increments as
	\begin{align*}
	G_a-G_b & = \sum_{j=(1-\varepsilon)k_0}^a u_j \prod_{l=j+1}^a \EE(v_l) (1-\prod_{t=a+1}^b \EE(v_t)) + \sum_{j=a+1}^b u_j \prod_{l=j+1}^b \EE(v_l)
	\end{align*}
	and bound now the variance of each term. For the second term, 
	we just bound the product term by $1$ 
	to obtain
	\[ \Var\Big(\sum_{j=a+1}^b u_j \prod_{l=j+1}^b \EE(v_l) \Big) \leq \frac{{C_u}(b-a)}{k_0}.  \]
	For the first term, we use that
	\[1-\prod_{t=a+1}^b \EE(v_t)  \leq  c (b-a) \sqrt{\frac{a^*}{k_0}}  \]
	which leads, after some computation 
	using Lemma \ref{alpha} and the fact that for $s_{i+1}\leq 
	a<b\leq s_{i}$, we have that $a^*\leq k_0^{1/3} (i+1)^{2/3}$ while
	$b-a\leq s_i-s_{i+1}\leq k_0^{1/3}i^{2/3} ( (1+1/i)^{2/3}-1)$
	and therefore
	$\sqrt{a^*/k_0} \leq 1/(b-a)$, to
	\begin{align*}
	  \text{Var}\Big( \sum_{j=(1-\varepsilon)k_0}^a u_j \prod_{l=j+1}^a \EE(v_l) (1-\prod_{t=a+1}^b \EE(v_t)) \Big) \leq \bar C \frac{(b-a)}{k_0}.
	\end{align*}
	
	\noindent\textbf{Step 3: End of the proof.}
	We show that within a block $[s_{i+1},s_i]$ the 
	increments of the process cannot be too big. 
	More precisely, we show that 
	\begin{equation}
	\label{eq-sumAi}
	\sum_{i=1}^{\veps^{3/2} k_0} A_i:=\sum_i \mathbb{P}\Big(\max_{ k\in[s_{i+1}, s_i]}
	\frac{|G_{k}-G_{s_{i}}|}{\log i+10} > \frac{C}{(k_0s_i^*)^{1/4}} \Big) \underset{C \to \infty}{\longrightarrow}  0.
	\end{equation}
	
	Fix $i$. 
	For any integer $0<l \leq  \log_2(s_{i}-s_{i+1})$, 
	we divide the interval $[s_{i+1},s_i]$ into $2^l$ 
	dyadic intervals of length $2^{-l} (s_i-s_{i+1})$ with endpoints
	$m_p^l$. As $G_k -G_{s_i}$ is the sum of at most $\log_2(s_i -s_{i+1})$ terms of the form $G_{m_{p+1}^l}- G_{m_{p}^l}$, we deduce employing
	a union bound
	and  $\sum_{l\geq 1} l^{-2} = \pi^2/6$,  that
\begin{align*}
	  A_i  &\leq \mathbb{P}\Big( \exists l \leq \log_2(s_i-s_{i+1}) , \exists p \leq 2^l-1 \text{ such that } \frac{|G_{m_p^l} - G_{m_{p+1}^{l}}|}{ \log i + 10} > \frac{6C}{\pi^2l^2(k_0s_i^*)^{1/4}}\Big) \\
	  & \leq \sum_{l=1}^{\log_2(s_i-s_{i+1})} \sum_{p=1}^{2^l-1}
	  \mathbb{P}\Big( \frac{|G_{m_p^l} - G_{m_{p+1}^{l}}|}{ \log i + 10} > \frac{6C}{\pi^2l^2(k_0s_i^*)^{1/4}} \Big).
	  \end{align*}
Using Lemma	\ref{lem-subgaussian} and \eqref{vraincre}, we deduce for some
universal constant $c>0$,
\begin{align*}
	  A_i 
	& \leq \sum_{l=1}^{\log_2(s_i-s_{i+1})} 2^l \exp\Big( -\frac{cC^2}{l^4} (\log i +10)^2 \frac{2^l}{(s_i-s_{i+1})}\sqrt{\frac{k_0}{s_i^*}}\Big)
	\\ &\quad
	+\sum_{l=1}^{\log_2(s_i-s_{i+1})} 2^l 
	\exp\Big(-\frac{cC}{l^2}(\log i+10) \Big(\frac{k_0}{s_i^*}\Big)^{1/4}\Big).
\end{align*}
Using that $k_0/(s_i -s_{i+1})\asymp \sqrt{k_0 s_i^*}$ and 
$s_i^* = k_0^{1/3} i^{2/3}$, we find that
	\[A_i  \leq \sum_{l=1}^{+ \infty} 2^l \exp\{-\frac{c'C^2}{l^4} (\log i +10) 2^l  \}+  M(i,k_0/i).\]
	where, for $x\geq 2 $, 
	$M(i,x)=c'x^{1/3}(\log x+1)e^{-Cx^{1/6} (\log i+10)/c'(\log x)^2}$, $c'$ is a universal constant, and we used that
	$k_0/s_i^*\asymp (k_0/i)^{2/3}$.
	It
	is now straight forward to check, using dominated convergence, that
	for a universal $c''$,
	$\sum_i A_i\leq c''e^{- C/{c''}}\to_{C\to\infty} 0$, showing
	\eqref{eq-sumAi} and completing the proof of the lemma.
\end{proof}
\noindent
The next lemma complements 
Lemma \ref{DeterministicLemma}
by controlling  $S_k$ and $R_k$, recall \eqref{eq-Sk}, \eqref{Gk}.
\begin{lemma}[Bounds on $R_k$ and $S_k$] \label{DeterministicboundR}
	With notation as above, we have that
	\begin{equation}
	\label{eq-Rkbound}
	\PP\Big( \exists k\in [(1-\varepsilon)k_0, k_0-l_0  ],  |R_k| > \frac{\log(i(k))+10}{(k_0k^*)^{1/4}}\Big) \underset{n\to +\infty}{\longrightarrow}  0 
	\end{equation}
	and 
	\begin{equation}
	\label{eq-Skbound}
	\PP\Big( \exists k\in [(1-\varepsilon)k_0, k_0-l_0  ], |S_k| > \frac{\log(i(k))+10}{(k_0k^*)^{1/4}}\Big) \underset{n\to +\infty}{\longrightarrow}  0 .
	\end{equation}
\end{lemma}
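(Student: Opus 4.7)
The two bounds \eqref{eq-Rkbound} and \eqref{eq-Skbound} have different flavors. For $S_k$ I will give a purely deterministic bound on the high-probability event $\Omega_n$ of Lemma \ref{subexpolemma}. For $R_k$ I will follow the three-step scheme used in the proof of Lemma \ref{DeterministicLemma}: a concentration estimate at the endpoints $\{s_i\}$, a dyadic chaining estimate inside each block $[s_{i+1},s_i]$, and a union bound over $i$.

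The bound on $S_k$ is immediate. On $\Omega_n$, Lemma \ref{apriori} gives $|\bdelta_{(1-\varepsilon)k_0-1}| \leq C(\log n)/\sqrt n$, while \eqref{eq-product} gives $|\prod_{l=(1-\varepsilon)k_0}^{k} v_l| \leq e^{-(k-(1-\varepsilon)k_0+1)\sqrt{k^*/k_0}}$. Hence, on $\Omega_n$, $|S_k| \leq (C\log n/\sqrt n)\,e^{-(k-(1-\varepsilon)k_0)\sqrt{k^*/k_0}}$ pointwise in $k$. Comparing with the target $(\log i(k)+10)/(k_0 k^*)^{1/4}$, the ratio is at most $C \varepsilon^{1/4}$ at $k=(1-\varepsilon)k_0$ (using $n \asymp k_0$) and decays exponentially for larger $k$; choosing $\varepsilon$ sufficiently small yields $|S_k|$ below the target uniformly, so that \eqref{eq-Skbound} follows from $\mathbb{P}(\Omega_n^c)\to 0$.

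For $R_k$, set $B_j^{(k)}:=\prod_{l=j+1}^k v_l - \prod_{l=j+1}^k \EE v_l$, so that $R_k=\sum_j u_j B_j^{(k)}$, and note the crucial independence $u_j \perp B_j^{(k)}$, since $u_j$ depends on $(b_j,g_j)$ while $B_j^{(k)}$ depends only on $(g_l)_{l>j}$. Conditioning on $\mathcal{F}_g:=\sigma(g_l : l\geq 1)$ makes every $B_j^{(k)}$ deterministic, and
\[
R_k-\EE[R_k\mid\mathcal{F}_g] = -\sum_{j=(1-\varepsilon)k_0}^{k} \frac{b_j}{\alpha_j\sqrt j}\,B_j^{(k)}
\]
is then a weighted sum of independent centered sub-Gaussians $(b_j)$ with $\mathcal{F}_g$-measurable weights. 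Applying Lemma \ref{lem-subgaussian} conditionally yields a sub-Gaussian tail with conditional variance $\sigma^2(\omega)=v\sum_j (B_j^{(k)})^2/(\alpha_j^2 j)$; telescoping
\[
B_j^{(k)}=\sum_{m=j+1}^{k} (v_m-\EE v_m)\prod_{l=j+1}^{m-1}v_l \prod_{l=m+1}^{k}\EE v_l
\]
and using Lemma \ref{alpha} gives $\EE(B_j^{(k)})^2 \lesssim (k-j)k_0^{-1} e^{-c(k-j)\sqrt{k^*/k_0}}$, whence $\EE \sigma^2 \lesssim 1/(k_0 k^*)$. The conditional mean $\EE[R_k\mid\mathcal{F}_g]$ is itself a telescoped sum in the independent sub-Gaussians $(g_l)_{l>(1-\varepsilon)k_0}$ and admits a parallel sub-Gaussian tail bound via the same lemma. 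Combining these ingredients yields, at the target threshold $C(\log i+10)/(k_0 s_i^*)^{1/4}$, a tail probability bounded by $e^{-C'(\log i+10)}$ at each endpoint $s_i$, summable over $i$ as in Step 1 of the proof of Lemma \ref{DeterministicLemma}. A parallel second-moment estimate for the increment $R_b - R_a$ with $a,b$ in the same block, based on the decomposition
\[
B_j^{(b)}-B_j^{(a)}= \Big(\prod_{l=j+1}^a v_l\Big)\, B_a^{(b)} + B_j^{(a)}\Big(\prod_{l=a+1}^b \EE v_l -1\Big),
\]
gives $\Var(R_b-R_a)\lesssim (b-a)/k_0$, matching the scaling \eqref{vraincre}, and the dyadic chaining of Step 3 of the proof of Lemma \ref{DeterministicLemma} carries over essentially verbatim.

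The main obstacle is upgrading from variance to exponential concentration for $R_k$, since $R_k$ is not directly a linear functional of the independent variables. The conditioning on $\mathcal{F}_g$ handles the linearity in $(b_j)$ cleanly, but one must still control the $\mathcal{F}_g$-measurable quantities $\sigma^2(\omega)$ and $\EE[R_k\mid\mathcal{F}_g]$ uniformly in $\omega$; this is done by restricting to a further high-probability event on which $\sigma^2(\omega) \leq C\,\EE\sigma^2\cdot(\log i+10)$, itself obtained by Markov applied to the explicit telescoped expression for $\sigma^2$.
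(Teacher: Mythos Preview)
Your treatment of $S_k$ is essentially the same as the paper's and is fine.

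For $R_k$, however, you are working much harder than necessary, and the argument as sketched has a real gap. The paper does \emph{not} run a sub-Gaussian/chaining scheme for $R_k$; it simply computes $\EE(R_k^2)$ and applies Chebyshev together with a plain union bound over $k$. Writing $w_{j,k}=\prod_{l=j+1}^k v_l$, one shows $\mathrm{Var}(w_{j,k})\lesssim \tfrac{k-j}{k_0}\prod_{l}\alpha_l^{-4}$ and hence
\[
\EE(R_k^2)\;\lesssim\;\sum_j \mathrm{Var}(u_j)\,\mathrm{Var}(w_{j,k})\;+\;\Big(\sum_j \EE u_j\,\sqrt{\mathrm{Var}(w_{j,k})}\Big)^2\;\lesssim\;\frac{1}{k_0 k^*}.
\]
This is one full order smaller than $\mathrm{Var}(G_k)\lesssim 1/\sqrt{k_0 k^*}$, and that extra factor is exactly what makes the naive union bound
\[
\sum_{k=(1-\varepsilon)k_0}^{k_0-l_0}\frac{\EE(R_k^2)}{\big((\log i(k)+10)/(k_0k^*)^{1/4}\big)^2}
\;\lesssim\;\sum_k\frac{1}{(\log i(k)+10)^2 (k_0 k^*)^{1/2}}\;\longrightarrow\;0
\]
converge. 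No block endpoints, no chaining, no conditional sub-Gaussianity are needed.

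The gap in your route is the control of the random conditional variance $\sigma^2(\omega)$. You propose to restrict to $\{\sigma^2\le C\,\EE\sigma^2\,(\log i+10)\}$ ``by Markov''; but Markov only yields $\PP(\sigma^2>C\,\EE\sigma^2\,(\log i+10))\le (C(\log i+10))^{-1}$, and summing this over the blocks $i=1,\dots,\varepsilon^{3/2}k_0$ diverges. So the intersection over $i$ of these events is not a high-probability event, and the endpoint bound in your Step~1 does not go through. (A secondary issue: $\EE[R_k\mid\mathcal F_g]$ is not a linear functional of the independent $g_l$'s, since each summand is $g_j$ times a function of $g_{j+1},\dots,g_k$; Lemma~\ref{lem-subgaussian} does not apply to it directly.) Both issues are fixable with extra work---e.g.\ higher-moment bounds on $\sigma^2$ and a martingale version of the concentration lemma---but the point is that none of this is needed: once you notice $\EE R_k^2\lesssim 1/(k_0k^*)$, Chebyshev plus a union bound finishes the proof in a few lines.
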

\begin{proof}
	We begin with the control of 
	\[S_k=\bdelta_{(1-\varepsilon)k_0-1}
	\prod_{l=(1-\varepsilon)k_0}^k v_l=:
	\bdelta_{(1-\varepsilon)k_0-1}  M_k. \]
	For an appropriate $c(\varepsilon)$, let
	\begin{equation} 
	\label{eq-M_k}
	\mathcal{M}:=\Big\{ \forall k \in [(1-\veps)k_0, k_0], \ 
	M_k \leq 
 e^{-c(\varepsilon) (k-(1-\varepsilon) k_0)}
	\Big\}
	\end{equation}
	Using \eqref{bounvlOmega}, we deduce that on the event $\Omega_n$, $\mathcal{M}$ occurs and therefore $\PP(\mathcal M^\complement)\to_{n\to\infty} 0$. 
	Also,
	since
	$\EE(\bdelta_{(1-\varepsilon)k_0-1}^2) \leq C/n$,
	see Lemma \ref{part1second}, we have that $\PP(|\bdelta_{(1-\varepsilon)k_0-1}|>
	{\log n}/{\sqrt{n}})\to_{n\to\infty} 0,$
	and therefore
	\[ \PP\Big(\! \exists k\in [(1-\varepsilon)k_0, k_0-l_0  ], 
	|S_k| > \!\frac{\log (i(k))+10}{(k_0k^*)^{1/4}}\Big) 
	\!\leq \!\PP\Big(|\bdelta_{(1-\varepsilon)k_0-1}|>\!
	\frac{\log n}{\sqrt{n}}\Big) + \PP(\mathcal{M}^\complement)\to_{n\to\infty} 0,
	\]
	which completes the proof of  \eqref{eq-Skbound}. Turning to the proof of
	\eqref{eq-Rkbound}, we write
	\[ \PP\Big( \exists k \in [(1-\varepsilon)k_0, k_0-l_0  ], 
	|R_k| > \frac{\log (i(k))+10}{(k_0k^*)^{1/4}}\Big) 
	\leq \sum_{k=(1-\varepsilon)k_0}^{k_0-l_0} 
	\frac{(k_0k^*)^{1/2}}{(\log (i(k))+10)^2} \EE(R_k^2). \]
	We thus need to bound 
	$\EE (R_k^2)$. 
	We write $u_j = (u_j-\EE(u_j)) + \EE(u_j)$  and
	$w_{j,k} = \prod_{l=j+1}^k v_l$, 
	and recall that
	\begin{equation}
	\label{eq-Rkdec}
	R_k =  \sum_{j=(1-\varepsilon)k_0}^k (u_j-\EE (u_j)) (w_{j,k}-\EE (w_{j,k}))
	+ \sum_{j=(1-\varepsilon)k_0}^k \EE (u_j)(w_{j,k}-\EE (w_{j,k})).
	\end{equation}
	We compute the second moment of both terms in \eqref{eq-Rkdec} separately.
	The first term gives
	\begin{align*}
	\EE\Big(  \sum_{j=(1-\varepsilon)k_0}^k (u_j-\EE(u_j)) 
	(w_{j,k}-\EE (w_{j,k}))\Big)^2
	=\sum_{j=(1-\varepsilon)k_0}^k \mathrm{Var}(u_j) \mathrm{Var}(w_{j,k}).
	\end{align*}
	First, we note that by \eqref{eq-ukvk} we have that
	\[ \mathrm{Var}(w_{j,k})
	\leq  
	\Big(\prod_{l=j+1}^k\frac{1}{\alpha_l^4}\Big) 
	\EE\Big( \prod_{l=j+1}^k  (1-c_l+g_l/\sqrt{l}) - 
	\prod_{l=j+1}^k  (1-c_l)\Big)^2.\]
	Now, for some universal constants $C,c>0$,
	\begin{align*}
	&
	\EE\Big( \prod_{l=j+1}^k  (1-c_l+g_l/\sqrt{l}) - 
	\prod_{l=j+1}^k  (1-c_l)\Big)^2
	= \prod_{l=j+1}^k \EE(1-c_l+g_l/\sqrt{l})^2- \prod_{l=j+1}^k (1-c_l)^2\\
	&=  \prod_{l=j+1}^k ((1-c_l)^2+c/l) - \prod_{l=j+1}^k (1-c_l)^2
	\leq C\frac{k-j}{k},
	\end{align*}
where we used the fact that $c_l =O(1/k_0)$ for $l\in [(1-\varepsilon) k_0, k_0]$ and therefore $\prod_{l=j+1}^k ((1-c_l)^2+c/l) =O(1)$.
	Hence we obtain by \eqref{eq-Euk1} and Lemma \ref{alpha} that 
	\[\sum_{j=(1-\varepsilon)k_0}^k \mathrm{Var}(u_j) \mathrm{Var}(w_{j,k})
	\lesssim \sum_{j=(1-\varepsilon)k_0}^k  e^{-c(k-j)\sqrt{k^*/k_0}} \frac{(k-j)}{n^2} \lesssim  \frac{1}{k_0k^*}. \]
	For the last term in \eqref{eq-Rkdec}, 
	we bound similarly
	\[ \sum_{j_1,j_2=(1-\varepsilon)k_0}^k \EE(u_{j_1}) \EE(u_{j_2}) \mathrm{Cov}(w_{j_1,k},w_{j_2,k})   \leq \Big(\sum_{j=(1-\varepsilon)k_0}^k \EE(u_{j}) \sqrt{\Var(w_{j,k})} \Big)^2  \lesssim \frac{1}{k_0{k^*}} . \]
	These bounds combined together give, after substitution in \eqref{eq-Rkdec},
	that for any $k \in [(1-\varepsilon)k_0, k_0-l_0]$, $\EE(R_k^2) \leq C/k_0k^*$.
	This gives
	\[ \sum_{k=(1-\varepsilon)k_0}^{k_0-l_0} \frac{(k_0k^*)^{1/2}}{
		(\log (i(k))+10)^2}\EE(R_k^2) \leq  \sum_{k=(1-\varepsilon)k_0}^{k_0-l_0} \frac{C}{(\log (i(k))+10)^2(k_0k^*)^{1/2}} 
	\underset{n\to +\infty}{\longrightarrow}  0. \]
\end{proof}
\begin{lemma}[Bound on $\Delta_k$]\label{Deterministic Delta}
	For any $C\geq 1$, and $\kappa$  large enough (depending on $C$), there exists a constant $\bar C$ 
	such that, on the event $\Omega_{C}\cap \Omega_n$,
	for all $k \in [(1-\varepsilon)k_0, k_0-l_0]$, 
	\begin{equation}\label{Bound}
	|\Delta_k| \leq \frac{\bar C (\log i(k)+10)^2}{k^*} 
	\end{equation}
\end{lemma}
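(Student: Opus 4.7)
My plan is to prove the bound by induction on $k$ over the range $[(1-\varepsilon)k_0, k_0-l_0]$, iterating the recursion \eqref{RecursionDelta} to express $\Delta_k$ as a sum of forcing terms weighted by a transfer product. The base case $k=(1-\varepsilon)k_0$ follows from Lemma \ref{apriori}, since $|\Delta_{(1-\varepsilon)k_0}| \lesssim (\log n)^2/n$ is bounded by $\bar C(\log i(k)+10)^2/k^* \asymp (\log n)^2/k_0$ once $\bar C$ is large. For the inductive step, solving the linear part of \eqref{RecursionDelta} gives
\[ \Delta_k = T_{(1-\varepsilon)k_0,k}\,\Delta_{(1-\varepsilon)k_0} \;+\; \sum_{j=(1-\varepsilon)k_0+1}^{k} F_j\, T_{j,k}, \qquad T_{j,k}=\prod_{l=j+1}^{k} v_l(1-2\bdelta_{l-1}), \]
where $F_j = -v_j\bdelta_{j-1}^2 - v_j\Delta_{j-1}^2 + v_j\delta_{j-1}^3/(1+\delta_{j-1})$.

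The first key step is to prove an exponential decay bound $|T_{j,k}| \leq C_1 e^{-c(k-j)\sqrt{k^*/k_0}}$ on $\Omega_n\cap \Omega_C$ for $\kappa$ large. The factor $\prod v_l$ already decays at rate $2\sqrt{k^*/k_0}$ per step by Lemma \ref{alpha} (on $\Omega_n$, the $g_l/\sqrt{l}$ perturbation is negligible). The perturbation from $\prod(1+2|\bdelta_{l-1}|) \leq \exp(2\sum|\bdelta_{l-1}|)$ is controlled by using the block structure of $\Omega_C$: within a single block of index $i$, the sum $\sum_{l \in \text{block } i}|\bdelta_{l-1}|$ is $\lesssim C(\log i+10)/\sqrt{i}$, which is uniformly bounded for $i\ge 1$ and small for $i\geq \kappa^{3/2}$ with $\kappa$ large. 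This allows the $v_l$-decay to dominate.

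The second key step is to bound each contribution of $F_j$. For the main term $\bdelta_{j-1}^2$, the bound $|\bdelta_{j-1}|^2 \leq C^2(\log i(j)+10)^2/\sqrt{k_0 j^*}$ from $\Omega_C$, combined with the geometric sum $\sum_j e^{-c(k-j)\sqrt{k^*/k_0}} \lesssim \sqrt{k_0/k^*}$ dominated by the window of size $\sqrt{k_0/k^*}$ around $k$ (where $j^*\asymp k^*$ and $i(j)\asymp i(k)$), yields
\[ \sum_j v_j\bdelta_{j-1}^2|T_{j,k}| \;\lesssim\; C^2\,\frac{(\log i(k)+10)^2}{\sqrt{k_0 k^*}}\sqrt{\frac{k_0}{k^*}} \;=\; C^2\,\frac{(\log i(k)+10)^2}{k^*}. \]
The $\delta_{j-1}^3$ term is smaller by a factor $(\log n)/(\kappa^{1/4}k_0^{1/3})$, hence negligible. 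The initial term contributes at most $C(\log n)^2/n$, well below the target.

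The main obstacle will be the $\Delta_{j-1}^2$ term, which must be absorbed to close the induction. Using the inductive hypothesis yields
\[ \sum_j v_j\Delta_{j-1}^2|T_{j,k}| \;\lesssim\; \bar C^2\,\frac{(\log i(k)+10)^4}{(k^*)^2}\sqrt{\frac{k_0}{k^*}} \;=\; \bar C\cdot\frac{(\log i(k)+10)^2}{i(k)}\cdot\bar C\,\frac{(\log i(k)+10)^2}{k^*}, \]
where I used $\sqrt{k_0}/(k^*)^{3/2}=1/i(k)$. The prefactor $\bar C(\log i+10)^2/i$ is monotone decreasing for $i\ge 1$, so on the range $i(k)\geq\kappa^{3/2}$ it is bounded by $\bar C(1.5\log\kappa+10)^2/\kappa^{3/2}$, which tends to $0$ as $\kappa\to\infty$. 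Thus, choosing first $\bar C$ large enough to absorb the $\bdelta^2$ contribution (say $\bar C=4C_2 C^2$) and then $\kappa$ large enough (depending on $\bar C$, hence on $C$) that the prefactor is $\le 1/4$, the inductive step closes: $|\Delta_k|\leq (\bar C/2+\bar C/4+o(1))(\log i(k)+10)^2/k^*\leq \bar C(\log i(k)+10)^2/k^*$. The delicate point is that $(\log i+10)^2/i$ must be made small uniformly in $n$ — which works only because our range forces $i\geq\kappa^{3/2}$, not for smaller $i$ where the induction would fail.
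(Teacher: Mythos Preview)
Your proof is correct and follows the same induction strategy as the paper: initialize at $k=(1-\varepsilon)k_0$ via Lemma~\ref{apriori}, iterate the recursion \eqref{RecursionDelta}, bound the $\bdelta_{j-1}^2$ forcing by $C^2 h_{j-1}^2$ with $h_j=(\log i(j)+10)/(k_0 j^*)^{1/4}$, and close the induction by making the $\Delta_{j-1}^2$ contribution small through the choice of $\kappa$.

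The one substantive difference is where you place the cross term $-2v_k\bdelta_{k-1}\Delta_{k-1}$. You fold it into the transfer factor, yielding $T_{j,k}=\prod v_l(1-2\bdelta_{l-1})$, and then must argue that the correction $\prod(1+2|\bdelta_{l-1}|)$ does not destroy the geometric decay coming from $\prod v_l$. Your block argument works, but the clean way to say it is that the \emph{pointwise} ratio $|\bdelta_l|/\log\alpha_l \lesssim C(\log i(l)+10)/\sqrt{i(l)}$ is uniformly $\le 1/2$ once $i(l)\ge\kappa^{3/2}$ and $\kappa$ is large, so $2\sum|\bdelta_{l-1}|\le \sum\log\alpha_l$ and half the decay survives. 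The paper instead keeps the transfer as the bare $\prod v_l$ (so that \eqref{eq-product} applies directly) and throws the cross term into the forcing $r_j$; the key observation \eqref{eq-contDbyd} that the induction hypothesis already gives $|\Delta_j|\lesssim(\bar C/\kappa^{3/4})h_j$ then lets them bound all of $r_j$ at once by $2(2C+\bar C/\kappa^{3/4})^2 h_{j-1}^2$ and close via the single quadratic inequality \eqref{eq-Cchoice}. Their route is slightly more economical; yours separates the roles of the three nonlinear terms more transparently. Both arrive at the same place.
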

\begin{proof}[Proof of Lemma \ref{Deterministic Delta}]
	We prove this result by a deterministic induction, working on the 
	event $\Omega_{C}\cap \mathcal{M}$, see 
	\eqref{eq-M_k} and \eqref{eq-OCG}, which implies in particular that
	\begin{equation}
	\label{eq-hk}
	|\bar \delta_k|\leq  2C \frac{\log i(k)+10}{(k_0 k^*)^{1/4}}=:2C h_k,\;
	\mbox{\rm for all $k\in [(1-\varepsilon) k_0,k_0-l_0]$}.
	\end{equation}
	The induction hypothesis on $\Omega_{C}$
	is the following: we fix a constant $\bar C$ (which will depend on $C$)
	so that, 
	for any $j \leq k-1$,
	\begin{equation}
	\label{eq-inductDk}
	|\Delta_j| \leq \frac{\bar C (\log i(j)+10)^2}{j^*} .
	\end{equation}
	Note that, with this choice, we have that
	\begin{equation}
	\label{eq-contDbyd}
	|\Delta_j|\leq  \frac{\bar C (\log i(j)+10)}{\kappa^{3/4}
		(k_0 j^*)^{1/4} }=\frac{\bar C}{\kappa^{3/4}} h_j.
	\end{equation}
	
	For $k=(1-\varepsilon)k_0$, the result is exactly the one of 
	Lemma \ref{apriori}, if we choose $\bar C>C_\Delta$.
	Fix $k \in[(1-\varepsilon)k_0,k_0-l_0]$ and assume that the bound \eqref{Bound} is valid up to $k-1$.
	On the event $\Omega_{C}$ where \eqref{eq-hk} holds,
	and together with the induction hypothesis,
	we have in particular that $|\delta_j|<1/2$ for any $j \leq k-1$, and therefore, as in 
	\eqref{eq-aprioriDk} in the proof of Lemma \ref{apriori}, we can write
	\begin{equation}
	\label{eq-DDk}
	\Delta_j = r_j +v_j \Delta_{j-1},\quad (1-\veps)k_0 < j\leq  k, 
      \end{equation}
where
\[ r_j=
-v_j  \bdelta_{j-1}^2 -	2 v_j  \bdelta_{j-1} \Delta_{j-1} - v_j \Delta_{j-1}^2 + v_j \frac{\delta_{j-1}^3}{1+\delta_{j-1}}. \]
On $\Omega_n$, we have by \eqref{bounvlOmega}  that $|v_j|\leq 1$. Thus, using \eqref{eq-contDbyd}, \eqref{eq-hk} and the fact that $|\delta_{j}|<1/2$ and
therefore $|\delta_j^3|/|1+\delta_j|\leq \delta_j^2$, 
we have on $\Omega_C\cap \Omega_n$ that $|r_j| \leq 2 (2C + \bar C/\kappa^{3/4})^2 h_{j-1}$ for any $j\leq k$.
Recall that $M_k = \prod_{j=(1-\veps)k_0}^k v_j$. We deduce from \eqref{eq-DDk} that, on $\Omega_C\cap \Omega_n$,
		\begin{equation}
		\label{eq-DD}
		|\Delta_k| \leq \sum_{j=(1-\varepsilon)k_0+1}^k |r_j \prod_{l=j+1}^k v_l|  + |\Delta_{(1-\varepsilon)k_0} M_{k}|  \leq 2\Big(2C+\frac{\bar C}{\kappa^{3/4}}\Big)^2 A_k + |\Delta_{(1-\varepsilon)k_0}M_k|,
\end{equation}
where $A_k = \sum_{j=(1-\varepsilon)k_0}^k h_{j-1}^2\prod_{l=j+1}^k |v_l|$.
	
	We show below 
	that there exists a universal constant $C_1>0$ (independent of $C$)
	such that
	\begin{equation}
	\label{eq-Ak2}
	A_k = \sum_{j = (1-\varepsilon)k_0}^k  h_{j-1}^2
	\prod_{l=j+1}^k |v_l| \leq \frac{C_1 (\log i(k)+10)^2}{k^*}.  
	\end{equation}
	On $\mathcal{M}$ (see \eqref{eq-M_k}) 
	and using Lemma \ref{apriori},
	 we have that
	\[|\Delta_{(1-\varepsilon)k_0}M_k| \leq
	 C_\Delta \frac{(\log n)^2}
	{n}e^{-c(\epsilon)
	(k-(1-\varepsilon)k_0)}\leq 
	  C_\Delta \frac{(\log i(k)+10)^2}{k^*}
	. \]
	Choosing now 
\begin{equation}
\label{eq-Cchoice}
\bar C>8 C_1\Big(C+\frac{\bar C}{2\kappa^{3/4}}\Big)^2 + C_\Delta
,
\end{equation}
	which is possible if $\kappa>\kappa_0(C)$, we conclude from
	\eqref{eq-DD} and \eqref{eq-Ak2} that \eqref{eq-inductDk}
	holds for $j=k$, thus completing the proof of the
	induction.
	
	It remains to prove \eqref{eq-Ak2}. By Lemma \ref{alpha},  on
	$\Omega_n$,
\[ A_k \lesssim \sum_{j=(1-\veps) k_0}^k \frac{(\log i(j) +10)^2}{(k_0 j^*)^{1/2}} e^{-(k-j) \sqrt{k^*/k_0}}.\]
We claim that for any $j\in [(1-\veps)k_0, k]$
\begin{equation}\label{claimdecre} \frac{\log i(j) +10}{{j^*}^{1/4}}\lesssim \frac{\log i(k)+10}{{k^*}^{1/4}}.\end{equation}
Indeed, for any $j \in[(1-\veps)k_0, k]$ we have $j \in[s_{i(j)+1}, s_{i(j)}]$ with $i(j) \geq i(k)$. Thus, 
\[  \frac{\log i(j) +10}{{j^*}^{1/4}}\leq  \frac{\log i(j) +10}{{s(j)^*}^{1/4}} \leq  \frac{\log i(k)+10}{{s_{i(k)}^*}^{1/4}},\]
where we used the fact that $s_i^* = i^{2/3} k_0^{1/3}$ and that $x \mapsto (\log x +10)/x^{1/6}$ is nonincreasing on $[1,+\infty)$. This ends the proof of the claim. Using \eqref{claimdecre}, we get that
\[ A_k \lesssim \frac{(\log i(k)+10)^2}{(k_0 k^*)^{1/2} } \sum_{j=(1-\veps) k_0}^k  e^{-(k-j) \sqrt{k^*/k_0} }\lesssim \frac{(\log i(k)+10)^2}{k^* }.\]
\end{proof}

\subsection{Proof of Proposition \ref{prop-scalar}}
We begin with a CLT for the sums of the $\delta_k$'s in the scalar regime.
Recall that $\gamma$ denotes the standard Gaussian law.
\begin{proposition} \label{mainhyperbolic}
The following convergence holds: for all $t\in \RR$,
\begin{align}
  \label{eq-finalprop1}
  &\lim_{\kappa \to \infty}\limsup_{n\to\infty}
  \PP\Big( \frac{\sum_{k=1}^{k_0-l_0}\delta_k   + \frac{v-1}{6}\log n}{\sqrt{\frac{v}{3} \log n } }  >  t  \Big) = 
  \lim_{\kappa \to \infty}\liminf_{n\to\infty}
  \PP\Big( \frac{\sum_{k=1}^{k_0-l_0}\delta_k   + \frac{v-1}{6}\log n}{\sqrt{\frac{v}{3} \log n } }  >  t  \Big)\nonumber\\
  &\qquad \qquad = 
  \gamma( (t,\infty)).
\end{align}
Furthermore, there exist constants $C_{\kappa}>0$ so that
\begin{equation}
  \label{eq-finalprop2}
  \lim_{\kappa \to \infty} \liminf_{n\to \infty} \PP\Big( |\delta_{k_0-l_0}|< \frac{C_{\kappa}}{k_0^{1/3}} \Big)=1 . 
\end{equation}
\end{proposition}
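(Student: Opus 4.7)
The plan is to chain together the two-step linearization $\delta_k=\bdelta_k+\Delta_k$ and $\Delta_k=\overline{\Delta_k}+E_k$ already set up in this section, combining the distributional limits obtained for $\bdelta$ and $\overline{\Delta}$ with a deterministic control of the residual $E_k:=\Delta_k-\overline{\Delta_k}$ on the high-probability event $\Omega_C\cap\Omega_n$. Concretely, decomposing
\begin{align*}
\sum_{k=1}^{k_0-l_0}\delta_k = \sum_{k=1}^{(1-\varepsilon)k_0}\bdelta_k + \sum_{k=1}^{(1-\varepsilon)k_0}\overline{\Delta_k} + \sum_{k=(1-\varepsilon)k_0}^{k_0-l_0}\bdelta_k + \sum_{k=(1-\varepsilon)k_0}^{k_0-l_0}\overline{\Delta_k} + \sum_{k=1}^{k_0-l_0}E_k,
\end{align*}
Proposition \ref{easypart} handles the two sums over $k\leq(1-\varepsilon)k_0$ as $o(\sqrt{\log n})$ in probability, Proposition \ref{TCL delta bar} gives the Gaussian convergence of $\bigl(\sum_{(1-\varepsilon)k_0}^{k_0-l_0}\bdelta_k\bigr)/\sqrt{v\log n/3}$ to $\gamma$, and Proposition \ref{Limit Delta bar} extracts the deterministic drift $\sum_{(1-\varepsilon)k_0}^{k_0-l_0}\overline{\Delta_k}=-(v/6)\log n+o(\sqrt{\log n})$ in probability. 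Once the residual $\sum E_k$ is shown to be $o(\sqrt{\log n})$ in probability, the CLT \eqref{eq-finalprop1} follows by Slutsky's theorem and the independence of the Gaussian contribution from the deterministic drift.

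To control $\sum E_k$, I would subtract the recursions \eqref{RecursionDelta} and \eqref{Deltabar} to obtain the linear recursion
$$E_k = v_k E_{k-1} + S_k,\qquad S_k = -2v_k\bdelta_{k-1}\Delta_{k-1}-v_k\Delta_{k-1}^2+v_k\frac{\delta_{k-1}^3}{1+\delta_{k-1}},$$
with $E_1=0$. On $\Omega_C\cap\Omega_n$, Lemmas \ref{Deterministic delta} and \ref{Deterministic Delta} give $|\bdelta_{k-1}|\lesssim(\log i(k)+10)/(k_0 k^*)^{1/4}$ and $|\Delta_{k-1}|\lesssim(\log i(k)+10)^2/k^*$ throughout the contributing range, so that $|S_k|\lesssim(\log i(k)+10)^3/(k_0^{1/4}(k^*)^{5/4})$ there (the product term $|\bdelta\Delta|$ being dominant); in the negligible range the much sharper bounds of Lemma \ref{apriori} give $|S_k|=O((\log n)^3/n^{3/2})$. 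Solving the linear recursion and using the kernel estimate $\prod_{l=j+1}^k|v_l|\leq e^{-c(k-j)\sqrt{k^*/k_0}}$ from \eqref{eq-product}, whose summation over $j$ contributes a factor $\sqrt{k_0/k^*}$, one obtains $|E_k|\lesssim(\log i(k)+10)^3 k_0^{1/4}/(k^*)^{7/4}$ on the contributing range. Summing across blocks $k\in[s_{i+1},s_i]$ of width $\asymp k_0^{1/3}/i^{1/3}$ with $k^*\asymp k_0^{1/3}i^{2/3}$ collapses to the convergent series $\sum_i(\log i)^3/i^{3/2}=O(1)$, while the negligible-range contribution to $\sum|E_k|$ is $O((\log n)^3/\sqrt{n})$. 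Since $\PP(\Omega_C\cap\Omega_n)\to 1$ as $C\to\infty$ by Lemmas \ref{Deterministic delta} and \ref{subexpolemma}, this yields $\sum E_k=o(\sqrt{\log n})$ in probability.

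Finally, the a priori bound \eqref{eq-finalprop2} follows directly: at $k=k_0-l_0$ one has $k^*=l_0=\kappa k_0^{1/3}$ and $i(k)\asymp\kappa^{3/2}$, so the deterministic bounds of Lemmas \ref{Deterministic delta} and \ref{Deterministic Delta} yield $|\bdelta_{k_0-l_0}|\lesssim(\log\kappa)\kappa^{-1/4}k_0^{-1/3}$ and $|\Delta_{k_0-l_0}|\lesssim(\log\kappa)^2\kappa^{-1}k_0^{-1/3}$ on $\Omega_C\cap\Omega_n$; absorbing the $\kappa$-dependent factors into $C_\kappa$ yields the claimed bound. The main technical obstacle in this plan is the careful bookkeeping of $\sum_k|E_k|$ across blocks of varying size with accompanying logarithmic factors, but this is a purely deterministic exercise on the high-probability event $\Omega_C\cap\Omega_n$ once the a priori bounds of Lemmas \ref{Deterministic delta} and \ref{Deterministic Delta} are in hand, so no further probabilistic input is needed.
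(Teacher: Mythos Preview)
Your proposal is correct and follows essentially the same route as the paper. In particular, your control of $\sum_k E_k$ via the linear recursion $E_k=v_kE_{k-1}+S_k$, the bound $|S_k|\lesssim(\log i(k)+10)^3/(k_0^{1/4}(k^*)^{5/4})$, and the resulting $O(1)$ bound on the sum is precisely the content of the paper's Lemma~\ref{lem-DDk}, and your derivation of \eqref{eq-finalprop2} from Lemmas~\ref{Deterministic delta} and~\ref{Deterministic Delta} at $k=k_0-l_0$ matches the paper's argument verbatim.
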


Before giving the proof of Proposition \ref{mainhyperbolic}, we 
derive an a-priori estimate on the difference $\Delta_k-\overline \Delta_k$.
\begin{lemma}
  \label{lem-DDk}Let $C>0$.
Choose $\bar C$ so that the conclusions of Lemma
  \ref{Deterministic Delta} hold.
   Then, there exists a constant $C_2=C_2(C,\varepsilon)$ so that,
   on the event $\Omega_n\cap \Omega_{C}$,
  \begin{equation}
    \label{eq-recursion3}
    \sum_{k=1}^{k_0-l_0} |\Delta_k -\overline \Delta_k|\leq C_2.
  \end{equation}
\end{lemma}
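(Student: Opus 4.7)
The plan is to derive a linear recursion for the error $E_k := \Delta_k - \overline{\Delta_k}$, solve it explicitly, and then control its $\ell^1$ norm by Fubini combined with the a priori bounds from Lemmas \ref{apriori}, \ref{Deterministic Delta}, and \ref{DeterministicLemma}. Starting from \eqref{RecursionDelta} and the recursion $\overline{\Delta_k} = -v_k \bdelta_{k-1}^2 + v_k\overline{\Delta_{k-1}}$, and using the identity $\delta_{k-1}^2 - \bdelta_{k-1}^2 = 2\bdelta_{k-1}\Delta_{k-1} + \Delta_{k-1}^2$, one obtains
\[ E_k = v_k E_{k-1} + v_k s_k, \qquad s_k := -2\bdelta_{k-1}\Delta_{k-1} - \Delta_{k-1}^2 + \frac{\delta_{k-1}^3}{1+\delta_{k-1}}, \]
which integrates, since $E_1=0$, to $E_k = \sum_{j=2}^k s_j \prod_{l=j}^k v_l$. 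Exchanging the order of summation,
\[ \sum_{k=1}^{k_0-l_0} |E_k| \leq \sum_{j=2}^{k_0-l_0} |s_j|\, T_j, \qquad T_j := \sum_{k=j}^{k_0-l_0}\prod_{l=j}^k |v_l|. \]

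I would then bound $T_j$ deterministically on $\Omega_n$ in two regimes. For $j \leq (1-\varepsilon)k_0$, the bound $|v_l| \leq 1/\alpha_{(1-\varepsilon)k_0}<1$ on $\Omega_n$ (see \eqref{bounvlOmega}) gives a geometric decay and yields $T_j \leq C(\varepsilon)$. For $j > (1-\varepsilon)k_0$, the deterministic bound \eqref{eq-product} combined with a calculus estimate analogous to the one used in Lemma \ref{Bounds W} (splitting the sum at $k-j \approx j^*/2$, and using $\sqrt{k^*/k_0} \geq \sqrt{l_0/k_0}$ in the tail) yields $T_j \lesssim \sqrt{k_0/j^*}$.

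Next, I would estimate $|s_j|$ separately in the two regimes. In the negligible regime, Lemma \ref{apriori} directly gives $|\bdelta|, |\delta| \lesssim \log n/\sqrt n$ and $|\Delta| \lesssim (\log n)^2/n$, so $|s_j| = O((\log n)^3/n^{3/2})$; since $T_j \leq C(\varepsilon)$ and this regime contains $O(k_0)$ indices, its total contribution is $O((\log n)^3/\sqrt n) = o(1)$. In the contributing regime, combining the bound $|\bdelta_{j-1}| \leq 2C(\log i(j-1)+10)/(k_0 j^*)^{1/4}$ from the definition of $\Omega_C$ with $|\Delta_{j-1}| \leq \bar C(\log i(j-1)+10)^2/j^*$ from Lemma \ref{Deterministic Delta}, and using that $|\delta_{j-1}| \lesssim (\log i(j-1)+10)/(k_0 j^*)^{1/4}$ for $\kappa$ large, one obtains
\[ |s_j| \lesssim \frac{(\log i(j)+10)^3}{(k_0 j^*)^{1/4} j^*} + \frac{(\log i(j)+10)^4}{j^{*2}} + \frac{(\log i(j)+10)^3}{(k_0 j^*)^{3/4}}. \]

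Finally, I would decompose $\{j : j^* \in [l_0, \varepsilon k_0]\}$ into the blocks $[s_{i+1}, s_i]$, each of length $\asymp k_0^{1/3}/i^{1/3}$ with $j^* \asymp i^{2/3}k_0^{1/3}$, and $i$ ranging from $\kappa^{3/2}$ to $\varepsilon^{3/2}k_0$. Multiplying $|s_j|$ by $T_j \lesssim \sqrt{k_0/j^*}$ and summing within each block, each of the three source terms contributes a series of the form $\sum_{i \geq \kappa^{3/2}}(\log i+10)^c/i^p$ with $p \geq 3/2$; the powers of $k_0$ exactly cancel in the dominant first term and are negative in the other two, so the total is bounded by a constant $C_2$ depending only on $C$ and $\varepsilon$ (with some decay in $\kappa$). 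The main obstacle is the bookkeeping for $T_j$ in the contributing regime, since the exponential decay rate of $\prod_l v_l$ degrades as $k$ approaches $k_0$, forcing the two-range split in the sum defining $T_j$; once this is in hand the remaining summation is routine.
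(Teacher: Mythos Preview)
Your proof is correct and follows essentially the same strategy as the paper: derive the linear recursion for $E_k=\Delta_k-\overline{\Delta_k}$, bound the source term via the a~priori estimates of Lemmas~\ref{apriori} and~\ref{Deterministic Delta} together with the definition of $\Omega_C$, and show the resulting double sum is $O(1)$. The only difference is in handling the double sum: you apply Fubini and then bound $T_j=\sum_{k\ge j}\prod_l|v_l|\lesssim\sqrt{k_0/j^*}$ (which forces the two-range split you flag, since the decay rate $\sqrt{k^*/k_0}$ depends on $k$); the paper instead keeps the $(k,j)$ order, combines your three source bounds into the single dominant one $|r_j|\lesssim(\log i(j)+10)^3/(k_0^{1/4}j^{*5/4})$, and uses the monotonicity $(\log i(j)+10)^3/j^{*5/4}\lesssim(\log i(k)+10)^3/k^{*5/4}$ for $j\le k$ to reduce the inner sum over $j$ to a genuine geometric series with fixed rate $\sqrt{k^*/k_0}$, avoiding the split entirely.
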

\begin{proof}[Proof of Lemma  \ref{lem-DDk}]
  Set $x_k=\Delta_k-\overline \Delta_k$ and note that, by 
  \eqref{Deltabar} and \eqref{eq-aprioriDk}, we have that
  $x_k$ satisfies the recursion
  \begin{equation}
    \label{eq-recursion1}
    x_{k+1}=v_{k+1} x_k+r_k, \quad x_0=0,
  \end{equation}
where $ r_k=-v_{k+1} \Delta_{k}^2-2v_{k+1} \bdelta_{k} \Delta_{k}
    +v_{k+1} \delta_{k}^3/(1+\delta_k)$. For any $k$, we have
\[ |x_k| \leq \sum_{j=0}^{k-1} |r_j|\prod_{l=j+1}^k |v_l|.\]
  On the event $\Omega_n\cap \Omega_{C}$ we have by combining
  Lemmas \ref{apriori},  \ref{Deterministic delta} and \ref{Deterministic Delta}, that
  for some $C_1=C_1(C,\varepsilon)$,
 \begin{equation}
    \label{eq-recursion4}
    |r_k|\leq C_1 \frac{(\log i(k)+10)^3}{ k_0^{1/4} {k^*}^{5/4}},
  \end{equation} where we extended the definition of $i(k)$ so that $i(k)=n$
  for $k<(1-\varepsilon)k_0$. (The term contributing the most is
  $\bar \delta_k \Delta_k$.) Recalling Lemma \ref{alpha}, 
  we obtain that on $\Omega_n\cap \Omega_C$,
  \begin{align*}
    \label{eq-recursion5}
    \sum_{k=1}^{k_0-l_0} \sum_{j=0}^{k-1} |r_j| \prod_{l=j+1}^{k} |v_l|
   &\lesssim \frac{1}{k_0^{1/4}} \sum_{k=1}^{k_0-l_0}\sum_{j=0}^{k-1}\frac{(\log i(j)+10)^3}{{j^*}^{5/4}} e^{-\frac{1}{2}(k-j)\sqrt{k^*/k_0}/2}\\
   &\lesssim \frac{1}{k_0^{1/4}}  \sum_{k=1}^{k_0-l_0}\frac{(\log i(k)+10)^3}{{k^*}^{5/4}} \sqrt{\frac{k_0}{k^*}},
\end{align*}
where we used the fact that, similarly to
\eqref{claimdecre}, $(\log i(j)+10){j^*}^{-5/4} \lesssim (\log i(k)+10){k^*}^{-5/4}$ for any $j\leq k$. As $i(k)^{2/3} \leq k^*/k_0^{1/3}$ and $l_0 = \kappa k_0^{1/3}$, we conclude by Riemann approximation that
\[    \sum_{k=1}^{k_0-l_0} \sum_{j=0}^{k-1} |r_j| \prod_{l=j+1}^{k} |v_l| \lesssim   k_0^{\frac{1}{4}} \sum_{l= l_0}^{+\infty} \frac{(\log(l/k_0^{1/3})+10)^3}{l^{7/4}}\lesssim 1.\]
\end{proof}

\begin{proof}[Proof of Proposition \ref{mainhyperbolic}]
  We first note that by combining Lemma \ref{easypart} with the fact that $\delta_k=\Delta_k+\bar\delta_k$ and Lemma \ref{lem-DDk}, we have
  \[ \frac{\sum_{k=1}^{(1-\varepsilon)k_0} \delta_k}{\sqrt{ \log n}} 
\underset{n\to+\infty}{\longrightarrow} 0, \]
in probability.
It thus suffices to show \eqref{eq-finalprop1} with the sum starting at
$k=(1-\varepsilon) k_0$.
By Propositions \ref{TCL delta bar} and \ref{Limit Delta bar}, we have that for any $\kappa >1$, 
\[ 	\forall t \in \RR,  \lim_{n \to +\infty} \PP\Big( \frac{1}{\sqrt{\frac{v}{3} \log n}} \big(\sum_{k=(1-\varepsilon)k_0}^{k_0-l_0}  \bdelta_k - \frac{1}{6} \log n \big)  >t\Big) = \PP( \mathcal{G} >t) \]
where $\mathcal{G}$ is a standard Gaussian random variable,  
and
	\[ \frac{1}{\sqrt{\log n}} \Big( \sum_{k=(1-\varepsilon)k_0}^{k_0-l_0} \overline{ \Delta_k} + \frac{v}{6} \log n \Big) \underset{n\to +\infty}{\longrightarrow} 0, \]
in probability.
	Combined with Lemma \ref{lem-DDk}, we conclude that for any $\delta>0$,
	\begin{equation}
	  \label{eq-endgame1}
 \limsup_{n\to+\infty}
	 \PP\Big(\Big| \frac{1}{\sqrt{\log n}} \Big( \sum_{k=(1-\varepsilon)k_0}^{k_0-l_0}  \Delta_k + \frac{v}{6} \log n\Big)\Big|>\delta \Big)=0.
       \end{equation}
       Since $\delta_k=\bdelta_k+\Delta_k$,  the claimed
       \eqref{eq-finalprop1} follows.

       Combining now Lemmas \ref{Deterministic delta} and \ref{Deterministic Delta} (with $k=k_0-l_0$) yields \eqref{eq-finalprop2}.
     \end{proof}

Before completing the proof of Proposition \ref{prop-scalar}, we need one more
a-priori estimate on the sums of squares of $\overline{\Delta_k}$.
Recall that we write $\EE_n(\cdot)=\EE(\cdot {\bf 1}_{\Omega_n})$.
\begin{lemma}\label{secondmomentD}
	There exists a constant $c>0$ such that 
	for $(1-\varepsilon)k_0 \leq k \leq k_0-l_0$,
	\[ \EE(\overline{\Delta}_k^2) \leq 
	 \frac{c}{\sqrt{k_0} {k^*}^{3/2}}.\]
\end{lemma}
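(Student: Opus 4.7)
The plan is to solve the linear recursion~\eqref{Deltabar} explicitly and then estimate the resulting second moment by expanding and exploiting independence. Iterating gives
\[
\overline{\Delta_k}=-\sum_{j=1}^{k-1}\bar\delta_j^{\,2}\prod_{l=j+1}^{k}v_l,
\]
so $\EE\overline{\Delta_k}^{\,2}$ splits into a diagonal piece $\sum_j\EE\bigl[\bar\delta_j^{\,4}\prod_{l=j+1}^{k}v_l^{\,2}\bigr]$ and an off-diagonal piece $2\sum_{j_1<j_2}\EE\bigl[\bar\delta_{j_1}^{\,2}\bar\delta_{j_2}^{\,2}\prod_{l=j_1+1}^{j_2}v_l\prod_{l=j_2+1}^{k}v_l^{\,2}\bigr]$.

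In both pieces, the tail $\prod_{l>j_2}v_l^{\,2}$ is independent of $\mathcal{F}_{j_2}$ and contributes $\prod_{l=j_2+1}^{k}\EE v_l^{\,2}\lesssim e^{-4(k-j_2)\sqrt{k^*/k_0}(1-o(1))}$ by Lemma~\ref{alpha}. For the diagonal term, Lemma~\ref{lemma3} gives $\EE\bar\delta_j^{\,4}\le c/(k_0 j^*)$; the geometric sum is dominated by $j$ with $(k-j)\sqrt{k^*/k_0}\lesssim 1$, so $j^*\asymp k^*$, and one obtains exactly the target order $O(1/(\sqrt{k_0}(k^*)^{3/2}))$.

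For the off-diagonal I would use the one-step-forward identity
\[
\bar\delta_{j_2}=\bar\delta_{j_1}\prod_{l=j_1+1}^{j_2}v_l+R_{j_1+1,j_2},\qquad R_{j_1+1,j_2}=\sum_{i=j_1+1}^{j_2}u_i\prod_{l=i+1}^{j_2}v_l,
\]
in which $R_{j_1+1,j_2}$ is $\sigma(u_l,v_l:j_1<l\le j_2)$-measurable and hence independent of $\bar\delta_{j_1}$. Substituting $\bar\delta_{j_2}^{\,2}=(\bar\delta_{j_1}V+R)^{2}$ with $V=\prod_{l=j_1+1}^{j_2}v_l$ and multiplying by $\bar\delta_{j_1}^{\,2}V$ produces three monomials $\bar\delta_{j_1}^{\,4}V^{3}$, $2\bar\delta_{j_1}^{\,3}RV^{2}$, $\bar\delta_{j_1}^{\,2}R^{2}V$ whose expectations factor cleanly by the $\bar\delta_{j_1}\perp(V,R)$ independence. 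Each factor is then bounded via Lemma~\ref{lemma3} for moments of $\bar\delta_{j_1}$ (with odd orders obtained from Cauchy--Schwarz against the second and fourth moments), via Lemma~\ref{alpha} for the moments of $V$---crucially using the faster decay $\EE V^{3}\lesssim e^{-6(j_2-j_1)\sqrt{k^*/k_0}}$ coming from the higher power---and via analogues of Lemma~\ref{lemma3} applied to the truncated range $[j_1+1,j_2]$ for the moments of $R_{j_1+1,j_2}$. Summing over $j_1<j_2\le k$ using the geometric decays $e^{-c(j_2-j_1)\sqrt{k^*/k_0}}$ and $e^{-c(k-j_2)\sqrt{k^*/k_0}}$ should produce the same order as the diagonal.

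The main obstacle I anticipate is the tightness of this off-diagonal estimate: a blunt Cauchy--Schwarz bound $\EE[\bar\delta_{j_1}^{\,2}\bar\delta_{j_2}^{\,2}\prod v_l]\le\bigl(\EE\bar\delta_{j_1}^{\,4}\EE\bar\delta_{j_2}^{\,4}\EE[\prod v_l]^{2}\bigr)^{1/2}$ loses a factor of order $\sqrt{k_0/k^*}$ relative to the target, so the refined decomposition above---and the bookkeeping that exploits the higher-power decays of $V$ in each of the three resulting monomials---is what will drive the argument and recover the claimed bound $O(1/(\sqrt{k_0}(k^*)^{3/2}))$.
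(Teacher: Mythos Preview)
Your approach differs from the paper's. Rather than your three-monomial expansion of the off-diagonal via $\bdelta_{j_2}=\bdelta_{j_1}V+R$, the paper centers each summand: with $c_j=\EE(v_j)\EE(\bdelta_{j-1}^{\,2})$ it writes
\[
\overline{\Delta_k}=\sum_{j}\bigl(-v_j\bdelta_{j-1}^{\,2}+c_j\bigr)\prod_{l>j}v_l\ -\ \sum_{j}c_j\prod_{l>j}v_l,
\]
and bounds the second moment of each piece. For the deterministic-coefficient piece a direct expansion using $c_j\lesssim(k_0 j^*)^{-1/2}$ and Lemma~\ref{alpha} gives $O((k^*)^{-2})$; for the centered piece the paper records only the diagonal contribution $\sum_j\EE(\bdelta_{j-1}^{\,4})\prod_{l\ge j}\EE(v_l^2)$, which yields the stated $O\bigl(k_0^{-1/2}(k^*)^{-3/2}\bigr)$. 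Your decomposition is more explicit about the cross terms, which is a genuine plus since the centered summands above are not literally orthogonal (the $\bdelta_{j-1}$'s at different indices share randomness).

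Two caveats on your side. First, the mixed moments $\EE[V^2R]$ and $\EE[VR^2]$ do \emph{not} factor, since $V$ and $R$ are built from the same $v_l$'s for $j_1<l\le j_2$; you will need either Cauchy--Schwarz on the pair $(V,R^2)$ or a direct expansion tracking $\EE[u_iv_i^{\,p}]=O(1/k_0)$. Second, your hope that the faster decay of $\EE V^3$ recovers the diagonal order is too optimistic: the geometric sum $\sum_{m\ge0} e^{-cm\sqrt{k^*/k_0}}$ is of order $\sqrt{k_0/k^*}$ regardless of the constant $c$, so after the double sum each of your three monomials lands at $O((k^*)^{-2})$ rather than $O\bigl(k_0^{-1/2}(k^*)^{-3/2}\bigr)$. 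This is exactly the order of the paper's second piece and is harmless for the only use of the lemma (the proof of Proposition~\ref{prop-scalar}), since $\sum_k(k^*)^{-2}=O(1)$.
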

\begin{proof}[Proof of Lemma \ref{secondmomentD}]
We start from
\[ \overline{\Delta_k} = \sum_{j=(1-\varepsilon)k_0}^k \big(-v_j \bdelta_{j-1}^2 +\EE(v_j) \EE(\bdelta_{j-1}^2)\big) \prod_{l={j+1}}^k v_l - \sum_{j=(1-\varepsilon)k_0}^k \EE(v_j) \EE(\bdelta_{j-1}^2) \prod_{l={j+1}}^k v_l .  \]
From Lemma \ref{lemma3}, we have 
that $\EE(\bdelta_j^4) \leq C/(k_0j^*)$ 
and $\EE(\bdelta_j^2) \leq C/\sqrt{k_0j^*} $. Using Lemma  \eqref{alpha}, 
we get that
\begin{align*}
  \sum_{j=(1-\varepsilon)k_0}^k \EE(\bdelta_{j-1}^4) \prod_{l=j}^k \EE(v_l^2) \lesssim \sum_{j=(1-\varepsilon)k_0}^k \frac{1}{k_0 j^*} e^{-(k-j)\sqrt{k^*/k_0}} \lesssim \frac{1}{\sqrt{k_0} {k^*}^{3/2}}
\end{align*}
and
\begin{align*}
&\EE \Big( \Big( \sum_{j=(1-\varepsilon)k_0}^k \EE(v_j) \EE(\bdelta_{j-1}^2) \prod_{l={j+1}}^k v_l \Big)^2 \Big)
\lesssim \!\!
\sum_{(1-\veps)k_0 \leq j_1< j_2 \leq k} \frac{1}{k_0\sqrt{j_1^*j_2^*}} \prod_{l_1=j_1+1}^{j_2} \EE(v_{l_1}) \prod_{l_1=j_2+1}^{k} \EE(v_{l_2}^2)  \\
& 
\qquad\qquad \lesssim \sum_{(1-\veps)k_0 \leq j_1< j_2 \leq k} 
\frac{1}{k_0\sqrt{j_1^*j_2^*}} 
e^{-(j_2-j_1)\sqrt{j_2^*/k_0}} e^{-(k-j_2)\sqrt{k^*/k_0}}\\
&
\qquad\qquad
\leq \sum_{(1-\varepsilon)k_0<j_2\leq k} \frac{C}{\sqrt{k_0j_2^3}} e^{-(k-j_2)\sqrt{k^*/k_0}}
 \leq \frac{C}{{k^*}^2}.
\end{align*}
\end{proof}

\begin{proof}[Proof of Proposition \ref{prop-scalar}] The claim 
  \eqref{eq-extra} is precisely \eqref{eq-finalprop2}. In view of 
  \eqref{eq-finalprop1}, to see
  \eqref{eq-scalarrec} it is enough to prove that
  the sequence
  $ (\log |\psi_{k_0-l_0}(z)| - \sum_{k=1}^{k_0-l_0} \delta_k)/\sqrt{\log n}$ converges in probability towards zero as $n$ and then
$\kappa$ go to infinity. 

By the definition of $\delta_k$, we have
\[ \log |\psi_{k_0-l_0}(z)| = \sum_{k=1}^{k_0-l_0} \log |\frac{\psi_k(z)}{\psi_{k-1}(z)}| = \sum_{k=1}^{k_0-l_0} \log |1+\delta_k|.   \]
Lemmas \ref{apriori}, \ref{Deterministic delta} and \ref{Deterministic Delta}
imply that with probability approaching $1$ as first $n\to\infty$ and then $\kappa\to\infty$, we have that $|\delta_k|<1/2$ for all $k\leq k_0-l_0$.
Hence we can write, on this event,
\[ \sum_{k=1}^{k_0-l_0} |\log|1+\delta_k| - \delta_k| \leq \sum_{k=1}^{k_0-l_0} \delta_k^2 \leq 2\sum_{k=1}^{k_0-l_0}( { \bdelta_{k}}^2 +  \Delta_k^2). \]
Using Lemma \ref{lem-DDk}, we can replace,
on the event $\Omega_n\cap \Omega_C$, the term
$\Delta_k$ in the last expression
by $\overline{\Delta_k}$, with bounded error. 
It now follows from Lemmas \ref{apriori}, \ref{part1second}, \ref{lemma3} and 
\ref{secondmomentD} that
$\sum_{k=1}^{k_0-l_0} \overline{ \delta_{k}}^2$ and $\sum_{k=1}^{k_0-l_0} \overline{ \Delta_k}^2$ are bounded in $L^1$ on $\Omega_n$, and therefore
 we get that 
\[ \frac{1}{\sqrt{\log n}} \sum_{k=1}^{k_0-l_0} (\bdelta_k^2 + \overline{ \Delta_k}^2) \xrightarrow[n \to \infty]{\PP} 0.\]
It follows that the CLT for $\sum_{k=1}^{k_0-l_0} \delta_k$ is inherited by 
$\log |\psi_{k_0-l_0}(z)| = 
 \sum_{k=1}^{k_0-l_0} \log |1+\delta_k|,$ which completes the proof of the proposition.   
\end{proof}

\appendix
\section{An anti-concentration inequality}
The L\'{e}vy anti-concentration function $Q_\mu(\cdot)$ of a probability measure $\mu\in\mathcal{P}(\RR)$ is defined as
\begin{equation}
\label{eq-anticonc}
Q_\mu(\veps) = \sup_{ x \in  \RR} \mu\big([x-\veps, x+\veps]\big), \qquad \veps>0.
\end{equation}
For a random variable $X\sim \mu$, we often write $Q_X$ for $Q_\mu$.
	\begin{lemma}\label{anticoncunif}
	 For $M\geq 1$, set
	  \begin{equation}
	    \label{eq-1}
\mathcal{C}_M=\left\{ \mu \in \mathcal{P}(\RR): 	  \int x d\mu=0, \quad  \int x^2 d\mu=1, \quad  \int |x|^3 d\mu \leq M\right\}.
	  \end{equation}
Then for  any $\veps\in(0,1/4)$ there exists $\delta=\delta(M,\veps)>0$ such that for all $\mu\in \mathcal{C}_M$,
\begin{equation}
  \label{eq-2}
  Q_\mu(\veps) \leq 1-\delta.
\end{equation}
\end{lemma}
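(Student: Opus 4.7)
The plan is a standard compactness/contradiction argument, exploiting the third moment bound to keep the variance under control in limits. Suppose, for the sake of contradiction, that no such $\delta>0$ exists. Then there is a sequence $\mu_n\in\mathcal{C}_M$ with $Q_{\mu_n}(\varepsilon)\to 1$. The family $\mathcal{C}_M$ is tight by Chebyshev (since $\int x^2d\mu=1$), so after passing to a subsequence we may assume $\mu_n\leadsto\mu$ for some probability measure $\mu$ on $\RR$.

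The crucial role of the cubic moment bound is to upgrade tightness into preservation of the first two moments. For any $R>0$ and any $\mu_n\in\mathcal{C}_M$,
\[
\int_{|x|>R}x^2\,d\mu_n\leq \frac{1}{R}\int|x|^3\,d\mu_n\leq \frac{M}{R},
\qquad
\int_{|x|>R}|x|\,d\mu_n\leq \frac{M}{R^2},
\]
so that $x$ and $x^2$ are uniformly integrable along the sequence. Combined with weak convergence, this yields $\int x\,d\mu=0$ and $\int x^2\,d\mu=1$, i.e.\ the limit $\mu$ has variance $1$ and mean $0$.

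Next, I want to pass the concentration inequality to the limit. Choose $x_n\in\RR$ with $\mu_n([x_n-\varepsilon,x_n+\varepsilon])\to 1$. Tightness of $\{\mu_n\}$ together with this lower bound forces $(x_n)$ to be bounded, so after a further subsequence $x_n\to x_*$. For any $\eta>0$, eventually $[x_n-\varepsilon,x_n+\varepsilon]\subset[x_*-\varepsilon-\eta,x_*+\varepsilon+\eta]$, so by the portmanteau theorem applied to the closed set $F_\eta=[x_*-\varepsilon-\eta,x_*+\varepsilon+\eta]$,
\[
1=\lim_n\mu_n([x_n-\varepsilon,x_n+\varepsilon])\leq \limsup_n\mu_n(F_\eta)\leq \mu(F_\eta).
\]
Letting $\eta\downarrow 0$ gives $\mu([x_*-\varepsilon,x_*+\varepsilon])=1$, i.e.\ $\mu$ is supported in an interval of length $2\varepsilon$.

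This is the step where the hypothesis $\varepsilon<1/4$ kicks in and yields a contradiction. Any probability measure supported in an interval of length $2\varepsilon$ has variance at most $\varepsilon^2<1/16$, while we just proved that $\mu$ has variance $1$. This contradicts the existence of the sequence $\mu_n$, and completes the proof.

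The argument is essentially routine; the only point worth flagging is that third moment control (not just second moment) is needed to prevent mass from escaping to infinity in the weak limit and deflating the variance, which would otherwise break the final contradiction.
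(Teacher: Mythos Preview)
Your proof is correct and follows essentially the same compactness/contradiction route as the paper: extract a weak limit, use the uniform third-moment bound to preserve mean and variance, and derive a contradiction from the limit being supported on a short interval. The only cosmetic difference is that you pass the concentration to the limit via the portmanteau theorem on shrinking closed intervals, whereas the paper interposes a Lipschitz bump function; your version even lands on the slightly sharper interval $[x_*-\varepsilon,x_*+\varepsilon]$ rather than $[x-2\varepsilon,x+2\varepsilon]$, but either suffices under the hypothesis $\varepsilon<1/4$.
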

\begin{proof}
Let $\veps>0$, and assume by contradiction that
there exists a sequence $\mu_n \in \mathcal{C}_M$ such that
$ Q_{\mu_n}(\veps) \underset{n\to \infty}{\longrightarrow} 1.$
As $\mathcal{C}_M$ is a compact set for the weak convergence, there exist a sequence
 $n_k$ and $\mu \in \mathcal{P}(\RR)$ such that
$ \mu_{n_k} \underset{k\to \infty} {\leadsto}\mu.$
Since $\int |x|^3 d\mu_{n_k} \leq M$, we deduce by Fatou's Lemma 
that $\mu\in \mathcal{C}_M$.
Observe that as $Q_{\mu_n}(\veps) \to 1$ when $n\to \infty$, we obtain using Markov's inequality that  for $n$ large enough,
$$ Q_{\mu_n}(\veps)  = \sup_{|x| \leq 2M+\veps}\mu_n\big( [x-\veps,x+\veps]\big).$$
Now, let $\phi_\veps$ be a non-negative Lipschitz function such that $\phi_{\veps} =1$ on $[-\veps,\veps]$ and $\phi_\veps =0$ on $[-2\veps,2\veps]^c$, and define for any probability measure $\mu$,
$$\tilde Q_{\mu}(\veps) = \sup_{|x| \leq 2M+\veps} \int \phi_\veps(x-y) d\mu.$$
By continuity, we deduce that for any $k$ there exists $|x_k|\leq 2M +\veps$ such that
$$ \tilde Q_{\mu_{n_k}}(\veps) = \int \phi_\veps(x_k-y) d\mu_{n_k}.$$
Since $ Q_{\mu_{n_k}}(\veps)\leq \tilde Q_{\mu_{n_k}}(\veps)$, we get that
$ \int \phi_\veps(x_k-y) d\mu_{n_k} \underset{k\to \infty}{\longrightarrow} 1.$
At the price of taking another sub-sequence we can assume that $x_k$ converges to some $x \in \RR$. We then obtain that
$ \int \phi_\veps(x-y) d\mu = 1,$
which yields that $\mu\big([x-2\veps,x+2\veps]\big) =1$. Since $\Var(\mu) =1$ and $(4\veps)^2 <1$, we get a contradiction.
\end{proof}

Combining Lemma \ref{anticoncunif} with the  Kolmogorov-Rogozin inequality 
\cite{Rogozin}, we get the following anti-concentration inequality for sums of independent random variables.
\begin{lemma}\label{anticonc}
Assume that $X_1,X_2,\ldots,X_m$ are independent centered random variables such that for some  $M>0$,  $\EE |X_i|^3 \leq M (\EE X_i^2)^{3/2}$ for $i=1,\ldots,m$. Let $S = X_1+X_2+\ldots+X_m$. Then there is a constant $C=C(M)$ so that
for any $t \geq \max_i \sqrt{\EE(X_i^2)}/8$,
$$ Q_S(t) \leq \frac{Ct}{\sqrt{ \sum_{i=1}^m \EE X_i^2}}.$$
\end{lemma}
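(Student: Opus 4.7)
The plan is to combine Lemma \ref{anticoncunif} with the classical Kolmogorov--Rogozin anti-concentration inequality. Let me set $\sigma_i = \sqrt{\EE X_i^2}$ and consider the normalized variables $\tilde{X}_i = X_i/\sigma_i$. By hypothesis, the law of $\tilde{X}_i$ is centered, has unit variance, and satisfies $\EE|\tilde{X}_i|^3 \leq M$, so it lies in the class $\mathcal{C}_M$ of Lemma \ref{anticoncunif}.

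Applying Lemma \ref{anticoncunif} with, say, $\veps = 1/8$, I obtain a constant $\delta = \delta(M) > 0$ such that $Q_{\tilde{X}_i}(1/8) \leq 1 - \delta$ uniformly in $i$; rescaling, this says
\[
Q_{X_i}(\sigma_i/8) \leq 1 - \delta.
\]
Now I invoke the Kolmogorov--Rogozin inequality: for any $L>0$ and any choice of $\tau_i \in (0,L]$,
\[
Q_S(L) \leq \frac{C \cdot L}{\sqrt{\sum_{i=1}^m \tau_i^2\bigl(1 - Q_{X_i}(\tau_i)\bigr)}},
\]
where $C$ is a universal constant (see \cite{Rogozin}). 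The hypothesis $t \geq \max_i \sigma_i/8$ allows me to take $\tau_i = \sigma_i/8$, which satisfies $\tau_i \leq t$ for all $i$. Plugging in the bound $1 - Q_{X_i}(\sigma_i/8) \geq \delta$ gives
\[
\sum_{i=1}^m \tau_i^2\bigl(1-Q_{X_i}(\tau_i)\bigr) \geq \frac{\delta}{64}\sum_{i=1}^m \sigma_i^2 = \frac{\delta}{64}\sum_{i=1}^m \EE X_i^2,
\]
and therefore
\[
Q_S(t) \leq \frac{C t}{\sqrt{(\delta/64)\sum_{i=1}^m \EE X_i^2}} = \frac{C(M)\, t}{\sqrt{\sum_{i=1}^m \EE X_i^2}},
\]
which is the claimed bound.

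There is no real obstacle here once Lemma \ref{anticoncunif} is in place: the only substantive input is the uniform (in the class $\mathcal{C}_M$) lower bound $\delta(M)$ on the ``spreading'' of each summand at the scale of its own standard deviation, and the Kolmogorov--Rogozin inequality then assembles these local spreading estimates into a concentration bound at scale $\sqrt{\sum \EE X_i^2}$. The condition $t \geq \max_i \sigma_i/8$ is precisely what makes $\tau_i = \sigma_i/8$ an admissible choice in Rogozin's inequality; without it one would have to truncate $\tau_i$ at $t$, losing the uniform lower bound on $1-Q_{X_i}(\tau_i)$ for the large summands.
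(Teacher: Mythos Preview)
Your proof is correct and is essentially identical to the paper's own argument: both apply Lemma \ref{anticoncunif} (at scale $\sigma_i/8$) to get a uniform lower bound $1-Q_{X_i}(\sigma_i/8)\geq\delta(M)$, then feed $s_i=\sigma_i/8\leq t$ into the Kolmogorov--Rogozin inequality. The only difference is that you spell out the normalization step and the role of the hypothesis $t\geq\max_i\sigma_i/8$ more explicitly.
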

\begin{proof}
Note that the statement and its assumptions are  scale-invariant, so we may and will assume that $\max_{i=1}^m \EE X_i^2=1$. 
By the Kolmogorov-Rogozin inequality \cite{Rogozin}, we have that for any constants $s_i\leq t$,
$$ Q_S(t) \leq \frac{Ct}{\sqrt{\sum_{i=1}^m s_i^2 (1-Q_{X_i}(s_i))}}.$$
Let $s_i =\sqrt{\EE(X_i^2)}/8$, $i=1,\ldots,m$.
By Lemma \ref{anticoncunif}, we have that $Q_{X_i}(s_i)\leq 1-\delta$,
where $\delta >0$ only depends on $M$. This completes the proof.
\end{proof}

\end{document}